\documentclass{amsart}
\usepackage[english]{babel}

\usepackage[latin1]{inputenc}
\usepackage[T1]{fontenc}

\usepackage{amssymb}
\usepackage[arrow,matrix,cmtip]{xy}
\usepackage{paralist}

\title[Topological Invariants]{Topological Invariants of Anosov Representations}
\author{Olivier Guichard}
\address{CNRS, Laboratoire de Math\'ematiques d'Orsay, Orsay cedex, F-91405\\
Universit\'e Paris-Sud, Orsay cedex, F-91405}
\author{Anna Wienhard}
\address{Department of Mathematics, Princeton University\\
Fine Hall, Washington Road, Princeton, NJ 08540, USA}
\thanks{A.W. was partially supported by the National Science
  Foundation under agreement No. DMS-0604665, DMS-0803216 and DMS-0846408.
  O.G. was partially supported by the Agence Nationale de la
  Recherche under ANR's projects Repsurf (ANR-06-BLAN-0311) and
  ETTT (ANR-09-BLAN-0116-01) and by the National Science Foundation
  under agreement No. DMS-0635607.}
\date{\today}
\keywords{Surface Groups, Maximal Representations, Obstructions
  Classes, Representation Varieties, Anosov Representations} 
\subjclass[2000]{57M50, 20H10}

\numberwithin{equation}{section}

\theoremstyle{plain}

\newtheorem{thm}[equation]{Theorem}
\newtheorem{prop}[equation]{Proposition}
\newtheorem{lem}[equation]{Lemma}

\newtheorem{cor}[equation]{Corollary}
\newtheorem*{thm*}{Theorem}

\newtheorem{prop_intro}{Proposition}
\newtheorem{thm_intro}[prop_intro]{Theorem}
\newtheorem{cor_intro}[prop_intro]{Corollary}
\newtheorem{conj_intro}[prop_intro]{Conjecture}

\theoremstyle{definition}

\newtheorem{defi}[equation]{Definition}
\newtheorem{fact}[equation]{Fact}
\newtheorem{facts}[equation]{Facts}
\newtheorem{nota}[equation]{Notation}
\newtheorem{cond}[equation]{Condition}

\theoremstyle{remark}

\newtheorem{rem_intro}{Remark}
\newtheorem{remark}[equation]{Remark}

\newtheorem{examp_intro}[prop_intro]{Examples}

\newcommand{\ZZ}{\mathbf{Z}}
\newcommand{\RR}{\mathbf{R}}
\newcommand{\CC}{\mathbf{C}}

\newcommand{\HH}{\mathbb{H}}

\newcommand{\hH}{\mathcal{H}}
\newcommand{\xX}{\mathcal{X}}
\newcommand{\Ff}{\mathcal{F}}

\newcommand{\g}{\gamma}

\newcommand{\Ll}{\mathcal{L}}

\newcommand{\PSL}{\mathrm{PSL}}
\newcommand{\SL}{\mathrm{SL}}
\newcommand{\Sp}{\mathrm{Sp}}
\newcommand{\SO}{\mathrm{SO}}
\newcommand{\SOcon}{\mathrm{SO}_\circ}
\newcommand{\diag}{\mathrm{diag}} 
\newcommand{\Gk}{\mathrm{Sp}(2n, \RR)_{(k)}}

\renewcommand{\O}{\mathrm{O}}
\newcommand{\PGL}{\mathrm{PGL}}

\newcommand{\PU}{\mathrm{PU}}

\newcommand{\PSp}{\mathrm{PSp}}
\newcommand{\PSO}{\mathrm{PSO}}
\newcommand{\GL}{\mathrm{GL}}
\newcommand{\sym}{\mathrm{Sym}}
\newcommand{\rep}{\mathrm{Rep}}

\newcommand{\id}{\mathrm{Id}}
\newcommand{\h}{\mathrm{H}}

\newcommand{\map}{\mathcal{M}od}

\newcommand{\Span}{\mathrm{Span}}

\renewcommand{\hom}{\mathrm{Hom}}
\newcommand{\stab}{\mathrm{Stab}}

\newcommand{\moins}{\smallsetminus}
\newcommand{\cupprod}{\smallsmile}
\newcommand{\sep}{,\,}
\newcommand{\longto}{\longrightarrow}

\newcommand{\bqn}{\begin{equation*}}
\newcommand{\eqn}{\end{equation*}}

\newcommand{\bq}{\begin{equation}}
\newcommand{\eq}{\end{equation}}

\def\bal#1\eal{\begin{align}#1\end{align}}
\def\baln#1\ealn{\begin{align*}#1\end{align*}}

\newcommand{\homHano}{\hom_{H\textup{-Anosov}}}
\newcommand{\homGLano}{\hom_{\GL(n,\RR)\textup{-Anosov}}}
\newcommand{\hommax}{\hom_{\textup{max}}}
\newcommand{\homHit}{\hom_{\textup{Hitchin}}}
\newcommand{\repmax}{\rep_{\textup{max}}}
\newcommand{\repHit}{\rep_{\textup{Hitchin}}}
\newcommand{\hommaxzero}{\hom_{ \textup{max}, sw_1=0}}
\newcommand{\repmaxzero}{\rep_{ \textup{max}, sw_1=0}}

\newcommand{\FF}{\mathbf{F}}

\newcommand{\transpose}{\,{}^t\!}

\newcommand{\GenTor}{[\Sigma]}
\newcommand{\LZPlus}{L_{0+}}

\newcommand{\homMaxFZ}{\hom_{ \textup{max}, sw_1=0}}
\newcommand{\homMaxFourZero}{\hom_{ \textup{max}, sw_1=0}(
  \pi_1( \Sigma) , \Sp(4, \RR))}
\newcommand{\repMaxFourZero}{\rep_{ \textup{max}, sw_1=0}(
  \pi_1( \Sigma) , \Sp(4, \RR))}
\newcommand{\repmaxneqzero}{\rep_{ \textup{max}, sw_1 \neq 0}(
  \pi_1( \Sigma) , \Sp(4, \RR))}
\newcommand{\homMaxFour}{\hom_{ \textup{max}}(
  \pi_1( \Sigma) , \Sp(4, \RR))}
\newcommand{\homHitFour}{\hom_{ \textup{Hitchin}}(
  \pi_1( \Sigma) , \Sp(4, \RR))}
\newcommand{\repHitFour}{\rep_{ \textup{Hitchin}}(
  \pi_1( \Sigma) , \Sp(4, \RR))}
\newcommand{\homMaxLl}{\hom_{\textup{max},sw_1=0}^{\Ll, \gamma}(
  \pi_1( \Sigma) , \Sp(4, \RR))} 
\newcommand{\homMaxLlPlus}{\hom_{\textup{max},sw_1=0}^{\Ll_+, \gamma}(
  \pi_1( \Sigma) , \Sp(4, \RR))} 
\newcommand{\homMaxLlPlusNoGrp}{\hom_{\textup{max},sw_1=0}^{\Ll_+, \gamma}} 
\newcommand{\homMaxHatLlPlus}{\hom_{\textup{max},sw_1=0}^{\Ll_+, \hat{\gamma}}(
  \widehat{\pi_1(\Sigma)} , \Sp(4, \RR))} 
\newcommand{\homMaxPLnot}{\hom_{\textup{max},sw_1=0}^{\Ll_+, \gamma, \LZPlus}( \pi_1( \Sigma) ,
     \Sp(4, \RR))}

\tolerance=1000

\begin{document}

\begin{abstract}
  We define new topological invariants for Anosov representations and
  study them in detail for maximal representations of the fundamental
  group of a closed oriented surface $\Sigma$ into the symplectic
  group $\Sp(2n,\RR)$.  In particular we show that the invariants
  distinguish connected components of the space of symplectic maximal
  representations other than Hitchin components.  Since the invariants
  behave naturally with respect to the action of the mapping class
  group of $\Sigma$, we obtain from this the number of components of
  the quotient by the mapping class group action.

  For specific symplectic maximal representations we compute the
  invariants explicitly. This allows us to construct nice model
  representations in all connected components. The construction of
  model representations is of particular interest for $\Sp(4,\RR)$,
  because in this case there are $-1-\chi(\Sigma)$ connected
  components in which all representations are Zariski dense and no
  model representations were known so far. Finally, we use the model
  representations to draw conclusions about the holonomy of symplectic
  maximal representations.
\end{abstract}

\maketitle

\section{Introduction}
\label{sec_intro}

Let $\Sigma$ be a closed oriented connected surface of negative Euler
characteristic, $G$ a connected Lie group.  The obstruction to lifting
a representation $\rho: \pi_1(\Sigma) \to G$ to the universal cover of
$G$ is a characteristic class of $\rho$ which is an element of
$\h^2(\Sigma; \pi_1(G)) \cong \pi_1(G)$.

When $G$ is compact it is a consequence of the famous paper of Atiyah
and Bott \cite{Atiyah_Bott} that the connected components of
\[
\hom(\pi_1(\Sigma), G)/G
\]
are in one-to-one correspondence with the elements of $\pi_1(G)$. When
$G$ is a complex Lie group the analogous result has been conjectured
by Goldman \cite[p.~559]{Goldman_components} and proved by Li
\cite[Theorem~0.1]{Li}.

When $G$ is a real non-compact Lie group, this correspondence between
connected components of $\hom(\pi_1(\Sigma), G)/G$ and elements of
$\pi_1(G)$ fails to hold. Obviously characteristic classes of
representations still distinguish certain connected components of
$\hom(\pi_1(\Sigma), G)/G$, but they are not sufficient to distinguish
all connected components.

\begin{examp_intro}\label{exintro}
  Here are some examples of this phenomenon:
  \begin{asparaenum}
  \item For $n\geq 3$, the characteristic class of a representation of
    $\pi_1(\Sigma)$ into $\SL(n,\RR)$ is an element of $\ZZ/2\ZZ$.
    But the space
    \[\hom(\pi_1(\Sigma) ,\SL(n,\RR))/\PGL(n,\RR)\]
    has three connected components \cite[Theorem~B]{Hitchin}.

  \item For representations of $\pi_1(\Sigma)$ into $\PSL(2,\RR)$ the
    Euler number does distinguish the $4g-3$ connected components
    \cite[Theorem~A]{Goldman_components}. For representations into
    $\SL(2,\RR)$ the Euler number is not sufficient to distinguish
    connected components, there are $2^{2g+1} + 2g-3$ components, and
    in particular there are $2^{2g}$ components of maximal (or
    minimal) Euler number, each of which corresponds to the choice of
    a spin structure on $\Sigma$.
  \item \label{exactcount} For representations of $\pi_1(\Sigma)$ into
    $\Sp(2n,\RR)$ the characteristic class which generalizes the Euler
    number is an element of $\h^2(\Sigma; \pi_1(\Sp(2n,\RR))) \cong
    \ZZ$. It is bounded in absolute value by $n(g-1)$. The subspace of
    representations where it equals $n(g-1)$ is called the space of
    maximal representations. This subspace decomposes into several
    connected components, $3\times 2^{2g}$ when $n\geq 3$
    \cite[Theorem~8.7]{GarciaPrada_Gothen_Mundet} and $(3 \times
    2^{2g} + 2g-4)$ when $n=2$ \cite[Theorem, p.~824]{Gothen}. The
    space of maximal representations and its connected components are
    discussed in detail in this article.
  \end{asparaenum}
\end{examp_intro}

We introduce new topological invariants for representations $\rho:
\pi_1(M) \to G$, whenever $\rho$ is an {\em Anosov
  representation}. Let us sketch the definition.  Let $M$ be a compact
manifold equipped with an Anosov flow that has an invariant volume
form.  A representation $\rho:\pi_1(M) \to G$ is said to be a
$(G,H)$-Anosov representation if the associated $G/H$-bundle over $M$
admits a section whose image is a hyperbolic set for the induced flow
on $\widetilde{M} \times_\rho G/H$ (see Section~\ref{sec_anosov} for
details). We call such a section an \emph{Anosov section}.

\begin{thm_intro}\label{thm_intro:invariants}
  Let $\rho:\pi_1(M) \to G$ be a $(G,H)$-Anosov representation. Then
  the Anosov section is unique and defines a canonical principal
  $H$-bundle over $M$.  Hence there is a well defined map
  \[
  \homHano( \pi_1(M), G) \longrightarrow \mathcal{B}_H(M),
  \]
  where $\homHano( \pi_1(M), G)$ denotes the subspace of
  $(G,H)$-Anosov representations and $\mathcal{B}_H(M)$ the set of
  gauge isomorphism classes of principal $H$-bundles over $M$.  This
  map is continuous and natural with respect to:
  \begin{itemize}
  \item taking covers of $M$,
  \item certain morphisms of pairs $(G,H) \to (G' ,H')$ (see
    Lemmas~\ref{lem_anosov_constructions} and \ref{lem:finite_cover}).
  \end{itemize}
\end{thm_intro}
As a consequence the topological type of the $H$-bundle canonically
given by the Anosov section gives rise to topological invariants of
$\rho$.  Some general properties of the invariants associated with
Anosov sections are discussed in Section~\ref{sec:char_class_in_gal}.

\subsection{Maximal representations into $\Sp(2n,\RR)$}
Our main focus lies on maximal representations into
$\Sp(2n,\RR)$. Maximal representations into $\Sp(2n,\RR)$ are
$(\Sp(2n,\RR), \GL(n,\RR))$-Anosov representations
\cite[Theorem~6.1]{Burger_Iozzi_Labourie_Wienhard}.  More precisely,
with respect to some hyperbolic metric on $\Sigma$, the geodesic flow
is an Anosov flow on the unit tangent bundle $T^1\Sigma$ and the
Liouville form is invariant.  The fundamental group $\pi_1(T^1\Sigma)$
is a central extension of $\pi_1(\Sigma)$, with natural projection
$\pi: \pi_1(T^1\Sigma) \to \pi_1(\Sigma)$. Let $\rho: \pi_1(\Sigma)
\to \Sp(2n,\RR)$ be a maximal representation, then the composition
$\rho\circ \pi: \pi_1(T^1\Sigma) \to \Sp(2n,\RR)$ is a $(\Sp(2n,\RR),
\GL(n,\RR))$-Anosov representation.  The topological invariants
obtained by Theorem~\ref{thm_intro:invariants} are the characteristic
classes of a $\GL( n, \RR)$-bundle over $T^1\Sigma$. We only consider
the first and second Stiefel-Whitney classes $sw_1(\rho\circ \pi) \in
\h^1(T^1\Sigma; \FF_2)$ and $sw_2(\rho\circ\pi) \in \h^2(T^1\Sigma;
\FF_2)$.
\begin{thm_intro}\label{thm_intro:restrictions}
  Let $\rho: \pi_1(\Sigma) \to \Sp(2n,\RR)$ be a maximal
  representation. Then the topological invariants $sw_1(\rho) =
  sw_1(\rho\circ \pi) \in \h^1(T^1\Sigma; \FF_2)$ and $sw_2(\rho)=
  sw_2(\rho\circ\pi) \in \h^2(T^1\Sigma; \FF_2)$ are subject to the
  following constraints:
  \begin{enumerate}
  \item The image of
    \[sw_1 : \hommax( \pi_1(\Sigma),G) \longrightarrow \h^1(T^1 \Sigma
    ; \FF_2 )\] is contained in one coset of $\h^1(\Sigma ; \FF_2 )$.
    \begin{itemize}
    \item For $n$ even, $sw_1( \rho)$ is in $\h^1( \Sigma; \FF_2)
      \subset \h^1( T^1\Sigma; \FF_2)$,
    \item for $n$ odd, $sw_1( \rho)$ is in $\h^1( T^1\Sigma; \FF_2)
      \moins \h^1( \Sigma; \FF_2) $.
    \end{itemize}
  \item The image of
    \[sw_2 : \hommax( \pi_1(\Sigma),G) \longrightarrow \h^2(T^1 \Sigma
    ; \FF_2 )\] lies in the image of $\h^2( \Sigma ; \FF_2 ) \to
    \h^2(T^1 \Sigma ; \FF_2)$.
  \end{enumerate}
\end{thm_intro}

\begin{rem_intro}
  The homomorphism $\h^1( \Sigma; \FF_2) \to \h^1( T^1\Sigma; \FF_2)$
  is induced by the natural fibration $T^1 \Sigma \to \Sigma$. The
  Gysin exact sequence (see Equation~\eqref{eq_gysin}) implies that it
  is injective and that is image is of index $2$.
\end{rem_intro}

In the case when $n=2$, that is, for maximal representation $\rho:
\pi_1(\Sigma) \to \Sp(4,\RR)$, let $\homMaxFourZero$ denote the
subspace of maximal representations where the first Stiefel-Whitney
class vanishes.  This means that the $\GL(2, \RR)$-bundle over $T^1
\Sigma$ admits a reduction of the structure group to $\GL^+(2,\RR)$,
equivalently the corresponding $\RR^2$-vector bundle is orientable.  A
reduction of the structure group to $\GL^+(2,\RR)$ gives rise to an
Euler class, but since an orientable bundle does not have a canonical
orientation this reduction is not canonical. To circumvent this
problem, we introduce an \emph{enhanced} representation space, which
involves the choice of a nontrivial element $\gamma \in
\pi_1(\Sigma)$. For pairs $(\rho, L_+)$ consisting of a maximal
representation with vanishing first Stiefel-Whitney class and an
\emph{oriented} Lagrangian $L_+\subset \RR^4$ which is fixed by
$\rho(\gamma)$, there is a well-defined Euler class (see
Section~\ref{sec:sp4case}).

\begin{thm_intro}\label{thm_intro:restricEuler}
  Let $\rho: \pi_1(\Sigma) \to \Sp(4,\RR)$ be a maximal representation
  with $sw_1(\rho) = 0$. Let $\gamma \in \pi_1(\Sigma) \moins \{1\}$
  and $L_+$ an oriented Lagrangian fixed by $\rho(\gamma)$.  Then the
  Euler class $e_\gamma(\rho, L_+) \in \h^2(T^1\Sigma; \ZZ)$ lies in
  the image of $\h^2(\Sigma; \ZZ) \to \h^2(T^1\Sigma;\ZZ)$.
\end{thm_intro}

For every possible topological invariant satisfying the above
constraints we construct explicit representations $\rho:\pi_1(\Sigma)
\to \Sp(2n,\RR)$ realizing this invariant (see
Section~\ref{sec:model_intro} and Section~\ref{sec:examples} for the
construction of the representations and Section~\ref{sec:computations}
for the calculation of the invariants). From this we deduce a lower
bound on the connected components of the space $\hommax(\pi_1(\Sigma),
\Sp(2n,\RR))$ of maximal representations.

\begin{prop_intro}\label{prop:lowerbound_intro} 
  Let $\hommax(\pi_1(\Sigma), \Sp(2n,\RR))$ be the space of maximal
  representations.
  \begin{enumerate}
  \item If $n\geq 3$ the space $\hommax(\pi_1(\Sigma), \Sp(2n,\RR))$
    has at least $3\times 2^{2g}$ connected components.
  \item The space $\hommax(\pi_1(\Sigma), \Sp(4,\RR))$ has at least $3
    \times 2^{2g} + 2g-4$ connected components.
  \end{enumerate}
\end{prop_intro}

Our method (so far) only gives a lower bound on the number of
connected components. To obtain an exact count of the number of
connected components using the invariants defined here, a closer
analysis for surfaces with boundary would be necessary (see
\cite{Goldman_components} for the case when $n=1$).

Fortunately, the correspondence between representations and Higgs
bundles allows to use algebro-geometric methods to study the topology
of $\rep(\pi_1(\Sigma), G):=\hom(\pi_1(\Sigma), G)/G$.  These methods
have been developed by Hitchin \cite{Hitchin_selfdual} and applied to
representations into Lie groups of Hermitian type in
\cite{Bradlow_GarciaPrada_Gothen, Bradlow_GarciaPrada_Gothen_survey,
  GarciaPrada_Gothen_Mundet, GarciaPrada_Mundet, Gothen} leading to
the exact count mentioned above in
Examples~\ref{exintro}.(\ref{exactcount}).

Combining Proposition~\ref{prop:lowerbound_intro} with this exact
count we can conclude that the invariants defined here distinguish
connected components other than Hitchin components.

More precisely, let $\homHit(\pi_1(\Sigma), \Sp(2n,\RR))$ be the space
of Hitchin representations; by definition it is the union of the
connected components of $\hom(\pi_1(\Sigma), \Sp(2n,\RR))$ containing
representations of the form $\phi_{irr} \circ \iota$ where $\iota:
\pi_1( \Sigma) \to \SL(2, \RR)$ is a discrete embedding and
$\phi_{irr}: \SL(2, \RR) \to \Sp(2n, \RR)$ is the irreducible
representation of $\SL(2, \RR)$ of dimension $2n$.  Hitchin
representations are maximal representations.

\begin{thm_intro}\label{thm_intro:symp_maximal}
  Let $n\geq 3$. Then the topological invariants of
  Theorem~\ref{thm_intro:restrictions} distinguish connected
  components of $\hommax \moins \homHit$. More precisely,
  \begin{multline*}
    \hommax(\pi_1(\Sigma), \Sp(2n,\RR)) \moins
    \homHit(\pi_1(\Sigma), \Sp(2n,\RR)) \\
    \xrightarrow{sw_1,sw_2} \h^1(T^1\Sigma; \FF_2) \times
    \h^2(T^1\Sigma; \FF_2)
  \end{multline*}
  induces a bijection from $\pi_0(\hommax \moins \homHit)$ to the set
  of pairs satisfying the constraints of
  Theorem~\ref{thm_intro:restrictions}.
\end{thm_intro}

It is easy to see that, when $n$ is even, the first Stiefel-Whitney
class of a Hitchin representation vanishes, i.e.\ one has the
inclusion $\homHit \subset \homMaxFZ$.

\begin{thm_intro}\label{thm_intro:n=2_components}
  \begin{asparaenum}
  \item The Euler class defines a map
    \[\homMaxFourZero \moins
    \homHit(\pi_1(\Sigma), \Sp(4,\RR)) \longrightarrow \h^2(T^1\Sigma;
    \ZZ)\] which induces a bijection from $\pi_0(\homMaxFZ \moins
    \homHit)$ to the image of $\h^2(\Sigma; \ZZ)$ in
    $\h^2(T^1\Sigma;\ZZ)$. In particular, the Euler class
    distinguishes connected components in $\homMaxFZ \moins \homHit$.

  \item The components of $\hommax \moins \homMaxFZ$ are distinguished
    by the first and second Stiefel-Whitney classes, i.e.\ the map
    \begin{multline*}
      \homMaxFour \moins \homMaxFourZero \\
      \xrightarrow{sw_1,sw_2} \big(\h^1(\Sigma; \FF_2)\moins\{0\}\big)
      \times \h^2(\Sigma; \FF_2)
    \end{multline*}
    induces a bijection at the level of connected components.
  \end{asparaenum}

\end{thm_intro}

\begin{rem_intro}\label{rem_intro_inv_for_Hitchin}
  Hitchin representations are not only $(\Sp(2n,\RR),
  \GL(n,\RR))$-Anosov representations, but $(\Sp(2n,\RR), A)$-Anosov
  representations, where $A$ is the subgroup of diagonal matrices
  \cite[Theorems~4.1 and 4.2]{Labourie_anosov}.  Applying
  Theorem~\ref{thm_intro:invariants} to the pair $(G,H) =
  (\Sp(2n,\RR), A)$ one can define first Stiefel-Whitney classes
  $sw_1^A(\rho)$ in $\h^1( T^1\Sigma; \FF_2)$; similarly to the above
  discussion these invariants are shown to belong to $\h^1( T^1\Sigma;
  \FF_2) \moins\h^1( \Sigma; \FF_2) $ and distinguish the $2^{2g}$
  connected components of $\homHit(\pi_1(\Sigma), \Sp(2n,\RR))$ (see
  Section~\ref{sec:invforHitchin}). In
  Section~\ref{sec:gener-hitch-comp} we discuss the case of the
  Hitchin component of a general split real simple Lie group (see
  Theorem~\ref{thm_Hitchin_cover} and
  Theorem~\ref{thm_Hitchinclassical}).
\end{rem_intro}
\begin{rem_intro}
  The existence of $2^{2g}$ Hitchin components is due to the center of
  $\Sp(2n,\RR)$: they all project to the same component in $\hom(
  \pi_1( \Sigma), \PSp(2n, \RR))$.  For $n$ odd, $n \geq 3$, the
  abundance of non-Hitchin connected components in the space of
  maximal representations is also explained by the center. However,
  for $n$ even, the abundance of non-Hitchin connected components
  pertains when we consider representations into the adjoint group. A
  precise statement is the following theorem (see
  Section~\ref{sec:cover_sp}).
\end{rem_intro}

\begin{thm_intro}\label{thm_intro_comp_PSp}
  If $n$ is odd and $n\geq 3$, then the space $\hommax(\pi_1(\Sigma),
  \PSp(2n ,\RR))$ has $3$ connected components.

  If $n$ is even and $n \geq 4$, then there are $2^{2g}+2$ connected
  components of $\hommax(\pi_1(\Sigma), \PSp(2n ,\RR))$ that lift to
  $\Sp(2n,\RR)$.
 
  There are $2^{2g}+2g-2$ connected components of
  $\hommax(\pi_1(\Sigma), \PSp(4 ,\RR))$ that lift to $\Sp(4,\RR)$.
\end{thm_intro}

This result gives lower bounds for the number of components of the
space $\hommax(\pi_1(\Sigma), \PSp(2n ,\RR))$ when $n$ is even.

\begin{rem_intro}
  According to \cite[Section~7]{Bradlow_GarciaPrada_Gothen_survey} the
  space $\hommax(\pi_1(\Sigma), \PSp(4 ,\RR))\cong
  \hommax(\pi_1(\Sigma), \SOcon(2,3))$ has $2^{2g+1}+ 4g-5$ connected
  components. In particular, there are $2^{2g} + 2g-3$ components that
  do not lift to $\Sp(4,\RR)$.
\end{rem_intro}

Maximal representations into covers of $\Sp(2n, \RR)$ are also Anosov,
hence we have the corresponding topological invariants. We describe
their properties in Section~\ref{sec:cover_sp}. We get in particular
the following

\begin{thm_intro}\label{thm_intro:coverSP}
  Let $\Gk$ be the connected $k$-fold cover of $\Sp(2n, \RR)$. Then
  the space $\hommax(\pi_1(\Sigma), \Gk)$ is nonempty if and only if
  $\chi(\Sigma)$ is a multiple of $k$. In that case
  \[ \hommax(\pi_1(\Sigma), \Gk) \longto \hommax(\pi_1(\Sigma),
  \Sp(2n, \RR)) \] is the trivial cover of degree $k^{2g}$. In
  particular
  \[ \# \pi_0 \big( \hommax(\pi_1(\Sigma), \Gk) \big) = k^{2g} \#
  \pi_0 \big ( \hommax(\pi_1(\Sigma), \Sp(2n, \RR))\big). \]
\end{thm_intro}

\subsection{The action of the mapping class group}
The first and second Stiefel-Whitney classes of a maximal
representation $\rho$ do not change if $\rho$ is conjugated by an
element $\Sp(2n,\RR)$. Thus, they give well defined functions:
\begin{equation}
  \label{eq:map}
  sw_i: \repmax(\pi_1(\Sigma), \Sp(2n,\RR)) \longrightarrow
  \h^i(T^1\Sigma; \FF_2).
\end{equation}
The mapping class group $\map(\Sigma)$ acts by precomposition on
$\repmax$; this action is properly discontinuous \cite[Theorem
1.0.2]{Labourie_energy} \cite[Theorem 1.1]{Wienhard_mapping} and by
Theorem~\ref{thm_intro:invariants} the map \eqref{eq:map} is
equivariant with respect to this action and the natural action of
$\map(\Sigma)$ on $\h^i(T^1\Sigma; \FF_2)$.

For the Euler class $e_\gamma$ (see
Theorem~\ref{thm_intro:restricEuler}) there is a corresponding
statement of equivariance for the subgroup of $\map(\Sigma)$ fixing
the homotopy class of $\gamma$.

This allows us to determine the number of connected components of the
space $\repmax(\pi_1(\Sigma), \Sp(2n,\RR))/\map(\Sigma)$.

\begin{thm_intro}\label{thm:components_quotient_intro}
  If $n\geq 3$, the space $\repmax(\pi_1(\Sigma),
  \Sp(2n,\RR))/\map(\Sigma)$ has $6$ connected components.

  The space $\repmax(\pi_1(\Sigma), \Sp(4,\RR))/\map(\Sigma)$ has
  $2g+2$ connected components.
\end{thm_intro}

\subsection{Model representations}\label{sec:model_intro}
Given two representations it is in general very difficult to determine
whether they lie in the same connected component or not.  The
invariants defined here can be computed rather explicitly and hence
allow us to decide in which connected component of
$\hommax(\pi_1(\Sigma), \Sp(2n,\RR))$ a specific representation lies
in.  This computability enables us to give particularly nice model
representations in all connected components.

An easy way to construct maximal representations $\rho:\pi_1(\Sigma)
\to \Sp(2n,\RR)$ is by composing a discrete embedding $\pi_1(\Sigma)
\to \SL(2,\RR)$ with a tight homomorphism of $\SL(2,\RR)$ into
$\Sp(2n,\RR)$ (see \cite{Burger_Iozzi_Wienhard_tight} for the notion
of tight homomorphism; here they can be characterized as the morphisms
inducing multiplication by $n$ at the level of fundamental groups).
The composition with the $2n$-dimensional irreducible representation
of $\SL(2,\RR)$ into $\Sp(2n,\RR)$ is called an {\em irreducible
  Fuchsian representation}.  Hitchin representations are precisely
deformations of such representations.  The composition with the
diagonal embedding of $\SL(2,\RR)$ into the subgroup $\SL(2,\RR)^n <
\Sp(2n,\RR)$ is called a {\em diagonal Fuchsian representation}.  The
centralizer of the image of the diagonal embedding of $\SL(2,\RR)$ is
isomorphic to $\O(n)$. Any representation can be twisted by a
representation into its centralizer, thus any diagonal Fuchsian
representation can be twisted by a representation $\pi_1(\Sigma) \to
\O(n)$, defining a {\em twisted diagonal representations}.  A
representation obtained by one of these constructions will be called a
{\em standard maximal representation} (see
Section~\ref{sec_standard}).

\begin{thm_intro}
  \label{thm_intro:symp_n>2}
  Let $n\geq 3$. Then every maximal representation $\rho:
  \pi_1(\Sigma) \to \Sp(2n,\RR)$ can be deformed to a standard maximal
  representation.
\end{thm_intro}
\begin{cor_intro}\label{cor_intro:deform_closed}
  Let $n\geq 3$. Then any maximal representation $\rho: \pi_1(\Sigma)
  \to \Sp(2n,\RR)$ can be deformed to a maximal representation whose
  image is contained in a proper closed subgroup of $\Sp(2n,\RR)$.
\end{cor_intro}

\begin{rem_intro}
  This conclusion can also be obtained from
  \cite[Section~5]{GarciaPrada_Gothen_Mundet} because the Higgs
  bundles for standard maximal representations can be described quite
  explicitly.
\end{rem_intro}
Our computations of the topological invariants in
Section~\ref{sec:computations} give more precise information on when a
maximal representation can be deformed to an irreducible Fuchsian or a
diagonal Fuchsian representation:

\begin{cor_intro}\label{cor_intro:deform}
  Let $\rho:\pi_1(\Sigma) \to \Sp(2n,\RR)$ be a maximal
  representation. Then $\rho$ can be deformed either to an irreducible
  Fuchsian representation or to a diagonal Fuchsian representation if
  \begin{enumerate}
  \item for $n= 2m$, $m>2$, $sw_1(\rho) = 0$ and $sw_2(\rho) = m
    \frac{\chi(\Sigma)}{2} \mod 2$,
  \item for $n= 2m+1$, $sw_2(\rho) = m \frac{\chi(\Sigma)}{2} \mod 2$.
  \end{enumerate}
\end{cor_intro}

The case of $\Sp(4,\RR)$ is different as
Theorem~\ref{thm_intro:symp_n>2} and
Corollary~\ref{cor_intro:deform_closed} do not hold anymore.  From the
count of the connected components of $\repmax(\pi_1(\Sigma),
\Sp(4,\RR))$ in \cite{Gothen}, one can conclude that there are $2g-3$
exceptional components not containing any standard maximal
representations.

To construct model representations in these components, we decompose
$\Sigma = \Sigma_l \cup \Sigma_r$ into two subsurfaces and define a
representation of $\pi_1(\Sigma)$ by amalgamation of an irreducible
Fuchsian representation of $\pi_1(\Sigma_l)$ with a deformation of a
diagonal Fuchsian representation of $\pi_1(\Sigma_r)$. We call these
representations {\em hybrid representations} (see
Section~\ref{sec:description_hybrid} for details).
  
We compute the topological invariants of these representations
explicitly.  Allowing the Euler characteristic of the subsurface
$\Sigma_l$ to vary between $3-2g$ and $-1$, we obtain $2g-3$ hybrid
representations which exhaust the $2g-3$ exceptional components of the
space of maximal representations into $\Sp(4,\RR)$. We conclude

\begin{thm_intro}
  \label{thm_intro:symp_n=2}
  Every maximal representation $\rho: \pi_1(\Sigma) \to \Sp(4,\RR)$
  can be deformed to a standard maximal representation or a hybrid
  representation.
\end{thm_intro}

\begin{rem_intro}
  To obtain Theorem~\ref{thm_intro:symp_n=2} it is essential that we
  are able to compute the topological invariants
  explicitly. Geometrically there is no obvious reason why different
  hybrid representations lie in different connected components.  In
  particular, our results on the topological invariants imply that
  similar constructions by amalgamation (see
  Section~\ref{sec:other_amalgams}) give representations which can be
  deformed to twisted diagonal representations.
\end{rem_intro}

From the computations of the topological invariants we also deduce the
following
\begin{thm_intro}
  \label{thmintro_Zdensity}
  Any representation in $\homMaxFourZero$ with Euler class not equal
  to $(g-1) \GenTor $ has Zariski dense image.
\end{thm_intro}

\begin{rem_intro} 
  Here the class $\GenTor \in \h^2 ( T^1 \Sigma; \ZZ)$ is the image of
  the orientation class in $\h^2( \Sigma; \ZZ)$ under the natural map
  $\h^2( \Sigma; \ZZ) \to \h^2( T^1 \Sigma; \ZZ)$; it is a torsion
  class of order $2g-2$ (see Appendix~\ref{subsec:cohomologysigma}).
\end{rem_intro}

\begin{rem_intro}
  A similar result is proved in
  \cite[Th.~1.1.(3)]{Bradlow_GarciaPrada_Gothen_inpreparation} using
  the theory of Higgs bundles.
\end{rem_intro}

\subsection{Holonomies of maximal representations}
A direct consequence of the fact that maximal representations
$\rho:\pi_1(\Sigma)\to \Sp(2n,\RR)$ are $(\Sp(2n,\RR),
\GL(n,\RR))$-Anosov representations is that the holonomy
$\rho(\gamma)$ is conjugate to an element of $\GL(n,\RR)$ for every
$\gamma \in \pi_1(\Sigma)$. More precisely $\rho(\gamma)$ fixes two
transverse Lagrangians, one, $L^s$, being attractive, the other being
repulsive.  From this it follows that the holonomy $\rho(\gamma)$ is
an element of $\GL(L^s)$ whose eigenvalues are strictly bigger than
one.

For representations in the Hitchin components we have moreover that
$\rho(\gamma) \in \GL(L^s)$ is a regular semi-simple element
\cite[Prop.~3.4]{Labourie_anosov},
\cite[Prop.~8]{Guichard_CompHit}. This does not hold for other
connected components of $\hommax(\pi_1(\Sigma), \Sp(2n,\RR))$. Using
the description of model representations in
Theorem~\ref{thm_intro:symp_n>2} and Theorem~\ref{thm_intro:symp_n=2}
we prove

\begin{thm_intro}
  \label{thm_intro:holonomy}
  Let $\hH$ be a connected component of \[\repmax(\pi_1(\Sigma),
  \Sp(2n,\RR)) \moins \repHit(\pi_1(\Sigma), \Sp(2n,\RR)),\] and let
  $\g \in \pi_1(\Sigma)\moins\{1\}$ be an element corresponding to a
  simple closed curve.  If $n=2$, the genus of $\Sigma$ is $2$ and
  $\g$ is separating, we require that $\hH$ is not the connected
  component determined by $sw_1 = 0$ and $e_{\gamma} = 0$.  Then there
  exist
  \begin{enumerate}
  \item a representation $\rho \in \hH$ such that the Jordan
    decomposition of $\rho(\g)$ in $\GL(L^s) \cong \GL(n,\RR)$ has a
    nontrivial parabolic component.
  \item a representation $\rho' \in \hH$ such that the Jordan
    decomposition of $\rho'(\g)$ in $\GL(L^s) \cong\GL(n,\RR)$ has a
    nontrivial elliptic component.
  \end{enumerate}
\end{thm_intro}

\subsection{Other maximal representations}
Maximal representations $\rho:\pi_1(\Sigma) \to G$ can be defined
whenever $G$ is a Lie group of Hermitian type, and they are always
$(G,H)$-Anosov representations \cite{Burger_Iozzi_Wienhard_anosov},
where $H$ is a specific subgroup of $G$ (see
Theorem~\ref{thm:maximal_anosov}).  When $G$ is not locally isomorphic
to $\Sp(2n,\RR)$ there is no analogue of Hitchin
representations\footnote{Hitchin components can be defined for any
  $\RR$-split semisimple Lie group (see \cite[Theorem~A]{Hitchin}) and
  the only simple $\RR$-split Lie groups of Hermitian type are the
  symplectic groups.} and we conjecture:

\begin{conj_intro}\label{conj_intro:components}
  Let $G$ be a simple Lie group of Hermitian type. If $G$ is not
  locally isomorphic to $\Sp(2n,\RR)$, then the topological invariants
  of Theorem~\ref{thm_intro:invariants} distinguish connected
  components of $\repmax(\pi_1(\Sigma), G)$.
\end{conj_intro}

If the real rank of $G$ is $n$, then there is an embedding of
$\mathfrak{sl}(2,\RR)^n$ into $\mathfrak{g}$, and it is unique up to
conjugation.  Thus, there is always a corresponding diagonal embedding
of $L$, a finite cover of $\PSL(2, \RR)$, into $G$, the centralizer of
which is a compact subgroup of $G$. In particular, one can always
construct twisted diagonal representations.

\begin{conj_intro}\label{conj:standard_intro}
  Let $G$ be of Hermitian type. If $G$ not locally isomorphic to
  $\Sp(2n,\RR)$, then every maximal representation $\rho:\pi_1(\Sigma)
  \to G$ can be deformed to a twisted diagonal representation.
\end{conj_intro}

If Conjecture~\ref{conj:standard_intro} holds the analogue of
Theorem~\ref{thm_intro:holonomy} will also hold.

\subsection{Comparison with Higgs bundle invariants}
We already mentioned that the correspondence between (reductive)
representations and Higgs bundles permits to use algebro-geometric
methods to study the structure of $\rep(\pi_1(\Sigma), G)$, and in
particular to count the number of connected components. Where these
methods have been applied to study representations into Lie groups of
Hermitian type, see \cite{Bradlow_GarciaPrada_Gothen,
  Bradlow_GarciaPrada_Gothen_survey, GarciaPrada_Gothen_Mundet,
  GarciaPrada_Mundet, Gothen}, the authors associate special vector
bundles to the Higgs bundles, whose characteristic classes give
additional invariants for maximal representations $\pi_1(\Sigma) \to
G$; then they show that for any possible value of the invariants the
corresponding moduli space of Higgs bundles is non-empty and
connected.

We conclude the introduction with several remarks concerning the
relation between the topological invariants defined here and the
invariants obtained via Higgs bundles:

\begin{asparaenum}
\item The Higgs bundle approach has the feature that the $L^2$-norm of
  the Higgs field gives a Morse-Bott function on the moduli space,
  which allows to perform Morse theory on the representation
  variety. In fact, for the symplectic structure on the moduli space,
  this function is the Hamiltonian of a circle action, so that its
  critical points are exactly the fixed points of this circle
  action. These fixed points are Higgs bundles of a very special type,
  called ``variations of Hodge structures''. Additional information
  arising from this framework allows one to read off the index of the
  critical submanifolds from the eigenvalues of the circle action on
  the tangent space at a fixed point.  This is used to give an exact
  count of the connected components in many cases, as well as to
  obtain further important information about the topology of the
  representation variety.  For more details on this strategy we refer
  the reader to Hitchin's article \cite{Hitchin_selfdual} as well as
  to the series of papers \cite{Bradlow_GarciaPrada_Gothen_survey,
    GarciaPrada_Gothen_Mundet, GarciaPrada_Mundet, Gothen}.

\item The invariants defined here can be computed for explicit
  representations; this is very difficult for the Higgs bundle
  invariants. The computability is essential in order to determine in
  which connected components specific representations lie.  This is of
  particular interest for the $2g-3$ exceptional connected components
  when $n=2$, because in these connected components no explicit
  representations were known before.

\item The Higgs bundle invariants depend on the choice of a complex
  structure $X$ on $\Sigma$ and on the choice of a square-root
  $K^{1/2}$ of the canonical bundle $K= T^{1,0*}X$ (i.e.\ by
  \cite[Prop.~3.2]{Atiyah_spin} such a square root corresponds to a
  spin structure on $\Sigma$ which is given by an element $v$ in
  $\h^1(T^1\Sigma;\FF_2) \moins \h^1(\Sigma;\FF_2)$, see
  \cite[p.~55]{Atiyah_spin}). More precisely the Higgs bundle
  corresponding to a representation into $\Sp(2n, \RR)$ is a pair $(E,
  \Phi)$ of a holomorphic vector bundle $E=V \oplus V^*$ ($V$ is of
  rank $n$) and $\Phi$ a holomorphic one-form with coefficients in the
  endomorphisms of $E$ of the form \[ \Phi = \left (
    \begin{array}{cc}
      0 & b \\ c & 0
    \end{array}
  \right)\] with $b: V^* \to V \otimes K$ and $c: V \to V^* \otimes K$
  being symmetric \cite[Eq.~(2.6)]{Gothen}. For maximal representation
  $c$ is an isomorphism \cite[Prop.~3.2]{Gothen}, hence $V\otimes
  K^{-1/2}$ becomes an $\O(n,\CC)$-bundle over $X$ whose
  Stiefel-Whitney classes $w_1(\rho,v)$ in $\h^1(X,\FF_2)$ and
  $w_2(\rho,v)$ in $\h^2(X,\FF_2)$ (and sometimes Euler class
  $e(\rho,v)$) are the Higgs bundle invariants. One can remove the
  dependency in the spin structure by considering $V$ as a
  $\O(n,\CC)$-bundle over $T\Sigma \moins \{0\}$, the complement of
  the zero section. The dependency in the complex structure can also
  be removed, because the invariants take values in a discrete set and
  Teichm\"uller space is connected.

  The Stiefel-Whitney classes defined here do not depend on any
  choices.  In particular, they are equivariant under the action of
  the mapping class group of $\Sigma$ and behave naturally with
  respect to taking finite index subgroups of $\pi_1(\Sigma)$. They
  are also natural with respect to tight homomorphisms.
\end{asparaenum}

For symplectic maximal representations there is a simple relation
between the invariants defined here and those defined using Higgs
bundles, although they live naturally in different cohomology groups.
\begin{prop_intro}\label{prop_intro:relations}
  Let $\rho: \pi_1(\Sigma) \to \Sp(2n,\RR)$ be a maximal
  representation. Then, for any choice of spin structure $v$, we have
  the following equality in $\h^i(T^1\Sigma; \FF_2)$:
  \begin{align*}
    sw_1(\rho) &= w_1(\rho,v) + n v\\
    sw_2(\rho) &= w_2(\rho,v) + sw_1(\rho) \cupprod v + (g-1)\mod 2.
  \end{align*}
\end{prop_intro}
When $n=2$, the suspected relation is
\[
e(\rho,v) = \varepsilon e(\rho) + (g-1)
\]
in $ \h^2( T^1 \Sigma; \ZZ)^{tor}$, where $\varepsilon$ depends on the
choices of orientation involved in the definition of $e(\rho)$.

The existence of such relations is not surprising since the invariants
arise from the same compact Lie group ($\O(n,\CC)$-bundles as well as
$\GL(n,\RR)$-bundles have, up to isomorphism, a unique $\O(n,
\RR)$-reduction, and the invariants are in fact the invariants of the
underlying $\O(n,\RR)$-bundle).  Nevertheless, it would be interesting
to provide a general proof for these relations for all maximal
representations. The relations in
Proposition~\ref{prop_intro:relations} are obtained from case by case
considerations for model representations.

\subsection{Structure of the paper}
In Section~\ref{sec:prelim} we recall the definition and properties of
Anosov representations and of maximal representations. Examples of
such representations are discussed in Section~\ref{sec:examples}. The
topological invariants are defined in Section~\ref{sec:invariants} and
computed for symplectic maximal representations in
Section~\ref{sec:computations}. Section~\ref{sec:mapgroup} discusses
the action of the mapping class group; in
Section~\ref{sec:HoloOfMaxRep} we derive consequences for the holonomy
of symplectic maximal representations.  In Appendix~\ref{sec:app_max}
we establish several important facts about positive curves and maximal
representations; in Appendix~\ref{sec:cohomology} we review some facts
in cohomology, in particular for the unit tangent bundle $T^1\Sigma$.

{\bf Acknowledgments.} The discussions leading to this paper started
during a workshop on ``Surface group representations'' taking place at
the American Institute for Mathematics in 2007. The authors thank the
University of Chicago, the Institut des Hautes \'Etudes Scientifiques
and Princeton University for making mutual intensive research visits
possible.

\section{Preliminaries}\label{sec:prelim}

This section introduces the notations and definitions necessary for
the rest of the paper.  Section~\ref{sec_anosov} gives the definition
of Anosov representations and their basic properties, in particular
the uniqueness of the \emph{Anosov section}
(Proposition~\ref{prop:uniqsigma}). Section~\ref{sec:prelim_maximal}
recalls the needed theory of maximal representations, especially that
they are Anosov (Theorem~\ref{thm:maximal_anosov}) and the gluing
property (Theorem~\ref{thm:gluing_maximal}).

\subsection{Anosov representations}
\label{sec_anosov}

Holonomy representations of locally homogeneous geometric structures
are very special.  The Thurston-Ehresmann theorem
\cite[p.~178]{Goldman_geometric} \cite[Theorem~2.1]{Bergeron_Gelander}
states that every deformation of such a representation can be realized
through deformations of the geometric structure.  The concept of
\emph{Anosov structures}, introduced by Labourie in
\cite[Section~2]{Labourie_anosov}, gives a dynamical generalization of
this, which is more flexible, but for which it is still possible to
obtain enough rigidity of the associated representations.

\subsubsection{Definition}
Let
\begin{itemize}
\item $M$ be a compact manifold with an Anosov flow $\phi_t$,
\item $G$ a connected semisimple Lie group and $(P^s, P^u)$ a pair of
  opposite parabolic subgroups of $G$,
\item $H = P^s \cap P^u$ their intersection, and
\item $\mathcal{F}^s = G/ P^s$ (resp. $\mathcal{F}^u = G/ P^u$) the
  flag variety associated with $P^s$ (resp. $P^u$).
\end{itemize}
There is a unique open $G$-orbit $\mathcal{X} \subset \mathcal{F}^s
\times \mathcal{F}^u$. We have $\mathcal{X} = G/H$ and as open subset
of $\mathcal{F}^s \times \mathcal{F}^u$ it inherits two foliations $
\mathcal{E}^s$ and $\mathcal{E}^u$ whose corresponding distributions
are denoted by $E^s$ and $E^u$, i.e.\ $(E^s)_{(f^s,f^u)} \cong
T_{f^{s}} \Ff^s$ and $(E^u)_{(f^s,f^u)} \cong T_{f^u} \Ff^u$.

\begin{defi}
  A flat $G$-bundle $\mathsf{P}$ over $(M, \phi_t)$ is said to have an
  $H$-reduction $\sigma$ that is \emph{flat along flow lines} if:
  \begin{itemize}
  \item $\sigma$ is a section of $\mathsf{P} \times_G \mathcal{X}$;
    i.e.\ $\sigma : M \to \mathsf{P} \times_G \mathcal{X}$ defines the
    $H$-reduction\footnote{For details on the bijective correspondence
      between $H$-reductions and sections of ${\mathsf{P} \times_G
        G/H}$ we refer the reader to \cite[Section 9.4]{Steenrod}.}
  \item the restriction of $\sigma$ to every orbit of $\phi_t$ is
    locally constant with respect to the induced flat structure on
    $\mathsf{P} \times_G \mathcal{X}$.
  \end{itemize}
\end{defi}

The two distributions $E^s$ and $E^u$ on $\mathcal{X}$ are
$G$-invariant and hence define distributions, again denoted $E^s$ and
$E^u$, on $\mathsf{P}\times_G \mathcal{X}$. These two distributions
are invariant by the flow, again denoted by $\phi_t$, that is the lift
of the flow on $M$ by the connection.

To every section $\sigma$ of $\mathsf{P} \times_G \mathcal{X}$ we
consider the two vector bundles $\sigma^* E^s$ and $\sigma^* E^u$ on
$M$ by pulling back to $M$ the vector bundles $E^s$ and $E^u$. If
furthermore $\sigma$ is flat along flow lines, so that it commutes
with the flow, then these two vector bundles $\sigma^* E^s$ and
$\sigma^* E^u$ are equipped with a natural flow.

\begin{defi}
  \label{defi:anosov}
  A flat $G$-bundle $\mathsf{P} \to M$ is said to be a
  \emph{($G,H)$-Anosov bundle} if:
  \begin{enumerate}
  \item \label{anosovpoint1} $\mathsf{P}$ admits an $H$-reduction
    $\sigma$ that is flat along flow lines, and
  \item \label{anosovpoint2} the flow $\phi_t$ on $\sigma^* E^s$
    (resp. $\sigma^* E^u$) is contracting (resp. dilating).
  \end{enumerate}
  We call $\sigma$ an {\em Anosov section} or an {\em Anosov
    reduction} of $\mathsf{P}$.
\end{defi}
By (\ref{anosovpoint2}) we mean that there exists a continuous family
of norms $(\| \cdot \|_m)_{m \in M} $ on $\sigma^* E^s$
(resp. $\sigma^* E^u$) and constants $A,a >0$ such that for any $e$ in
$(\sigma^* E^s)_m$ (resp. $(\sigma^* E^u)_m$) and for any $t>0$ one
has
\[
\| \phi_t e \|_{\phi_t m} \leq A \exp(-at) \| e \|_m \quad
\text{(resp. } \| \phi_{-t} e \|_{\phi_{-t} m} \leq A \exp(-at) \| e
\|_m \text{)}.
\]
Since $M$ is compact this definition does not depend on the norm $\|
\cdot \|$ or the parametrization of $\phi_t$.

\begin{defi}
  \label{defi:repano}
  A representation $\pi_1( M) \to G$ is said to be
  ($G,H$)\emph{-Anosov} (or $H$\emph{-Anosov} or simply \emph{Anosov})
  if the corresponding flat $G$-bundle $\mathsf{P}$ is a
  $(G,H)$-Anosov bundle.
\end{defi}

\begin{remark}\label{rem:parabolic}
  Note that the terminology for Anosov representations is not
  completely uniform. A $(G,H)$-Anosov representation is sometimes
  called a $(G,\mathcal{X})$-Anosov representation or an Anosov
  representation with respect to the parabolic subgroup $P^s$ or
  $P^u$.
\end{remark}

\subsubsection{Properties}
From now on we assume that the Anosov flow $\phi_t$ has an invariant
volume form.  For the definition of topological invariants of Anosov
representations in Section~\ref{sec:invariants} the following
proposition will be crucial.

\begin{prop}
  \label{prop:uniqsigma}
  Let $ \mathsf{P} \to M$ be an Anosov bundle, then there is a
  \emph{unique} section $\sigma : M \to \mathsf{P} \times_G
  \mathcal{X}$ such that properties (\ref{anosovpoint1}) and
  (\ref{anosovpoint2}) of Definition~\ref{defi:anosov} hold.  In
  particular, a $(G,H)$-Anosov bundle admits a \emph{canonical}
  $H$-reduction.
\end{prop}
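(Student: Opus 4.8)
The plan is to prove the uniqueness of the Anosov section by a standard contraction/dynamics argument. Suppose $\sigma_0$ and $\sigma_1$ are two sections of $\mathsf{P} \times_G \mathcal{X}$, both flat along flow lines and both satisfying the contraction/dilation property \eqref{anosovpoint2}. The key point is that $\mathcal{X} = G/H$ is an open subset of $\mathcal{F}^s \times \mathcal{F}^u$, so each section $\sigma_i$ is the same as a pair $(\sigma_i^s, \sigma_i^u)$ of sections of the associated bundles $\mathsf{P} \times_G \mathcal{F}^s$ and $\mathsf{P} \times_G \mathcal{F}^u$, with the transversality condition that the pair lands in the open orbit. The first step is to show the $\mathcal{F}^s$-components agree: $\sigma_0^s = \sigma_1^s$ (and symmetrically, reversing the flow, the $\mathcal{F}^u$-components agree). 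Since both sections are flat along flow lines, along each orbit $t \mapsto \phi_t m$ both $\sigma_0^s$ and $\sigma_1^s$ are determined by parallel transport from their value at $m$; the contraction hypothesis on $\sigma_i^* E^s$ says precisely that nearby points in the $\mathcal{F}^s$-direction are exponentially attracted to $\sigma_i^s$ under the \emph{forward} flow.

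The main step is then the following: fix a point $m \in M$ and consider the forward orbit $\phi_t m$, $t \to +\infty$. Because the flow has an invariant volume form, it is recurrent (Poincaré recurrence), so there is a sequence $t_k \to +\infty$ with $\phi_{t_k} m \to m$. Now transport $\sigma_1^s(m)$ along the orbit: its value at $\phi_{t_k}m$ is obtained by flat transport and, by the contraction property applied to $\sigma_0$, it converges toward $\sigma_0^s(\phi_{t_k}m)$ — more precisely, using a local trivialization near $\sigma_0^s$ identifying a neighborhood with $E^s$, the "difference" $\sigma_1^s(\phi_t m) - \sigma_0^s(\phi_t m)$ is acted on by the contracting flow $\phi_t$ on $\sigma_0^* E^s$ and hence decays like $A e^{-at}$. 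Passing to the limit along $t_k$ and using $\phi_{t_k}m \to m$, continuity of $\sigma_0^s, \sigma_1^s$ forces $\sigma_0^s(m) = \sigma_1^s(m)$. Since $m$ was arbitrary (recurrence holds from a full-measure set of points, and by continuity and density this suffices, or one invokes recurrence at every point via minimality of the nonwandering set — in any case a density/continuity argument closes the gap), we get $\sigma_0^s = \sigma_1^s$ on all of $M$. Running the same argument with the flow reversed gives $\sigma_0^u = \sigma_1^u$, and hence $\sigma_0 = \sigma_1$.

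I expect the main obstacle to be making the "difference decays exponentially" step rigorous: one must set up, near the image of $\sigma_0$, an identification of an open neighborhood in $\mathsf{P} \times_G \mathcal{X}$ with (an open subset of) the vector bundle $\sigma_0^* E^s \oplus \sigma_0^* E^u$ that is compatible with the flow to first order, so that the genuinely nonlinear flow on $\mathcal{X}$ is controlled by its linearization along $\sigma_0$, which is exactly the flow on $\sigma_0^* E^s \oplus \sigma_0^* E^u$ appearing in Definition~\ref{defi:anosov}. Here compactness of $M$ is used to get uniform constants. One technical subtlety: a priori $\sigma_1^s(\phi_t m)$ might not stay in this neighborhood for all $t$; but flatness along flow lines plus the contraction estimate shows that once it enters a small enough neighborhood it stays and is attracted, and recurrence guarantees it enters near the recurrent point $m$. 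An alternative, perhaps cleaner, route avoiding recurrence: use that for an Anosov flow preserving volume the periodic orbits are dense; restrict attention to a periodic orbit of period $T$, where flatness along flow lines makes $\sigma_i^s$ correspond to a fixed flag invariant under the holonomy $g = \rho(\gamma) \in G$ of that orbit, and the contraction property forces this flag to be the unique attracting fixed point of $g$ acting on $\mathcal{F}^s$; hence $\sigma_0^s = \sigma_1^s$ along all periodic orbits, and by density and continuity everywhere. I would likely present this periodic-orbit version as the main argument and mention recurrence as the conceptual underpinning.
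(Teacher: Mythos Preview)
Your proposal is correct, and the periodic-orbit argument you settle on at the end is exactly the paper's proof: use density of periodic orbits (from the invariant volume form), observe that on a closed orbit the section corresponds to a fixed point $(f^s,f^u)$ of the holonomy satisfying the attraction/repulsion hypotheses of Fact~\ref{fact:attracL}, conclude uniqueness on each periodic orbit, and extend by continuity. The recurrence-based variant you sketch first is a reasonable alternative, but the paper goes directly with the cleaner periodic-orbit route you ultimately prefer.
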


To prove Proposition~\ref{prop:uniqsigma} we will use the following
classical fact.

\begin{fact}
  \label{fact:attracL}
  Suppose that $g\in G$ has a fixed point $f^s\in \Ff^s$ and a fixed
  point $f^u \in \Ff^u$ such that
  \begin{enumerate}
  \item $f^s$ and $f^u$ are in general position, i.e.\ $(f^s, f^u)$
    belongs to $\mathcal{X} \subset \Ff^s\times \Ff^u$; and
  \item the (linear) action of $g$ on the tangent space $T_{f^s}
    \Ff^s$ is contracting, the action on $T_{f^u} \Ff^u$ is expanding.
  \end{enumerate}
  Then $f^s$ is the only attracting fixed point of $g$ in $\Ff^s$. Its
  attracting set is the set of all flags that are in general position
  with $f^u$, and $f^u$ is the only repelling fixed point for $g$ in
  $\Ff^u$.
\end{fact}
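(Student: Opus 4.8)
The plan is to analyze the dynamics of a single group element $g \in G$ acting on the flag varieties, using only linear-algebraic/topological input about attracting and repelling fixed points. The statement is purely about the $G$-action, with no reference to bundles or flows, so the proof is self-contained.

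\smallskip

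\noindent\textbf{Sketch of proof.}
First I would work on $\Ff^s$. The element $g$ acts on the compact manifold $\Ff^s$ with $f^s$ a fixed point at which the differential $D_{f^s}g$ on $T_{f^s}\Ff^s$ is contracting (all eigenvalues of modulus $<1$). Hence $f^s$ is an attracting fixed point, and by the stable manifold theorem (or simply by iterating and using compactness) it has an open basin of attraction $W^s \subseteq \Ff^s$. The key point is to identify $W^s$ exactly as the set $\Omega^u := \{\, f \in \Ff^s : (f, f^u) \in \mathcal{X} \,\}$ of flags in general position with $f^u$; note $\Omega^u$ is open, dense, $g$-invariant, and nonempty since $f^s \in \Omega^u$. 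To prove $W^s = \Omega^u$, I would argue as follows. One inclusion: if $f \in W^s$ then $g^k f \to f^s$, and since $\mathcal{X}$ is open and contains $(f^s,f^u)$ while $f^u$ is $g$-fixed, $(g^k f, f^u) \in \mathcal{X}$ for large $k$; since $\mathcal{X}$ is $g$-invariant and $(g^k f, f^u) = g^k(f, f^u)$... wait, more carefully: $g^k(f, f^u) = (g^k f, f^u)$ because $f^u$ is fixed, so $(f, f^u) \in \mathcal{X}$ iff $(g^k f, f^u) \in \mathcal{X}$, giving $f \in \Omega^u$. For the reverse inclusion $\Omega^u \subseteq W^s$, I would use the Bruhat-type decomposition: $\Omega^u$ is the big cell, which is a single $U$-orbit isomorphic to an affine space $\mathfrak{u} \cong T_{f^s}\Ff^s$, and on this affine chart centered at $f^s$ the action of $g$ is conjugate to (the restriction to a $g$-invariant neighborhood of) a linear contraction; more robustly, any $f\in\Omega^u$ lies in the basin because the basin of $f^s$ is exactly the big cell — this is the standard fact that for $g$ with attracting/repelling behavior the dynamics on $\Ff^s$ is ``north-south'' relative to the pair $(f^s,f^u)$. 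Concretely: $g$ generates a one-parameter-like contraction on $\Omega^u$ because $g$ stabilizes the opposite horospherical subgroup $U^s$ and acts on its Lie algebra contractingly (this follows from the contraction hypothesis on $T_{f^s}\Ff^s \cong \mathfrak{u}^s$), so for $f = u\cdot f^s$ with $u \in U^s$ we get $g^k f = (g^k u g^{-k})\cdot f^s \to f^s$.

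\smallskip

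\noindent\textbf{Uniqueness and the other variety.}
To see $f^s$ is the \emph{only} attracting fixed point in $\Ff^s$: any attracting fixed point has a nonempty open basin, but $W^s = \Omega^u$ is dense, so any other basin must meet $\Omega^u$; however points of $\Omega^u$ already converge to $f^s$ under $g$, so they cannot converge to a different fixed point. Hence there is no other attracting fixed point, and the attracting set of $g$ in $\Ff^s$ (meaning the basin of its attracting fixed point) is precisely $\Omega^u$, as claimed. The statement about $f^u$ being the only repelling fixed point of $g$ in $\Ff^u$ follows by the symmetric argument applied to $g^{-1}$: indeed $g^{-1}$ fixes $f^u \in \Ff^u$ and $f^s \in \Ff^s$, its action on $T_{f^u}\Ff^u$ is contracting (since $g$ expands there) and on $T_{f^s}\Ff^s$ is expanding, so by the first part $f^u$ is the unique attracting fixed point of $g^{-1}$ in $\Ff^u$, i.e.\ the unique repelling fixed point of $g$ in $\Ff^u$.

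\smallskip

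\noindent\textbf{Main obstacle.}
The only genuinely nontrivial step is the identification $W^s = \Omega^u$, i.e.\ proving that the basin of attraction is exactly the big Bruhat cell and not something smaller. The ``$\subseteq$'' direction is easy topology as above; the ``$\supseteq$'' direction requires knowing that $g$ normalizes the unipotent radical $U^s$ of $P^s$ and contracts its Lie algebra — equivalently, that the contraction hypothesis on the single tangent space $T_{f^s}\Ff^s$ already forces $g$ (or rather its semisimple part) to have a $g$-invariant parabolic with these properties and to act as a genuine contraction on the whole cell, not merely infinitesimally at $f^s$. I would handle this by passing to the Jordan decomposition $g = g_s g_u$ with $g_s$ semisimple, $g_u$ unipotent commuting with $g_s$; diagonalize $g_s$ to read off that the weight spaces give $g_s$-stable parabolics $P^s \ni f^s$, $P^u \ni f^u$ with $P^s \cap P^u = H$, and that the restriction of $\mathrm{Ad}(g)$ to $\mathfrak{u}^s = \mathrm{Lie}(U^s)$ has all eigenvalues of modulus $<1$ (this is the global form of the infinitesimal contraction hypothesis, since $T_{f^s}\Ff^s \cong \mathfrak g/\mathfrak p^s \cong \mathfrak u^s$ as a $g$-module). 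Then on the big cell $U^s \cdot f^s \cong \mathfrak u^s$ the action of $g$ is $u \mapsto (g u g^{-1})$, which converges to the identity on every element, proving $\Omega^u \subseteq W^s$. This is standard but is where all the real content sits.
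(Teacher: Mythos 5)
The paper offers no proof of this Fact --- it is invoked as ``classical'' and used as a black box in the proof of Proposition~2.5 --- so there is nothing to compare against; judged on its own, your argument is correct and is the standard one. The architecture is exactly right: the basin $W^s$ of $f^s$ equals the big cell $\Omega^u=\{f\in\Ff^s : (f,f^u)\in\mathcal X\}$ (one inclusion from openness and $G$-invariance of $\mathcal X$ together with $g$-fixedness of $f^u$, the other from conjugating the unipotent parametrization of the cell); uniqueness of the attracting fixed point from density of $\Omega^u$; and the statement in $\Ff^u$ by running the first part for $g^{-1}$. The one thing you must fix is the identification of the unipotent group in your last paragraph: the group acting simply transitively on $\Omega^u$ is the unipotent radical of $P^u=\mathrm{Stab}(f^u)$, \emph{not} the unipotent radical of $P^s$ as written there. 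Indeed $R_u(P^s)$ fixes $f^s$, so its orbit through $f^s$ is a point, and $\mathrm{Ad}(g)$ is expanding, not contracting, on $\mathrm{Lie}(R_u(P^s))$ under your hypotheses; it is the complementary nilpotent algebra $\mathrm{Lie}(R_u(P^u))$ that is identified with $T_{f^s}\Ff^s\cong\mathfrak g/\mathfrak p^s$ as an $H$-module and on which the contraction hypothesis bears (your phrase ``opposite horospherical subgroup'' earlier in the sketch shows you had the right object in mind, so this is a labelling slip rather than a conceptual error). With that substitution the computation $g^k(u\cdot f^s)=(g^k u g^{-k})\cdot f^s\to f^s$ for $u\in R_u(P^u)$, using that $\mathrm{Ad}(g)$ has spectral radius less than one on $\mathrm{Lie}(R_u(P^u))$, is exactly the standard proof, and the rest of your write-up goes through verbatim.
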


\begin{proof}[Proof of Proposition~\ref{prop:uniqsigma}]
  Let $\sigma$ be an Anosov section of the flat bundle $\mathsf{P} \to
  M$.  The flow $\phi_t$ on $M$ is (by hypothesis) an Anosov flow that
  has an invariant volume form. Because of the density of closed
  orbits in $M$ \cite[Theorem~3]{Anosov}, it is sufficient to show
  that the restriction of $\sigma$ to any closed orbit $\g$ is
  uniquely determined.

  We identify $\gamma$ with $\ZZ \backslash \RR$ and write the
  restriction of $\mathsf{P}$ to $\g$ as $\mathsf{P|}_{\g} = \ZZ
  \backslash (\RR \times G)$ where $\ZZ$ acts on $\RR \times G$ by $n
  \cdot (t,g) = (n +t, h_\gamma^n g)$ for $h_\gamma \in G$ the
  holonomy of $\mathsf{P}$ along $\g$.
  
  Therefore, the restriction $\sigma|_{\g}$ is a section of $\ZZ
  \backslash (\RR \times \mathcal{X})$. Since $\sigma$ is locally
  constant along $\g$ there exists $x = (f^s, f^u)$ in $\mathcal{X}$
  such that the lift of $\sigma|_{\g}$ to $\RR$ is the map $\RR \to
  \RR \times \mathcal{X}\sep{} t \mapsto (t,x)$. Hence $f^s$ and $f^u$
  are $h_\gamma$-invariant. Furthermore the restrictions of $\sigma^*
  E^s$ (resp. $\sigma^* E^u$) to $\g$ are in this case $ \ZZ
  \backslash (\RR \times T_{f^s} \Ff^s)$ (resp. $ \ZZ \backslash (\RR
  \times T_{f^u} \Ff^u)$) so that the contraction property of
  Definition~\ref{defi:anosov}.\eqref{anosovpoint2} exactly translates
  into the assumption of Fact~\ref{fact:attracL}. This implies the
  uniqueness of $(f^s, f^u)$ and hence the uniqueness of
  $\sigma|_{\g}$.
\end{proof}

\begin{remark}
  Without the assumption that the Anosov flow has an invariant volume
  form, it is not always true that the periodic orbits are dense in
  $M$ (see \cite{Franks_Williams_Anomalous}). So
  Proposition~\ref{prop:uniqsigma} may not hold in general.
\end{remark}

From this uniqueness one deduces

\begin{prop}\label{prop:coverM}
  Let $\pi: M' \to M$ be a finite covering of $M$ and let $\mathsf{P}
  \to M$ be a flat $G$-bundle over $M$.

  Then the bundle $\pi^* \mathsf{P}$ over $M'$ is $H$-Anosov if and
  only if $\mathsf{P}$ is $H$-Anosov. In that case the Anosov
  $H$-reduction of $\pi^* \mathsf{P}$ is $\pi^* \mathsf{P}_H$ the
  pullback of the Anosov reduction of $\mathsf{P}$.
\end{prop}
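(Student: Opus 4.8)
The plan is to leverage the uniqueness statement of Proposition~\ref{prop:uniqsigma} as the central tool, exploiting the compatibility between the flat structure, the Anosov flow, and the covering map. First I would recall that the covering $\pi: M' \to M$ pulls back the Anosov flow $\phi_t$ on $M$ to an Anosov flow $\phi'_t$ on $M'$ (locally $\pi$ is a diffeomorphism intertwining the flows), and that the invariant volume form pulls back to an invariant volume form on $M'$; so the hypotheses of Section~\ref{sec_anosov} are satisfied upstairs. The flat $G$-bundle $\pi^*\mathsf{P}$ carries the pulled-back flat connection, and the lifted flow on $\pi^*\mathsf{P} \times_G \mathcal{X}$ is simply the pullback of the flow on $\mathsf{P} \times_G \mathcal{X}$; likewise the distributions $E^s, E^u$ upstairs are the pullbacks of those downstairs.

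For the ``only if'' direction (if $\mathsf{P}$ is $H$-Anosov then so is $\pi^*\mathsf{P}$): given the Anosov section $\sigma: M \to \mathsf{P} \times_G \mathcal{X}$, set $\sigma' := \pi^*\sigma$, the section of $\pi^*\mathsf{P} \times_G \mathcal{X} = \pi^*(\mathsf{P}\times_G\mathcal{X})$ covering $\sigma$. Being flat along flow lines is a local condition and is preserved since $\pi$ is a local flow-diffeomorphism; and $(\sigma')^* E^s = \pi^*(\sigma^* E^s)$ with the pulled-back flow, so contraction/dilation is inherited, using that a continuous family of norms on $\sigma^*E^s$ over the compact $M$ pulls back to one on $(\sigma')^*E^s$ over the compact $M'$ with the same constants $A, a$ (again because $\pi$ locally intertwines $\phi_t$ and $\phi'_t$). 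This shows $\pi^*\mathsf{P}$ is $H$-Anosov and simultaneously identifies its Anosov reduction as $\pi^*\sigma$, i.e.\ the associated principal $H$-bundle is $\pi^*\mathsf{P}_H$, giving the last assertion of the proposition.

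For the ``if'' direction (if $\pi^*\mathsf{P}$ is $H$-Anosov then so is $\mathsf{P}$): let $\sigma'$ be the Anosov section of $\pi^*\mathsf{P}$ upstairs, unique by Proposition~\ref{prop:uniqsigma}. The deck group $\Delta$ of $\pi$ acts on $M'$ preserving $\phi'_t$ and its invariant volume, and acts on $\pi^*\mathsf{P}$ by flat bundle automorphisms covering the identity on $\mathsf{P}$; hence for each $\delta \in \Delta$, the pushforward $\delta_*\sigma'$ is again a section of $\pi^*\mathsf{P}\times_G\mathcal{X}$ that is flat along flow lines and satisfies the contraction/dilation property (the norms transport by $\delta$). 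By the uniqueness in Proposition~\ref{prop:uniqsigma}, $\delta_*\sigma' = \sigma'$ for all $\delta$, so $\sigma'$ is $\Delta$-invariant and therefore descends to a section $\sigma: M \to \mathsf{P}\times_G\mathcal{X}$ with $\pi^*\sigma = \sigma'$. One then checks that $\sigma$ is flat along flow lines and that its stable/unstable bundles $\sigma^*E^s, \sigma^*E^u$ carry contracting/dilating flows: each of these properties can be verified after pulling back by the surjective local diffeomorphism $\pi$, where it holds by assumption for $\sigma'$; compactness of $M$ then supplies uniform constants. Hence $\mathsf{P}$ is $H$-Anosov with Anosov reduction $\sigma$, and $\pi^*\mathsf{P}_H = \pi^*\sigma$ as claimed.

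The step I expect to require the most care is the descent in the ``if'' direction --- specifically, verifying that $\Delta$-invariance of $\sigma'$ is genuinely forced by uniqueness and that the descended section inherits the quantitative contraction estimates with constants independent of the choice of fundamental domain. This is where the hypothesis that $\phi_t$ has an invariant volume form (used through Proposition~\ref{prop:uniqsigma}) is essential: without uniqueness of the Anosov section, there would be no reason for $\sigma'$ to be deck-invariant, and the statement could fail. All remaining verifications are routine consequences of $\pi$ being a finite covering that locally intertwines the flows and of the compactness of $M$ and $M'$.
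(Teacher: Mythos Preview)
Your approach is essentially the same as the paper's: pull back the Anosov section for the forward direction, and for the converse use uniqueness (Proposition~\ref{prop:uniqsigma}) to force deck-invariance and then descend. There is, however, a small gap in your ``if'' direction that the paper handles explicitly and you do not: you invoke the deck group $\Delta$ and conclude that $\Delta$-invariance of $\sigma'$ makes it descend to $M$, but this only works when the covering $M'\to M$ is Galois (regular). For a general finite cover the deck group need not act transitively on fibers, so $\Delta$-invariance does not suffice to descend. The paper fixes this by first passing to a further finite cover so that $\pi_1(M')$ is replaced by a finite-index \emph{normal} subgroup of $\pi_1(M)$ (using that $\pi_1(M)$ is finitely generated), making the cover Galois; one then applies your forward direction to go from $M'$ up to this Galois cover, runs the descent argument there, and concludes for $M$. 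Once you insert this reduction step your argument is complete --- in fact you are slightly more explicit than the paper in verifying that the descended section inherits flatness along flow lines and the contraction/dilation estimates.
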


\begin{proof}
  If $\mathsf{P}$ is Anosov, then clearly $\pi^* \mathsf{P}$ is Anosov
  with reduction $\pi^* \mathsf{P}_H$.

  Suppose now $\pi^* \mathsf{P}$ is Anosov with reduction
  $\mathsf{P}_{H}^{\prime}$. We must prove that
  $\mathsf{P}_{H}^{\prime}$ is the pullback of a bundle over $M$.  It
  is enough to treat the case when $M'$ is connected. Since $\pi_1(M)$
  is finitely generated, the finite index subgroup $\pi_1(M')$ is
  contained in a finite index normal subgroup. Thus, up to taking a
  finite cover, we can suppose that $\pi_1(M') < \pi_1(M)$ is normal,
  i.e.\ that the finite covering $M'\to M$ is a Galois covering with
  group $S$.

  For any $\sigma$ in $S$ the two bundles $\mathsf{P}_{H}^{\prime}$
  and $\sigma^* \mathsf{P}_{H}^{\prime}$ are Anosov reductions of
  $\pi^* \mathsf{P}$. Hence by the previous
  proposition~\ref{prop:uniqsigma}, $\sigma^* \mathsf{P}_{H}^{\prime}
  = \mathsf{P}_{H}^{\prime}$; the bundle $\mathsf{P}_{H}^{\prime}$ is
  $S$-invariant. This means that $\mathsf{P}_{H}^{\prime}$ is the
  pullback $\pi^* \mathsf{P}_{H}$ of a bundle over $M$.
\end{proof}

An important feature of Anosov representations is their stability
under deformation.
\begin{prop}
  \cite[Proposition~2.1]{Labourie_anosov}
  \label{prop:openano}
  The set of (G,H)-Anosov representations is \emph{open} in $\hom (
  \pi_1 (M), G)$. Moreover, the $H$-reduction given by the Anosov
  section $\sigma$ depends continuously on the representation.
\end{prop}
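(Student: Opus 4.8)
The plan is to prove Proposition~\ref{prop:openano} by exhibiting the Anosov section of a nearby representation explicitly as a perturbation of the one for $\rho$, and then to verify the contraction/dilation estimates survive a small perturbation by a compactness argument. First I would set up the family: fix $\rho_0 \in \homHano(\pi_1(M),G)$ with flat bundle $\mathsf{P}_0$, Anosov section $\sigma_0$, and the invariant splitting $\sigma_0^* E^s \oplus \sigma_0^* E^u$ over $M$ (or rather over a covering adapted to the flow). A representation $\rho$ near $\rho_0$ gives a flat bundle $\mathsf{P}$ which, as a smooth bundle, is identified with $\mathsf{P}_0$; the data that varies is the flat connection, equivalently the lifted flow $\phi_t^\rho$ on $\mathsf{P} \times_G \mathcal{X}$, which depends continuously on $\rho$ in the $C^0$ (indeed $C^\infty$ along orbits) topology. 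So the problem becomes: the flow $\phi_t^{\rho_0}$ on the $\mathcal{X}$-bundle has a normally hyperbolic invariant section $\sigma_0$ (normal hyperbolicity being exactly conditions \eqref{anosovpoint1}--\eqref{anosovpoint2}), and I want persistence of this invariant section under $C^0$-small perturbation of the flow, together with persistence of the hyperbolicity estimates.

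The key steps, in order. (1) Reformulate "flat along flow lines" + contraction/dilation as: $\sigma_0(M)$ is a compact $\phi_t^{\rho_0}$-invariant submanifold of the fiber bundle $\mathsf{P}_0 \times_G \mathcal{X} \to M$, normally hyperbolic with the normal bundle splitting as $\sigma_0^* E^s \oplus \sigma_0^* E^u$ on which the flow contracts/dilates, while along $\sigma_0(M)$ the flow is (up to the base Anosov flow, whose expansion/contraction rates are dominated after reparametrization) neutral — here one uses that the base Anosov flow has bounded rates and $M$ is compact, so the normal rates dominate. (2) Apply the persistence theorem for normally hyperbolic invariant manifolds (Hirsch--Pugh--Shub), or, in this fiberwise-over-a-flow setting, run the graph transform directly: a candidate section $\sigma$ for $\rho$ is a continuous map $M \to \mathsf{P}_0\times_G\mathcal{X}$; the requirement that $\sigma$ be $\phi_t^\rho$-invariant and flat-along-flow-lines is a fixed-point equation for the graph transform $\Gamma_\rho$ induced by $\phi_t^\rho$ on the space of sections near $\sigma_0$; show $\Gamma_\rho$ is a contraction in a suitable $C^0$-neighborhood for $\rho$ close to $\rho_0$, with fixed point depending continuously on $\rho$. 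This simultaneously gives existence of $\sigma_\rho$ and the continuous dependence $\rho \mapsto \sigma_\rho$ claimed in the statement. (3) Transfer the hyperbolicity: once $\sigma_\rho$ is close to $\sigma_0$ and $\phi_t^\rho$ is close to $\phi_t^{\rho_0}$, the pullback bundles $\sigma_\rho^* E^s$, $\sigma_\rho^* E^u$ carry flows $C^0$-close to the contracting/dilating flows on $\sigma_0^* E^s$, $\sigma_0^* E^u$; by the standard openness of hyperbolic (contracting) linear cocycles over a compact base — pick the adapted norm for $\rho_0$, then for $\rho$ near $\rho_0$ the one-step operators still contract that norm — conclude \eqref{anosovpoint2} holds for $\rho$. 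Finally, uniqueness of $\sigma_\rho$ comes for free from Proposition~\ref{prop:uniqsigma} (since the volume-form hypothesis is in force), so there is no ambiguity in "the" $H$-reduction depending continuously on $\rho$.

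The main obstacle I expect is step (1)/(2): making precise that the invariant section is \emph{normally} hyperbolic in the sense required by the persistence machinery, i.e.\ that the contraction rate along $E^s$ (and dilation along $E^u$) \emph{dominates} whatever expansion/contraction the flow exhibits tangent to $\sigma_0(M)$. Tangentially, $\phi_t^{\rho_0}$ on $\sigma_0(M) \cong M$ is conjugate to the base flow $\phi_t$ on $M$, which is itself Anosov and hence has its own stable/unstable expansion; so one does \emph{not} have neutral tangential behavior, and naive normal hyperbolicity can fail. The fix is the standard one: the relevant rates are those appearing in Definition~\ref{defi:anosov}.\eqref{anosovpoint2}, and after reparametrizing the flow (allowed since $M$ is compact, as noted after that definition) one can arrange the normal rates to be as strong as desired relative to the fixed tangential rates; alternatively, invoke the version of the Hirsch--Pugh--Shub theorem for \emph{partially hyperbolic} / dominated splittings, where the tangential directions are themselves hyperbolic — the total space $\mathsf{P}_0 \times_G \mathcal{X}$ then has a dominated splitting into (tangential-stable) $\oplus$ (normal-stable $= \sigma_0^*E^s$) $\oplus$ (tangential-unstable) $\oplus$ (normal-unstable $= \sigma_0^*E^u$), and $\sigma_0(M)$ is the unique invariant manifold tangent to the appropriate sub-splitting. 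Once this domination is set up correctly the persistence and continuous dependence are formal; the only other routine point to check is that the perturbed section $\sigma_\rho$ is again \emph{flat along flow lines} rather than merely invariant, but this is automatic because the flow $\phi_t^\rho$ is by construction the flat lift, so a $\phi_t^\rho$-invariant section is locally constant along orbits in the flat structure of $\mathsf{P}$.
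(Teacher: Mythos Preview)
The paper does not prove this proposition: it is stated with a citation to \cite[Proposition~2.1]{Labourie_anosov} and no proof is given. Your outline via persistence of a normally hyperbolic invariant section (graph transform / Hirsch--Pugh--Shub) and openness of hyperbolic cocycles is the standard argument and is essentially what Labourie does, so there is nothing to compare against in the present paper.
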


\subsubsection{Constructions}
Some simple constructions allow to obtain new Anosov bundles from old
ones.
\begin{lem}
  \label{lem_anosov_constructions}
  \begin{asparaenum}
  \item\label{item1:lemAc} Let $\rho: \pi_1(M) \to G$ be a
    $(G,H)$-Anosov representation, where $H = P^s \cap P^u$ for two
    opposite parabolic subgroups in $G$. Let $Q^s$, $Q^u$ be opposite
    parabolic subgroups in $G$ such that $P^s <Q^s$ and $P^u <
    Q^u$. Then $\rho$ is also a $(G,H')$-Anosov representation, where
    $H' = Q^s \cap Q^u$.
  \item\label{item2:lemAc} Let $\mathsf{P}$ be a $(G,H)$-Anosov bundle
    over $M$ with canonical $H$-reduction $\mathsf{P}_H$ and $E$ a
    flat $L$-bundle over $M$. Then the fibered product
    $\mathsf{P}\times E$ is a $(G\times L, H\times L)$-Anosov bundle
    over $M$, whose canonical $(H\times L)$-reduction is the fibered
    product $\mathsf{P}_H \times E$.
  \item\label{item3:lemAc} Let $G$ be a Lie group of real rank one and
    $\mathsf{P}$ an $(G,H)$-Anosov bundle over $M$ with canonical
    $H$-reduction $\mathsf{P}_H$; $H$ is the centralizer in $G$ of a
    Cartan subgroup $A$ of $G$. Let $f: G \to L$ be a homomorphism of
    Lie groups and $N$ be the centralizer in $L$ of $f(A)$.

    Then $\mathsf{P}\times_G L$ is an $(L,N)$-Anosov bundle and its
    canonical $N$-reduction is the fibered product $\mathsf{P}_H
    \times_H N$.
  \end{asparaenum}
\end{lem}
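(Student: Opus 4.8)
The plan is to treat the three parts in order, in each case exhibiting the candidate reduction and then verifying the contraction/dilation condition of Definition~\ref{defi:anosov}.\eqref{anosovpoint2}. For part \eqref{item1:lemAc}, observe that since $P^s < Q^s$ and $P^u < Q^u$, there are $G$-equivariant projections $\Ff^s_P = G/P^s \to G/Q^s = \Ff^s_Q$ and likewise for $u$, inducing a $G$-equivariant map $\xX_P = G/H \to G/H' = \xX_Q$ carrying the Anosov section $\sigma$ for $H$ to a section $\sigma'$ for $H'$; this $\sigma'$ is still flat along flow lines, being the image of a flat-along-flow-lines section under a $G$-equivariant (hence connection-preserving) map. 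The differential of $G/P^s \to G/Q^s$ at a base flag maps $E^s_P$ onto $E^s_Q$, so $(\sigma')^*E^s_Q$ is a flow-equivariant quotient of $(\sigma)^*E^s_P$; a contracting flow descends to any quotient bundle, so the contraction property persists, and dually for $E^u$. (One should remark that $Q^s, Q^u$ opposite guarantees $\xX_Q$ is the open $G$-orbit in $\Ff^s_Q\times\Ff^u_Q$ and that $H' = Q^s\cap Q^u$, so the setup of Section~\ref{sec_anosov} applies.)

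For part \eqref{item2:lemAc}, the flag varieties for $G\times L$ with parabolic $P^s\times L$ are just $\Ff^s_G = (G\times L)/(P^s\times L)$, so the open orbit is $\xX_{G\times L} = \xX_G$ with the \emph{same} distributions $E^s, E^u$ (the $L$-factor contributes nothing to the flag geometry). The flat $(G\times L)$-bundle $\mathsf{P}\times E$ has $\mathsf{P}\times E \times_{G\times L}\xX_G \cong \mathsf{P}\times_G \xX_G$, and the section $\sigma$ of the latter gives a section $\sigma\times E$ of the former whose pulled-back bundles $\sigma^* E^s$, $\sigma^*E^u$ and their flows are unchanged; so the Anosov condition is inherited verbatim, and by Proposition~\ref{prop:uniqsigma} this section is \emph{the} canonical reduction, which is exactly $\mathsf{P}_H\times E$.

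For part \eqref{item3:lemAc}, here $G$ has real rank one, so $P^s$ and $P^u$ are opposite minimal parabolics, $H = P^s\cap P^u$ is the centralizer $Z_G(A)$ of the one-dimensional Cartan $A$, and a homomorphism $f\colon G\to L$ sends $A$ to a one-parameter subgroup $f(A) < L$. The natural candidate for the reduction of $\mathsf{P}\times_G L$ is $\mathsf{P}_H\times_H N$, where $N = Z_L(f(A))$; this is a genuine reduction because $f(H) \subset N$ (as $f$ intertwines the $A$-action on $G$ with the $f(A)$-action on $L$, so $f$ of an $A$-centralizing element centralizes $f(A)$). The section it defines is flat along flow lines because it is obtained from $\sigma$ by applying the $G$-equivariant map $\xX_G = G/H \to L/N$ induced by $f$, and such a map carries flat-along-flow-lines sections to flat-along-flow-lines sections. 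The substantive point, and the main obstacle, is the contraction estimate: one must show that the flow on $(\sigma')^*E^s_L$ is contracting where $E^s_L$ is the tangent distribution to the flag variety $L/Q^s$ (for an appropriate opposite pair $Q^s, Q^u$ of parabolics of $L$ with $Q^s\cap Q^u = N$). This reduces to a Lie-theoretic statement about the adjoint action: the restriction of $\sigma$ to a closed orbit picks out a holonomy element $h_\gamma \in G$ conjugate (via $\sigma$) to an element whose $\Ad$-action on $\mathfrak{g}/\mathfrak{p}^s \cong T_{f^s}\Ff^s$ is contracting; because $G$ has rank one and $h_\gamma$ fixes the pair $(f^s,f^u)\in\xX$, Fact~\ref{fact:attracL} forces $h_\gamma$ to be, up to conjugacy and a bounded factor, $\exp(t_\gamma X)$ for $X$ in the Lie algebra of $A$ with $t_\gamma > 0$; then $f(h_\gamma)$ acts on $\mathfrak{l}$ through $\Ad(\exp(t_\gamma\, df(X)))$, whose eigenvalues on the "negative" part $\mathfrak{n}^- = \bigoplus_{\alpha(df(X))<0}\mathfrak{l}_\alpha$ are all $<1$; identifying this negative part with $T_{f^s_L}(L/Q^s)$ for the parabolic $Q^s$ determined by the sign of $\alpha(df(X))$ gives the required uniform contraction over the (compact) base. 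I would carry out this last identification carefully, checking that the resulting parabolic $Q^s$ of $L$ is indeed opposite to $Q^u$ (defined by the opposite sign condition) and that $Q^s\cap Q^u = Z_L(f(A)) = N$, which is where the rank-one hypothesis on $G$ is genuinely used — it ensures $A$, and hence $f(A)$, is "as large as possible" inside a torus so that its centralizer is a Levi factor of an honest parabolic of $L$.
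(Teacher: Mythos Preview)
Your arguments for parts (\ref{item1:lemAc}) and (\ref{item2:lemAc}) are correct and match the paper's position, which simply declares these two parts ``immediate''; you have usefully spelled out why (for (\ref{item1:lemAc}), that contraction passes to flow-equivariant quotient bundles; for (\ref{item2:lemAc}), that the $L$-factor contributes nothing to the flag geometry, so the pulled-back distributions and flow are literally unchanged).

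For part (\ref{item3:lemAc}) the paper does not give its own argument but cites \cite[Proposition~3.1]{Labourie_anosov}. Your sketch is essentially a reconstruction of that proposition, and the Lie-theoretic content---that $N = Z_L(f(A))$ is the Levi of an opposite pair of parabolics $Q^s, Q^u$ in $L$, and that the sign decomposition of $\mathrm{ad}(df(X))$ on $\mathfrak{l}$ governs contraction versus dilation---is exactly right. The one place to tighten is your passage from closed orbits to ``uniform contraction over the (compact) base'': verifying the eigenvalue condition on each periodic holonomy $h_\gamma$ does not by itself produce the uniform constants $A, a$ required in Definition~\ref{defi:anosov}.(\ref{anosovpoint2}). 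The cleaner route, and the one Labourie takes, avoids closed orbits entirely: the rank-one $(G,H)$-Anosov hypothesis already says that the $A$-component of the section's displacement along the flow grows uniformly linearly over all of $M$ (this is precisely what contraction on $\sigma^* E^s$ means when $E^s \cong \mathfrak{g}/\mathfrak{p}^s$ carries the $\mathrm{Ad}(A)$-action and $A$ is one-dimensional); pushing this forward by $f$ gives the same uniform linear growth for the $f(A)$-valued cocycle on the $L$-side, and contraction on $(\sigma')^{*}E^s_L$ then follows directly from the eigenvalue inequality $\alpha(df(X)) < 0$ at a single uniform rate. Your decomposition $h_\gamma = m_\gamma \exp(t_\gamma X)$ with $m_\gamma$ in the compact part of $H$ is the correct local picture---one just has to run it as a uniform estimate along arbitrary trajectories rather than orbit-by-orbit.
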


Points (\ref{item1:lemAc}) and (\ref{item2:lemAc}) of this lemma are
immediate; point (\ref{item3:lemAc}) results from \cite[Proposition
3.1]{Labourie_anosov}. For a more general version concerning Anosov
representations under embeddings of Lie groups $G\to L$ we refer to
\cite{Guichard_Wienhard_DoD}.  Anosov representations are also stable
under finite cover of Lie groups:

\begin{lem}
  \label{lem:finite_cover}
  Let $\pi:G' \to G$ be a finite covering of Lie groups. Let $H = P^s
  \cap P^u$ be the intersection of two opposite parabolic subgroups of
  $G$ and let $H' = \pi^{-1}(H)$.

  Then a representation $\rho: \pi_1(M) \to G'$ is $(G',H')$-Anosov if
  and only if $\pi \circ \rho$ is $(G,H)$-Anosov. Furthermore is
  $\mathsf{P}'$ and $\mathsf{P}$ are the flat principal bundles for
  $\rho$ and $\pi \circ \rho$, then the two Anosov reductions are
  equal under the identification $\mathsf{P}' \times_{G'} G'/H' =
  \mathsf{P}' /H' \cong \mathsf{P}/H = \mathsf{P} \times_{G} G/H$.
\end{lem}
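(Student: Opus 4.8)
The plan is to deduce everything from the uniqueness of the Anosov section (Proposition~\ref{prop:uniqsigma}) together with functoriality of the associated-bundle construction. First I would set up the basic dictionary: if $\mathsf{P}'\to M$ is the flat $G'$-bundle attached to $\rho$, then $\mathsf{P} := \mathsf{P}'\times_{G'}G = \mathsf{P}'/\ker\pi$ is precisely the flat $G$-bundle attached to $\pi\circ\rho$, since $\ker\pi$ is central (in fact finite) so quotienting the flat structure descends the flat connection. Under this identification one has a canonical homeomorphism of associated fiber bundles $\mathsf{P}'\times_{G'}G'/H' \cong \mathsf{P}\times_G G/H$: indeed $G'/H' = G'/\pi^{-1}(H) \xrightarrow{\ \sim\ } G/H$ is a $G'$-equivariant bijection (the map $\pi$ induces it), and it is even $G$-equivariant after passing to the quotient group, because $\ker\pi$ acts trivially on $G/H$. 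Likewise the open orbit $\mathcal{X}'\subset\mathcal{F}'^s\times\mathcal{F}'^u$ and the distributions $E^s,E^u$ correspond to those for $G$ under this identification, since $\pi$ carries the parabolics $P'^s=\pi^{-1}(P^s)$, $P'^u=\pi^{-1}(P^u)$ isomorphically onto their images on the level of homogeneous spaces and tangent spaces.

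Given this dictionary, the two directions of the equivalence are essentially one observation applied twice. If $\rho$ is $(G',H')$-Anosov with Anosov section $\sigma':M\to\mathsf{P}'\times_{G'}G'/H'$, then transporting $\sigma'$ across the homeomorphism above gives a section $\sigma$ of $\mathsf{P}\times_G G/H$; one checks that ``flat along flow lines'' is preserved (the flat structures correspond) and that $\sigma^*E^s = \sigma'^*E'^s$, $\sigma^*E^u = \sigma'^*E'^u$ as flat vector bundles with flow, so the contraction/dilation property transfers verbatim. Hence $\pi\circ\rho$ is $(G,H)$-Anosov. Conversely, if $\pi\circ\rho$ is $(G,H)$-Anosov with Anosov section $\sigma$, pull it back across the same homeomorphism to get a section $\sigma'$ of $\mathsf{P}'\times_{G'}G'/H'$ that is flat along flow lines and has the required contraction properties, so $\rho$ is $(G',H')$-Anosov. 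The final sentence of the lemma — that the two Anosov reductions agree under the identification $\mathsf{P}'/H'\cong\mathsf{P}/H$ — is then immediate, but it is worth phrasing it as a uniqueness argument: the section obtained by transporting an Anosov section across the homeomorphism is again Anosov, and by Proposition~\ref{prop:uniqsigma} the Anosov section is unique, so it must be the one we started with on the other side.

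The one point that needs a little care, and which I would flag as the main (minor) obstacle, is checking that $G'/H' \to G/H$ is genuinely a bijection, i.e.\ that $H' = \pi^{-1}(H)$ and not something smaller. This is where one uses that $\pi$ is a \emph{finite} covering and that $H = P^s\cap P^u = N_G(\mathfrak{p}^s)\cap N_G(\mathfrak{p}^u)$ is its own normalizer among the relevant subgroups, so $\pi^{-1}(P^s)$ is a parabolic of $G'$ with the same Lie algebra image, $\pi^{-1}(P^s)\cap\pi^{-1}(P^u) = \pi^{-1}(P^s\cap P^u) = \pi^{-1}(H) = H'$, and $\pi$ restricts to a bijection $G'/H' \to G/\pi(H') = G/H$. (In particular the fibers of $G'\to G$ lie in $H'$, so no collapsing occurs.) Once this identification is in hand, everything else is formal transport of structure; there is no analytic content beyond what is already in Definition~\ref{defi:anosov}, since $M$ is compact and the norms and flow are literally the same objects on both sides. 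I would also remark, parenthetically, that this is the special case of Lemma~\ref{lem_anosov_constructions}.(\ref{item3:lemAc})-type reasoning in which the homomorphism $f=\pi$ is a finite covering, so one could alternatively cite the general machinery of \cite{Guichard_Wienhard_DoD}, but the direct argument is short enough to give in place.
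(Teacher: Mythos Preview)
Your proposal is correct and takes essentially the same approach as the paper: the entire content is the identification $G'/H' \cong G/H$, from which everything else is formal transport of structure. The paper's own proof is literally the single sentence ``This lemma is obvious since $G'/H' \cong G/H$,'' so you have simply unpacked that observation (and your invocation of Proposition~\ref{prop:uniqsigma} for the ``furthermore'' clause is the natural way to make the equality of reductions explicit).
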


This lemma is obvious since $G'/H' \cong G/H$.

\subsubsection{Definition in terms of the universal cover of M}
\label{sec:terms-univ-cover}
A $(G,H)$-Anosov bundle over $M$ can be equivalently defined in terms
of equivariant maps from the universal cover of $M$ to
$\mathcal{X}\cong G/H$.  Let $\widetilde{M}$ be the universal cover of
$M$.  Then any flat $G$-bundle $\mathsf{P}$ on $M$ can be written as:
\[ \mathsf{P} = \pi_1(M) \backslash (\widetilde{M} \times G) , \quad
\g \cdot ( \tilde{m}, g) = ( \g \cdot \tilde{m}, \rho( \g) g)\] for
some representation $\rho: \pi_1(M) \to G$.

Let $\phi_t$ be the lift of the flow on $M$ to $\widetilde{M}$. This
flow lifts to $\phi_t(\tilde{m}, g) = (\phi_t(\tilde{m}), g)$,
defining a flow on $\widetilde{M} \times G$.

An $H$-reduction $\sigma$ is the same as a $\rho$-equivariant map
\[ \tilde { \sigma} : \widetilde{M} \longrightarrow G/H \cong
\mathcal{X}.\] The section $\sigma$ is flat along flow lines if and
only if the map $ \tilde { \sigma}$ is $\phi_t$-invariant.

The contraction property of the flow is now expressed as follows:
\begin{enumerate}
\item There exists a continuous family $( \| \cdot \|_{\tilde{m}} )_{
    \tilde{m} \in \widetilde{M}} $ such that
  \begin{itemize}
  \item for all $\tilde{m}$, $\| \cdot \|_{\tilde{m}}$ is a norm on $(
    E^s)_{\tilde{ \sigma} (\tilde{m})} \subset T_{ \tilde{
        \sigma}(\tilde{m})} \mathcal{X}$,
  \item and $( \| \cdot \|_{\tilde{m}} )_{ \tilde{m} \in
      \widetilde{M}} $ is $\rho$-equivariant, i.e.\ for all
    $\tilde{m}$ in $\widetilde{M}$, $\g$ in $\pi_1( M)$ and $e$ in $(
    E^s)_{\tilde{ \sigma} (\tilde{m})}$ one has $ \| \rho( \g) \cdot e
    \|_{ \g \cdot \tilde{m} } = \| e \|_{\tilde{m}}$.
  \end{itemize}
\item The flow $\phi_t$ is contracting, i.e.\ there exist $A,a >0$
  such that for any $t>0$ and $\tilde{m}$ in $\widetilde{M}$ and $e$
  in $( E^s)_{\tilde{ \sigma} (\tilde{m})}$ then \[\| e \|_{\phi_t
    \cdot \tilde{m}} \leq A \exp(-at) \| e \|_{\tilde{m}}.\]
    
  (Note that this makes sense because $\tilde{ \sigma} (\phi_t \cdot
  \tilde{m}) = \tilde{ \sigma} (\tilde{m})$, and thus $e$ belongs to
  $( E^s)_{\tilde{ \sigma} (\phi_t \cdot \tilde{m})}$).
\end{enumerate}

\subsubsection{Specialization to $T^1\Sigma$}
\label{sec:spec-t1sigma}\label{sec:afewconseq}

We restrict now to the case when $M = T^1 \Sigma$ is the unit tangent
bundle of a closed oriented connected surface $\Sigma$ of negative
Euler characteristic and $\phi_t$ is the geodesic flow on $T^1 \Sigma$
with respect to some hyperbolic metric on $\Sigma$.

Let $\partial \pi_1(\Sigma)$ be the boundary at infinity of
$\pi_1(\Sigma)$. Then $\partial \pi_1(\Sigma)$ is a topological circle
that comes with a natural orientation and an action of
$\pi_1(\Sigma)$.

There is an equivariant identification
\[ T^1 \widetilde{ \Sigma} \cong \partial \pi_1(\Sigma)^{(3+)}\] of
the unit tangent bundle of $\widetilde{\Sigma}$ with the set of
positively oriented triples in $\partial \pi_1(\Sigma)$. The orbit of
the geodesic flow through the point $(t^s, t, t^u)$ is

\[ \mathcal{G}_{(t^s, t, t^u)} = \mathcal{G}_{(t^s, t^u)} = \{ (r^s,
r, r^u ) \in \partial \pi_1(\Sigma)^{(3+)} \mid \ r^s = t^s, r^u=t^u
\},\] and the set of geodesic leaves is parametrized by $\partial
\pi_1(\Sigma)^{(2)} = \partial \pi_1(\Sigma)^2 \moins \Delta$, the
complement of the diagonal in $\partial \pi_1(\Sigma)^2$. (For more
details we refer the reader to
\cite[Section~1.1]{Guichard_Wienhard_Duke}).

Let $\pi: T^1\Sigma \to \Sigma$ be the natural projection.
\begin{defi}
  A flat $G$-bundle $\mathsf{P}$ over $\Sigma$ is said to be
  \emph{Anosov} if its pullback $\pi^* \mathsf{P}$ over $M = T^1
  \Sigma$ is Anosov.

  A representation $\rho : \pi_1( \Sigma) \to G$ is \emph{Anosov} if
  the composition $ \pi_1( T^1\Sigma) \to \pi_1(\Sigma) \xrightarrow{
    \rho} G$ is an Anosov representation.
\end{defi}

\begin{remark} 
  Since the (unparametrized) geodesic flow can be described
  topologically in terms of triples of points in $\partial
  \pi_1(\Sigma)$, the definition of Anosov representations \emph{does
    not} depend on the choice of the hyperbolic metric on $\Sigma$.
\end{remark}

\begin{remark}
  Note that for a $(G,H)$-Anosov bundle $\mathsf{P}$ over $\Sigma$,
  the pullback $\pi^* \mathsf{P}$ over $T^1\Sigma$ admits a canonical
  $H$-reduction, but this $H$-reduction in general does not come from
  a reduction of $\mathsf{P}$.  Indeed, the invariants defined in
  Section~\ref{sec:invariants} also give obstructions for this to
  happen.
\end{remark}

Let $\rho: \pi_1(\Sigma) \to G$ be an Anosov representation and
$\mathsf{P}$ the corresponding flat $G$-bundle over $\Sigma$. Using
the description in Section~\ref{sec:terms-univ-cover} it follows that
there exists a $\rho$-equivariant map:
\[ \tilde{ \sigma} : T^1 \widetilde{ \Sigma} \longrightarrow
\mathcal{X} \subset \Ff^s \times \Ff^u\] which is invariant by the
geodesic flow.

In particular we get a $\rho$-equivariant map
\[
(\xi^s, \xi^u): \partial \pi_1(\Sigma)^{(2)} \longrightarrow \Ff^s
\times \Ff^u.\]

In view of the contraction property of $\tilde{ \sigma}$
(Definition~\ref{defi:anosov}.(\ref{anosovpoint2})) it is easy to see
that $\xi^s( t^s, t^u)$ (resp. $\xi^u( t^s, t^u)$) depends only of
$t^s$ (resp. $t^u$). Hence, we obtain $\rho$-equivariant maps
$\xi^s: \partial \pi_1(\Sigma) \to \Ff^s$, and $\xi^u: \partial
\pi_1(\Sigma) \to \Ff^u$.

\begin{cor}\label{cor:anosov_holonomy}
  Let $\rho: \pi_1(\Sigma) \to G$ be a $(G,H)$-Anosov
  representation. Then for every $\gamma \in \pi_1(\Sigma)\moins\{1\}$
  the image $\rho(\gamma)$ is conjugate to an element in $H$, having a
  unique pair of attracting/repelling fixed points
  $(\xi^s(t_{\gamma}^{s}), \xi^u(t_{\gamma}^{u})) \in \Ff^s \times
  \Ff^u$, where $(t_{\gamma}^{s}, t_{\gamma}^{u})$ denotes the pair of
  attracting/repelling fixed points of $\gamma$ in $\partial \pi_1(
  \Sigma)$.
\end{cor}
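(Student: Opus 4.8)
The plan is to combine the equivariant boundary maps $\xi^s,\xi^u$ with Fact~\ref{fact:attracL}, exactly as in the proof of Proposition~\ref{prop:uniqsigma}, but now reading off the dynamics of a single holonomy element rather than uniqueness. Fix $\gamma\in\pi_1(\Sigma)\smallsetminus\{1\}$ and let $(t_\gamma^s,t_\gamma^u)\in\partial\pi_1(\Sigma)^{(2)}$ be its attracting/repelling fixed points (these exist and are distinct because $\pi_1(\Sigma)$ acts on its boundary circle as a convergence group, with each nontrivial element acting with North--South dynamics). First I would observe that $(t_\gamma^s,t_\gamma^u)$ determines a closed geodesic leaf $\mathcal{G}_{(t_\gamma^s,t_\gamma^u)}\subset T^1\widetilde\Sigma$ which is invariant under $\gamma$, and whose image in $T^1\Sigma$ is a closed orbit of the geodesic flow. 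Restricting the discussion to this orbit puts us in precisely the setting of the proof of Proposition~\ref{prop:uniqsigma}: the holonomy of $\mathsf{P}$ along this closed orbit is (conjugate to) $\rho(\gamma)$, and the Anosov section restricted to the orbit is the constant section with value $x=(\xi^s(t_\gamma^s),\xi^u(t_\gamma^u))\in\mathcal{X}$.

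The key steps are then: (1) By $\rho$-equivariance of $\tilde\sigma$ and its flow-invariance, together with $\gamma\cdot(t_\gamma^s,t_\gamma^u)=(t_\gamma^s,t_\gamma^u)$, the point $x=(\xi^s(t_\gamma^s),\xi^u(t_\gamma^u))$ is fixed by $\rho(\gamma)$; in particular $\xi^s(t_\gamma^s)\in\Ff^s$ and $\xi^u(t_\gamma^u)\in\Ff^u$ are each $\rho(\gamma)$-fixed, and since $x\in\mathcal{X}$ they are in general position. (2) Since $x\in\mathcal{X}=G/H$, the stabilizer of $x$ in $G$ is a conjugate of $H$; hence $\rho(\gamma)$ is conjugate into $H$. (3) The contraction/dilation hypothesis in Definition~\ref{defi:anosov}.(\ref{anosovpoint2}), restricted to the closed orbit, says that $\rho(\gamma)$ acts on $(\sigma^*E^s)$ (resp.\ $(\sigma^*E^u)$) along the orbit contracting (resp.\ dilating); under the identifications $(\sigma^*E^s)\cong T_{\xi^s(t_\gamma^s)}\Ff^s$ and $(\sigma^*E^u)\cong T_{\xi^u(t_\gamma^u)}\Ff^u$ this is exactly hypothesis (2) of Fact~\ref{fact:attracL}. (4) Applying Fact~\ref{fact:attracL} to $g=\rho(\gamma)$ yields that $\xi^s(t_\gamma^s)$ is the \emph{unique} attracting fixed point of $\rho(\gamma)$ in $\Ff^s$ and $\xi^u(t_\gamma^u)$ the unique repelling one in $\Ff^u$.

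One small point of care, which I expect to be the only genuine subtlety, is the matching of orientations/parametrizations: one must check that the geodesic flow direction used to define the Anosov contraction agrees with the translation direction of $\gamma$ on $\mathcal{G}_{(t_\gamma^s,t_\gamma^u)}$, so that ``contracting as $t\to+\infty$'' really corresponds to ``$\xi^s(t_\gamma^s)$ attracting for $\rho(\gamma)$'' and not its inverse. This is handled by the convention, recorded in Section~\ref{sec:spec-t1sigma}, that the identification $T^1\widetilde\Sigma\cong\partial\pi_1(\Sigma)^{(3+)}$ is such that the geodesic through $(t^s,t,t^u)$ flows from $t^u$ towards $t^s$, combined with the fact that $t_\gamma^s$ is by definition the attracting fixed point of $\gamma$ on $\partial\pi_1(\Sigma)$; so after possibly swapping the roles of $s$ and $u$ in the bookkeeping everything is consistent. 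Everything else is a direct transcription of the argument already given for Proposition~\ref{prop:uniqsigma} and a citation of Fact~\ref{fact:attracL}.
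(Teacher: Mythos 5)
Your argument is correct and is essentially the paper's own: the corollary is stated without separate proof precisely because it is the closed-orbit computation already carried out inside the proof of Proposition~\ref{prop:uniqsigma} (restriction of the Anosov section to the closed orbit of $\gamma$, identification of the holonomy with $\rho(\gamma)$ fixing the point $(\xi^s(t_\gamma^s),\xi^u(t_\gamma^u))\in\mathcal{X}$, and an appeal to Fact~\ref{fact:attracL}). Your step (2), that the stabilizer of a point of $\mathcal{X}\cong G/H$ is a conjugate of $H$, and your check of the flow-direction convention are exactly the right bookkeeping.
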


\begin{remark}\label{rem_PconjtoPopp}
  In the situation when $P^s$ is conjugate to $P^u$, there is a
  natural identification between $\Ff^s$ and $\Ff^u$, and
  Proposition~\ref{prop:uniqsigma} implies the equality $\xi^s =
  \xi^u$.  In this case we denote the equivariant map simply by
  $\xi: \partial \pi_1(\Sigma) \to \Ff$.
\end{remark}

\subsection{Maximal representations}\label{sec:prelim_maximal}
\subsubsection{Definition and properties}
Let $G$ be an almost simple noncompact Lie group of Hermitian type,
i.e.\ the symmetric space associated with $G$ is an irreducible
Hermitian symmetric space of noncompact type.  Then $\pi_1(G)$ is a
finite extension of $\ZZ$: $\pi_1(G) / \pi_1(G)^{tor} \cong \ZZ$, and
the obstruction class of $\rho$ (see Introduction) projects to a
characteristic class $\tau(\rho) \in \h^2( \Sigma; \pi_1 (G) /
\pi_1(G)^{tor} ) \cong \ZZ$, called the \emph{Toledo invariant} of the
representation $\rho$.  The Toledo invariant $\tau(\rho)$ is bounded
in absolute value\[ |\tau(\rho)| \leq -C(G)\chi(\Sigma),
\] where $C (G)$ is an explicit constant depending only on $G$.

\begin{defi}\label{defi:maximal}
  A representation $\rho:\pi_1(\Sigma) \to G$ is \emph{maximal} if
  \[
  \tau(\rho) = -C(G) \chi(\Sigma).
  \]
  The space of maximal representation is denoted by $\hommax(
  \pi_1(\Sigma),G)$.  It is a union of connected components of $\hom(
  \pi_1(\Sigma),G)$.
\end{defi}

\begin{remark}
  In the definition of maximal representations we choose the positive
  extremal value of the Toledo invariant.  Yet, the Toledo number
  $\tau(\rho) \in \ZZ$ depends on the identification $\ZZ \cong \h^2(
  \Sigma; \ZZ)$ hence on the orientation of the surface. Thus
  reversing the orientation changes the Toledo number to its
  opposite. Therefore the space where the negative extremal value is
  achieved is isomorphic to the space of maximal representations.
\end{remark}

Maximal representations have been extensively studied in recent years
\cite{Bradlow_GarciaPrada_Gothen,Bradlow_GarciaPrada_Gothen_survey,
  Burger_Iozzi_Labourie_Wienhard, Burger_Iozzi_Wienhard_ann,
  Burger_Iozzi_Wienhard_toledo, Hernandez, Toledo_89,
  Wienhard_mapping}.  They enjoy several interesting properties, e.g.\
maximal representations are discrete embeddings, but more importantly
for our considerations is the following

\begin{thm}\cite{Burger_Iozzi_Labourie_Wienhard,Burger_Iozzi_Wienhard_anosov}\label{thm:maximal_anosov}
  A maximal representation $\rho: \pi_1(\Sigma) \to G$ is an Anosov
  representation. More precisely, $\rho$ is a $(G,H)$-Anosov
  representation, where $H<G$ is the stabilizer of a pair of
  transverse points in the Shilov boundary of the symmetric space
  associated with $G$.
\end{thm}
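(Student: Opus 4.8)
The plan is to build the data required by Definition~\ref{defi:anosov} directly on $M = T^1\Sigma$, in the ``universal cover'' formulation of Section~\ref{sec:terms-univ-cover}. Let $\check S = G/P^s$ be the Shilov boundary of the symmetric space of $G$ (a flag variety for a maximal parabolic $P^s$), let $P^u$ be an opposite parabolic, $H = P^s \cap P^u$, and let $\mathcal{X} = G/H$ be the unique open $G$-orbit in $\check S \times \check S$, i.e.\ the set of transverse pairs. One must produce a $\rho$-equivariant, geodesic-flow-invariant map $\tilde\sigma : T^1\widetilde\Sigma \to \mathcal{X}$ and then verify the contraction/dilation estimate on $\tilde\sigma^* E^s$, $\tilde\sigma^* E^u$.

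\emph{The limit map.} The first --- and representation-theoretically the deepest --- step is to establish that a maximal $\rho$ admits a continuous, $\rho$-equivariant, \emph{monotone} limit map $\phi : \partial\pi_1(\Sigma) \to \check S$. This is where maximality enters: expressing $\tau(\rho)$ via the pullback of the bounded K\"ahler class and evaluating against the fundamental class of $\Sigma$, equality in $|\tau(\rho)| \leq -C(G)\chi(\Sigma)$ forces the boundary map attached to any $\rho$-equivariant measurable map $\partial\pi_1(\Sigma) \to \check S$ to be essentially monotone for the cyclic order on $\partial\pi_1(\Sigma)$ and the causal (Maslov--Bergman) structure on $\check S$; since a monotone map of a circle is continuous off a countable set and $\rho$-equivariance together with minimality of the $\pi_1(\Sigma)$-action rigidifies it, one upgrades it to an everywhere-defined continuous monotone $\phi$. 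The crucial consequence is transversality: $t^s \neq t^u$ implies $(\phi(t^s),\phi(t^u)) \in \mathcal{X}$.

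\emph{The section.} Using $T^1\widetilde\Sigma \cong \partial\pi_1(\Sigma)^{(3+)}$ from Section~\ref{sec:spec-t1sigma}, set
\[
\tilde\sigma(t^s,t,t^u) = \big(\phi(t^s),\,\phi(t^u)\big).
\]
By the previous step this lands in $\mathcal{X} = G/H$; it is $\rho$-equivariant since $\phi$ is; and it is independent of the middle coordinate $t$, hence constant along geodesic leaves, so the associated section $\sigma$ of $\mathsf{P} \times_G \mathcal{X}$ is flat along flow lines --- this gives Definition~\ref{defi:anosov}.(\ref{anosovpoint1}).

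\emph{Contraction, and the main obstacle.} It remains to check Definition~\ref{defi:anosov}.(\ref{anosovpoint2}): the geodesic flow contracts $\tilde\sigma^* E^s = \phi^* T\check S$ (pulled back along the backward endpoint) and dilates $\tilde\sigma^* E^u$ (along the forward endpoint). On a periodic orbit corresponding to $\gamma \in \pi_1(\Sigma)\moins\{1\}$ with endpoints $(t^s_\gamma, t^u_\gamma)$, monotonicity of $\phi$ combined with the north--south dynamics of $\gamma$ on $\partial\pi_1(\Sigma)$ forces $\phi(t^s_\gamma)$ to be an attracting fixed point of $\rho(\gamma)$ in $\check S$ with contracting differential, and dually at $t^u_\gamma$; equivalently $\rho(\gamma)$ is conjugate into $H$ with the proximality of Fact~\ref{fact:attracL}. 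This yields the pointwise statement on the dense set of periodic orbits, but the Anosov condition demands \emph{uniform} exponential constants $A,a>0$ and a continuous family of norms on all of $T^1\Sigma$. I expect this uniformity to be the technical crux: one must convert the purely order-theoretic monotonicity of $\phi$ into a quantitative ``transversality modulus'' --- points of $\partial\pi_1(\Sigma)$ at controlled distance have images separating at a controlled rate, i.e.\ $\phi$ is H\"older --- and then combine this with the uniform expansion of the geodesic flow on $\partial\pi_1(\Sigma)$ in a Labourie-type compactness argument over the compact base $T^1\Sigma$ to extract the uniform contraction. Once this is in place, $\mathsf{P}$ is a $(G,H)$-Anosov bundle with $H = P^s \cap P^u$ the stabilizer of a transverse pair of Shilov points, which is the assertion; by Proposition~\ref{prop:uniqsigma} the section $\sigma$ is the canonical Anosov reduction.
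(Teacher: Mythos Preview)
The paper does not prove Theorem~\ref{thm:maximal_anosov}; it is quoted as a result from \cite{Burger_Iozzi_Labourie_Wienhard,Burger_Iozzi_Wienhard_anosov} and used as a black box throughout. So there is no ``paper's own proof'' to compare against.

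That said, your sketch is a faithful outline of the argument in those references. The three-step architecture --- (1) use maximality of the Toledo invariant and the bounded K\"ahler class to produce a continuous $\rho$-equivariant monotone boundary map $\phi:\partial\pi_1(\Sigma)\to\check S$ sending distinct points to transverse pairs, (2) build the flow-invariant section $\tilde\sigma(t^s,t,t^u)=(\phi(t^s),\phi(t^u))$, (3) upgrade monotonicity to uniform contraction/dilation --- is exactly the strategy carried out there (with \cite{Burger_Iozzi_Labourie_Wienhard} treating $G=\Sp(2n,\RR)$ and \cite{Burger_Iozzi_Wienhard_anosov} the general Hermitian case). You have also correctly located the real work: step~(1) is the deep input (this is essentially \cite[Theorem~8]{Burger_Iozzi_Wienhard_toledo}, restated in the paper as Theorem~\ref{thm:positivecurve} for the symplectic case), and step~(3) is where one must pass from qualitative positivity to quantitative H\"older estimates and a cocycle-compactness argument over $T^1\Sigma$. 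One small wording slip: your parenthetical ``pulled back along the backward endpoint'' for $E^s$ should read the \emph{attracting} endpoint $t^s$, consistent with what you write two lines later.
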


The Toledo invariant can be defined for surface with boundary
\cite[Section~1.1]{Burger_Iozzi_Wienhard_toledo}, extending the notion
of maximal representation also to this case.  In our construction of
maximal representations we make use of the following gluing theorem,
which follows from the additivity properties of the Toledo invariant
established in \cite[Prop.~3.2]{Burger_Iozzi_Wienhard_toledo}.

\begin{thm}\cite[Th.~1, Def.~2]{Burger_Iozzi_Wienhard_toledo}\label{thm:gluing_maximal}
  Let $\Sigma = \Sigma_1 \cup_{\gamma} \Sigma_2$ be the decomposition
  of $\Sigma$ along a simple closed curve and $\pi_1(\Sigma) =
  \pi_1(\Sigma_1) *_{\langle \gamma \rangle} \pi_1(\Sigma_2)$ the
  corresponding decomposition as amalgamated product. Let $\rho_i:
  \pi_1(\Sigma_i) \to G$ be representations which agree on $\gamma$,
  and let $ \rho = \rho_1*\rho_2: \pi_1(\Sigma) \to G$ the amalgamated
  representation.

  If $\rho_1$ and $\rho_2$ are maximal, then $\rho$ is maximal.

  Conversely, if $\rho$ is maximal, then $\rho_1$ and $\rho_2$ are
  maximal.
\end{thm}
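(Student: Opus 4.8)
The plan is to reduce both implications to numerical properties of the Toledo invariant for surfaces with boundary. The two facts I would invoke are: (a) \emph{additivity} of $\tau$ along a separating simple closed curve, i.e.\ $\tau(\rho) = \tau(\rho_1) + \tau(\rho_2)$, which is precisely \cite[Prop.~3.2]{Burger_Iozzi_Wienhard_toledo}; and (b) the Milnor--Wood inequality $|\tau(\rho_i)| \le -C(G)\chi(\Sigma_i)$ for $i=1,2$, the boundary version of the bound recalled in Section~\ref{sec:prelim_maximal}, with the same constant $C(G)$. I would also use the elementary fact that $\chi(\Sigma) = \chi(\Sigma_1) + \chi(\Sigma_2)$, since $\Sigma_1\cap\Sigma_2 = \gamma$ has $\chi(\gamma) = 0$.

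For the forward implication, if $\rho_1$ and $\rho_2$ are maximal I would simply compute, using (a),
\[
\tau(\rho) \;=\; \tau(\rho_1) + \tau(\rho_2) \;=\; -C(G)\chi(\Sigma_1) - C(G)\chi(\Sigma_2) \;=\; -C(G)\chi(\Sigma),
\]
so $\rho$ is maximal by definition.

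For the converse I would run a saturation argument. Assuming $\rho$ maximal, fact (a) gives $\tau(\rho_1) + \tau(\rho_2) = -C(G)\chi(\Sigma_1) - C(G)\chi(\Sigma_2)$, while by (b) each summand satisfies $\tau(\rho_i) \le -C(G)\chi(\Sigma_i)$. Two reals, each no larger than a prescribed value, can have sum equal to the sum of those values only when each attains its own bound; hence $\tau(\rho_i) = -C(G)\chi(\Sigma_i)$ for $i=1,2$, i.e.\ $\rho_1$ and $\rho_2$ are maximal. (This also rules out the opposite extremal value $+C(G)\chi(\Sigma_i)$ for the pieces, consistent with $C(G)\chi(\Sigma_i) < 0$.)

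The only genuinely substantial ingredient is (a), and that is the step I would expect to be the obstacle if one wanted a self-contained argument rather than a citation. The route I would take is to represent $\tau$ as the pairing of the pullback of the bounded K\"ahler class of $G$ with a relative fundamental class of $(\Sigma,\partial\Sigma)$, decompose that relative class via a Mayer--Vietoris argument along $\gamma$, and observe that the contribution of $\gamma$ vanishes because $\langle\gamma\rangle\cong\ZZ$ is amenable, so the bounded cohomology class defining $\tau$ restricts to $0$ on it; additivity then drops out. The bound (b) is the standard estimate of the pairing by the norm of the bounded K\"ahler class times the $\ell^1$-size of a relative fundamental cycle, uniform in the topology of the bounded surface. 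With (a) and (b) in hand, everything else is the bookkeeping above.
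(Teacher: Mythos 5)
Your argument is correct and is exactly the route the paper takes: the theorem is quoted from Burger--Iozzi--Wienhard, and the paper explicitly notes that it "follows from the additivity properties of the Toledo invariant established in \cite[Prop.~3.2]{Burger_Iozzi_Wienhard_toledo}" together with the boundary Milnor--Wood bound, which is precisely your combination of (a) and (b) plus the saturation argument. Your sketch of how additivity itself is proved (amenability of $\langle\gamma\rangle$ killing the contribution of the gluing curve in bounded cohomology) also matches the cited source.
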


\subsubsection{Maximal representations into
  $\Sp(2n,\RR)$}\label{sec:prelim_maxrep_sp}

Let $\RR^{2n}$ be a symplectic vector space and $ ( e_i)_{1 \leq i
  \leq 2n}$ a symplectic basis, with respect to which the symplectic
form $\omega$ is given by the anti-symmetric matrix:
\[ J= \left(
  \begin{array}{cc}
    0 & \id_n \\ - \id_n & 0
  \end{array}\right). \]
Let $G = \Sp(2n,\RR)$.

Let $L_0^s := \Span ( e_i)_{1 \leq i \leq n}$ be a Lagrangian subspace
and $P^s<\Sp(2n,\RR)$ be the parabolic subgroup stabilizing
$L_0^s$. The stabilizer of the Lagrangian $L_0^u = \Span( e_i)_{ n< i
  \leq 2n}$ is a parabolic subgroup $P^u$, it is opposite to $P^s$.
The subgroup $H = P^s \cap P^u$ is isomorphic to $\GL(n, \RR)$:
\[
\begin{array}{rcl}
  \GL(n, \RR) & \overset{ \sim}{ \longrightarrow} & H \subset G \\
  A & \longmapsto & \left(
    \begin{array}{cc}
      A & 0 \\ 0 & \transpose{}\! A^{-1}
    \end{array}\right).
\end{array}
\]
(Such isomorphisms are in one to one correspondence with symplectic
bases $( \epsilon_i)_{ 1 \leq i \leq 2n }$ ---i.e.\ $\omega(
\epsilon_i, \epsilon_j) = \omega( e_i, e_j)$ for all $i,j$--- for
which $( \epsilon_i)_{ 1 \leq i \leq n}$ is a basis of $L_0^s$).  Note
that $P^u$ is conjugate to $P^s$ in $G$, so that the flag variety
$\Ff^s$ is canonically isomorphic to $\Ff^u$; we will denote this
homogeneous space by $\Ll$.  The space $\Ll$ is the Shilov boundary of
the symmetric space associated with $\Sp(2n,\RR)$; it can be realized
as the space of Lagrangian subspaces in $\RR^{2n}$ and the homogeneous
space $\mathcal{X} \subset \Ll \times \Ll$ is the space of pairs of
transverse Lagrangians.

Let $\rho:\pi_1(\Sigma) \to \Sp(2n,\RR)$ be a maximal representation
and $\mathsf{P}$ the corresponding flat principal $\Sp(2n,\RR)$-bundle
over $T^1\Sigma$ and $E$ the corresponding flat symplectic
$\RR^{2n}$-bundle over $T^1\Sigma$. Then $\rho$ is an $(\Sp(2n,\RR),
\GL(n,\RR))$-Anosov representation
(Theorem~\ref{thm:maximal_anosov}). The canonical
$\GL(n,\RR)$-reduction of $\mathsf{P}$ is equivalent to a continuous
splitting of $E$ into two (non-flat) flow-invariant transverse
Lagrangian subbundles
\[
E = L^s(\rho) \oplus L^u(\rho).
\]
\begin{nota}\label{defi:Lagrangianreduction}
  We call this splitting the {\em Lagrangian reduction} of the flat
  symplectic Anosov $\RR^{2n}$-bundle.
\end{nota}

\begin{remark}
  The symplectic form on $L^s(\rho) \oplus L^u(\rho)$ induces a
  duality $L^u(\rho) \cong L^s(\rho)^*$. As a result we will only
  consider $L^s(\rho)$ when defining invariants for symplectic maximal
  representations.
\end{remark}

The Lagrangian reduction can be directly constructed from the
continuous $\rho$-equivariant curve $\xi:\partial\pi_1(\Sigma) \to
\Ll$: for any triple $v=(t^s, t, t^u) \in \partial
\pi_1(\Sigma)^{(3+)} \cong T^1\widetilde{\Sigma}$, we set
$(L^s(\rho))_v = \xi(t^s)$ and $(L^u(\rho))_v = \xi(t^u)$.  This curve
satisfies an additional positivity property which we now describe.

Let $(L^s, L, L^u)$ be a triple of pairwise transverse Lagrangians in
$\RR^{2n}$; then $L$ can be realized as the graph of $F_L \in
\hom(L^s, L^u)$.
\begin{defi}\label{def:positivetriple}
  A triple $(L^s, L, L^u)$ of pairwise transverse Lagrangians is
  called {\em positive} if the quadratic form $\omega(\cdot,
  F_L(\cdot))$ on $L^s$ is positive definite.
\end{defi}
\begin{defi}
  \label{def:positivecurve}
  A curve $\xi: \partial \pi_1(\Sigma) \to \Ll$ from the boundary at
  infinity of $\pi_1(\Sigma)$ to the space of Lagrangians is said to
  be \emph{positive}, denoted by $\xi>0$, if for every positively
  oriented triple $( t^s, t, t^u)$ in $\partial\pi_1(\Sigma)^{(3+)}$
  the triple of Lagrangians $( \xi(t^s), \xi(t), \xi(t^u))$ is
  positive.
\end{defi}

Important facts about the space of positive curves in $\Ll$ are
established in Appendix~\ref{sec:app_positive}.

\begin{thm}\cite[Theorem~8]{Burger_Iozzi_Wienhard_toledo}
  \label{thm:positivecurve}
  Let $\rho \in \hom( \pi_1(\Sigma),\Sp(2n,\RR))$ be a maximal
  representation and $\xi : \partial \pi_1(\Sigma) \to \Ll$ the
  equivariant limit curve.  Then $\xi$ is a positive curve.
\end{thm}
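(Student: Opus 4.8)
The statement to prove is that the equivariant limit curve $\xi\colon\partial\pi_1(\Sigma)\to\Ll$ of a maximal representation $\rho\colon\pi_1(\Sigma)\to\Sp(2n,\RR)$ is positive, i.e.\ for every positively oriented triple $(t^s,t,t^u)$ the triple $(\xi(t^s),\xi(t),\xi(t^u))$ is positive in the sense of Definition~\ref{def:positivetriple}. The plan is to extract this from the maximality of the Toledo invariant together with the properties of the Anosov limit curve already recorded in Section~\ref{sec:afewconseq}.

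\emph{Step 1: continuity, transversality and equivariance of $\xi$.} By Theorem~\ref{thm:maximal_anosov}, $\rho$ is $(\Sp(2n,\RR),\GL(n,\RR))$-Anosov, so by the discussion preceding Corollary~\ref{cor:anosov_holonomy} (and Remark~\ref{rem_PconjtoPopp}, using that $P^s$ is conjugate to $P^u$ in $\Sp(2n,\RR)$) there is a continuous $\rho$-equivariant curve $\xi\colon\partial\pi_1(\Sigma)\to\Ll$ whose images on distinct points are transverse Lagrangians. So for any triple of distinct points the triple $(\xi(t^s),\xi(t),\xi(t^u))$ is a triple of pairwise transverse Lagrangians and the quadratic form $Q_v:=\omega(\,\cdot\,,F_{\xi(t)}(\,\cdot\,))$ on $\xi(t^s)$ of Definition~\ref{def:positivetriple} is defined and \emph{nondegenerate}; the content of the theorem is that it is positive \emph{definite}.

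\emph{Step 2: the signature is locally constant, hence constant on $\partial\pi_1(\Sigma)^{(3+)}$.} The space of positively oriented triples $\partial\pi_1(\Sigma)^{(3+)}\cong T^1\widetilde\Sigma$ is connected. Since $\xi$ is continuous and takes values in pairwise transverse Lagrangians on this set, the map $v\mapsto Q_v$ is a continuous family of nondegenerate symmetric bilinear forms of rank $n$ on a varying $n$-plane; its signature is therefore locally constant, hence constant on $\partial\pi_1(\Sigma)^{(3+)}$. Thus it suffices to evaluate the signature of $Q_v$ at a single convenient triple, or equivalently to show it cannot be $(p,q)$ with $q>0$.

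\emph{Step 3: relate the signature to the Toledo invariant / Maslov cocycle and use maximality.} This is the crux. The Toledo invariant of $\rho$ can be computed via the (bounded) Kähler class, which on the Shilov boundary $\Ll$ is represented by the Maslov cocycle: to a triple of pairwise transverse Lagrangians the Maslov cocycle assigns (up to normalization) the signature of exactly the form $Q_v$ appearing in Definition~\ref{def:positivetriple}, an integer in $\{-n,\dots,n\}$. Pulling back by $\xi$ and integrating the resulting $\pi_1(\Sigma)$-invariant cocycle on $\partial\pi_1(\Sigma)$ over a fundamental class computes $\tau(\rho)$ up to the normalizing constant; because $|\tau(\rho)|\le -C(G)\chi(\Sigma)=n\,|\chi(\Sigma)|$ with \emph{equality} (maximality), and because the integrand is pointwise bounded in absolute value by $n$, the extremal value forces the integrand to be identically $\pm n$, i.e.\ $Q_v$ has signature $(n,0)$ (with the sign fixed by our choice of the positive extremal value of $\tau$). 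Since $\xi$ was only defined up to the choices above, one checks the orientation conventions so that the sign is $+n$, giving positive definiteness. Alternatively, invoking \cite{Burger_Iozzi_Wienhard_toledo} directly: the identification of the Toledo invariant with the integral of the Maslov/Bergmann cocycle along the limit curve is established there, and maximality is the equality case of the Milnor--Wood-type inequality, whose extremizers are exactly the cocycles taking the constant extremal value.

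\emph{Main obstacle.} The delicate point is Step~3: making precise the identification of the Toledo invariant with an integral over $\partial\pi_1(\Sigma)$ of the pulled-back Maslov cocycle (a Maslov/Bergmann cocycle statement, which is where one genuinely uses bounded cohomology and the properties of maximal representations), and fixing the normalizations and orientation conventions so that "extremal" translates to "$Q_v$ positive definite" rather than "negative definite" or "some fixed indefinite signature". Once the cocycle-theoretic bound with the correct constant $C(\Sp(2n,\RR))=n$ is in place, the equality case forces pointwise extremality by the standard averaging argument, and Step~2 promotes this from a.e.\ to everywhere by continuity. I would therefore organize the proof so that Steps~1--2 are elementary and self-contained, and Step~3 is reduced to citing the Milnor--Wood inequality and its equality case from \cite{Burger_Iozzi_Wienhard_toledo}, together with an explicit computation of the Maslov cocycle at one transverse triple to pin down the sign.
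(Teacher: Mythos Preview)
The paper does not give its own proof of this theorem: it is simply quoted as \cite[Theorem~8]{Burger_Iozzi_Wienhard_toledo} and used as a black box. So there is nothing in the paper to compare your argument against line by line.

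That said, your outline is a faithful sketch of the argument in the cited reference. Step~1 (continuity and pairwise transversality of $\xi$ from the Anosov property) and Step~2 (constancy of the signature on the connected set $\partial\pi_1(\Sigma)^{(3+)}$) are exactly right and elementary. Step~3 is indeed where the real content lies, and your identification of the Maslov cocycle of a transverse triple of Lagrangians with the signature of $Q_v$ is correct. The point that needs more care than you indicate is the phrase ``integrating the resulting $\pi_1(\Sigma)$-invariant cocycle on $\partial\pi_1(\Sigma)$ over a fundamental class'': the precise mechanism in \cite{Burger_Iozzi_Wienhard_toledo} is that the pullback via $\xi$ of the bounded K\"ahler class (represented on $\Ll$ by the Maslov cocycle) is compared, via the transfer map in bounded cohomology, with the bounded fundamental class of $\Sigma$, and maximality of $\tau(\rho)$ is exactly the statement that these agree up to the constant $n$. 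The equality case then forces the Maslov cocycle of $(\xi(t^s),\xi(t),\xi(t^u))$ to take its extremal value $n$ almost everywhere, and your Step~2 upgrades this to everywhere. So your plan is correct; just be aware that making ``integrate over a fundamental class'' precise is precisely the bounded-cohomology machinery developed in the cited paper, and is not something you can do by hand without it.
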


As a consequence of Proposition~\ref{prop:openano} we have
\begin{fact}\label{fact:curve}
  The positive limit curve $\xi : \partial \pi_1(\Sigma) \to \Ll$
  depends continuously on the representation.
\end{fact}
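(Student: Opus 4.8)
The plan is to obtain Fact~\ref{fact:curve} directly from Proposition~\ref{prop:openano}, the only subtlety being the passage from the Anosov section, which lives over the compact base $T^1\Sigma$, to the limit curve $\xi$, which lives over the boundary circle $\partial\pi_1(\Sigma)$. Throughout, the hyperbolic metric on $\Sigma$, the geodesic flow, the identification $T^1\widetilde\Sigma \cong \partial\pi_1(\Sigma)^{(3+)}$ and the circle $\partial\pi_1(\Sigma)$ are fixed once and for all; only the representation $\rho$ and the associated equivariant data vary. For a maximal representation $\rho$, write $\tilde\sigma_\rho \colon T^1\widetilde\Sigma \to \mathcal{X} \subset \Ll\times\Ll$ for the (geodesic-flow-invariant, $\rho$-equivariant) map corresponding to the Anosov section, as in Section~\ref{sec:terms-univ-cover}; by the construction recalled before Definition~\ref{def:positivetriple}, its first component at a triple $v = (t^s,t,t^u)$ depends only on $t^s$ and equals $\xi_\rho(t^s)$, i.e.\ $\mathrm{pr}_1\circ\tilde\sigma_\rho(t^s,t,t^u)=\xi_\rho(t^s)$ for the projection $\mathrm{pr}_1\colon\Ll\times\Ll\to\Ll$.

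Next I would unwind what Proposition~\ref{prop:openano} gives in the universal-cover picture: over the compact base $T^1\Sigma$, continuity of the Anosov section in $\rho$ means that, for a fixed compact fundamental domain $K\subset T^1\widetilde\Sigma$ for the $\pi_1(\Sigma)$-action, the assignment $\rho\mapsto \tilde\sigma_\rho|_K$ is continuous for the uniform topology; since finitely many $\pi_1(\Sigma)$-translates of $K$ cover any given compact subset of $T^1\widetilde\Sigma$ and $\rho\mapsto\rho(\gamma)$ is continuous, $\rho\mapsto\tilde\sigma_\rho|_C$ is then continuous for the uniform topology for every compact $C\subset T^1\widetilde\Sigma$. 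To transfer this to $\partial\pi_1(\Sigma)$, cover the circle $\partial\pi_1(\Sigma)$ by finitely many proper open arcs $I_1,\dots,I_N$ and, for each $j$, pick $p_j,q_j\in\partial\pi_1(\Sigma)$ with $(t^s,p_j,q_j)\in\partial\pi_1(\Sigma)^{(3+)}$ for all $t^s\in\overline{I_j}$ (possible once $I_j$ is a small enough arc); then $s_j\colon \overline{I_j}\to T^1\widetilde\Sigma$, $t^s\mapsto(t^s,p_j,q_j)$, is a continuous section of the weak-stable projection $(t^s,t,t^u)\mapsto t^s$ with compact image $C_j:=s_j(\overline{I_j})$, and $\xi_\rho|_{I_j}=\mathrm{pr}_1\circ\tilde\sigma_\rho\circ s_j$.

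Combining the two steps finishes the proof: if $\rho_k\to\rho_\infty$ in $\hommax(\pi_1(\Sigma),\Sp(2n,\RR))$, then $\tilde\sigma_{\rho_k}\to\tilde\sigma_{\rho_\infty}$ uniformly on each compact set $C_j$, hence $\xi_{\rho_k}=\mathrm{pr}_1\circ\tilde\sigma_{\rho_k}\circ s_j\to\xi_{\rho_\infty}$ uniformly on $I_j$, and since the $I_j$ cover $\partial\pi_1(\Sigma)$ this gives uniform convergence $\xi_{\rho_k}\to\xi_{\rho_\infty}$ on $\partial\pi_1(\Sigma)$ — the asserted continuity, the uniform topology being the natural one as $\partial\pi_1(\Sigma)$ and $\Ll$ are compact metrizable. (The same local formula $\xi_\rho|_{I_j}=\mathrm{pr}_1\circ\tilde\sigma_\rho\circ s_j$ also shows, for fixed $\rho$, that $\xi_\rho$ is continuous, the local expressions agreeing on overlaps because each equals $\xi_\rho$ by the first paragraph.) The one point I expect to require genuine care in a complete write-up is precisely this transfer: Proposition~\ref{prop:openano} controls the section over the base $T^1\Sigma$, whereas $\xi$ lives on $\partial\pi_1(\Sigma)$, which is not the base, so one must produce the local sections $s_j$ of the weak-stable foliation projection (equivalently, use that this projection is a topological fiber bundle with contractible fibers) and invoke $\rho$-equivariance to upgrade uniform control on a fundamental domain to uniform control on the compact sets $C_j$.
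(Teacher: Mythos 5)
Your argument is correct and is exactly the argument the paper intends: the paper offers no proof beyond the phrase ``As a consequence of Proposition~\ref{prop:openano}'', and your write-up simply makes explicit the transfer from continuity of the Anosov section over the compact base $T^1\Sigma$ to uniform continuity of $\xi_\rho$ on $\partial\pi_1(\Sigma)$ via local sections of the projection $(t^s,t,t^u)\mapsto t^s$. The point you flag as needing care (that $\partial\pi_1(\Sigma)$ is not the base) is handled correctly by your choice of the sections $s_j$ together with the fact, established in Section~\ref{sec:afewconseq}, that the first component of $\tilde\sigma_\rho$ depends only on $t^s$.
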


In the following we will often switch between the three different
viewpoints: $\GL(n,\RR)$-reduction of $\mathsf{P}$, splitting $E =
L^s(\rho) \oplus L^u(\rho)$ or (positive) equivariant curve $\xi
: \partial \pi_1(\Sigma) \to \Ll$.

\section{Examples of representations}\label{sec:examples}

In this section we provide examples of Anosov and maximal
representations. In Section~\ref{sec_anov_rep} we describe examples of
Anosov representations. In Section~\ref{sec_standard} we construct
maximal representations into $\Sp(2n, \RR)$ arising from embeddings of
subgroups. In Section~\ref{sec:amalagamatedRep} we construct other
examples of maximal representations based on
Theorem~\ref{thm:gluing_maximal}.  The maximal representations
constructed in this section will be considered throughout the paper.

\subsection{Anosov representations}
\label{sec_anov_rep}
We give examples of Anosov representations. By
Proposition~\ref{prop:openano} every small deformation of one of these
representations is again an Anosov representation.

\subsubsection{Hyperbolizations} 
Let $\Sigma$ be a connected oriented closed hyperbolic surface and $M=
T^1\Sigma$ its unit tangent bundle equipped with the geodesic flow
$\phi_t$.  Hyperbolizations give rise to discrete embeddings
$\pi_1(\Sigma) \to \PSL(2,\RR)$ which are examples of Anosov
representations (see \cite[Proposition 3.1]{Labourie_anosov}).  More
generally, a discrete embedding of $\pi_1(\Sigma)$ into any finite
cover $L$ of $\PSL(2,\RR)$ is an Anosov representation.  Since
$\PSL(2,\RR)$ has rank one there is no choice for the parabolic
subgroup.

Later we will be interested in particular in Anosov bundles arising
from discrete embeddings $\iota: \pi_1(\Sigma) \to \SL(2,\RR)$. In
that case $H \cong \GL(1,\RR)$ is the subgroup of diagonal matrices
and the $H$-reduction corresponds to a splitting of the flat
$\RR^2$-bundle over $T^1 \Sigma$ into two line bundles $L^s(\iota)
\oplus L^u(\iota)$.

\subsubsection{Hitchin representations}
A representation of $\pi_1(\Sigma)$ into a split real semisimple Lie
group $G$ is said to be a {\em Hitchin representation} if it can be
deformed into a representation $\pi_1(\Sigma) \xrightarrow{\iota} L
\xrightarrow{\tau} G$, where $L$ is a finite cover of $\PSL(2, \RR)$
and $\tau_*: \mathfrak{sl}(2, \RR) \to \mathfrak{g}$ is the principal
$\mathfrak{sl}(2, \RR)$ (see \cite[Section~4]{Hitchin} for more
details), and where the homomorphism $\iota: \pi_1(\Sigma) \to L$ is a
discrete embedding.  For the special case when $G= \SL(n,\RR),$
$\Sp(2m,\RR)$ or $\SO(m,m+1)$, the embedding $\tau: \SL(2,\RR) \to G$
is the $n$-dimensional irreducible representation of $\SL(2, \RR)$,
where $n = 2m$ when $G=\Sp(2m,\RR)$ and $n= 2m+1$ when
$G=\SO(m,m+1)$. Hitchin representations are $(G,H)$-Anosov, where $H$
is the intersection of two opposite minimal parabolic subgroups
\cite[Theorems~4.1, 4.2]{Labourie_anosov},
\cite[Theorem~1.15]{Fock_Goncharov}. For $G= \SL(n,\RR),$
$\Sp(2m,\RR)$ or $\SO(m,m+1)$, $H$ is the subgroup of diagonal
matrices, in particular, the $H$-reduction corresponds to a splitting
of the flat $\RR^n$-bundle over $T^1\Sigma$ into $n$ line bundles.

\subsubsection{Other examples}
\begin{asparaenum}
\item Any quasi-Fuchsian representation $\pi_1(\Sigma)\to \PSL(2,\CC)$
  is Anosov.
\item Embed $\SL(2,\RR)$ into $\PGL(3,\RR)$ as stabilizer of a point
  and consider the representation $\rho: \pi_1(\Sigma) \to \SL(2,\RR)
  \to \PGL(3,\RR)$. Then small deformations of $\rho$ are
  $(\PGL(3,\RR),H)$-Anosov where $H$ is the subgroup of diagonal
  matrices. These representations were studied in
  \cite{Barbot_anosov}.
\item Let $G$ be a semisimple Lie group, $G'<G$ a rank one subgroup,
  $\Lambda < G'$ a cocompact torsionfree lattice and $N =
  \Lambda\backslash G'/K$, where $K<G'$ is a maximal compact
  subgroup. Let $M = T^1 N$ be the unit tangent bundle of $N$. Then
  the composition $\pi_1(M) \to \Lambda \to G' < G$ is a
  $(G,H)$-Anosov representation, where $H$ is the identity component
  of the centralizer in $G$ of a real split Cartan subgroup in $G'$
  (compare with Lemma~\ref{lem_anosov_constructions}), see
  \cite[Prop.~3.1]{Labourie_anosov}.
\item In \cite{Barbot_quasifuchsian,Merigot} a notion of
  quasi-Fuchsian representations for a cocompact lattice $\Lambda <
  \SOcon(1,n)$ into $\SOcon(2,n)$ is introduced, and it is shown that
  these quasi-Fuchsian representations are Anosov representations.
\end{asparaenum}

\subsection{Standard maximal representations}
\label{sec_standard}

In this section we describe the construction of several maximal
representations
\[
\rho: \pi_1(\Sigma) \longrightarrow \Sp(2n,\RR)
\]
to which we will refer as {\em standard representations}. All these
representations come from homomorphisms of $\SL(2,\RR)$ into
$\Sp(2n,\RR)$, possibly twisted by a representation of $\pi_1(\Sigma)$
into the centralizer of the image of $\SL(2,\RR)$ in $\Sp(2n,\RR)$.
By construction, the image of any such representation will be
contained in a proper closed Lie subgroup of
$\Sp(2n,\RR)$.\footnote{More precisely, $\rho(\pi_1(\Sigma))$ will
  preserve a totally geodesic tight disk in the symmetric space
  associated with $\Sp(2n,\RR)$ (the notion of tight disk is not used
  in this paper, the interested reader is referred to
  \cite{Burger_Iozzi_Wienhard_tight}).}
 
Let us fix a discrete embedding $\iota:\pi_1(\Sigma) \to \SL(2,\RR)$.
\subsubsection{Irreducible Fuchsian representations}
Consider $V_0 =\RR_1[X,Y] \cong \RR^2$ the space of homogeneous
polynomials of degree one in the variables $X$ and $Y$, endowed with
the symplectic form determined by
\[
\omega_0(X,Y) = 1.
\]

The induced action of $\Sp(V_0)$ on $V = \sym^{2n-1} V_0 \cong
\RR_{2n-1}[X,Y]\cong \RR^{2n}$ preserves the symplectic form
$\omega_n= \sym^{2n-1} \omega_0 $, explicitly
\[ \omega_n(P_k, P_{l}) = 0 \text{ if } k+l \neq 2n-1 \text{ and }
\omega_n(P_k, P_{2n-1-k}) = \frac{(-1)^{k}}{ (2n-1)!},\] where $P_k =
X^{2n-1-k}Y^k/k!$.

This defines the $2n$-dimensional irreducible representation of
$\Sp(V_0) \cong\SL(2,\RR)$ into $\Sp(V) \cong\Sp(2n,\RR)$,
\[
\phi_{irr}: \SL(2,\RR) \longrightarrow \Sp(2n,\RR),
\]
which, by precomposition with $\iota: \pi_1(\Sigma)\to \SL(2,\RR)$,
gives rise to an {\em irreducible Fuchsian representation}
\[
\rho_{irr}: \pi_1(\Sigma) \longrightarrow \SL(2,\RR) \longrightarrow
\Sp(2n,\RR).
\]

\begin{nota}
  For a line bundle $L$ and a non-zero integer $n$, we use the
  notation $L^n$ for the line bundle that is the tensor product of $n$
  copies of $L$ when $n$ is positive or of $-n$ copies of $L^*$ if $n$
  is negative. By convention, $L^0$ is the trivial line bundle. The
  line bundles $L^n$ and $L^{-n}$ are naturally dual to each other.
\end{nota}

\begin{facts}\label{facts:irreducible}
  \begin{asparaenum}
  \item \label{item:lag_irr} Let $L^s(\iota)$ be the line bundle over
    $T^1\Sigma$ associated with the embedding $\iota:\pi_1(\Sigma) \to
    \SL(2,\RR)$, and $E_\iota$, $E_{ \rho_{irr}}$ the flat symplectic
    bundles over $T^1 \Sigma$. As $E_{ \rho_{irr}}= \sym^{2n-1}
    E_\iota$ and $E_\iota = L^s(\iota) \oplus L^u(\iota) = L^s(\iota)
    \oplus L^s(\iota)^{*} = L^s(\iota) \oplus L^s(\iota)^{-1}$, the
    Lagrangian reduction $L^s(\rho_{irr})$ over $T^1\Sigma$ associated
    with $\rho_{irr}$ is
    \[
    L^s(\rho_{irr}) = L^s(\iota)^{2n-1} \oplus L^s(\iota)^{2n-3}
    \oplus \cdots \oplus L^s(\iota).
    \]
  \item\label{item:splitting} When $n=2$, let us choose the symplectic
    identification $(\RR_3[X,Y], -\omega_2) \cong (\RR^4,\omega)$
    given by $X^3 = e_1, X^2 Y = -e_2 / \sqrt{3} , Y^3 = -e_3 , X Y^2
    = -e_4 / \sqrt{3}$ ($\omega$ was defined in
    Section~\ref{sec:prelim_maxrep_sp}).  With respect to this
    identification the irreducible representation $\phi_{irr}:
    \SL(2,\RR) \to \Sp(4,\RR)$ is given by the following formula
    \[ \phi_{irr} \left(
      \begin{array}{cc}
        a & b \\ c & d
      \end{array} \right) = \left(
      \begin{array}{cccc}
        a^3 & - \sqrt{3} a^2 b & -b^3 & -\sqrt{3} a b^2 \\
        -\sqrt{3} a^2 c & 2abc + a^2 d & \sqrt{3} b^2 d & 2 a b d + b^2 c
        \\
        -c^3 & \sqrt{3} c^2 d & d^3 & \sqrt{3} c d^2 \\
        -\sqrt{3} a c^2 & 2 a c d + b c^2 & \sqrt{3} b d^2 & 2 b c d + a d^2
      \end{array} \right). \]
    In particular
    $\phi_{irr}(\diag( e^m, e^{-m})) = \diag ( e^{3m}, e^m, e^{-3m}, e^{-m})$. 

    This choice has been made so that $(\phi_{irr})_*: \pi_1( \SL(2,
    \RR)) \to \pi_1(\Sp(4, \RR)) $ is the multiplication by $2$. Note
    that the more immediate identification $(\RR_3[X,Y], \omega_2)
    \cong (\RR^4,\omega)$ given by $X^3 = e_1, X^2 Y = -e_2 / \sqrt{3}
    , Y^3 = e_3 , X Y^2 = e_4 / \sqrt{3}$ would produce the morphism
    $\SL(2, \RR) \to \Sp(4, \RR)$ which is the conjugate of
    $\phi_{irr}$ by $\diag(1,1,-1,-1)$ (this last element is
    \emph{not} in $\Sp(4, \RR)$); however this morphism induces the
    multiplication by $-2$ at the level of fundamental groups.
  \end{asparaenum}
\end{facts}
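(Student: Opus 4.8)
\emph{Setup and item (i).} By construction $\phi_{irr}=\sym^{2n-1}$ of the defining representation of $\SL(2,\RR)=\Sp(V_0)$, so the associated flat bundle of $\rho_{irr}=\phi_{irr}\circ\iota$ is $E_{\rho_{irr}}=\sym^{2n-1}E_\iota$; since $\iota$ is Anosov, $E_\iota=L^s(\iota)\oplus L^u(\iota)$ topologically and the symplectic pairing gives $L^u(\iota)\cong L^s(\iota)^{-1}$, whence
\[
E_{\rho_{irr}}=\sym^{2n-1}\bigl(L^s(\iota)\oplus L^s(\iota)^{-1}\bigr)=\bigoplus_{k=0}^{2n-1}L^s(\iota)^{2n-1-2k}.
\]
What requires an argument is which of these line subbundles make up the Lagrangian reduction $L^s(\rho_{irr})$. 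I would argue at the level of the equivariant curve: writing $\ell=\xi_\iota(t^s)\subset V_0$, the stable flag of $\phi_{irr}$ at $t^s$ has as its Lagrangian component the subspace $\ell^{\,n}\cdot\sym^{n-1}V_0\subset\sym^{2n-1}V_0$, and this subspace depends only on $\ell$ (a line together with any complement spans $V_0$, so the complement drops out). Translated to $T^1\Sigma$ this subbundle is $L^s(\iota)^{n}\otimes\sym^{n-1}E_\iota=L^s(\iota)^{2n-1}\oplus L^s(\iota)^{2n-3}\oplus\cdots\oplus L^s(\iota)$. To see this is the \emph{stable} (attracting) Lagrangian and not the opposite one, evaluate on a closed geodesic: there $\iota(\gamma)$ is conjugate to $\diag(\lambda,\lambda^{-1})$ with $\lambda>1$, $L^s(\iota)$ is its $\lambda$-eigenline, $\phi_{irr}(\diag(\lambda,\lambda^{-1}))=\diag(\lambda^{2n-1},\lambda^{2n-3},\dots,\lambda^{-(2n-1)})$, and by Fact~\ref{fact:attracL} and Corollary~\ref{cor:anosov_holonomy} the attracting Lagrangian of the latter is spanned precisely by the eigenvectors with eigenvalues $\lambda^{2n-1},\dots,\lambda$. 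Finally, that $\bigoplus_{j>0}L^s(\iota)^j$ is genuinely a Lagrangian (not merely a rank-$n$ subbundle) is the observation that $\omega_n$ pairs $L^s(\iota)^j$ with $L^s(\iota)^{j'}$ only when $j+j'=0$ — this is the relation $\omega_n(P_k,P_l)=0$ unless $k+l=2n-1$ — so it is isotropic of rank $n$.

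\emph{Item (ii).} The displayed formula for $\phi_{irr}\left(\begin{smallmatrix}a&b\\c&d\end{smallmatrix}\right)$ is a routine expansion: let $\left(\begin{smallmatrix}a&b\\c&d\end{smallmatrix}\right)$ act on binary cubics by $X\mapsto aX+cY$, $Y\mapsto bX+dY$, expand the images of $X^3,X^2Y,XY^2,Y^3$ and rewrite in the basis $e_1,\dots,e_4$ of the stated identification; specializing to $\diag(e^m,e^{-m})$ gives the diagonal form, and $\phi_{irr}$ has image in $\Sp(4,\RR)$ because $\omega_2=\sym^3\omega_0$, hence also $-\omega_2$, is $\phi_{irr}$-invariant. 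For the map on fundamental groups, both $\pi_1(\SL(2,\RR))$ and $\pi_1(\Sp(4,\RR))$ are $\ZZ$, computed on the maximal compacts $\SO(2)$ and $\U(2)$ as the winding number of the determinant. The generator of $\pi_1(\SL(2,\RR))$ is $\theta\mapsto R_\theta$, $\theta\in[0,2\pi]$; complexified, $R_\theta$ has eigenvector $u=X-iY$ with eigenvalue $e^{i\theta}$ and $v=X+iY$ with eigenvalue $e^{-i\theta}$, so on $\sym^3V_0\otimes\CC$ it is $\diag(e^{3i\theta},e^{i\theta},e^{-i\theta},e^{-3i\theta})$ in the basis $u^3,u^2v,uv^2,v^3$. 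The compact circle $\phi_{irr}(\SO(2))$ lies in a maximal compact $\U(2)\subset\Sp(4,\RR)$, i.e.\ preserves some $\omega$-compatible complex structure; such a structure commutes with the circle, so its $+i$-eigenspace is spanned by one eigenline from each conjugate pair $\{u^3,v^3\}$ and $\{u^2v,uv^2\}$, and compatibility with $\omega$ — equivalently, positive-definiteness of the Hermitian form $w\mapsto-i\,\omega(w,\bar w)$, which with the chosen signs pins it down — singles out $\langle u^3,uv^2\rangle$. On this $\CC^2$ the element $\phi_{irr}(R_\theta)$ acts by $\diag(e^{3i\theta},e^{-i\theta})$, of determinant $e^{2i\theta}$, which winds twice over $[0,2\pi]$; hence $(\phi_{irr})_*$ is multiplication by $2$. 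The concluding remark follows at once: the ``more immediate'' identification differs from this one by conjugation by $\diag(1,1,-1,-1)$, which carries $\omega$ to $-\omega$ and so reverses the chosen generator of $\pi_1(\Sp(4,\RR))$, turning multiplication by $2$ into multiplication by $-2$.

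\emph{Main obstacle.} The delicate point is the sign bookkeeping in (ii): one must verify that the specific minus signs in the identification $X^2Y=-e_2/\sqrt3$, $Y^3=-e_3$, $XY^2=-e_4/\sqrt3$, together with the use of $-\omega_2$, are exactly what makes the $\omega$-compatible circle-invariant complex structure have $+i$-eigenspace $\langle u^3,uv^2\rangle$ rather than its conjugate $\langle v^3,u^2v\rangle$, so that the winding number comes out $+2$ and not $-2$; this is settled by a short explicit computation of $\omega_2(u^3,v^3)$ and $\omega_2(u^2v,uv^2)$. In (i) the only mild care needed is to identify the Anosov subbundle cleanly: using the description of the limit curve as $t\mapsto\ell(t)^{n}\cdot\sym^{n-1}V_0$ avoids an unnecessary density-and-closure argument over closed orbits.
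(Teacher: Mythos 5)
Your verification is correct, and it fills in computations that the paper simply asserts (the statement is given as a ``Facts'' environment with no proof): the symmetric-power decomposition and the eigenvalue check on closed geodesics for (i), and the direct expansion plus the winding-number computation on the compact circle for (ii) are exactly what the paper's conventions are set up to produce. The one step you defer --- pinning down which of $\langle u^3,uv^2\rangle$ and $\langle v^3,u^2v\rangle$ is the $+i$-eigenspace via the definiteness of $-i\,\omega(w,\bar w)$ --- is indeed the entire content of the sign, but you have located it precisely, and your closing observation that $\diag(1,1,-1,-1)$ is anti-symplectic and hence acts by $-1$ on $\pi_1(\Sp(4,\RR))$ correctly accounts for the $+2$ versus $-2$ discrepancy between the two identifications.
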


\subsubsection{Diagonal Fuchsian representations} 
Let
\[
\RR^{2n}= W_1 \oplus \cdots \oplus W_{n}
\]
with $W_i = \Span(e_i, e_{n+i})$ be a symplectic splitting of
$\RR^{2n}$.  Identifying $W_i \cong \RR^2$, this splitting gives rise
to an embedding
\[
\psi: \SL(2,\RR)^n \longrightarrow \Sp(W_1) \times \cdots \times
\Sp(W_{n}) \subset \Sp(2n,\RR).
\]
Precomposing with the diagonal embedding of $\SL(2,\RR) \to
\SL(2,\RR)^n$ we obtain the diagonal embedding
\[
\phi_\Delta: \SL(2,\RR) \longrightarrow \Sp(2n,\RR).
\]
Precomposition with $\iota: \pi_1(\Sigma) \to \SL(2,\RR)$ gives rise
to a \emph{diagonal Fuchsian representation}
\[
\rho_\Delta: \pi_1(\Sigma) \longrightarrow \SL(2,\RR) \longrightarrow
\Sp(2n,\RR).
\]

\begin{facts}\label{facts:diagonal}
  \begin{asparaenum}
  \item\label{item:lag_diag} Let $L^s(\iota)$ be the Lagrangian line
    bundle over $T^1\Sigma$ associated with $\iota: \pi_1(\Sigma) \to
    \SL(2,\RR)$, then the Lagrangian reduction $L^s(\rho_\Delta)$ of
    the flat symplectic $\RR^{2n}$-bundle over $T^1\Sigma$ associated
    with $\rho_{\Delta}$ is given by
    \[
    L^s(\rho_\Delta) = L^s(\iota)\oplus \cdots \oplus L^s(\iota).
    \]
  \item\label{item:embedding_psi}When $n=2$ and with respect to the
    symplectic basis $(e_i)_{i=1, \dots,4}$ the map $\psi$ is given by
    the following formula
    \[ \psi\left( \left(
        \begin{array}{cc}
          a & b \\ c & d
        \end{array}\right) , \left(
        \begin{array}{cc}
          \alpha & \beta \\ \gamma & \delta
        \end{array}\right) \right) = \left(
      \begin{array}{cccc}
        a & 0 & b & 0 \\ 0 & \alpha & 0 & \beta \\ c & 0 & d & 0 \\ 0 &
        \gamma & 0 & \delta
      \end{array}\right ).\]
    This choice has been made so that, for the diagonal embedding
    $\phi_\Delta$, the map $(\phi_{\Delta})_*$ is the multiplication
    by $2$.
  \end{asparaenum}
\end{facts}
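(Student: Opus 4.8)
The plan is to verify each of the two assertions directly from the construction. For point~(\ref{item:lag_diag}), I would recall that the Lagrangian reduction $L^s(\rho)$ of a symplectic Anosov bundle is built fiberwise from the equivariant limit curve $\xi$ via the recipe recalled just above Notation~\ref{defi:Lagrangianreduction} and in Section~\ref{sec:prelim_maxrep_sp}: for $v = (t^s, t, t^u)$, one sets $(L^s(\rho))_v = \xi(t^s)$. For a Fuchsian representation $\iota \colon \pi_1(\Sigma) \to \SL(2,\RR)$, the limit curve $\xi_\iota \colon \partial\pi_1(\Sigma) \to \POR$ is the boundary map sending $t^s$ to the attracting line of $\iota(\gamma)$ when $t^s$ is the attracting fixed point of $\gamma$; this gives exactly the line subbundle $L^s(\iota) \subset E_\iota$. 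Since $\rho_\Delta = \phi_\Delta \circ \iota$ and $\phi_\Delta$ is the diagonal embedding into $\Sp(W_1)\times\cdots\times\Sp(W_n)$, the flat $\RR^{2n}$-bundle $E_{\rho_\Delta}$ splits $\phi_\Delta(\pi_1(\Sigma))$-equivariantly as $E_\iota \otimes \RR^n$ (one copy of $E_\iota$ in each $W_i$), and the attracting Lagrangian of $\phi_\Delta(\iota(\gamma))$ is the direct sum of the attracting lines in each $W_i$. Passing to the associated bundles over $T^1\Sigma$ gives $L^s(\rho_\Delta) = L^s(\iota) \oplus \cdots \oplus L^s(\iota)$ ($n$ summands), as claimed. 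Uniqueness of the Anosov section (Proposition~\ref{prop:uniqsigma}) guarantees this is the Lagrangian reduction and not merely a flow-invariant splitting.

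For point~(\ref{item:embedding_psi}), the formula for $\psi$ is immediate from the definition: with $W_1 = \Span(e_1, e_3)$ and $W_2 = \Span(e_2, e_4)$ and the identifications $W_i \cong \RR^2$, a pair of $\SL(2,\RR)$-matrices acts block-diagonally, and writing out the action on the ordered basis $(e_1, e_2, e_3, e_4)$ produces exactly the displayed $4\times 4$ matrix (the index interleaving $1,3$ versus $2,4$ accounts for its non-block-diagonal appearance). The only substantive point is the normalization claim that $(\phi_\Delta)_*$ is multiplication by $2$ on $\pi_1$. For this I would evaluate $\phi_\Delta$ on the standard generator of $\pi_1(\SL(2,\RR)) \cong \pi_1(\SO(2)) \cong \ZZ$, namely the loop $\theta \mapsto R(\theta) = \begin{psmallmatrix} \cos\theta & -\sin\theta \\ \sin\theta & \cos\theta\end{psmallmatrix}$, $\theta \in [0,\pi]$ (a half-turn, which generates since $R(\pi) = -\id$ and $-\id$ is the nontrivial element of the center, with $R|_{[0,2\pi]}$ being twice the generator — more precisely the generator of $\pi_1(\SL(2,\RR))$ corresponds to $\theta \in [0,2\pi]$ under the standard maximal compact). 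Then $\psi(R(\theta), R(\theta))$ is the rotation in $\Sp(4,\RR)$ acting by $R(\theta)$ on each $W_i$ simultaneously; under the standard identification $\pi_1(\Sp(2n,\RR)) \cong \ZZ$ via the Maslov class (equivalently via $\det_{\CC}$ on the unitary maximal compact $\U(n)$), a simultaneous rotation by angle $\theta$ on $n$ orthogonal symplectic planes has $\det_{\CC} = e^{in\theta}$, so the loop wraps $n$ times; for $n=2$ this is multiplication by $2$.

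The main (and only real) obstacle is bookkeeping around the identification $\pi_1(\Sp(2n,\RR)) \cong \ZZ$ and the sign/normalization conventions, so that "multiplication by $2$" comes out with the correct sign and matches the convention fixed for $\phi_{irr}$ in Facts~\ref{facts:irreducible}.(\ref{item:splitting}); this is the point one must be careful about, but it is a routine unwinding of the Maslov-index normalization rather than a conceptual difficulty. Everything else is linear algebra and the fiberwise description of the Lagrangian reduction.
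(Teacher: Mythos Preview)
Your proposal is correct, and the paper offers no proof at all: these are stated as \emph{Facts} (in the definition theorem style) and treated as immediate from the construction. Your verification is exactly the natural unwinding the authors leave implicit.

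One small expository wrinkle: in your discussion of $(\phi_\Delta)_*$ you first describe the loop $R(\theta)$ for $\theta\in[0,\pi]$ as the generator, but this is not a closed loop in $\SL(2,\RR)$ since $R(\pi)=-\id\neq\id$. Your parenthetical self-correction (``the generator of $\pi_1(\SL(2,\RR))$ corresponds to $\theta\in[0,2\pi]$'') is the right statement, and your actual computation via $\det_{\CC}=e^{in\theta}$ on $\U(n)$ uses this and is correct; just clean up the earlier sentence so it does not mislead.
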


\subsubsection{Twisted diagonal representations}
We now vary the construction of the previous subsection.  For this
note that the image $\phi_\Delta(\SL(2,\RR)) < \Sp(2n,\RR)$ has a
fairly large centralizer, which is a compact subgroup of $\Sp(2n,\RR)$
isomorphic to $\O(n)$.
\begin{remark}
  For any maximal representation $\rho: \pi_1(\Sigma) \to \Sp(2n,\RR)$
  the centralizer of $\rho(\pi_1(\Sigma))$ is a subgroup of $\O(n)$.
  This is because the centralizer of $\rho(\pi_1(\Sigma))$ fixes
  pointwise the positive curve in the space of Lagrangians. In
  particular it will be contained in the stabilizer of one positive
  triple of Lagrangians which is isomorphic to $\O(n)$.
\end{remark}

That the centralizer of $\phi_\Delta(\SL(2,\RR))$ is precisely $\O(n)$
can be seen in the following way. Let $(W,q)$ be an $n$-dimensional
vector space equipped with a definite quadratic form $q$ and let again
$V_0 = \RR^{2}$ with its standard symplectic form $\omega_0$. The
tensor product $V_0 \otimes W$ inherits the bilinear nondegenerate
form $\omega_0 \otimes q$ which is antisymmetric, so that we can
choose a symplectic identification $\RR^{2n} \cong V_0 \otimes W$.
This gives an embedding
\[
\SL(2,\RR) \times \O(n) \cong \Sp(V_0) \times \O(W,q)
\xrightarrow{\phi_\Delta} \Sp(2n,\RR),
\]
which extends the morphism $\phi_\Delta$ defined above.

Now given $\iota: \pi_1(\Sigma) \to \SL(2, \RR)$ and a representation
$\Theta: \pi_1(\Sigma) \to \O(n)$, we set
\begin{align*}
  \rho_{\Theta}= \iota \otimes \Theta: \pi_1(\Sigma) &\longrightarrow \Sp(2n,\RR)\\
  \gamma &\longmapsto \phi_\Delta( \iota(\gamma), \Theta(\gamma)).
\end{align*}
We will call such a representation a \emph{twisted diagonal
  representation}.
\begin{facts}\label{facts:twisted}
  \begin{asparaenum}
  \item The flat bundle $E$ over $\Sigma$ associated with
    $\rho_{\Theta}:\pi_1(\Sigma) \to \Sp(2n,\RR) \cong \Sp(V_0 \otimes
    W)$ is of the form
    \[ E = E_0 \otimes W,\] where (with a slight abuse of notation)
    $W$ is the flat $n$-plane bundle associated with $\Theta$ and
    $E_0$ the flat $2$-plane bundle over $\Sigma$ associated with
    $\iota$.
  \item\label{item:lag_twisted} Let $L^s(\iota)$ be the line bundle
    over $T^1\Sigma$ associated with $\iota$.  Let $\overline{W}$
    denote the flat $n$-plane bundle over $T^1\Sigma$ given by the
    pullback of $W$.  Then the Lagrangian reduction $L^s(\rho_\Theta)$
    is the tensor product
    \[
    L^s(\rho_\Theta) = L^s(\iota)\otimes \overline{W}.
    \]
  \end{asparaenum}
\end{facts}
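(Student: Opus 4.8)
The plan is to prove the two items of Facts~\ref{facts:twisted} by unwinding the definition of the flat bundle attached to a representation together with the construction of the Lagrangian reduction out of the limit curve. Neither item needs more than bookkeeping, apart from one small point isolated at the end.

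For item~(i): by construction the embedding $\Sp(V_0)\times\O(W,q)\hookrightarrow\Sp(V_0\otimes W)$ sends $(g,h)$ to $g\otimes h$, so $\rho_\Theta(\gamma)$ acts on $V_0\otimes W$ exactly as $\iota(\gamma)\otimes\Theta(\gamma)$. The flat bundle of $\rho_\Theta$ is $\widetilde\Sigma\times_{\pi_1(\Sigma)}(V_0\otimes W)$, where $\gamma$ acts on the base by deck transformations and on the fibre by $\iota(\gamma)\otimes\Theta(\gamma)$; the tautological assignment $(\tilde x,\,u\otimes w)\mapsto\bigl((\tilde x,u),(\tilde x,w)\bigr)$ then descends to an isomorphism $E\cong E_0\otimes W$ of flat bundles. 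First I would simply record this, observing that it is the standard identification of the monodromy bundle of a tensor product of representations with the tensor product of the monodromy bundles; the only input is the relation $\rho_\Theta=\iota\otimes\Theta$.

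For item~(ii): recall from Section~\ref{sec:prelim_maxrep_sp} that the Lagrangian reduction $L^s(\rho_\Theta)\subset\pi^*E$ over $T^1\Sigma$ has fibre $\xi_{\rho_\Theta}(t^s)$ over $v=(t^s,t,t^u)\in\partial\pi_1(\Sigma)^{(3+)}$, where $\xi_{\rho_\Theta}\colon\partial\pi_1(\Sigma)\to\Ll$ is the limit curve, so it suffices to identify this curve. The key observation is that $\ell\mapsto\ell\otimes W$ defines a $\phi_\Delta$-equivariant map from $\PP(V_0)$, the flag variety of $\SL(2,\RR)$, to $\Ll$: a line $\ell\subset V_0$ is automatically $\omega_0$-isotropic, so $\ell\otimes W$ is isotropic for $\omega_0\otimes q$, and $\dim(\ell\otimes W)=n=\tfrac12\dim(V_0\otimes W)$, so $\ell\otimes W$ is a Lagrangian, while $(g\otimes h)(\ell\otimes W)=(g\ell)\otimes W$. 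Composing this map with the limit curve $\xi_\iota\colon\partial\pi_1(\Sigma)\to\PP(V_0)$ of $\iota$ produces a continuous, $\rho_\Theta$-equivariant curve $\eta\mapsto\xi_\iota(\eta)\otimes W$; once one verifies the contraction property, uniqueness of the Anosov section (Proposition~\ref{prop:uniqsigma}, cf.\ Corollary~\ref{cor:anosov_holonomy}) forces it to be $\xi_{\rho_\Theta}$. Using the isomorphism $\pi^*E\cong(\pi^*E_0)\otimes\overline W$ pulled back from item~(i), evaluating fibrewise over $v$ gives $(L^s(\rho_\Theta))_v=\xi_\iota(t^s)\otimes W=(L^s(\iota))_v\otimes\overline W_v$; since this identification is the one of item~(i), it is $\pi_1(\Sigma)$-equivariant and globalizes to $L^s(\rho_\Theta)=L^s(\iota)\otimes\overline W$.

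The single step that is not pure bookkeeping---and the part I expect to cost a line of genuine argument---is the contraction property of $L^s(\iota)\otimes\overline W$ under the geodesic flow. This is where the hypothesis that $\Theta$ has image in the compact group $\O(n)$ enters: the flat bundle $\overline W$ carries a parallel, hence geodesic-flow-invariant, Euclidean metric, so with the tensor-product norm the exponential contraction of $\phi_t$ on $L^s(\iota)$ (valid because $\iota$ is Anosov) transfers verbatim to $L^s(\iota)\otimes\overline W$. Alternatively one can bypass this by invoking the twisting and push-forward constructions of Lemma~\ref{lem_anosov_constructions}, which yield directly that $\rho_\Theta$ is Anosov with canonical reduction obtained from $\mathsf P_H\times E_\Theta$, leaving only the fibrewise identification above. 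Beyond this, the proof of Facts~\ref{facts:twisted} is routine.
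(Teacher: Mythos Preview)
Your proposal is correct. The paper states these as ``Facts'' without proof, treating both items as immediate from the definitions; your argument supplies exactly the bookkeeping the paper leaves implicit, and the one point you single out---that compactness of $\O(n)$ gives $\overline{W}$ a parallel metric so the contraction on $L^s(\iota)$ transfers to $L^s(\iota)\otimes\overline{W}$---is indeed the only step with any content.
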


\subsubsection{Standard representations for other groups} 
Let $G$ be an almost simple Lie group of Hermitian type of real rank
$n$. Then there exists a unique (up to conjugation by $G$) embedding
$\mathfrak{sl}(2,\RR)^n \to \mathfrak{g}$, which gives, at the level
of Lie groups, a finite-to-one morphism $L^n \to G$ where $L$ is a
finite cover of $\PSL(2, \RR)$. We call the precomposition of such an
embedding with the diagonal embedding $L \to L^n$ a diagonal embedding
\[
\phi_\Delta: L \longto G.
\]
The centralizer of $\phi_\Delta(L)$ in $G$ is always a compact
subgroup $K'$ of $G$.

The composition $\rho_\Delta = \phi_\Delta\circ \iota: \pi_1(\Sigma)
\to G$ is a maximal representation. Given a representation $\Theta:
\pi_1(\Sigma) \to K'$ we can again define a \emph{twisted diagonal
  representation}
\begin{align*}
  \rho_{\Theta}: \pi_1(\Sigma) &\longto G\\
  \gamma &\longmapsto \rho_\Delta(\gamma) \cdot \Theta(\gamma).
\end{align*}

\begin{remark}
  In the general case the subgroup $K'$ can be characterized as being
  the intersection of a maximal compact subgroup $K$ in $G$ with the
  subgroup $H<G$, which is the stabilizer of a pair of transverse
  points in the Shilov boundary of the symmetric space associated with
  $G$. Equivalently, $K'$ is the stabilizer in $G$ of a maximal triple
  of points in the Shilov boundary. (For the definition of maximal
  triples see \cite[Section~2.1.3]{Burger_Iozzi_Wienhard_toledo}).
\end{remark}

\subsection{Amalgamated representations}
\label{sec:amalagamatedRep}

Due to Theorem~\ref{thm:gluing_maximal} we can construct maximal
representations of $\pi_1(\Sigma)$ by amalgamation of maximal
representations of the fundamental groups of subsurfaces.

Let $\Sigma = \Sigma_l \cup_{\gamma} \Sigma_r$ be a decomposition of
$\Sigma$ along a simple closed separating oriented geodesic $\gamma$
into two subsurfaces $\Sigma_l$, lying to the left of $\gamma$, and
$\Sigma_r$, lying to the right of $\gamma$. Then $\pi_1(\Sigma)$ is
isomorphic to $\pi_1(\Sigma_l) *_{\langle\gamma\rangle}
\pi_1(\Sigma_r)$, where we identify $\gamma$ with the element it
defines in $\pi_1(\Sigma)$.

We will call a representation constructed by amalgamation of two
representations $\rho_l: \pi_1(\Sigma_l) \to G$ and
$\rho_r:\pi_1(\Sigma_r) \to G$ with $\rho_l(\gamma) = \rho_r(\gamma)$
an \emph{amalgamated representation} $\rho = \rho_l*\rho_r:
\pi_1(\Sigma) \to G$. By Theorem~\ref{thm:gluing_maximal}, the
amalgamated representation $\rho$ is maximal if and only if $\rho_l$
and $\rho_r$ are maximal.\footnote{Note that it is important that the
  Toledo invariant for both $\rho_l$ and $\rho_r$ are of the same sign
  (and also that the orientations of $\Sigma_l$ and $\Sigma_r$
  ---involved in the definition of the Toledo number--- are compatible
  with the orientation of $\Sigma$). Amalgamating a maximal
  representation with a minimal representation does not give rise to a
  maximal representation.}

\subsubsection{Hybrid representations}
\label{sec:description_hybrid}\label{sec:dec_surf}\label{sec:the_rep_hybrid}
In this section we describe the most important class of maximal
representations $\rho: \pi_1(\Sigma) \to \Sp(4,\RR)$ obtained via
amalgamation.  We call these representation \emph{hybrid
  representations} to distinguish them from general amalgamated
representations.

Let $\iota: \pi_1(\Sigma) \to \SL(2,\RR)$ be a discrete embedding. The
basic idea of the construction of hybrid representations is to
amalgamate the restriction to $\Sigma_l$ of the irreducible Fuchsian
representation $\rho_{irr} = \phi_{irr} \circ \iota$ and the
restriction to $\Sigma_r$ of the diagonal Fuchsian representation
$\rho_{\Delta} = \phi_{\Delta} \circ \iota$. This does not directly
work because the holonomies of $\rho_{irr}$ and $\rho_\Delta$ along
$\gamma$ do not agree, but a slight modification works.

\begin{remark} 
  This idea of using two different embeddings of $\SL(2, \RR)$ to
  construct amalgamated representations was previously used in the
  paper \cite{Goldman_Kapovich_Leeb} to construct representations
  $\pi_1(\Sigma) \to \PU(2,1)$ of every possible Toledo number (see in
  particular the concept of ``hybrid surfaces''
  \cite[Sec.~2.4]{Goldman_Kapovich_Leeb}).
\end{remark}

Assume that $\iota(\gamma) = \diag( e^m, e^{-m})$ with $m>0$. Set
\[ {\rho}_l := \phi_{irr} \circ \iota: \pi_1(\Sigma) \to \Sp(4,\RR),
\]
with $\phi_{irr}$ defined in
Facts~\ref{facts:irreducible}.(\ref{item:splitting}).  Then
${\rho}_l(\gamma) = \diag(e^{3m}, e^m, e^{-3m}, e^{-m})$.

Let $( \tau_{1,t})_{t \in [0,1]}$ and $( \tau_{2,t})_{t \in [0,1]}$ be
continuous paths of discrete embeddings $\pi_1( \Sigma) \to \SL(2,
\RR)$ such that $\tau_{1,0} = \tau_{2,0} = \iota$, and, for all $t \in
[0,1]$,
\[ \tau_{1,t}( \gamma ) = \left(
  \begin{array}{cc}
    e^{l_{1,t}} & \\ & e^{-l_{1,t}}
  \end{array}\right) \text{ and } \tau_{2,t}( \gamma ) = \left(
  \begin{array}{cc}
    e^{l_{2,t}} & \\ & e^{-l_{2,t}}
  \end{array}\right),
\]
where $l_{1,t} > 0$ and $l_{2,t} > 0$, $l_{1,0} = l_{2,0} = m$,
$l_{1,1} = 3m$ and $l_{2,1} = m$. The existence of $\tau_{i,t}$ is a
classical fact from hyperbolic geometry, for the reader's convenience
we include the statement we are using in Lemma~\ref{lem:deformation}.
Set
\[{\rho}_r:= \psi \circ (\tau_{1,1}, \tau_{2,1}): \pi_1(\Sigma)
\longrightarrow \Sp(4,\RR),
\]
with $\psi$ defined by
Facts~\ref{facts:diagonal}.(\ref{item:embedding_psi}).  Then ${\rho}_r
$ is a continuous deformation of $\phi_\Delta \circ \iota$ which
satisfies $\rho_r(\gamma) = \rho_l(\gamma)$.

\begin{defi}\label{defi:hybrid} 
  A {\em hybrid representation} is the amalgamated representation
  \begin{equation*}
    \rho:=  {{\rho}_l}|_{\pi_1(\Sigma_l)} * {{\rho}_r}|_{\pi_1(\Sigma_r)}:
    \pi_1(\Sigma)  = \pi_1(\Sigma_l) *_{\langle\gamma\rangle}
    \pi_1(\Sigma_r) \longrightarrow \Sp(4,\RR).
  \end{equation*}
  If $\chi(\Sigma_l) = k$ we call the representation a $k$-hybrid
  representation.
\end{defi}

Since $ {{\rho}_l}|_{\pi_1(\Sigma_l)}$ and
${{\rho}_r}|_{\pi_1(\Sigma_r)}$ are maximal representations, the
representation $\rho$ is maximal (see
Theorem~\ref{thm:gluing_maximal}).

\begin{remark}\label{rema_choices_embed} 
  The special choices for the embeddings $\phi_{irr}$ and $\psi$
  (Facts~\ref{facts:irreducible}.\eqref{item:splitting} and
  \ref{facts:diagonal}.\eqref{item:embedding_psi}) will be important
  for the calculation of the Euler class of the Anosov reduction of
  the hybrid representation. Obviously one can always change one of
  this two embeddings by conjugation by an element of the centralizer
  of $\rho( \gamma)$, i.e.\ an element, which is of the form
  $\diag(a,b,a^{-1}, b^{-1})$, these representations are also maximal.
\end{remark}

In order to keep track of this situation we define

\begin{defi}
  \label{defi_good_triples}
  Let $\gamma$ be a loop\footnote{The loop $\gamma$ does not need to
    be separating or simple.} on $\Sigma$ and let ${\rho}_l$ and
  ${\rho}_r$ two representations of $\pi_1(\Sigma)$ into $\Sp(4,\RR)$
  with ${\rho}_l(\gamma) = {\rho}_r(\gamma)$ and such that ${\rho}_l$
  is a Hitchin representation and ${\rho}_r$ is a maximal
  representation into $\SL(2, \RR) \times \SL(2, \RR)<\Sp(4,\RR)$.

  The pair $({\rho}_l,{\rho}_r)$ is said to be \emph{positively
    adjusted with respect to $\gamma$} if there exists a symplectic
  basis $(\epsilon_i)_{i=1,\dots,4}$ and continuous deformations
  $({\rho}_{l,t})_{t\in [0,1]}$ and $({\rho}_{r,t})_{t\in [0,1]}$ such
  that:
  \begin{itemize}
  \item ${\rho}_{l,1}= {\rho}_l$ and ${\rho}_{r,1}= {\rho}_r$,
  \item ${\rho}_{l,0} = \phi_{irr} \circ \iota $ is an irreducible
    Fuchsian representation, ${\rho}_{r,0} = \phi_\Delta \circ \iota$
    is a diagonal Fuchsian representation and for each $t$ in $[0,1]$
    ${\rho}_{r,t}$ is a maximal representation into $\SL(2, \RR)
    \times \SL(2, \RR)<\Sp(4,\RR)$,
  \item for each $t$, $\rho_{l,t}( \gamma)$ and $\rho_{r,t}( \gamma)$
    are diagonal in the base $( \epsilon_i)$.
  \end{itemize}

  The pair $({\rho}_l,{\rho}_r)$ is said to be \emph{negatively
    adjusted with respect to $\gamma$} if the pair $(g {\rho}_l
  g^{-1},{\rho}_r)$ is positively adjusted where $g$ is diagonal in
  the base $( \epsilon_i)_{i=1,\dots,4}$ with eigenvalues of opposite
  signs, i.e.\ $g= \diag(a,b,a^{-1}, b^{-1})$ with $ab<0$.
\end{defi}

\begin{remark}
  In fact, it is not really necessary here that the representations
  ${\rho}_l$ and ${\rho}_r$ are representations of $\pi_1( \Sigma)$,
  since only their restrictions to $\pi_1(\Sigma_l)$ or
  $\pi_1(\Sigma_r)$ are considered.
\end{remark}

Since a hybrid representation is maximal the associated flat
symplectic $\RR^4$-bundle $E$ admits a Lagrangian splitting $E =
L^s(\rho) \oplus L^u(\rho)$. We are unable to describe the Lagrangian
bundles $L^s (\rho)$ and $L^u(\rho)$ explicitly as we did above for
standard maximal representations. Nevertheless, computing the
topological invariants (see Section~\ref{sec:computations}) we defined
for Anosov representations, we will be able to determine the
topological type of $L^s(\rho)$. The topological type will indeed only
depend on the Euler characteristic of $\Sigma_l$.

\subsubsection{Hybrid representations: general construction}
\label{sec:gener-hybr-repr}
In the construction above we decompose $\Sigma$ along one simple
closed separating geodesic, so the Euler characteristic of $\Sigma_l$
will be odd. To obtain $k$-hybrid representations of $\pi_1(\Sigma)$
for all $ \chi(\Sigma)+ 1\leq k \leq -1$ we have to consider slightly
more general decompositions of $\Sigma$, in particular $\Sigma_l$ or
$\Sigma_r$ might not be connected.

Let us fix some notation to describe this more general construction.
Let $\Sigma$ be closed oriented surface of genus $g$ and $\Sigma_1
\subset \Sigma$ be a subsurface with Euler characteristic equal to
$k$.

The (non-empty) boundary $\partial \Sigma_1$ is the union of disjoint
circles $\gamma_d$ for $d \in \pi_0( \partial \Sigma_1)$. We orient
the circles so that, for each $d$, the subsurface $\Sigma_1$ is on the
left of $\gamma_d$.  Write the surface $\Sigma\moins \partial
\Sigma_1$ as the union of its connected components:
\[
\Sigma\moins \partial \Sigma_1 = \bigcup_{c \in \pi_0( \Sigma
  \moins \partial \Sigma_1)} \Sigma_c.
\]
For any $d$ in $\pi_0( \partial \Sigma_1)$ the curve $\gamma_d$ bounds
exactly $2$ connected components of $\Sigma \moins \partial \Sigma_1$:
one is included in $\Sigma_1$ and denoted by $\Sigma_{l(d)}$ with
$l(d)$ in $\pi_0(\Sigma_1)$; the other one is included in the
complement of $\Sigma_1$ and denoted by $\Sigma_{r(d)}$ with $r(d)$ in
$\pi_0(\Sigma \moins \Sigma_1)$.  Note that $l(d)$ and $r(d)$ are
elements of $\pi_0( \Sigma \moins \partial \Sigma_1)$. In particular,
${l(d)}$ might equal ${l(d')}$ for some $d\neq d'$; similarly for
${r(d)}$ and ${r(d')}$.

We assume that
\begin{itemize}
\item The graph with vertex set $\pi_0( \Sigma \moins \partial
  \Sigma_1)$ and edges given by the pairs $\{ (l(d), r(d))\}_{d \in
    \pi_0(\partial \Sigma_1)}$ is a tree.
\end{itemize}

The fundamental group $ \pi_1( \Sigma)$ can be described as the
amalgamated product of the groups $\pi_1( \Sigma_c)$, $c$ in
$\pi_0(\Sigma \moins \partial \Sigma_1)$, over the groups $\pi_1(
\gamma_d)$, $d\in \pi_0( \partial \Sigma_1 )$.  The above assumption
ensures that no HNN-extensions appear in this description.

\medskip

With these notations, we can now define general $k$-hybrid
representations.  For each $c$ in $\pi_0 (\Sigma_1)$ we choose a
representation
\[\rho_{c} : \pi_1( \Sigma) \longrightarrow \Sp( 4, \RR)\]
belonging to one of the $2^{2g}$ Hitchin components.  We set with a
slight abuse of notation $ \rho_c = {\rho}_c |_{\pi_1( \Sigma_{c})}$
for each $c$ in $\pi_0 (\Sigma_1)$.

For any $d$ in $\pi_0( \partial \Sigma_1)$, $\rho_{l(d)}(\gamma_d)$
(note that $l(d) \in \pi_0( \Sigma_1)$) is conjugate to a unique
element of the form
\[ \rho_{l(d)}(\gamma_d) \cong \epsilon(d) \left(
  \begin{array}{cccc}
    e^{l_1(d)} & & & \\ & e^{l_2(d)} & & \\ & & e^{-l_1(d)} & \\ & & & e^{-l_2(d)}
  \end{array}\right ) \in \Sp(4, \RR)
\]
with $\epsilon(d) \in \{ \pm 1 \}$, $l_1(d) > l_2(d) > 0$.

The construction of $\rho_{c'}$ for $c'$ in $\pi_0( \Sigma \moins
\Sigma_1 )$ now goes as follow. By Lemma~\ref{lem:deformation} one can
choose a continuous path
\[ {\tau}_{c', t} : \pi_1( \Sigma) \longrightarrow \SL(2, \RR) , \ t
\in [1,2] \] such that $\tau_{c',t} : \pi_1( \Sigma) \to \SL(2, \RR)$
are discrete embeddings for all $t \in [1,2]$ and such that, for any
$d$ in $\pi_0( \partial \Sigma_{c'}) \subset \pi_0( \partial
\Sigma_1)$, (hence $r(d) =c'$) one has
\[ \tau_{c',i}( \gamma_d) \text{ is conjugate to } \epsilon(d) \left(
  \begin{array}{cc}
    e^{l_i(d)} &  \\ & e^{-l_i(d)} 
  \end{array}\right ), \text{ for } i =1,2.
\]
Set ${\rho}_{c'} = \psi \circ ( \tau_{c',1} , \tau_{c',2}) : \pi_1(
\Sigma_{c'}) \rightarrow \Sp( 4, \RR)$, where $\psi$ is defined in
Fact~\ref{facts:diagonal}.(\ref{item:embedding_psi}).

In order to define the amalgamated representation, we need to choose
elements $g_{c}$ in $\Sp(4, \RR)$ for any $c$ in $\pi_0( \Sigma
\moins \partial \Sigma_1)$ such that, for any $d$, $g_{l(d)} \rho_{
  l(d) } ( \gamma_d) g_{l(d)}^{-1} = g_{r(d)} \rho_{ r(d) } (
\gamma_d) g_{r(d)}^{-1}$. As mentioned in
Remark~\ref{rema_choices_embed} these elements should be chosen such
that
\begin{itemize}
\item for any $d$ in $\pi_0( \partial \Sigma_1)$ the pair of
  representations \[ \big(g_{l(d)} {\rho}_{ l(d) } g_{l(d)}^{-1},
  g_{r(d)} {\rho}_{ r(d) } g_{r(d)}^{-1}\big)\] is positively adjusted
  with respect to $\gamma_d$ (Definition~\ref{defi_good_triples}).
\end{itemize}
Such a family $(g_c)$ always exists by our hypothesis that the graph
associated with the decomposition of the surface $\Sigma$ is a
tree. One then constructs the $k$-hybrid representation
\[ \rho : \pi_1( \Sigma) \longrightarrow \Sp(4, \RR)\] by amalgamation
of the representations $g_{l(d)} \rho_{l(d)} g_{l{d}}^{-1}$ and
$g_{r(d)} \rho_{ r(d) } ( \gamma_d) g_{r(d)}^{-1}$.

\begin{remark}
  The hypothesis that the dual graph of $\Sigma \moins \partial
  \Sigma_1$ is a tree is necessary. For example, if this graph has a
  double edge, one would try to construct a Hitchin representation
  whose restriction to the disjoint union of two closed simple curves
  $\gamma_1$ and $\gamma_2$ is contained in some $\SL(2, \RR) \times
  \SL(2, \RR)<\Sp(4,\RR)$. However it is not difficult to see that the
  restriction of a Hitchin representation to the subgroup generated by
  $\gamma_1$ and $\gamma_2$ is irreducible and hence cannot be
  contained in $\SL(2, \RR) \times \SL(2, \RR)< \Sp(4,\RR)$.
\end{remark}

\subsubsection{Other amalgamated representations}\label{sec:other_amalgams}
Let us describe a variant of the construction of hybrid
representation. Assume that $\Sigma$ is decomposed along a simple
closed separating geodesic $\gamma$ into two subsurfaces $\Sigma_l$
and $\Sigma_r$ as above.  On $\pi_1(\Sigma_l)$ we choose again the
irreducible Fuchsian representation $\rho_{irr} = \phi_{irr} \circ
\iota$ into $\Sp(4,\RR)$, for the fundamental group of
$\pi_1(\Sigma_r)$ we choose a maximal representation into $\SL(2,\RR)
\times \SL(2,\RR) \subset \Sp(4,\RR)$ which agrees with the
irreducible representation along $\gamma$, but sends an element
$\alpha \in \pi_1(\Sigma_r)$ corresponding to a non-separating simple
closed geodesic to an element of $\SL(2,\RR) \times \SL(2,\RR) $ with
eigenvalues of different sign. The corresponding amalgamated
representation will be maximal. However, the first Stiefel-Whitney
class of this representation is non-zero by Lemma~\ref{lem:sw1=sign}.
Thus Theorem~\ref{thm_intro:n=2_components} implies that $\rho$ can be
deformed to a twisted diagonal representation.

Analogous constructions can be made to obtain maximal representations
into other Lie groups $G$ of Hermitian type. When $G$ is not locally
isomorphic to $\Sp(4,\RR)$, we expect that all maximal representations
can be deformed to twisted diagonal representations.

\section{Topological invariants}
\label{sec:invariants}\label{sec:TopologicalInvariants}

In this section we introduce the topological invariants for Anosov
representations. First the uniqueness of the \emph{Anosov section} is
restated (Section~\ref{sec:defining-invariants}). Some general
properties of obstruction classes for those Anosov sections are
discussed (Section~\ref{sec:char_class_in_gal}).  Then we define first
and second Stiefel-Whitney classes for symplectic maximal
representations (Section~\ref{sec_fst_scd_SW}). In
Section~\ref{sec:sp4case} we focus on the special case $\Sp(4,\RR)$
and define an Euler class considering a cover of the representation
space (\emph{enhanced} representation space,
Section~\ref{sec:thespaces}).  In Section~\ref{sec:constraints} we
prove a priori constraints on the possible values for the
invariants. Finally in Section~\ref{sec_inv_others} we define and
calculate invariants for other Anosov representations.

\subsection{Uniqueness}
\label{sec:defining-invariants}

Let us denote by $\homHano ( \pi_1(M), G)$ the set of $(G,H)$-Anosov
representations and let $\mathcal{B}_H (M)$ the set of gauge
isomorphism classes of $H$-bundles over $M$.  Summarizing
Proposition~\ref{prop:uniqsigma} and Proposition~\ref{prop:openano} we
have
\begin{prop}\label{prop:invariants}
  For any pair $(G,H)$, there is a well-defined locally constant map
  \[ \homHano( \pi_1(M), G) \longrightarrow \mathcal{B}_H (M),\]
  associating to an Anosov representation its Anosov $H$-reduction.

  This map is natural with respect to taking finite covers of $M$ and
  with respect to the constructions described in
  Lemmas~\ref{lem_anosov_constructions} and \ref{lem:finite_cover} and
  in Proposition~\ref{prop:coverM}.
\end{prop}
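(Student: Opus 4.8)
The plan is to assemble Proposition~\ref{prop:invariants} directly from the results recalled above; nothing genuinely new is needed beyond bookkeeping. First I would \emph{define the map}. Given a $(G,H)$-Anosov representation $\rho:\pi_1(M)\to G$ with associated flat principal $G$-bundle $\mathsf{P}$, Proposition~\ref{prop:uniqsigma} supplies a \emph{unique} section $\sigma$ of $\mathsf{P}\times_G\mathcal{X}$ satisfying (\ref{anosovpoint1}) and (\ref{anosovpoint2}) of Definition~\ref{defi:anosov}; under the correspondence between sections of $\mathsf{P}\times_G G/H$ and $H$-reductions of $\mathsf{P}$, this yields a canonical sub-$H$-bundle $\mathsf{P}_H\subset\mathsf{P}$. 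Sending $\rho$ to the gauge isomorphism class $[\mathsf{P}_H]\in\mathcal{B}_H(M)$ is then manifestly well defined, since $\mathsf{P}_H$ depends on $\rho$ alone and on no auxiliary choice.

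Next I would show the map is \emph{locally constant}. By Proposition~\ref{prop:openano} the set of Anosov representations is open in $\hom(\pi_1(M),G)$ and the reduction $\mathsf{P}_H$ depends continuously on $\rho$; since $\hom(\pi_1(M),G)$ is locally path connected it suffices to show that along a path $(\rho_t)_{t\in[0,1]}$ of Anosov representations the classes $[\mathsf{P}_{H,t}]$ all agree. The flat bundles $\mathsf{P}_{\rho_t}$ fit into a single principal $G$-bundle over $M\times[0,1]$, and the continuously varying Anosov reductions $\sigma_t$ assemble into a section of the associated $\mathcal{X}$-bundle, hence into a principal $H$-bundle $\mathcal{Q}$ over the compact manifold $M\times[0,1]$. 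By homotopy invariance of principal bundles the restrictions of $\mathcal{Q}$ to $M\times\{0\}$ and $M\times\{1\}$ are gauge isomorphic, so $[\mathsf{P}_{H,0}]=[\mathsf{P}_{H,1}]$; thus the map factors through the set of connected components of $\homHano(\pi_1(M),G)$.

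Finally, \emph{naturality}. For a finite covering $\pi:M'\to M$, Proposition~\ref{prop:coverM} identifies the Anosov reduction of $\pi^*\mathsf{P}$ with $\pi^*\mathsf{P}_H$, which is precisely the statement that pullback along $\pi$ commutes with our map. For the operations of Lemma~\ref{lem_anosov_constructions} --- enlarging the parabolics, taking the fibered product with a flat $L$-bundle, and the pushforward $\mathsf{P}\mapsto\mathsf{P}\times_G L$ in the rank-one case --- and for the finite cover of Lie groups $G'\to G$ in Lemma~\ref{lem:finite_cover}, each of these lemmas already exhibits the Anosov reduction of the output bundle explicitly in terms of the Anosov reduction $\mathsf{P}_H$ of the input ($\mathsf{P}_H\times E$, $\mathsf{P}_H\times_H N$, $\mathsf{P}/H'\cong\mathsf{P}/H$, etc.); passing to gauge isomorphism classes turns these identifications into the asserted commuting squares.

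The only point requiring a real (if standard) argument is the local constancy, where one must know that a continuous deformation of $H$-bundles over a compact base is a gauge isomorphism, i.e.\ invoke homotopy invariance of principal bundles (equivalently their classification by homotopy classes of maps into $BH$). Everything else is a transcription of Propositions~\ref{prop:uniqsigma}, \ref{prop:openano} and \ref{prop:coverM} and of Lemmas~\ref{lem_anosov_constructions} and \ref{lem:finite_cover}.
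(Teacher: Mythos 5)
Your proof is correct and follows exactly the route the paper intends: the paper states this proposition merely as a ``summary'' of Propositions~\ref{prop:uniqsigma} and \ref{prop:openano} together with the naturality statements of Proposition~\ref{prop:coverM} and Lemmas~\ref{lem_anosov_constructions} and \ref{lem:finite_cover}, and you have simply written out that assembly. The one detail you supply that the paper leaves implicit --- packaging a path of representations into a principal $H$-bundle over $M\times[0,1]$ and invoking homotopy invariance of bundles over a compact base to get local constancy --- is exactly the standard argument the authors use elsewhere (compare Lemma~\ref{lem:homo}), so there is no gap.
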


In general $\mathcal{B}_H (M)$ could be rather complicated, so instead
of the whole space of gauge isomorphism classes $\mathcal{B}_H (M)$ we
will consider the obstruction classes associated with the $H$-bundle
as topological invariants of an Anosov representation, i.e.\ the
invariants are elements of the cohomology groups of $M$, possibly with
local coefficients.

\begin{remark}
  \label{rem:reduc-P}
  The group $H$ is the Levi component of $P^s$, therefore $P^s$ and
  $H$ are homotopy equivalent. Thus $\mathcal{B}_H =
  \mathcal{B}_{P^s}$, and instead of working with the $H$-reduction we
  could equally well work with the corresponding $P^s$-reduction (or
  similarly with the $P^u$-reduction) of the flat $G$-bundle.
\end{remark}

\subsection{Characteristic classes}
\label{sec:char_class_in_gal}

Obstruction theory (see \cite[Part~III]{Steenrod}) associates
characteristic classes to fiber bundles; they measure the obstruction
to constructing sections of the bundles. In our special case, if
$\mathsf{P}_H$ is a principal $H$-bundle over $M$ we obtain a
characteristic class
\[ o_1(\mathsf{P}_H) \in \h^1(M; \pi_0(H)^{ab}),\] with
$\pi_0(H)^{ab}$ the abelianization of $\pi_0(H)$.  This class can be
explicitly described as follows: the associated principal
$\pi_0(H)$-bundle $\mathsf{P}_H \times_H \pi_0(H)$ is necessarily flat
(since its structure group is discrete) and hence comes from a
morphism $\pi_1(M)\to \pi_0(H)$. This later morphism defines
$o_1(\mathsf{P}_H)$ a class in $\h^1(M; \pi_0(H)^{ab}) =
\h^1(\pi_1(M); \pi_0(H)^{ab}) \cong \hom(\pi_1(M), \pi_0(H)^{ab})$.

\begin{defi}
  The \emph{first obstruction class} $o_1( \rho)$ of a $(G,H)$-Anosov
  representation $\rho: \pi_1(M)\to G$ is the class
  $o_1(\mathsf{P}_H)$ of the Anosov reduction $\mathsf{P}_H$.
\end{defi}

\begin{remark}\label{rem:obstr_morphism}
  A more precise invariant would be the first obstruction morphism
  $\sigma_1(\rho)$ in $\rep(\pi_1(M), \pi_0(H))$. In every case
  considered below $\pi_0(H)$ is abelian, so that the obstruction
  morphism and the obstruction class are equal. This is not, however,
  the general case.
\end{remark}

In order to get a characteristic class of degree $q+1$ one need to
consider bundles whose fibers are $q$-connected
(\cite[Sec.~29]{Steenrod}). We restrict here the discussion to
$q=1$. A way to get a class in a second cohomology group is to
consider the $H/F$-bundle $\mathsf{P}_H / F$ where $F$ is a subgroup
of $H$ such that $H/F$ is connected. We will define a second
cohomology class under the following

\begin{cond}\label{cond:H} 
  The group $H$ is the semidirect product of its identity component
  $H_0$ and a discrete group $F$; $H= F \ltimes H_0$.
\end{cond}
In that situation $F\cong \pi_0(H)$ and $\pi_1(H/F) \cong
\pi_1(H_0)$. The obstruction $o_2(\mathsf{P}_H)$ to finding a section
of the bundle $\mathsf{P}_H/F$ is a class in $\h^2(M; \pi_1(H/F))$ the
second cohomology group of $M$ with coefficients in the
$\pi_1(M)$-module $\pi_1(H/F)$ (for generalities on cohomology with
local coefficients see \cite[Sec.~3.H]{Hatcher_AT} or
\cite[Sec.~31]{Steenrod}). The action of $\pi_1(M)$ on $\pi_1(H/F)$ is
the composition $\pi_1(M)\to \pi_0(H) \to \mathrm{Aut}(\pi_1(H/F))$.

Here is a way to construct this second obstruction class. Using
Condition~\ref{cond:H} it is easy to construct a (noncentral)
extension $\pi_1(H_0)\to \overline{H} \to H$. The obstruction to write
the $H$-bundle $\mathsf{P}_H$ as the reduction of a
$\overline{H}$-bundle is a class in $\h^2(M; \pi_1(H_0))$ (cohomology
with local coefficients since the extension is not central) and is the
second obstruction class $o_2(\rho)$.

The next two results describe the behavior of the first obstruction
under covering and when twisting by a representation into the center.

\begin{prop}\label{prop:first_obstr_cover}
  Let $\pi:G'\to G$ be a finite covering of Lie groups. Suppose that a
  representation $\rho: \pi_1(M) \to G'$ is $H'$-Anosov.

  Then the representation $\pi\circ \rho$ is $H= \pi(H')$-Anosov and
  under the natural map \[ \h^1(M, \pi_0(H')) \to \h^1(M, \pi_0(H))
  \]
  the class $o_1(\pi\circ \rho)$ is the image of $o_1(\rho)$.
\end{prop}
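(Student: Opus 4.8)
The plan is to reduce everything to the level of the Anosov reductions, which are canonical by Proposition~\ref{prop:uniqsigma}, and then to chase a commutative diagram of groups of connected components. First I would invoke Lemma~\ref{lem:finite_cover}: if $\rho:\pi_1(M)\to G'$ is $(G',H')$-Anosov with $H'=\pi^{-1}(H)$ for two opposite parabolics of $G$, then $\pi\circ\rho$ is $(G,H)$-Anosov, and moreover the Anosov $H$-reduction $\mathsf{P}_H$ of $\pi\circ\rho$ is identified with $\mathsf{P}'_{H'}/\ker\pi$, where $\mathsf{P}'_{H'}$ is the Anosov $H'$-reduction of $\rho$; equivalently, $\mathsf{P}_H = \mathsf{P}'_{H'}\times_{H'}H$ along the covering map $H'\to H$. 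Here I should check that $H=\pi(H')$: since $\pi$ is surjective and $H'=\pi^{-1}(H)$, this is immediate, and $\pi|_{H'}:H'\to H$ is a finite covering of Lie groups with kernel $\ker\pi$.

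Next I would pass to components. The covering $H'\to H$ induces a homomorphism $\pi_0(H')\to\pi_0(H)$ (the kernel $\ker\pi$ being central and discrete, it meets the identity component, so this map is surjective with finite kernel, but surjectivity is not even needed). Associating to a principal $H'$-bundle its extension to a $\pi_0(H')$-bundle is functorial, and the extension-of-structure-group construction commutes: $(\mathsf{P}'_{H'}\times_{H'}H)\times_H\pi_0(H) \cong \mathsf{P}'_{H'}\times_{H'}\pi_0(H) \cong (\mathsf{P}'_{H'}\times_{H'}\pi_0(H'))\times_{\pi_0(H')}\pi_0(H)$. In other words, the associated flat $\pi_0(H)$-bundle of $\mathsf{P}_H$ is the extension of structure group, along $\pi_0(H')\to\pi_0(H)$, of the associated flat $\pi_0(H')$-bundle of $\mathsf{P}'_{H'}$. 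Translating flat principal bundles with discrete structure group into homomorphisms of $\pi_1(M)$ (as done in Section~\ref{sec:char_class_in_gal}), this says precisely that the homomorphism $\pi_1(M)\to\pi_0(H)$ classifying $\mathsf{P}_H$ is the composite of the homomorphism $\pi_1(M)\to\pi_0(H')$ classifying $\mathsf{P}'_{H'}$ with $\pi_0(H')\to\pi_0(H)$.

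Finally, abelianizing and interpreting these homomorphisms as classes in $\h^1(M;\pi_0(H')^{ab})$ and $\h^1(M;\pi_0(H)^{ab})$, the above identity is exactly the assertion that $o_1(\pi\circ\rho)$ is the image of $o_1(\rho)$ under the coefficient map $\h^1(M;\pi_0(H')^{ab})\to\h^1(M;\pi_0(H)^{ab})$ induced by $\pi_0(H')^{ab}\to\pi_0(H)^{ab}$. I expect the only genuinely delicate point to be the compatibility of the two descriptions of $\mathsf{P}_H$ — as $\mathsf{P}'_{H'}\times_{H'}H$ versus as the Anosov reduction of $\pi\circ\rho$ — but this is precisely the content of Lemma~\ref{lem:finite_cover} together with the uniqueness in Proposition~\ref{prop:uniqsigma}, so the argument is essentially a functoriality bookkeeping once that identification is in hand. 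Everything else is the naturality of obstruction theory under change of coefficient group, which is standard (see \cite[Part~III]{Steenrod}).
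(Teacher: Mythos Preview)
Your proposal is correct and follows essentially the same approach as the paper's own proof: invoke Lemma~\ref{lem:finite_cover} to identify the Anosov $H$-reduction of $\pi\circ\rho$ with $\mathsf{P}'_{H'}\times_{H'}H$, then read off that the classifying map $\pi_1(M)\to\pi_0(H)^{ab}$ factors through $\pi_0(H')^{ab}$. The paper's proof is just a terser version of yours, omitting the explicit associated-bundle manipulations you spell out; the substance is identical.
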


\begin{proof}
  By Lemma~\ref{lem:finite_cover} $\pi\circ\rho$ is $H$-Anosov with
  Anosov reduction $\mathsf{P}_{H'} \times_{H'} H$ where
  $\mathsf{P}_{H'}$ is the Anosov reduction of $\rho$. Hence the
  morphism $\pi_1(M)\to \pi_0(H)^{ab}$ defining $o_1(\pi\circ \rho)$
  is the composition $\pi_1(M)\to \pi_0(H')^{ab} \to \pi_0(H)^{ab}$ of
  $o_1(\rho)$ with the natural projection $\pi_0(H')^{ab} \to
  \pi_0(H)^{ab}$.
\end{proof}

\begin{prop}\label{prop:frst_ob_center}
  Let $\rho: \pi_1(M)\to G$ be a $(G,H)$-Anosov representation. Let
  $Z$ be the center of $G$ and $\eta: \pi_1(M)\to Z$ a representation.

  Then the representation $\eta\cdot \rho: \pi_1(M) \to G \sep \gamma
  \mapsto \eta(\gamma)\rho(\gamma)$ is $(G,H)$-Anosov and its first
  obstruction class is \[o_1(\eta\cdot \rho) = o_1(\rho) +
  \overline{\eta}\] with $\overline{\eta}$ being the composition
  $\pi_1(M)\to Z \to \pi_0(H)^{ab}$.
\end{prop}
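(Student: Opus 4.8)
The plan is to check first that $\eta\cdot\rho$ is again $(G,H)$-Anosov, and then to identify its Anosov reduction explicitly in terms of the Anosov reduction of $\rho$ twisted by $\eta$; the obstruction class computation then follows from the additivity of $o_1$ under this kind of twist. For the Anosov property: since $\eta$ takes values in the center $Z$ of $G$, multiplication by $\eta(\gamma)$ acts trivially on $\mathcal{F}^s$, $\mathcal{F}^u$ and hence on $\mathcal{X}=G/H$, so the map $\tilde\sigma\colon\widetilde M\to\mathcal{X}$ that is $\rho$-equivariant is automatically $(\eta\cdot\rho)$-equivariant. Likewise the flow-invariance of $\tilde\sigma$ is unaffected, since $\eta$ is constant along the deck-transformation action and does not enter the flow. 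For the contraction property one uses that $Z$ acts on the tangent bundle of $\mathcal{X}$ by isometries for the equivariant norm family (central elements act by scalars on the relevant representations, but since $\mathcal{X}=G/H$ and $Z\subset H$, they act trivially on $T\mathcal{X}$ along $\tilde\sigma$); concretely, one checks $\|(\eta\cdot\rho)(\gamma)\cdot e\|_{\gamma\cdot\tilde m}=\|\rho(\gamma)\cdot e\|_{\gamma\cdot\tilde m}=\|e\|_{\tilde m}$, so the same norm family and the same constants $A,a$ witness contraction for $\eta\cdot\rho$. Hence $\eta\cdot\rho$ is $(G,H)$-Anosov, and by the uniqueness in Proposition~\ref{prop:uniqsigma} its Anosov reduction $\mathsf{P}_H^{\eta\cdot\rho}$ is the one determined by the same $\tilde\sigma$.

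Next I would identify this reduction concretely. Write the flat $G$-bundle of $\rho$ as $\mathsf{P}=\pi_1(M)\backslash(\widetilde M\times G)$ and its Anosov $H$-reduction as $\mathsf{P}_H=\pi_1(M)\backslash(\widetilde M\times H)$ — this makes sense because, trivializing $\mathsf{P}$ over $\widetilde M$, the section $\tilde\sigma$ picks out an $H$-subbundle on the nose. Twisting by $\eta$ changes the gluing data for $\mathsf{P}_H$ by the central cocycle $\eta$, i.e.\ the Anosov reduction of $\eta\cdot\rho$ is $\pi_1(M)\backslash(\widetilde M\times H)$ with the action $\gamma\cdot(\tilde m,h)=(\gamma\cdot\tilde m,\eta(\gamma)h)$ (here $\eta(\gamma)\in Z\subset H$). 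In other words $\mathsf{P}_H^{\eta\cdot\rho}$ differs from $\mathsf{P}_H^{\rho}$ by tensoring with the flat principal $Z$-bundle $\mathsf{P}_\eta$ associated to $\eta\colon\pi_1(M)\to Z$, via the inclusion $Z\hookrightarrow H$.

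Finally, I would compute the first obstruction class from this description of the reduction. Recall $o_1(\mathsf{P}_H)$ is the class of the composite $\pi_1(M)\to\pi_0(H)^{ab}$ obtained from the flat $\pi_0(H)$-bundle $\mathsf{P}_H\times_H\pi_0(H)$. Applying $H\to\pi_0(H)^{ab}$ to the twisted gluing data above, the cocycle of $\mathsf{P}_H^{\eta\cdot\rho}\times_H\pi_0(H)$ is the pointwise product of the cocycle for $\mathsf{P}_H^{\rho}\times_H\pi_0(H)$ with the image of $\eta$ under $\pi_1(M)\xrightarrow{\eta}Z\to\pi_0(H)^{ab}$, i.e.\ with $\overline\eta$. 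Since $\pi_0(H)^{ab}$ is abelian, the class of a product of cocycles is the sum of the classes, and under the identification $\h^1(M;\pi_0(H)^{ab})\cong\hom(\pi_1(M),\pi_0(H)^{ab})$ this product of homomorphisms is literally the sum $o_1(\rho)+\overline\eta$. This gives the claimed formula $o_1(\eta\cdot\rho)=o_1(\rho)+\overline\eta$.

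The main obstacle I anticipate is being careful about the Anosov/contraction verification, specifically confirming that the equivariant norm family for $\rho$ genuinely transports to $\eta\cdot\rho$ without adjustment: one must be sure that central elements act trivially (not just by isometries) on the distributions $E^s,E^u$ pulled back along $\tilde\sigma$, which is where $Z\subset H$ is used. Once that is pinned down, the identification of the twisted reduction and the additivity of $o_1$ are formal.
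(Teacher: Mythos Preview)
Your proposal is correct and follows essentially the same approach as the paper: identify the Anosov reduction of $\eta\cdot\rho$ as the $Z$-twist of $\mathsf{P}_H^{\rho}$, then read off the obstruction class. The only organizational difference is that the paper packages the Anosov verification by first passing to the product representation $(\eta,\rho)\colon\pi_1(M)\to Z\times G$ (Anosov by Lemma~\ref{lem_anosov_constructions}.(\ref{item2:lemAc})) and then descending along the finite cover $Z\times G\to G$ via Lemma~\ref{lem:finite_cover}, whereas you verify directly that $Z\subset H$ acts trivially on $\mathcal{X}$ and its tangent bundle; both routes yield the same twisted reduction $(\mathsf{P}_\eta\times\mathsf{P}_H)/Z$.
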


\begin{proof}
  The representation $(\eta, \rho): \pi_1(M) \to Z \times G$ is Anosov
  with Anosov reduction $\mathsf{P}_\eta \times \mathsf{P}_H$, where
  $\mathsf{P}_H$ is the Anosov reduction of $\rho$ and
  $\mathsf{P}_\eta$ is the flat $Z$-bundle associated with
  $\eta$. Hence, by Lemma~\ref{lem:finite_cover}, the representation
  $\eta \cdot \rho$ is Anosov with reduction $ (\mathsf{P}_\eta \times
  \mathsf{P}_H) / Z$. Thus the obstruction morphism
  $\sigma_1(\eta\cdot \rho)$ (see Remark~\ref{rem:obstr_morphism}) is
  the composition of $(\eta, \sigma_1(\rho))$ with the map $Z \times
  \pi_0(H) \to \pi_0(H)$. The result for the obstruction class follows
  from this description.
\end{proof}

A direct application of these propositions to the study of connected
components is the following result.  To allow more flexibility we will
work with a Galois covering $\overline{M} \to M$ with group $\Gamma$
and consider representations of $\Gamma$. The group $\Gamma$ is a
quotient of $\pi_1(M)$.

\begin{prop}\label{prop:taking_cover}
  Let $\pi:G'\to G$ be a finite covering of connected Lie groups with
  kernel $W$. Let $H' <G'$ be the intersection of two opposite
  parabolic subgroups and $H=\pi(H')$.

  Let $p: \hom(\Gamma,G')\to \hom(\Gamma,G)$ be the map $\rho \mapsto
  \pi\circ\rho$ and let $\mathcal{C}$ be a connected subset of
  $\homHano(\Gamma,G)$.
  
  Suppose that the natural map $\h^1(\Gamma;W) \cong \hom(\Gamma, W)
  \to \h^1(M;\pi_0(H')^{ab}) \cong \hom(\pi_1(M), \pi_0(H')^{ab})$ is
  an injection then
  \begin{itemize}
  \item either $p^{-1}(\mathcal{C})$ is empty,
  \item or $p^{-1}(\mathcal{C})$ is the union of $|\h^1(\Gamma;W)|$
    components, that are distinguished by the first obstruction
    $o_1$. The image of $o_1: p^{-1}(\mathcal{C}) \to
    \h^1(M;\pi_0(H')^{ab})$ is one coset for the subgroup
    $\h^1(\Gamma;W)$.
  \end{itemize}
\end{prop}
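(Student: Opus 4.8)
The plan is to analyze the fiber $p^{-1}(\mathcal C)$ using the action of $\h^1(\Gamma;W)\cong\hom(\Gamma,W)$ on $\hom(\Gamma,G')$ by pointwise multiplication by central elements. First I would observe that because $W$ is central in $G'$, for any $\eta\in\hom(\Gamma,W)$ and any $\rho\in\hom(\Gamma,G')$ the product $\eta\cdot\rho$ is again a homomorphism, and $p(\eta\cdot\rho)=p(\rho)$ since $\pi(W)=\{1\}$. Conversely, if $\rho_1,\rho_2\in\hom(\Gamma,G')$ satisfy $p(\rho_1)=p(\rho_2)$, then $\gamma\mapsto\rho_1(\gamma)\rho_2(\gamma)^{-1}$ takes values in $W$ and is a homomorphism (again by centrality of $W$), so $\rho_1=\eta\cdot\rho_2$ for a unique $\eta\in\hom(\Gamma,W)$. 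Thus the nonempty fibers of $p$ are exactly the orbits of the free action of the finite group $\hom(\Gamma,W)$, and in particular each nonempty fiber has cardinality $|\h^1(\Gamma;W)|$.

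Next I would handle the dichotomy. If $p^{-1}(\mathcal C)$ is empty we are done, so assume it contains some $\rho_0$; by Lemma~\ref{lem:finite_cover} every element of $p^{-1}(\mathcal C)$ is $H'$-Anosov (since its image under $p$ lies in $\mathcal C\subset\homHano(\Gamma,G)$), so $o_1$ is defined on all of $p^{-1}(\mathcal C)$. The key identity is Proposition~\ref{prop:frst_ob_center} applied in $G'$: for $\eta\in\hom(\Gamma,W)$, one has $o_1(\eta\cdot\rho)=o_1(\rho)+\overline\eta$, where $\overline\eta$ is the image of $\eta$ under $\hom(\Gamma,W)\to\h^1(M;\pi_0(H')^{ab})$. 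By the injectivity hypothesis, $\eta\mapsto\overline\eta$ is injective, so $o_1$ separates the $|\h^1(\Gamma;W)|$ points of each $p$-fiber, and the $o_1$-values on a single fiber form the coset $o_1(\rho_0)+\h^1(\Gamma;W)$. Since $o_1=o_1(p^{-1})$ depends only on the Anosov reduction and the map $\rho\mapsto$ (Anosov reduction) is locally constant (Proposition~\ref{prop:invariants}, via Proposition~\ref{prop:uniqsigma} and Proposition~\ref{prop:openano}), $o_1$ is locally constant on $p^{-1}(\mathcal C)$; hence it is constant on each connected component. On the other hand $p$ restricted to $p^{-1}(\mathcal C)\to\mathcal C$ is a covering map onto the connected set $\mathcal C$ with deck group $\hom(\Gamma,W)$ acting freely and transitively on fibers, so $p^{-1}(\mathcal C)$ has at most $|\h^1(\Gamma;W)|$ components, each mapping onto $\mathcal C$; combined with the fact that $o_1$ takes $|\h^1(\Gamma;W)|$ distinct values on each fiber and is constant on components, each component must realize exactly one value of $o_1$, so there are exactly $|\h^1(\Gamma;W)|$ components, distinguished by $o_1$, and the image of $o_1$ is the single coset described above.

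The main obstacle I anticipate is the covering-space / connectedness bookkeeping: one needs that $p\colon p^{-1}(\mathcal C)\to\mathcal C$ is genuinely a covering with the claimed deck transformations and finite fibers, so that ``at most $|\h^1(\Gamma;W)|$ components, each surjecting onto $\mathcal C$'' is legitimate; this uses that $\hom(\Gamma,W)$ is finite (as $\Gamma$ is finitely generated and $W$ is finite) and acts freely, plus the fact that $\mathcal C$ need not be open, so one should argue directly that the free transitive action of a finite group on the fibers of a continuous surjection $p^{-1}(\mathcal C)\to\mathcal C$ together with local constancy of $o_1$ forces the component count, rather than invoking covering-space theory as a black box. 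Once that is pinned down, everything else is a direct application of the two obstruction-class propositions above together with the injectivity hypothesis.
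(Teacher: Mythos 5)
Your proposal is correct and follows essentially the same route as the paper: identify the nonempty fibers of $p$ with orbits of the free action of $\hom(\Gamma,W)$ by central twisting, note via Lemma~\ref{lem:finite_cover} that $p^{-1}(\mathcal{C})\subset\hom_{H'\text{-Anosov}}(\Gamma,G')$, apply Proposition~\ref{prop:frst_ob_center} to see that $o_1$ shifts by $\overline\eta$ along each fiber, and combine the injectivity hypothesis (lower bound on components, via local constancy of $o_1$) with the finite-covering structure (upper bound). Your extra care about why $p^{-1}(\mathcal{C})\to\mathcal{C}$ behaves as a covering is a reasonable refinement of a step the paper states without elaboration, but it is not a different argument.
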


\begin{proof}
  If $p^{-1}(\mathcal{C})$ is nonempty then by connectedness every
  representation in $\mathcal{C}$ lifts to $\hom(\Gamma, G')$. Thus
  $p^{-1}(\mathcal{C}) \to \mathcal{C}$ is a finite covering. Note
  that the fiber containing a representation $\rho$ is precisely the
  set $\eta\cdot \rho$ for $\eta: \Gamma \to W$. Hence this covering
  is of degree $|\h^1(\Gamma;W)|$. So $p^{-1}(\mathcal{C})$ has at
  most that number of components.

  Note also that $p^{-1}(\mathcal{C})$ is included in
  $\hom_{H'\text{-Anosov}}(\Gamma, G')$. Therefore the obstruction
  class defines a map
  \[o_1: p^{-1}(\mathcal{C}) \longto \h^1(M; \pi_0(H')^{ab}).\] Using
  Proposition~\ref{prop:frst_ob_center}, for any $\rho$ in
  $p^{-1}(\mathcal{C})$ and any $\eta$ in $\h^1(\Gamma;W)$ the two
  classes $o_1(\rho)$ and $o_1(\eta\cdot \rho)$ differ by the image of
  $\eta$ in $\h^1(M; \pi_0(H')^{ab})$. The image of $o_1$ is hence one
  orbit of $\h^1(\Gamma;W)$ in $\h^1(M;\pi_0(H')^{ab})$.  The
  hypothesis that $\h^1(\Gamma;W)$ injects into
  $\h^1(M;\pi_0(H')^{ab})$ insures then that the image of $o_1$ has at
  least $|\h^1(\Gamma;W)|$ elements and hence $p^{-1}(\mathcal{C})$
  has at least that number of components.
\end{proof}

The next lemma will provide an easy criterion for the hypothesis of
the above proposition to be satisfied.

\begin{lem}\label{lem:inject}
  Let $\Sigma$ be a closed oriented connected surface. Suppose that
  $A\to B$ is an injective morphism of abelian group. Then the map
  \[ \h^1(\Sigma; A) \longto \h^1(T^1 \Sigma; B)\] is injective.
\end{lem}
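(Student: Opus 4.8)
The plan is to factor the map $\h^1(\Sigma; A) \to \h^1(T^1\Sigma; B)$ through $\h^1(\Sigma; B)$, using the fact that a morphism of coefficient groups commutes with the maps induced by the fibration $\pi\colon T^1\Sigma \to \Sigma$. Concretely, the injection $A \hookrightarrow B$ induces a map $\h^1(\Sigma; A) \to \h^1(\Sigma; B)$, and the fibration induces $\pi^*\colon \h^1(\Sigma; B) \to \h^1(T^1\Sigma; B)$; these fit into a commutative square with the map in question. So it suffices to prove the two factors are injective.

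For the first factor, injectivity of $\h^1(\Sigma; A) \to \h^1(\Sigma; B)$: since $\Sigma$ is a closed oriented surface, $\h_1(\Sigma; \ZZ)$ is free abelian (of rank $2g$) and $\h_0(\Sigma; \ZZ) = \ZZ$ is free, so by the universal coefficient theorem $\h^1(\Sigma; C) \cong \hom(\h_1(\Sigma;\ZZ), C)$ naturally in the abelian group $C$ (the $\mathrm{Ext}$ term vanishes because $\h_0$ is free). Since $\hom(\h_1(\Sigma;\ZZ), -)$ is a left-exact functor, an injection $A \hookrightarrow B$ yields an injection $\hom(\h_1(\Sigma;\ZZ), A) \hookrightarrow \hom(\h_1(\Sigma;\ZZ), B)$, hence $\h^1(\Sigma; A) \hookrightarrow \h^1(\Sigma; B)$.

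For the second factor, injectivity of $\pi^*\colon \h^1(\Sigma; B) \to \h^1(T^1\Sigma; B)$: this is exactly the statement quoted in the Remark after Theorem~\ref{thm_intro:restrictions}, which follows from the Gysin sequence of the circle bundle $T^1\Sigma \to \Sigma$ (Equation~\eqref{eq_gysin}). The relevant segment reads
\[
\h^{-1}(\Sigma; B) \xrightarrow{\ \cupprod e\ } \h^1(\Sigma; B) \xrightarrow{\ \pi^*\ } \h^1(T^1\Sigma; B),
\]
and since $\h^{-1}(\Sigma; B) = 0$ the map $\pi^*$ is injective. (The Euler class of $T^1\Sigma$ is $\chi(\Sigma)$ times a generator, but we need nothing about it here beyond exactness of the sequence.) Composing, the map $\h^1(\Sigma; A) \to \h^1(\Sigma; B) \to \h^1(T^1\Sigma; B)$ is injective, and this composite equals the map in the statement by naturality of the Gysin sequence in the coefficient group. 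The main thing to be careful about is the naturality of all these maps in the coefficient group, so that the factorization square genuinely commutes; this is routine but should be stated. (Alternatively, one could avoid the Gysin sequence entirely by using the cohomology computation of $T^1\Sigma$ from Appendix~\ref{sec:cohomology} together with universal coefficients, but the Gysin argument is shortest.)
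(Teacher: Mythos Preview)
Your proof is correct. The factorization $\h^1(\Sigma;A)\to\h^1(\Sigma;B)\to\h^1(T^1\Sigma;B)$ is the same one the paper uses, and both factors are indeed injective for the reasons you give.

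The paper's argument is slightly more direct, however: instead of invoking the universal coefficient theorem and the Gysin sequence, it uses the identification $\h^1(X;C)\cong\hom(\pi_1(X),C)$ (valid for any path-connected $X$ and abelian $C$) together with the surjectivity of $\pi_1(T^1\Sigma)\to\pi_1(\Sigma)$ (immediate since the fibers of $T^1\Sigma\to\Sigma$ are connected). From this, injectivity of $\hom(\pi_1(\Sigma),A)\to\hom(\pi_1(T^1\Sigma),B)$ follows at once from left-exactness of $\hom$ in both variables. Your route has the mild advantage of explaining exactly where each fact about $\Sigma$ and the bundle enters (freeness of $\h_0$ for UCT, the circle-bundle structure for Gysin), while the paper's route is a one-liner that avoids any spectral-sequence or exact-sequence machinery. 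Either is perfectly acceptable here.
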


\begin{proof}
  The map between the fundamental groups $\pi_1(T^1 \Sigma) \to
  \pi_1(\Sigma)$ is surjective. Together with the injectivity of $A\to
  B$ this implies the injectivity of $\h^1(\Sigma; A) \cong
  \hom(\pi_1(\Sigma), A) \to \h^1(T^1 \Sigma; B) \cong \hom(\pi_1(T^1
  \Sigma), B)$.
\end{proof}

\subsection{First and second Stiefel-Whitney classes}
\label{sec_fst_scd_SW}

The inclusion
\[\hommax( \pi_1(\Sigma), \Sp(2n,\RR)) \subset \homGLano(
\pi_1(\Sigma), \Sp(2n,\RR)), \] which is described in
Section~\ref{sec:prelim_maxrep_sp}, allows us to apply
Proposition~\ref{prop:invariants} to associate to a maximal
representation into $\Sp(2n,\RR)$ the first and second Stiefel-Whitney
classes of a $\GL(n, \RR)$-bundle over $T^1\Sigma$.

\begin{prop}\label{prop:invariants_symp}
  \label{prop:swformax}
  Let $G= \Sp(2n, \RR)$ and $\hommax( \pi_1(\Sigma),G)$ the space of
  maximal representations. Then the obstruction classes of the Anosov
  $\GL(n,\RR)$-reduction give maps:
  \[
  \begin{array}{rcl}
    \hommax( \pi_1(\Sigma),G) & \xrightarrow{sw_1} & \h^1( T^1
    \Sigma; \FF_2 ) \\
    \textup{and } \hommax( \pi_1(\Sigma),G) & \xrightarrow{sw_2} &
    \h^2( T^1 \Sigma; \FF_2 ).
  \end{array}\]
\end{prop}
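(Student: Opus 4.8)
The plan is to verify that the construction of the first and second Stiefel-Whitney classes does not depend on any choices, so that one obtains genuine \emph{maps} (as opposed to multivalued assignments) on the space of maximal representations. First I would note that by Theorem~\ref{thm:maximal_anosov}, every maximal representation $\rho \in \hommax(\pi_1(\Sigma), \Sp(2n,\RR))$ is $(\Sp(2n,\RR), \GL(n,\RR))$-Anosov, where $\GL(n,\RR) \cong H = P^s \cap P^u$ is the stabilizer of a pair of transverse Lagrangians. Hence $\rho$ lies in $\homGLano(\pi_1(\Sigma), \Sp(2n,\RR))$, and Proposition~\ref{prop:invariants} provides a well-defined, locally constant map $\rho \mapsto [\mathsf{P}_{\GL(n,\RR)}]$ into $\mathcal{B}_{\GL(n,\RR)}(T^1\Sigma)$, the canonical Anosov reduction of the flat $\Sp(2n,\RR)$-bundle $\pi^*\mathsf{P}$ over $T^1\Sigma$. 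The uniqueness here is exactly Proposition~\ref{prop:uniqsigma}, applied to $M = T^1\Sigma$ with its geodesic flow and the invariant Liouville volume form; this is what makes $\mathsf{P}_{\GL(n,\RR)}$ \emph{canonical} and removes any ambiguity.

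The next step is simply to compose this map with the standard characteristic classes. Given the canonical principal $\GL(n,\RR)$-bundle $\mathsf{P}_{\GL(n,\RR)}$ over $T^1\Sigma$, one has its associated rank-$n$ real vector bundle, whose Stiefel-Whitney classes $sw_1 \in \h^1(T^1\Sigma; \FF_2)$ and $sw_2 \in \h^2(T^1\Sigma; \FF_2)$ are defined in the usual way (equivalently, via obstruction theory as in Section~\ref{sec:char_class_in_gal}: since $\GL(n,\RR)$ deformation retracts onto $\O(n)$, $\pi_0(\GL(n,\RR))^{ab} \cong \FF_2$ gives $sw_1 = o_1$, and $\O(n)$ satisfies Condition~\ref{cond:H} with $F = \{\pm 1\}$ only for $n$ odd, but in general one passes to $\SO(n)$ and uses the $\Spin(n)$-extension to obtain $sw_2 = o_2$). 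Since gauge-isomorphic bundles have the same characteristic classes, these descend to well-defined maps on $\mathcal{B}_{\GL(n,\RR)}(T^1\Sigma)$, and hence on $\hommax(\pi_1(\Sigma), \Sp(2n,\RR))$.

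There is essentially no hard part here: the proposition is a formal consequence of Proposition~\ref{prop:invariants} together with the fact that Stiefel-Whitney classes are gauge-invariants of principal bundles. The only point requiring a word of care is the identification $H \cong \GL(n,\RR)$, which as noted in Section~\ref{sec:prelim_maxrep_sp} depends on a choice of symplectic basis adapted to the Lagrangian splitting; however, all such identifications are conjugate, so the underlying $\GL(n,\RR)$-bundle (and a fortiori its Stiefel-Whitney classes) is independent of this choice. One should also observe that, by Proposition~\ref{prop:invariants} again, the map $\rho \mapsto [\mathsf{P}_{\GL(n,\RR)}]$ is locally constant, hence in particular continuous; combined with the naturality statements of Proposition~\ref{prop:invariants}, this yields the continuity and naturality of $sw_1$ and $sw_2$ that will be used later (e.g.\ in the mapping class group equivariance of Section~\ref{sec:mapgroup}).
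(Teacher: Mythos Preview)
Your proposal is correct and follows exactly the approach the paper takes: the paper does not give an explicit proof of this proposition, but simply notes in the paragraph preceding it that the inclusion $\hommax(\pi_1(\Sigma),\Sp(2n,\RR)) \subset \homGLano(\pi_1(\Sigma),\Sp(2n,\RR))$ allows one to apply Proposition~\ref{prop:invariants} and then take Stiefel-Whitney classes of the resulting $\GL(n,\RR)$-bundle. Your write-up is more detailed than what the paper provides, but the content is the same.
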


The following geometric interpretation makes the first Stiefel-Whitney
class $sw_1(\rho)$ easy to compute.  Recall that given a smooth closed
curve $\gamma$ on $\Sigma$, taking the velocity vector at every point
defines a natural lift to a loop on $T^1\Sigma$; this gives rise to a
natural map $\pi_1(\Sigma)\moins\{1\} \to \h_1(T^1\Sigma; \ZZ)\sep{}
\gamma \mapsto [\gamma]$.

\begin{lem}\label{lem:sw1=sign}
  Let $\rho: \pi_1(\Sigma) \to \Sp(2n,\RR)$ be a maximal
  representation and $\xi: \partial \pi_1(\Sigma) \to \Ll$ the
  equivariant limit curve.  Then
  \[sw_1(\rho)([\gamma]) = \mathrm{sign}( \det
  \rho(\gamma)|_{\xi(t^{s}_{\gamma})}),\] where $t^{s}_{\gamma}
  \in \partial\pi_1(\Sigma)$ is the attractive fixed point of $\gamma$
  and $\FF_2$ is identified with $\{\pm 1\}$.
\end{lem}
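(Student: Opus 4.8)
The plan is to compute the first Stiefel--Whitney class by evaluating it on the homology classes $[\gamma]$ coming from closed geodesics, which by Appendix~\ref{sec:cohomology} (the cohomology of $T^1\Sigma$) generate $\h_1(T^1\Sigma;\FF_2)$, so that knowing $sw_1(\rho)([\gamma])$ for all such $\gamma$ determines $sw_1(\rho)$. Recall that $sw_1(\rho)$ is, by definition, the first obstruction class $o_1$ of the Anosov $\GL(n,\RR)$-reduction of the flat $\Sp(2n,\RR)$-bundle; by the description in Section~\ref{sec:char_class_in_gal} this class is represented by the homomorphism $\pi_1(T^1\Sigma)\to \pi_0(\GL(n,\RR))\cong\FF_2$ classifying the associated flat $\pi_0(\GL(n,\RR))$-bundle $\mathsf{P}_{\GL(n,\RR)}\times_{\GL(n,\RR)}\pi_0(\GL(n,\RR))$. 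Concretely, this means $sw_1(\rho)$ evaluated on a loop in $T^1\Sigma$ is the sign of the determinant of the holonomy of the reduced $\GL(n,\RR)$-bundle along that loop, where the $\GL(n,\RR)$-bundle is identified (via the Lagrangian reduction of Notation~\ref{defi:Lagrangianreduction}) with the frame bundle of $L^s(\rho)$.

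The key computation is then to identify this holonomy along the canonical lift $[\gamma]$ of a closed geodesic. Fix $\gamma\in\pi_1(\Sigma)\moins\{1\}$ with attracting/repelling fixed points $(t^s_\gamma,t^u_\gamma)$ in $\partial\pi_1(\Sigma)$. By Corollary~\ref{cor:anosov_holonomy} and the construction of the Lagrangian reduction from the positive curve $\xi$ (Section~\ref{sec:prelim_maxrep_sp}), the fibre of $L^s(\rho)$ over any point of the closed orbit $\mathcal{G}_{(t^s_\gamma,t^u_\gamma)}\subset T^1\widetilde{\Sigma}$ is the Lagrangian $\xi(t^s_\gamma)$, and this Lagrangian is $\rho(\gamma)$-invariant. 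The flat structure on the $\Sp(2n,\RR)$-bundle restricted to this orbit has holonomy $\rho(\gamma)$ (this is exactly the identification used in the proof of Proposition~\ref{prop:uniqsigma}, where $\mathsf{P}|_\gamma = \ZZ\backslash(\RR\times\Sp(2n,\RR))$ with $\ZZ$ acting via $h_\gamma=\rho(\gamma)$); since $\xi(t^s_\gamma)$ is preserved, the induced holonomy on the subbundle $L^s(\rho)|_{[\gamma]}$ is precisely $\rho(\gamma)|_{\xi(t^s_\gamma)}\in\GL(\xi(t^s_\gamma))\cong\GL(n,\RR)$. Hence the image of $[\gamma]$ under the classifying homomorphism $\pi_1(T^1\Sigma)\to\pi_0(\GL(n,\RR))=\FF_2$ is the connected component of $\rho(\gamma)|_{\xi(t^s_\gamma)}$, i.e.\ $\sign(\det\rho(\gamma)|_{\xi(t^s_\gamma)})$, which is the asserted formula.

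The main obstacle — really the only subtle point — is matching the \emph{orientation conventions}: one must check that the canonical lift $\gamma\mapsto[\gamma]$ (velocity vector of the geodesic) corresponds under the identification $T^1\widetilde\Sigma\cong\partial\pi_1(\Sigma)^{(3+)}$ to traversing the orbit $\mathcal{G}_{(t^s_\gamma,t^u_\gamma)}$ in the direction for which $\gamma$ acts as the positive-time return map, so that the holonomy is $\rho(\gamma)$ and not $\rho(\gamma)^{-1}$. Since $\FF_2$ has no nontrivial automorphisms, $\sign\det$ is insensitive to inverting the group element, so this ambiguity does not affect the final statement; nevertheless I would state the orientation matching explicitly to make the cleaner $\ZZ$-valued Euler-class arguments of Section~\ref{sec:sp4case} go through on the same footing. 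A secondary point to record is well-definedness: the isomorphism $\GL(\xi(t^s_\gamma))\cong\GL(n,\RR)$ is only canonical up to conjugation, but $\det$ and hence its sign are conjugation-invariant, so $sw_1(\rho)([\gamma])$ is unambiguous. Finally, continuity/locally-constancy of $sw_1$ in $\rho$ (already in Proposition~\ref{prop:invariants_symp}) combined with continuity of $\xi$ in $\rho$ (Fact~\ref{fact:curve}) shows the formula is consistent as $\rho$ varies, completing the argument.
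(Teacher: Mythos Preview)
Your proof is correct and follows essentially the same approach as the paper: both identify $sw_1(\rho)([\gamma])$ with the orientability obstruction for the restriction $L^s(\rho)|_{[\gamma]} \cong \ZZ \backslash (\RR \times \xi(t^s_\gamma))$, whose holonomy is $\rho(\gamma)|_{\xi(t^s_\gamma)}$, yielding the sign of its determinant. Your version is considerably more detailed---addressing the description of $o_1$ via the classifying homomorphism, orientation conventions, and conjugation-invariance of $\det$---but the paper's proof is a two-sentence compression of the same argument.
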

\begin{proof}
  The first Stiefel-Whitney class $sw_1(\rho)([\gamma])$ is the
  obstruction to the orientability of the bundle $L^s(\rho)|_\g \cong
  \ZZ \backslash (\RR \times \xi(t^{s}_{\gamma}))$ over $\gamma \cong
  \ZZ \backslash \RR$.  Thus, $ sw_1(\rho)([\gamma]) = 1$ if
  $\rho(\gamma) \in \GL(\xi(t^{s}_{\gamma}))$ has positive determinant
  and $ sw_1(\rho)([\gamma]) = -1$ otherwise.
\end{proof}

\subsection{An Euler class}\label{sec:sp4case}
For $n=2$, the invariants obtained in Proposition~\ref{prop:swformax}
do not allow to distinguish the connected components of
\[\hommax(\pi_1(\Sigma), \Sp(4,\RR)) \moins
\homHit(\pi_1(\Sigma), \Sp(4,\RR)).\] In particular, the second
Stiefel-Whitney class does not offer enough information to distinguish
the connected components of
\[\homMaxFourZero.\]
The reason for this is that when $sw_1 (\rho) = 0$ the Lagrangian
bundle $L^s(\rho)$ is orientable.  There is an Euler class whose image
in $\h^2(T^1\Sigma; \FF_2)$ is the second Stiefel-Whitney class
$sw_2(\rho)$. Since an \emph{orientable} vector bundle has no
\emph{canonical orientation}, we need to introduce an \emph{enhanced}
representation space to obtain a well-defined Euler class.

\subsubsection{Enhanced representation spaces}
\label{sec:thespaces}
Let $\rho: \pi_1(\Sigma) \to \Sp(4,\RR)$ be a maximal representation.
Let $\gamma\in \pi_1( \Sigma)\moins\{1\}$ and $L^s(\gamma) \in \Ll$
the attractive fixed Lagrangian of $\rho(\gamma)$.  We denote by
$\Ll_+$ the space of \emph{oriented} Lagrangians and by $\pi: \Ll_+
\to \Ll$ the projection.  Let
\begin{multline*}
  \homMaxLl = \\
  \big\{ (\rho, L) \in \homMaxFourZero \times \Ll \mid L = L^s
  (\gamma) \big\}
\end{multline*}
and
\begin{multline*}
  \homMaxLlPlus =\\
  \big\{ (\rho, L_+) \in \homMaxFourZero \times \Ll_+ \mid \pi(L_+) =
  L^s(\gamma) \big\}.
\end{multline*}
The map $\homMaxLlPlus \to \homMaxLl$ is a $2$-fold cover.

The space $\homMaxLl$ is easily identified with the space
$\homMaxFourZero$, we introduce it to emphasize the fact that $\rho(
\gamma)$ has an attractive Lagrangian.

\begin{lem}
  The natural map
  \begin{equation*}
    \begin{array}{rcl}
      \homMaxLl
      & \longrightarrow & \Ll \\
      (\rho, L) & \longmapsto & L
    \end{array}
  \end{equation*}
  is continuous.
\end{lem}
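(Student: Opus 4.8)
The plan is to reduce the assertion to the continuous dependence of the positive limit curve on the representation, which is precisely Fact~\ref{fact:curve}. Recall that $\homMaxLl$ is, by construction, the set of pairs $(\rho, L)$ in $\homMaxFourZero \times \Ll$ for which $L$ equals $L^s(\gamma)$, the attractive fixed Lagrangian of $\rho(\gamma)$; equivalently, it is the graph of the assignment $\rho \mapsto L^s(\gamma)$. The first projection $\homMaxLl \to \homMaxFourZero$ is therefore a continuous bijection, and it is a homeomorphism exactly when $\rho \mapsto L^s(\gamma)$ is continuous. Under this identification the map in the statement becomes $\rho \mapsto L^s(\gamma)$, so the whole lemma amounts to the continuity of $\homMaxFourZero \to \Ll$, $\rho \mapsto L^s(\gamma)$.

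To prove this, I would use the description of $L^s(\gamma)$ through the limit curve. Since a maximal representation $\rho$ is $(\Sp(4,\RR), \GL(2,\RR))$-Anosov and $P^s$ is conjugate to $P^u$ in the symplectic case, Corollary~\ref{cor:anosov_holonomy} together with Remark~\ref{rem_PconjtoPopp} give
\[
L^s(\gamma) = \xi_\rho\big(t^{s}_{\gamma}\big),
\]
where $\xi_\rho \colon \partial\pi_1(\Sigma) \to \Ll$ is the $\rho$-equivariant positive limit curve and $t^{s}_{\gamma} \in \partial\pi_1(\Sigma)$ is the attracting fixed point of $\gamma$ for its action on the boundary. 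The key observation is that $t^{s}_{\gamma}$ depends only on $\gamma$ (and on the fixed hyperbolic metric used to realize $\partial\pi_1(\Sigma)$), but not at all on $\rho$. Hence $\rho \mapsto L^s(\gamma)$ factors as the composition of $\rho \mapsto \xi_\rho$ with evaluation at the fixed point $t^{s}_{\gamma}$.

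Now Fact~\ref{fact:curve}, itself a consequence of Proposition~\ref{prop:openano}, tells us that $\rho \mapsto \xi_\rho$ is continuous; since $\partial\pi_1(\Sigma)$ is compact, evaluation at a point is continuous for the relevant (compact-open) topology on the space of curves, so the composition is continuous and the lemma follows. The only point that needs a little care is to make the phrase ``depends continuously'' in Fact~\ref{fact:curve} precise, i.e.\ to verify that the joint map $\hommax(\pi_1(\Sigma), \Sp(4,\RR)) \times \partial\pi_1(\Sigma) \to \Ll$, $(\rho, t) \mapsto \xi_\rho(t)$, is continuous; but this is immediate from the continuous dependence of the Anosov section on $\rho$ (Proposition~\ref{prop:openano}) together with the fact that $\xi_\rho$ is read off from that section along periodic orbits. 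I do not anticipate any genuine obstacle here: the statement is essentially a bookkeeping consequence of the openness and continuity properties of Anosov representations already established.
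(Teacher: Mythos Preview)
Your proposal is correct and follows one of the two routes the paper itself indicates: the paper's proof is a single sentence stating that the lemma follows immediately from the continuity of the eigenspace of a matrix or from Proposition~\ref{prop:openano}, and your argument via Fact~\ref{fact:curve} is precisely an unpacking of the second route. The only difference worth noting is that the paper also points out the more elementary alternative of using the continuity of the attracting eigenspace of $\rho(\gamma)$ as a function of the matrix $\rho(\gamma)$, which avoids invoking the Anosov machinery altogether.
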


This lemma follows immediately from the continuity of the eigenspace
of a matrix or from Proposition~\ref{prop:openano}.

\subsubsection{The Euler class}
\label{sec:eulerclassrank2}
As a consequence of the following proposition for every element
$(\rho, L_+)$ in $\homMaxLlPlusNoGrp$ there is a natural associated
\emph{oriented} Lagrangian bundle over $T^1\Sigma$.

\begin{prop}\label{prop:lift}
  Let $\rho\in \hommax( \pi_1( \Sigma), \Sp(2n, \RR))$ and let
  $\xi: \partial \pi_1( \Sigma) \to \Ll$ be the corresponding
  equivariant positive curve. Suppose that $n$ is even.  Then there
  exists a continuous lift of $\xi$ to $\Ll_+$.

  Let $\xi_+ : \partial \pi_1( \Sigma) \to \Ll_+$ be one of the two
  continuous lifts of $\xi$.  Then the map $\xi_+$ is
  $\rho$-equivariant if and only if $sw_1 (\rho) =0$. In this case the
  other lift of $\xi$ is also equivariant.
\end{prop}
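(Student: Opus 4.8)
The plan is to unwind the statement into two separate claims: (a) the curve $\xi$ admits a continuous lift to $\Ll_+$, and (b) for such a lift $\xi_+$, equivariance is equivalent to the vanishing of $sw_1(\rho)$. For (a), recall that the projection $\pi : \Ll_+ \to \Ll$ is a $2$-fold covering (the fiber is the set of orientations of a Lagrangian plane), classified by an element of $\h^1(\Ll; \FF_2)$; explicitly this is $w_1$ of the tautological Lagrangian bundle over $\Ll$. A continuous lift of $\xi$ exists if and only if $\xi^*$ of this class vanishes in $\h^1(\partial\pi_1(\Sigma); \FF_2)$. But $\partial\pi_1(\Sigma)$ is a topological circle, so its first $\FF_2$-cohomology is $\FF_2$, and the obstruction is the obstruction to orienting the pullback of the tautological bundle over the circle $\partial\pi_1(\Sigma)$. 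The key point is that this pullback bundle over the circle is \emph{orientable} precisely because $\xi$ is a positive curve into the space of Lagrangians of the \emph{even}-dimensional space $\RR^{2n}$: positivity forces $\xi$ to be monotone, and the clean way to see orientability is to use that $\Ll$ itself, the Lagrangian Grassmannian of $\RR^{2n}$ with $n$ even, has the property that the tautological bundle has vanishing $w_1$ (equivalently $\Ll_+ \to \Ll$ is trivial over any loop homotopic into the ``positive'' part; one can also argue directly from the explicit description of positive curves established in Appendix~\ref{sec:app_positive}, where a positive curve parametrizing an arc winds in a controlled way). So I would prove that $\xi^*$ of $w_1(\text{tautological})$ vanishes, deducing (a).

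For (b), fix a continuous lift $\xi_+ : \partial\pi_1(\Sigma) \to \Ll_+$; there are exactly two such lifts, $\xi_+$ and $-\xi_+$ (pointwise orientation reversal), since $\partial\pi_1(\Sigma)$ is connected. For each $\gamma \in \pi_1(\Sigma)$, the composition $\rho(\gamma)\circ\xi_+ \circ \gamma^{-1}$ is again a continuous lift of $\xi$ (because $\rho(\gamma)\in\Sp(2n,\RR)$ acts on $\Ll$ and $\xi$ is already $\rho$-equivariant), hence equals either $\xi_+$ or $-\xi_+$. This assignment $\gamma\mapsto \epsilon(\gamma)\in\{\pm 1\}$ is a homomorphism $\pi_1(\Sigma)\to\FF_2$, and $\xi_+$ is equivariant iff $\epsilon$ is trivial. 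It remains to identify $\epsilon$ with $sw_1(\rho)$. For this I would evaluate both on an arbitrary $\gamma\ne 1$ corresponding to a closed geodesic: on one side, $sw_1(\rho)([\gamma])=\sign(\det\rho(\gamma)|_{\xi(t^s_\gamma)})$ by Lemma~\ref{lem:sw1=sign}; on the other side, $\epsilon(\gamma)$ measures whether $\rho(\gamma)$, which fixes the Lagrangian $L^s(\gamma)=\xi(t^s_\gamma)$, preserves or reverses the orientation transported to it by $\xi_+$ along the axis of $\gamma$ in $\partial\pi_1(\Sigma)$. Since $\xi_+$ is continuous and the interval between $t^s_\gamma$ and $t^u_\gamma$ is contractible, the orientation at the endpoint is canonically the one fixed by the lift, so $\epsilon(\gamma)$ is exactly the sign of the determinant of $\rho(\gamma)$ acting on the \emph{oriented} line $\det L^s(\gamma)$, i.e.\ $\sign(\det\rho(\gamma)|_{\xi(t^s_\gamma)})$. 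Hence $\epsilon=sw_1(\rho)$ as homomorphisms on $\pi_1(\Sigma)$, and both lifts (differing by the global sign $-1$, which is $\rho$-equivariant trivially) are simultaneously equivariant or not.

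The main obstacle I anticipate is step (a): proving that the positive curve $\xi$ genuinely lifts. One must be careful that this is where the evenness of $n$ enters — for $n$ odd the tautological bundle over $\Ll$ restricted to a positive loop has $w_1\ne 0$, which is consistent with the dichotomy in Theorem~\ref{thm_intro:restrictions}. The cleanest route is to reduce to a model computation: every positive curve $\partial\pi_1(\Sigma)\to\Ll$ is, by the results of Appendix~\ref{sec:app_positive}, homotopic through positive curves to the boundary curve of an irreducible Fuchsian (or diagonal Fuchsian) representation, for which the pullback bundle is explicitly $L^s(\iota)^{2n-1}\oplus\cdots\oplus L^s(\iota)$ (Facts~\ref{facts:irreducible}.(\ref{item:lag_irr})); its determinant line is $L^s(\iota)^{n^2}$, which over the circle $\partial\pi_1(\Sigma)$ is orientable iff $n^2$ — equivalently $n$ — makes the relevant parity work out, and one checks directly that for $n$ even it is orientable. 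This model computation, together with homotopy invariance of $w_1$, gives (a). The rest is bookkeeping with covering-space theory and is routine.
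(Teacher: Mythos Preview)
Your proposal is correct and follows essentially the same approach as the paper. For part~(a) the paper skips your preliminary cohomological discussion of $w_1$ of the tautological bundle and goes straight to what you call ``the cleanest route'': the existence of a lift depends only on the homotopy class of $\xi$, the space of positive curves is connected (Proposition~\ref{prop:conn-posit-curv}), so one checks it for a single Fuchsian model. For part~(b) the paper's argument is identical to yours---define the sign $\epsilon(\gamma)$ by $\gamma\cdot\xi_+ \in \{\xi_+,-\xi_+\}$, evaluate at the attractive fixed point $t^{s}_{\gamma}$, and invoke Lemma~\ref{lem:sw1=sign}.
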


\begin{proof}
  The fact that a continuous lift exists depends only on the homotopy
  class of the curve $\xi$. Since the space of continuous and positive
  curves is connected (see Proposition~\ref{prop:conn-posit-curv}),
  the existence of a lift can be checked for one specific maximal
  representation.  Considering a Fuchsian representation gives the
  restriction on $n$.

  Let $\xi_+$ be a continuous lift of $\xi$ and denote by
  $\xi^{\prime}_{+}$ the other lift of the curve $\xi$. Since $\xi$ is
  equivariant, for any $\gamma$ in $\pi_1( \Sigma)$, the following
  alternative holds:
  \[ \gamma \cdot \xi_+ = \xi_+ \text{ or } \gamma \cdot \xi_+ =
  \xi^{\prime}_{+}. \] Furthermore, given any $t$ in $\partial \pi_1(
  \Sigma)$, by connectedness one observes that
  \[ \gamma \cdot \xi_+ = \xi_+ \Longleftrightarrow \xi_+( \gamma
  \cdot t) = \rho(\gamma) \cdot \xi_+(t). \] Using this last equation
  for the attractive fixed point $t^{s}_{\gamma}$ of $\gamma$ in
  $\partial \pi_1 ( \Sigma)$, the equivalence becomes
  \[ \gamma \cdot \xi_+ = \xi_+ \Longleftrightarrow \det
  \rho(\gamma)|_{\xi(t^{s}_{\gamma})} > 0. \] Using now the equality
  $sw_1(\rho)([\gamma]) = \mathrm{sign}( \det
  \rho(\gamma)|_{\xi(t^{s}_{\gamma})})$ (Lemma~\ref{lem:sw1=sign}),
  the proposition follows.
\end{proof}

Proposition~\ref{prop:lift} gives a natural way to lift the
equivariant curve given any element $(\rho, L_+)\in \homMaxLlPlus$:

\begin{defi}\label{defi:canocurve}
  Let $(\rho, L_+) \in \homMaxLlPlus$ and let $\xi : \partial \pi_1(
  \Sigma) \to \Ll$ be the $\rho$-equivariant curve.  The lift
  (uniquely determined and equivariant) $\xi_+$ of $\xi$ such that
  \[ \xi_+( t^{s}_{\gamma}) = L_+ \] is called the \emph{canonical
    oriented equivariant curve} for the pair $(\rho, L_+)$. Here
  $t^{s}_{\gamma}\in\partial \pi_1(\Sigma)$ is the attractive fixed
  point of $\gamma$.
\end{defi}
This canonical oriented equivariant curve defines a $\GL^+(2,
\RR)$-reduction of the $\GL(2, \RR)$-Anosov reduction associated with
$\rho$. It also induces a splitting of the flat bundle $E$ associated
with $\rho$
\[ E = L^{s}_{+}(\rho) \oplus L^{u}_{+}(\rho)\] into two oriented
Lagrangian subbundles.  We call this splitting the {\em oriented
  Lagrangian reduction} of $E$.

\begin{defi}
  The Euler class
  \[ \begin{array}{rcl}
    e_{\gamma}:  \homMaxLlPlus & \longrightarrow & \h^2( T^1\Sigma; \ZZ) \\
    (\rho, L_+) & \longmapsto & e_{\gamma}(\rho, L_+)
  \end{array} \]
  is the Euler class of the $\GL^+(2, \RR)$-reduction given by 
  Definition~\ref{defi:canocurve}.
\end{defi}
The map $e_{\gamma}$ is continuous.

\subsubsection{Connected components}
\label{sec:conncomprank2}
We consider now a subspace of $\homMaxLlPlusNoGrp$ by fixing the
oriented Lagrangian. Let $\LZPlus \in \Ll_+$ be an oriented
Lagrangian; we set

\begin{multline*}
  \homMaxPLnot =\\
  \{ ( \rho , L_+ ) \in \homMaxLlPlus \mid L_+ = \LZPlus \}.
\end{multline*}

\begin{lem}\label{lem:fibr_con}
  The natural map
  \[ \homMaxPLnot \longrightarrow \homMaxFourZero/\Sp(4, \RR)\] is
  onto. Its fibers are connected.
\end{lem}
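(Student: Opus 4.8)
The plan is to treat surjectivity and the connectedness of the fibers in turn; the real content lies in the latter.

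\emph{Surjectivity.} Recall that $\Sp(4,\RR)$ acts transitively on $\Ll$ (in fact transitively on the space $\xX$ of transverse pairs). Given a class in $\homMaxFourZero/\Sp(4,\RR)$, pick a representative $\rho_0$ and let $L_*\in\Ll$ be the attractive fixed Lagrangian of $\rho_0(\gamma)$. Choose $g\in\Sp(4,\RR)$ with $g\cdot L_* = \pi(\LZPlus)$ and put $\rho := g\rho_0 g^{-1}$. Then $\rho$ is again maximal with $sw_1(\rho)=0$, since the isomorphism class of the canonically associated bundle, hence its Stiefel--Whitney classes, depends only on the conjugacy class; and the attractive Lagrangian of $\rho(\gamma)$ is $g\cdot L_* = \pi(\LZPlus)$. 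So $(\rho,\LZPlus)\in\homMaxPLnot$ and it maps to the given class.

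\emph{Identifying the fibers.} Fix $[\rho_0]$; after the conjugation just performed we may assume the attractive Lagrangian of $\rho_0(\gamma)$ equals $L_0:=\pi(\LZPlus)$, so $(\rho_0,\LZPlus)\in\homMaxPLnot$. A point of the fiber over $[\rho_0]$ is a conjugate $g\rho_0 g^{-1}$ whose $\gamma$-attractive Lagrangian $g\cdot L_0$ equals $L_0$; hence the fiber is the image of the orbit map $g\mapsto g\rho_0 g^{-1}$ restricted to the Siegel parabolic $P:=\stab_{\Sp(4,\RR)}(L_0)$. Two elements of $P$ give the same point iff they differ on the right by an element of the centralizer $Z$ of $\rho_0(\pi_1(\Sigma))$; and $Z$ fixes the whole equivariant limit curve $\xi$ — this is the uniqueness in Proposition~\ref{prop:uniqsigma}, applied to the positive curve $z\cdot\xi$ — so $Z\subset P$. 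Since $Z\subset\O(2)$ is compact and the orbit is locally closed, the fiber is homeomorphic to $P/Z$.

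\emph{Connectedness of $P/Z$ --- the main obstacle.} Write $P = \GL(L_0)\ltimes N$ with $N$ the (connected) unipotent radical, so $\pi_0(P)\cong\pi_0(\GL(L_0))\cong\ZZ/2\ZZ$, the nontrivial class being the elements acting on $L_0$ with negative determinant. Thus $P/Z$ is connected as soon as $Z$ meets that class. I would obtain this from the structure of centralizers of maximal representations (see the remark on centralizers in Section~\ref{sec_standard}): $Z$ sits inside the stabilizer $\cong\O(2)$ of the positive triple $(\xi(t^{s}_{\gamma}),\xi(t),\xi(t^{u}_{\gamma}))$, which acts on $L_0 = \xi(t^{s}_{\gamma})$ as the orthogonal group of a definite form, and one then exhibits an orientation-reversing element of $Z$ there. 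If $Z$ is too small for this direct argument, the fall-back is a deformation argument: join two points of the fiber arising from the two components of $\GL(L_0)$ by a path of maximal representations, obtained by deforming the associated normalized equivariant positive curves inside the connected space of positive curves (Proposition~\ref{prop:conn-posit-curv}) while keeping the normalization $\xi(t^{s}_{\gamma})=L_0$ and maximality along the path. Reconciling the two components of the Levi $\GL(L_0)$ --- equivalently, ruling out that the two orientations of $L_0$ split the fiber --- is the delicate point, and I expect it to be the crux of the lemma.
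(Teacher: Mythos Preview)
Your identification of the fiber over $[\rho_0]$ as $P/Z(\rho_0)$, with $P=\stab_{\Sp(4,\RR)}(L_0)$ the stabilizer of the \emph{unoriented} Lagrangian $L_0=\pi(\LZPlus)$, is correct, and the two-component issue you flag is exactly the crux. The paper's own argument instead identifies the fiber with
\[
\{\,g\in\Sp(4,\RR)\mid g\cdot\LZPlus=\LZPlus\,\}\big/\{\,z\in Z(\rho_0)\mid z\cdot\LZPlus=\LZPlus\,\},
\]
using the stabilizer of the \emph{oriented} Lagrangian, and concludes from its connectedness. But membership in $\homMaxPLnot$ only pins down $\pi(\LZPlus)=L^s(\gamma)$, so conjugation by any $g\in P$ --- orientation-reversing on $L_0$ or not --- keeps one in the fiber; the paper's description omits the orientation-reversing half of $P$.

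Neither of your proposed routes closes the gap --- in fact the lemma as stated appears to be false. For a Zariski-dense $\rho_0$ (such exist in these components, e.g.\ by Theorem~\ref{thmintro_Zdensity}) one has $Z(\rho_0)=\{\pm\id\}$, and $-\id$ has determinant $+1$ on the two-dimensional $L_0$; hence $Z(\rho_0)\subset P^\circ=\stab(\LZPlus)$ and $P/Z(\rho_0)$ genuinely has two components. Your deformation fall-back cannot repair this either: if $g\in P\smallsetminus P^\circ$ then conjugation by $g$ reverses the canonical orientation of the Lagrangian bundle, so $(\rho_0,\LZPlus)$ and $(g\rho_0 g^{-1},\LZPlus)$ carry Euler classes $e$ and $-e$; when $e\neq 0,\,g{-}1$ these are distinct, and since $e_\gamma$ is locally constant on $\homMaxPLnot$, no path there --- through positive curves or otherwise --- can connect the two points. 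The honest conclusion is that $e_{\gamma,\LZPlus}$ descends to $[\rho]$ only as the unordered pair $\{e,-e\}$, and the lemma (and the definition following it) needs rephrasing rather than a cleverer proof.
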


\begin{proof}
  Let $\rho\in \homMaxFourZero$. Since the first Stiefel-Whitney class
  of $\rho$ vanishes, there exists an attracting oriented Lagrangian
  $L_+$ for $\rho( \gamma)$. Let $g \in \Sp( 4, \RR)$ such that $g
  \cdot L_+ = \LZPlus$, then $( g \rho g^{-1}, \LZPlus)$ belongs to
  $\homMaxPLnot$ and projects to $[\rho]$.

  Now, let $( \rho, \LZPlus)\in \homMaxPLnot$. Then the fiber of the
  projection containing $( \rho, \LZPlus)$ is isomorphic to the
  quotient:
  \[
  \{ g \in \Sp(4, \RR) \mid g \cdot \LZPlus = \LZPlus \} / \{ z \in Z(
  \rho) \mid z\cdot \LZPlus = \LZPlus \},
  \]
  where $Z(\rho)$ denotes the centralizer of $\rho(\pi_1(\Sigma))$ in
  $\Sp(4,\RR)$.  The group $\{ g \in \Sp(4, \RR) \mid g \cdot \LZPlus
  = \LZPlus \}$ is connected. Thus the fiber is connected.
\end{proof}

Lemma~\ref{lem:fibr_con} implies that $\homMaxPLnot$ has as the same
number of connected components as $\homMaxFourZero$.

\begin{defi}
  Let $\gamma\in \pi_1(\Sigma) \moins \{1\}$ and $\LZPlus \in
  \Ll_+$. The \emph{relative Euler class} $e_{\gamma,
    \LZPlus}(\rho)\in \h^2( T^1\Sigma; \ZZ)$ of the class of a maximal
  representation $\rho \in \homMaxFourZero$ is defined as the Euler
  class of one (any) inverse image of $[\rho]$ in the space
  $\homMaxPLnot$.
\end{defi}

\subsection{Constraints on invariants} \label{sec:constraints} Since
the invariants are constructed from bundles that are flat along
geodesic leaves, one expects that not every cohomology class can
arise:
\begin{prop}
  \label{prop:restriction}
  Let $G = \Sp(2n, \RR)$. Then

  \begin{asparaenum}
  \item The image of $sw_1 : \hommax( \pi_1(\Sigma),G) \rightarrow
    \h^1(T^1 \Sigma ; \FF_2 )$ is contained in one coset of
    $\h^1(\Sigma ; \FF_2 )$. More precisely:
    \begin{itemize}
    \item for $n$ even, $sw_1( \rho) \in \h^1( \Sigma; \FF_2) \subset
      \h^1( T^1\Sigma; \FF_2)$,
    \item for $n$ odd, $sw_1( \rho) \in \h^1( T^1\Sigma; \FF_2) \moins
      \h^1( \Sigma; \FF_2) $.
    \end{itemize}
  \item The image of $ sw_2 : \hommax( \pi_1(\Sigma),G) \rightarrow
    \h^2(T^1 \Sigma ; \FF_2 )$ lies in the image of $\h^2( \Sigma ;
    \FF_2 ) \to \h^2(T^1 \Sigma ; \FF_2 )$.
  \item Similarly, when $n=2$, let $\gamma \in \pi_1(\Sigma)$ and
    $\LZPlus \in \Ll_+$, then the image of $e_{\gamma, \LZPlus} :
    \hommaxzero( \pi_1(\Sigma), \Sp(4, \RR)) \rightarrow \h^2(T^1
    \Sigma ; \ZZ )$ lies in the image of $\h^2( \Sigma ; \ZZ ) \to
    \h^2(T^1 \Sigma ; \ZZ )$.
  \end{asparaenum}
\end{prop}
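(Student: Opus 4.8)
The plan is to run everything through the Gysin sequence \eqref{eq_gysin} of the circle bundle $\pi\colon T^1\Sigma\to\Sigma$, using the description of the Lagrangian bundle $L^s(\rho)$ coming from the equivariant limit curve (Section~\ref{sec:prelim_maxrep_sp}): after pulling back to $T^1\widetilde\Sigma\cong\partial\pi_1(\Sigma)^{(3+)}$, the fibre of $L^s(\rho)$ at $(t^s,t,t^u)$ is $\xi(t^s)$, so $L^s(\rho)$ is there the pullback of the tautological $\RR^n$-bundle $\mathcal T\to\Ll$ along the forward-endpoint map $(t^s,t,t^u)\mapsto\xi(t^s)$. With $\FF_2$-coefficients the Euler class of $T^1\Sigma$ equals $\chi(\Sigma)\equiv 0$, so \eqref{eq_gysin} splits into short exact sequences $0\to\h^k(\Sigma;\FF_2)\xrightarrow{\pi^*}\h^k(T^1\Sigma;\FF_2)\xrightarrow{\pi_!}\h^{k-1}(\Sigma;\FF_2)\to 0$; hence $\pi^*\h^1(\Sigma;\FF_2)=\ker\pi_!$ has index two and $\pi^*\h^2(\Sigma;\FF_2)=\ker\bigl(\pi_!\colon\h^2(T^1\Sigma;\FF_2)\to\h^1(\Sigma;\FF_2)\bigr)$. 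With $\ZZ$-coefficients the Euler class is $\pm(2g-2)\neq 0$, so $\pi^*\h^2(\Sigma;\ZZ)$ is the torsion subgroup of $\h^2(T^1\Sigma;\ZZ)$ and again equals $\ker\pi_!$. Thus (i) becomes ``$\pi_!(sw_1(\rho))$ is the constant $n\bmod 2$'', and (ii), (iii) become ``$\pi_!(sw_2(\rho))=0$'' and ``$\pi_!(e_{\gamma,\LZPlus}(\rho))=0$''. Since $\h_1(\Sigma)$ is spanned by classes of simple closed curves, each represented by a simple closed geodesic $\gamma_0$, and $\langle\pi_!\beta,[\gamma_0]\rangle=\langle\beta,[\pi^{-1}(\gamma_0)]\rangle$ with $\pi^{-1}(\gamma_0)$ an embedded torus $T$, statements (ii) and (iii) reduce to: $w_2\bigl(L^s(\rho)|_T\bigr)=0$, resp.\ $e\bigl(L^s_+(\rho)|_T\bigr)=0$, for every simple closed geodesic $\gamma_0$.

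For (i) I would restrict $L^s(\rho)$ to one fibre $F=\pi^{-1}(x)\cong S^1$. The forward-endpoint map $F\to\partial\pi_1(\Sigma)$ is a degree $\pm 1$ homeomorphism, so $L^s(\rho)|_F$ is the pullback of $\xi^*\mathcal T$ along a degree-one map of circles, whence $\pi_!(sw_1(\rho))=\langle w_1(\xi^*\mathcal T),[\partial\pi_1(\Sigma)]\rangle$ depends only on the homotopy class of $\xi$. As the space of positive curves is connected (Proposition~\ref{prop:conn-posit-curv}), this is independent of $\rho$, and it can be evaluated on a diagonal Fuchsian representation $\rho_\Delta$: by Facts~\ref{facts:diagonal}.(\ref{item:lag_diag}), $L^s(\rho_\Delta)|_F\cong(L^s(\iota)|_F)^{\oplus n}$ and $L^s(\iota)|_F$ is the M\"obius bundle (the case $n=1$: $\xi_\iota\colon\partial\pi_1(\Sigma)\to\Ll_1=\POR$ is a homeomorphism), so $\pi_!(sw_1(\rho))=n\bmod 2$, which is (i).

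For (ii) and (iii) I would fix a simple closed geodesic $\gamma_0$ and set $T=\pi^{-1}(\gamma_0)$, which contains the periodic orbit $\hat\gamma_0$ (velocity lift of $\gamma_0$) and its reverse $\hat\gamma_0'$; these disjoint simple closed curves are the only closed leaves of the weak-stable foliation inside $T$, and they bound two annuli $A_+,A_-$. On each weak-stable leaf $L^s(\rho)$ is constant ($=\xi(t^s)$) and hence flat (Proposition~\ref{prop:uniqsigma}); passing through the covering $T^1\widetilde\Sigma\to T^1\Sigma$ this shows $L^s(\rho)|_{A_\pm}$ is the pullback, along a map $A_\pm\to I_\pm/\langle\gamma_0\rangle\cong S^1$ induced by the forward-endpoint map (here $I_\pm\subset\partial\pi_1(\Sigma)$ are the two intervals on which $\gamma_0$ acts freely), of an $\RR^n$-bundle over $S^1$. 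Consequently $L^s(\rho)|_{A_\pm}$ splits off a trivial rank-$(n-1)$ subbundle, and when $n$ is even with $sw_1(\rho)=0$ the oriented bundle $L^s_+(\rho)|_{A_\pm}$ is trivial. Then $L^s(\rho)|_T$ is the clutching of the two pieces along $\hat\gamma_0\sqcup\hat\gamma_0'$, and $w_2(L^s(\rho)|_T)$ (resp.\ $e(L^s_+(\rho)|_T)$) is the obstruction carried by this clutching; the plan is to show it vanishes. As in (i), the restricted bundle $L^s(\rho)|_T$ depends on $\rho$ only through the homotopy type of $\xi$ and the behaviour of $\rho(\gamma_0)$ on the two Lagrangians $\xi(t^s_{\gamma_0}),\xi(t^u_{\gamma_0})$; using the connectedness of the space of positive curves and the fact that the relevant properties of the holonomy (Corollary~\ref{cor:anosov_holonomy}, positivity of $\xi$ — Theorem~\ref{thm:positivecurve}) are preserved along deformations, one reduces the vanishing to a model representation, e.g.\ a diagonal Fuchsian one, where $L^s(\rho)|_T$ is a sum of line bundles and the Whitney formula together with $\alpha^2=0$ for every $\alpha\in\h^1(T^2;\FF_2)$ gives $w_2=0$ (and, since $\h^2(T^2;\ZZ)$ is torsion-free, also $e=0$).

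The hard part will be the last step just described: showing that the homotopy class of the clutching datum along $\hat\gamma_0$ and $\hat\gamma_0'$ — which is what controls $w_2(L^s(\rho)|_T)$ and $e(L^s_+(\rho)|_T)$ — is forced to be trivial for \emph{every} maximal $\rho$, not merely for the standard examples (recall $\hommax(\pi_1(\Sigma),\Sp(2n,\RR))$ is disconnected, so one cannot simply invoke connectedness of the representation space). The tools available are the flatness of $L^s(\rho)$ along the weak-stable and, dually, the weak-unstable leaves (from Proposition~\ref{prop:uniqsigma} and flatness along flow lines), which trivialises $L^s(\rho)$ over $A_\pm$ and over $\hat\gamma_0,\hat\gamma_0'$ in a way controlled up to homotopy by the holonomy; the positivity of the limit curve $\xi$ (Theorem~\ref{thm:positivecurve}), which makes the family $\{\xi(t)\}_{t\in I_\pm}$ together with the action of $\rho(\gamma_0)$ monotone and thereby pins down those homotopy classes; and the connectedness of the space of positive curves (Proposition~\ref{prop:conn-posit-curv}), which allows the final reduction to a model representation.
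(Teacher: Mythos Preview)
Your argument for part~(i) is essentially the paper's: restrict to a fibre $T^1_x\Sigma$, identify $L^s(\rho)|_{T^1_x\Sigma}$ with $\xi^*\mathcal T$, invoke connectedness of positive curves (Proposition~\ref{prop:conn-posit-curv}), and compute on a diagonal Fuchsian model. Good.

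For parts~(ii) and~(iii) your overall plan---reduce to showing the class vanishes on each torus $T=T^1\Sigma|_{\gamma_0}$---is also the paper's, via Lemma~\ref{lem:whenTwoClassIsTor}. But your execution on each torus takes a detour. The paper does not decompose $T$ into annuli and analyse clutching; instead it writes $T=\langle\gamma_0\rangle\backslash(\partial\pi_1(\Sigma)\times\RR)$ globally, observes that the Lagrangian subbundle is entirely encoded by the \emph{pair} $(A,\xi)$ with $A=\rho(\gamma_0)\in\GL(n,\RR)$ and $\xi$ positive and $A$-equivariant, and then appeals to Proposition~\ref{prop:conn_equiv_posit}: the space of such pairs has exactly two connected components, detected by $\mathrm{sign}(\det A)$. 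Hence the isomorphism type of $L^s(\rho)|_T$ depends only on $\mathrm{sign}(\det A)=sw_1(\rho)([\gamma_0])$, and a computation on one model per sign finishes.

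This is precisely the ``hard part'' you flag at the end, and your proposed resolution has a gap. You invoke only Proposition~\ref{prop:conn-posit-curv} (connectedness of positive curves with no equivariance constraint) together with vague ``preservation of holonomy properties''. That is not enough: the bundle $L^s(\rho)|_T$ is determined by the pair $(A,\xi)$ \emph{jointly}, with $\xi$ constrained to be $A$-equivariant, so to deform to a model you must move both simultaneously while maintaining equivariance. Proposition~\ref{prop:conn-posit-curv} lets you move $\xi$ but breaks the equivariance; what you need is Proposition~\ref{prop:conn_equiv_posit}, which says the space of equivariant pairs $(A,\xi)$ is connected once $\mathrm{sign}(\det A)$ is fixed. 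With that in hand the annulus decomposition and clutching analysis become unnecessary: the whole bundle $L^s(\rho)|_T$ is deformed at once to its model counterpart.
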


To prove this proposition we use the positivity of the equivariant
curve $\xi : \partial \pi_1(\Sigma) \to \Ll$ of a maximal
representation $\rho: \pi_1(\Sigma) \to \Sp(2n,\RR)$
(Definition~\ref{def:positivecurve}).

\begin{proof}
  For the first property, let $x\in \Sigma$ and consider the exact
  sequence
  \[
  \h^1(\Sigma; \FF_2) \longrightarrow \h^1( T^1 \Sigma; \FF_2)
  \overset{ f_x}{\longrightarrow} \h^1( T_{x}^{1} \Sigma; \FF_2).\] We
  need to check that the image of $sw_1(\rho)$ in $\h^1( T_{x}^{1}
  \Sigma; \FF_2)\cong \FF_2$ does not depend on $\rho$, i.e.\ that the
  gauge isomorphism class of $L^s(\rho)|_{T_{x}^{1} \Sigma}$ is
  independent of $\rho$.

  Let $\tilde{x}\in \widetilde{ \Sigma}$ be a lift of $x$ so that
  $T_{x}^{1} \Sigma \cong T_{\tilde{x}}^{1} \widetilde{\Sigma}
  \cong \partial \pi_1(\Sigma)$, where the last map is given by the
  restriction of $\partial \pi_1(\Sigma)^{(3+)} \to \partial
  \pi_1(\Sigma)\sep{} (t^s,t,t^u) \mapsto t^s$ to $T_{\tilde{x}}^{1}
  \widetilde{\Sigma} \subset T^{1} \widetilde{\Sigma} \cong \partial
  \pi_1(\Sigma)^{(3+)}$. The restriction of the flat $G$-bundle to
  $T_{x}^{1} \Sigma$ is trivial, thus the restriction of the Anosov
  section $\sigma$ to $T_{x}^{1} \Sigma$ can be regarded as a map
  $T_{x}^{1} \Sigma \to \mathcal{X}$.

  By Remark~\ref{rem:reduc-P}, we can work with the $P^s$-reduction,
  i.e.\ we consider only the first component of the map $\sigma|_{
    T_{x}^{1} \Sigma} : T_{x}^{1} \Sigma \to \Ll \times \Ll$.  With
  the above identification $T_{x}^{1} \Sigma \cong \partial
  \pi_1(\Sigma)$ this map is exactly the equivariant limit curve $\xi
  : \partial \pi_1(\Sigma) \to \Ll$.  Since the space of positive
  curves is connected (Proposition~\ref{prop:conn-posit-curv}), $f_x(
  sw_1( \rho) )$ is independent of $\rho$.  The calculation for the
  diagonal Fuchsian representation (see
  Section~\ref{subsec:comp_diag}) gives the desired statement about
  $sw_1(\rho)$.

  \medskip

  For the second statement, in view of
  Lemma~\ref{lem:whenTwoClassIsTor} it is sufficient to prove that for
  any closed curve $\eta \subset \Sigma$ the restriction of $sw_2
  (\rho)$ (or $e_{\gamma, \LZPlus}( \rho)$) to the torus $ T^1
  \Sigma|_{ \eta}$ is zero.

  For this we write the torus $ T^1 \Sigma|_{ \eta}$ as the quotient
  of $\partial \pi_1(\Sigma) \times \RR$ by $\langle \eta \rangle
  \cong \ZZ$ where $\eta \cdot ( \theta, t) = ( \eta \cdot \theta,
  t+1)$.  With this identification the flat bundle can be written as
  $\langle \g \rangle \backslash \big (\partial \pi_1(\Sigma) \times
  \RR \times G \big)$ with $\g \cdot ( \theta, t, g) = ( \gamma \cdot
  \theta, t+1, A g)$ and $A \in H \cong \GL(n, \RR)$. The section of
  the associated $\Ll$-bundle lifts to $\partial \pi_1(\Sigma) \times
  \RR$:
  \[
  \begin{array}{rcl}
    \partial \pi_1(\Sigma) \times \RR & \longrightarrow & \Ll \\
    ( \theta, t) & \longmapsto & \xi( \theta).
  \end{array}
  \]
 
  Since the space of pairs \[\big\{ (A, \xi) \mid A \in H, \xi \text{
    positive, continuous and } A\text{-equivariant} \big\}\] has
  exactly two connected components given by the sign of $\det A$
  (Proposition~\ref{prop:conn_equiv_posit}), we conclude that
  $sw_2(L^s( \rho) |_{T^1 \Sigma|_{ \eta}})$ depends only on this
  sign, hence $sw_2(\rho)$ depends only on $sw_1( \rho)$ (see
  Lemma~\ref{lem:sw1=sign}).  Calculations for the model
  representations show that in fact $sw_2(L^s( \rho) |_{T^1 \Sigma|_{
      \eta}})$ is always zero.

  The proof for the Euler class proceeds along the same lines.
\end{proof}

In view of Proposition~\ref{prop:restriction} we make the following
\begin{defi}\label{defi:admissible}
  A pair \[(\alpha, \beta) \in \h^1(T^1\Sigma; \FF_2)\times
  \h^2(T^1\Sigma; \FF_2)\] is called $n$-\emph{admissible} if
  \begin{itemize}
  \item $n$ is even and $\alpha$ lies in the image of $\h^1( \Sigma ;
    \FF_2 ) \to \h^1(T^1 \Sigma ; \FF_2 )$ and $\beta$ lies in the
    image of $\h^2( \Sigma ; \FF_2 ) \to \h^2(T^1 \Sigma ; \FF_2 )$.
  \item $n$ is odd and $\alpha$ lies in $\h^1(T^1 \Sigma ; \FF_2 )
    \moins \h^1( \Sigma ; \FF_2 )$, and $\beta$ lies in the image of
    $\h^2( \Sigma ; \FF_2 ) \to \h^2(T^1 \Sigma ; \FF_2 )$.
  \end{itemize}
  For $n= 2$, a class $\beta\in \h^2(T^1\Sigma; \ZZ)$ is called
  $2$-admissible if $\beta$ lies in the image of $\h^2( \Sigma ; \ZZ )
  \to \h^2(T^1 \Sigma ; \ZZ )$.
\end{defi}

\subsection{Invariants for other Anosov representations}
\label{sec_inv_others}

\subsubsection{Maximal representations into $\SL(2,\RR)$}
Let $\iota: \pi_1(\Sigma) \to \SL(2,\RR) = \Sp(2,\RR)$ be a maximal
representation. The Anosov section associated with $\rho$ gives a
reduction of the structure group of the flat $\SL(2,\RR)$-principal
bundle $\mathsf{P}$ over $T^1\Sigma$ to $\GL(1,\RR)$. Considering the
$\RR^2$-bundle $E$ associated with $\mathsf{P}$, this reduction
corresponds to a Lagrangian subbundle $L$ of $E$.  As invariants we
get the first Stiefel-Whitney class of the line bundle $L$ over
$T^1\Sigma$,
\[
sw_1(\iota) \in \h^1(T^1\Sigma; \FF_2) \moins\h^1(\Sigma; \FF_2).
\]

This first Stiefel-Whitney class is precisely the spin structure on
$\Sigma$ which corresponds to the chosen lift of $\pi_1(\Sigma)
\subset \PSL(2,\RR)$ to $\SL(2,\RR)$. The invariant $sw_1(\iota)$ can
take $2^{2g}$ different values, distinguishing the $2^{2g}$ connected
components of $\hommax(\pi_1(\Sigma),\SL(2,\RR))$.

\subsubsection{The invariants for Hitchin representations into
  $\Sp(2n,\RR)$}\label{sec:invforHitchin} 
Let $\rho:\pi_1(\Sigma) \to \Sp(2n,\RR)$ be a Hitchin
representation. Then $\rho$ is indeed a $(\Sp(2n,\RR), A)$-Anosov
representation, where $A$ is the subgroup of diagonal matrices
\cite[Theorems~4.1, 4.2]{Labourie_anosov}. The reduction of the
structure group of the flat $\Sp(2n,\RR)$-principal bundle
$\mathsf{P}$ over $T^1\Sigma$ to $A$ corresponds to a splitting of the
associated $\RR^{2n}$-bundle $E$ into the sum of $2n$ isomorphic line
bundles $F_1 \oplus \cdots \oplus F_{2n}$.  The first Stiefel-Whitney
class of the line bundle $F_1$ gives an invariant
\[
sw^A_1(\rho) \in \h^1(T^1\Sigma; \FF_2),
\]
which lies in $\h^1(T^1\Sigma; \FF_2) \moins \h^1(\Sigma; \FF_2)$.
This invariant distinguishes the $2^{2g}$ components of
$\homHit(\pi_1(\Sigma), \Sp(2n,\RR))$.

\subsubsection{General Hitchin components}
\label{sec:gener-hitch-comp}

Let $G^{ad}$ a adjoint split real simple Lie group. The principal
subgroup is a distinguished $ \sigma: \PSL(2, \RR) \to G^{ad}$ that
generalizes the $n$-dimensional irreducible $\PSL(2, \RR)$ in $\PSL(n,
\RR)$. The Hitchin component $\homHit(\pi_1(\Sigma), G^{ad})$ for
$G^{ad}$ is the component of $\hom(\pi_1(\Sigma), G^{ad})$ that
contains uniformizations $\pi_1(\Sigma) \to \PSL(2, \RR)
\xrightarrow{\sigma} G^{ad}$. We refer to Hitchin's paper
\cite{Hitchin} for details. Representations in $\homHit(\pi_1(\Sigma),
G^{ad})$ are called Hitchin representations.

Applying the result \cite[Theorem~1.15]{Fock_Goncharov} one obtains
that Hitchin representations are $H$-Anosov where $H$ is the Levi
component of the Borel subgroup. In fact (by connectedness) the Anosov
reduction $P_{H}$ always admit a reduction to the finite index
subgroup $H_0$. Extending slightly the terminology we will say that
the representations are $H_0$-Anosov.

Now if $G$ is a connected cover of $G^{ad}$ so that the center $Z$ of
$G$ is a quotient of $\pi_1(G^{ad})$: $Z= \pi_1(G^{ad})/ \pi_1(G)$,
one can ask when Hitchin representations lift and how many components
there are above the Hitchin component $\homHit(\pi_1(\Sigma),
G^{ad})$. For this let us denote $\sigma_*: \ZZ \cong \pi_1(\PSL(2,
\RR)) \to \pi_1(G^{ad})$ the morphism induced by the injection of
principal $\PSL(2, \RR)$. Applying
Propositions~\ref{prop:taking_cover} and \ref{prop:frst_ob_center} one
gets

\begin{thm}\label{thm_Hitchin_cover}
  Let $\pi:G\to G^{ad}$ be a finite connected covering of a split real
  adjoint simple Lie group with $\ker \pi=Z$ and let $p:
  \hom(\pi_1(\Sigma), G) \to \hom(\pi_1(\Sigma), G^{ad})$ the induced
  map. The group $\pi^{-1}(H_0)$ is (isomorphic to) the product $Z H_0
  = Z \times H_0$.

  The Hitchin component $\mathcal{H}$ of $G^{ad}$ lifts if and only if
  $\sigma_*(\chi(\Sigma))$ is in $\pi_1(G) <
  \pi_1(G^{ad})$. Furthermore, in that case,
  $\homHit(\pi_1(\Sigma),G)=p^{-1}(\mathcal{H})$ is included in the
  space $\hom_{ZH_0\text{-Anosov}}(\pi_1(\Sigma), G)$ and the map
  \[ o_1 : \homHit(\pi_1(\Sigma),G) \longto \h^1(T^1 \Sigma;
  \pi_0(ZH_0)) = \h^1(T^1 \Sigma; Z)\] induces a bijection between the
  components of $\homHit(\pi_1(\Sigma),G)$ and one orbit of
  $\h^1(\Sigma; Z)$. This orbit is the inverse image of $\sigma_*(1)$
  by the map $\h^1(T^1 \Sigma; Z) \to \h^1(T^{1}_{x} \Sigma; Z)\cong
  Z$. In particular there are $|Z|^{2g}$ components.
\end{thm}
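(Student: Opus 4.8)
The plan is to deduce the statement from Proposition~\ref{prop:taking_cover} (with covering kernel $W=Z$) and Proposition~\ref{prop:frst_ob_center}, after making explicit two pieces of input: the group structure of $\pi^{-1}(H_0)$ and the lifting obstruction of the Hitchin component. For the first, note that $H_0$ is the identity component of the Levi factor of a minimal parabolic of the split group $G^{ad}$, i.e.\ the connected split Cartan subgroup; it is diffeomorphic to a Euclidean space, hence contractible. Therefore the $Z$-covering $\pi:G\to G^{ad}$ restricts over $H_0$ to a trivial covering, and since $\ker\pi=Z$ is central and $Z\cap\widetilde{H_0}=\{1\}$ for the identity sheet $\widetilde{H_0}$ (a finite subgroup of a simply connected abelian group), we get a \emph{direct} product $\pi^{-1}(H_0)=Z\times H_0=:ZH_0$. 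In particular $\pi_0(ZH_0)=Z$ is abelian, which is the first assertion and identifies the target of $o_1$ as $\h^1(T^1\Sigma;Z)$.

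For the lifting criterion: the obstruction to lifting a representation $\pi_1(\Sigma)\to G^{ad}$ along $\pi$ is a characteristic class in $\h^2(\Sigma;Z)$ (with $Z=\pi_1(G^{ad})/\pi_1(G)$), hence constant on the connected set $\mathcal{H}$. Evaluating it on a uniformization $\pi_1(\Sigma)\xrightarrow{\iota}\PSL(2,\RR)\xrightarrow{\sigma}G^{ad}$ and using naturality of obstruction classes together with $\h^2(\Sigma;\pi_1(G^{ad}))\cong\pi_1(G^{ad})$, it equals the image of $\sigma_*(\chi(\Sigma))$ under $\pi_1(G^{ad})\twoheadrightarrow Z$. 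Thus $\mathcal{H}$ lifts if and only if $\sigma_*(\chi(\Sigma))\in\pi_1(G)$, which is the second assertion; equivalently, $p^{-1}(\mathcal{H})\to\mathcal{H}$ is then a covering and $p^{-1}(\mathcal{H})=\homHit(\pi_1(\Sigma),G)$ by definition of the Hitchin components of $G$ and connectedness of $\mathcal{H}$.

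Assume now $\mathcal{H}$ lifts. By connectedness and the continuity of the Anosov reduction (Proposition~\ref{prop:openano}), together with Lemma~\ref{lem:finite_cover}, every representation in $p^{-1}(\mathcal{H})$ is ``$ZH_0$-Anosov'' in the extended sense used for general Hitchin components: its Anosov reduction reduces to $H_0$ over the Fuchsian locus, and this reduction lifts to $\pi^{-1}(H_0)=ZH_0$. I would then run the argument of Proposition~\ref{prop:taking_cover} verbatim with $ZH_0$ in place of the intersection of opposite parabolics. The injectivity hypothesis becomes $\h^1(\Sigma;Z)\hookrightarrow\h^1(T^1\Sigma;Z)$, which is Lemma~\ref{lem:inject} applied to $\mathrm{id}:Z\to Z$; and Proposition~\ref{prop:frst_ob_center} gives $o_1(\eta\cdot\rho)=o_1(\rho)+\overline{\eta}$ for $\eta\in\hom(\pi_1(\Sigma),Z)=\h^1(\Sigma;Z)$. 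This yields that $\homHit(\pi_1(\Sigma),G)$ is a disjoint union of $|\h^1(\Sigma;Z)|=|Z|^{2g}$ components, simply transitively permuted by $\h^1(\Sigma;Z)$ and separated by $o_1$, whose image is exactly one coset of $\h^1(\Sigma;Z)$ inside $\h^1(T^1\Sigma;Z)$.

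It then remains to identify this coset. Restricting to a fibre $T^1_x\Sigma\cong S^1$ annihilates $\h^1(\Sigma;Z)$ (Gysin sequence; see Appendix~\ref{sec:cohomology}), so the coset is pinned down by the single class $o_1(\rho)|_{T^1_x\Sigma}\in\h^1(T^1_x\Sigma;Z)\cong Z$, which is the same for every $\rho\in\homHit(\pi_1(\Sigma),G)$. Computing it on the lift of $\sigma\circ\iota$ and using naturality of the Anosov reduction under $\sigma$, one reduces to the monodromy around the fibre of the Cartan reduction of a Fuchsian representation, and matches it with $\sigma_*$; this identifies the coset as the preimage of $\sigma_*(1)$ under $\h^1(T^1\Sigma;Z)\to\h^1(T^1_x\Sigma;Z)\cong Z$ (consistently, this element is $(2g-2)$-torsion precisely because $\sigma_*(\chi(\Sigma))\in\pi_1(G)$). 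I expect this last step to be the main obstacle: one must unwind the Anosov section along the fibre of $T^1\Sigma\to\Sigma$ and track its $\pi_0$-monodromy through $\sigma$, a computation parallel to, but slightly more delicate than, the one for diagonal Fuchsian representations into $\Sp(2n,\RR)$ in Section~\ref{subsec:comp_diag}. A secondary point requiring care is the verification that Proposition~\ref{prop:taking_cover} indeed applies with $\pi^{-1}(H_0)$ in place of its $H'$, i.e.\ that the $H_0$-refinement of the Anosov condition is preserved under the covering $\pi$.
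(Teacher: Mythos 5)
Your proposal is correct and follows essentially the same route as the paper: the lifting criterion via the obstruction class of a uniformization $\sigma\circ\iota$, then Proposition~\ref{prop:taking_cover} together with Lemma~\ref{lem:inject} and Proposition~\ref{prop:frst_ob_center} to show the image of $o_1$ is a single $\h^1(\Sigma;Z)$-coset carrying $|Z|^{2g}$ components, and finally identification of the coset by restriction to a fibre $T^{1}_{x}\Sigma$. The one step you leave as an expectation --- the fibre holonomy of the lift of $\sigma\circ\iota$ --- is closed in the paper in a few lines: identifying $T^1\Sigma$ with $\iota(\pi_1(\Sigma))\backslash\PSL(2,\RR)$, the Anosov reduction of the lifted representation is $g\mapsto\pi^{-1}(\sigma(g)H_0)\in G/ZH_0$, so the $\pi_0(ZH_0)\cong Z$-monodromy over $T^{1}_{x}\Sigma\cong\PSO(2)$ is computed by lifting the loop $(\sigma(g_t))$ continuously to $G$ and reading off $h_1h_0^{-1}\in Z$, which is by definition the image of $\sigma_*(1)$ under $\pi_1(G^{ad})\to Z$; no delicate unwinding beyond this is needed. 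Your explicit argument that $\pi^{-1}(H_0)=Z\times H_0$ (contractibility of the split torus $H_0\cong(\RR_{>0})^r$, plus centrality of $Z$) is a useful addition, as the paper asserts this identification without proof.
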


\begin{proof}
  The obstruction to lifting $\rho: \pi_1(\Sigma) \to G^{ad}$ to the
  cover $G \to G^{ad}$ lies in $\h^2(\Sigma; \pi_1(G^{ad})/ \pi_1(G))
  \cong \pi_1(G^{ad})/ \pi_1(G)$ and is the image of $o(\rho) \in
  \pi_1(G^{ad})$ the obstruction to lifting $\rho$ to the universal
  cover of $G^{ad}$.  For a representation $ \pi_1(\Sigma)
  \xrightarrow{\iota} \PSL(2, \RR) \xrightarrow{\sigma} G^{ad}$ the
  obstruction class $o(\sigma \circ \iota)$ is the image by $\sigma_*:
  \ZZ \to \pi_1(G^{ad})$ of the obstruction class $o(\iota)=
  \chi(\Sigma)$, i.e.\ $ o(\sigma \circ \iota) =
  \sigma_*(\chi(\Sigma))$. This gives the condition on
  $\sigma_*(\chi(\Sigma))$.

  By Proposition~\ref{prop:taking_cover} and Lemma~\ref{lem:inject}
  the image of $o_1$ in $\h^1(T^1 \Sigma; Z)$ is one coset of the
  subgroup $\h^1(\Sigma; Z)$. To determine this coset observe first
  that the sequence $0\to \h^1(\Sigma; Z) \to \h^1(T^1 \Sigma; Z)
  \xrightarrow{i^*} \h^1(T^{1}_{x} \Sigma; Z) \to 0$ is exact, where
  $i: T^{1}_{x} \Sigma \to T^{1} \Sigma$ is the injection; this
  follows by example from the fact that the extension $\pi_1(T^1
  \Sigma) \to \pi_1(\Sigma)$ is central with kernel generated by
  $\pi_1(T^{1}_{x} \Sigma)$. Therefore it is enough to determine
  $i^*(o_1(\rho))$ for one particular representation $\rho$.  Let thus
  chose a representation $\rho: \pi_1(\Sigma) \to G$ lifting a
  representation of the form $\sigma \circ \iota$ where $\iota :
  \pi_1(\Sigma) \to \PSL(2, \RR)$ is a discrete embedding. The unit
  tangent bundle $T^1 \Sigma$ is then identified with $\iota(
  \pi_1(\Sigma)) \backslash \PSL(2, \RR)$.  The Anosov reduction
  associated with $\sigma \circ \iota$ is the equivariant map \[
  \begin{array}{rcl}
    \PSL(2, \RR) & \longto & G^{ad}/H_0 \\
    g & \longmapsto & \sigma(g) H_0,
  \end{array}
  \]
  and the Anosov reduction for $\rho$ is
  \[
  \begin{array}{rcl}
    \PSL(2, \RR) & \longto & G/ZH_0 \\
    g & \longmapsto & \pi^{-1}(\sigma(g) H_0).
  \end{array}
  \]
  The image of $o_1(\rho)$ in $\h^1(T^{1}_{x} \Sigma;Z)$ is the
  holonomy of the corresponding principal $\pi_0(ZH_0) \cong Z$-bundle
  restricted to $T^{1}_{x} \Sigma \cong \PSO(2) \subset \PSL(2,
  \RR)$. It is calculated as follow: Let $(g_t)_{t\in [0,1]}$ a loop
  generating $\pi_1(\PSO(2)) = \pi_1( \PSL(2, \RR))$ and let $(h_t)_{t
    \in [0,1]}$ be a continuous curve such that, for all $t$, $h_t$
  belongs to $\pi^{-1}(\sigma(g_t) H_0)$. Then $h_1 h_{0}^{-1}$ is in
  $ZH_0$ and the sought for holonomy is the projection onto $Z$ of
  this element. However one can chose $h_t \in
  \pi^{-1}(\sigma(g_t))$. For this choice $h_1 h_{0}^{-1}$ is in $Z$
  and is the image of the loop $(\sigma(g_t))_{t \in [0,1]}$ under the
  natural map $\pi_1(G^{ad}) \to Z$. Since the loop $(\sigma(g_t))$
  represents precisely $\sigma_*(1)$, the result follows.
\end{proof}

For classical split Lie groups the obstruction class obtained is the
first Stiefel-Whitney class $sw_{1}$ of a line bundle.  We have:

\begin{thm}\label{thm_Hitchinclassical}
  For $G$ the group $\SL(n,\RR)$, $\Sp(n, \RR)$, $\SOcon(n,n)$ or
  $\SOcon(n,n+1)$, the map \[ sw_{1} : \homHit( \pi_1(\Sigma), G)
  \longrightarrow \h^1(T^1\Sigma, \FF_2)\] induces a bijection between
  $\pi_0( \homHit)$ and $\h^1(T^1\Sigma, \FF_2) \moins \h^1(\Sigma,
  \FF_2)$.

  Furthermore, for any $\rho$ in $\homHit$ and any nontrivial $\g$ in
  $\pi_1( \Sigma)$, $sw_{1}(\rho)( [\g])$ in $\FF_2\cong \{ \pm 1\}$
  is the common sign of the eigenvalues of $\rho( \gamma)$.
\end{thm}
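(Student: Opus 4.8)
The plan is to derive both assertions from Theorem~\ref{thm_Hitchin_cover}, after two identifications: that the map $sw_1$ of the statement is the first obstruction class $o_1$ of the Anosov reduction, and that the group $Z$ appearing in Theorem~\ref{thm_Hitchin_cover} is $\FF_2$ for each $G$ in the list. First I would recall, as in the proof of Theorem~\ref{thm_Hitchin_cover}, that each of $\SL(n,\RR)$, $\Sp(n,\RR)$, $\SOcon(n,n)$, $\SOcon(n,n+1)$ is a finite connected covering $\pi:G\to G^{ad}$ of its adjoint group, that a Hitchin representation $\rho:\pi_1(\Sigma)\to G$ is $ZH_0$-Anosov with $\pi^{-1}(H_0)=Z\times H_0$ (so $\pi_0(ZH_0)=Z$), and that this $ZH_0$-reduction of the flat $G$-bundle is the same datum as a splitting $E=F_1\oplus\cdots\oplus F_N$ of the flat standard $\RR^N$-bundle into line bundles along which the holonomy acts with all eigenvalues of one sign. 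In this realisation $ZH_0$ is the group of diagonal matrices all of whose nonzero entries have a common sign, and $\pi_0(ZH_0)\cong Z$ records that sign.

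Next I would identify $Z\cong\FF_2$ and $o_1$ with $sw_1$. For each of the four families, and for the parities of $n$ for which $G$ is a proper cover of $G^{ad}$, the kernel $Z=\ker\pi$ is $\{\pm\id\}\cong\FF_2$, so $o_1(\rho)\in\h^1(T^1\Sigma;\pi_0(ZH_0))=\h^1(T^1\Sigma;\FF_2)$; and by construction this class is the first Stiefel--Whitney class of the distinguished line bundle $F_1$ in the splitting, the compatibility of the splitting with the principal $\mathfrak{sl}(2,\RR)\subset\mathfrak{g}$ being used to see that $o_1(\rho)=w_1(F_1)$, i.e.\ $o_1=sw_1$ in the notation of the statement. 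Since the irreducible Fuchsian representation lies in $\homHit(\pi_1(\Sigma),G)$, this space is nonempty, so Theorem~\ref{thm_Hitchin_cover} applies and gives that $sw_1=o_1$ induces a bijection from $\pi_0(\homHit(\pi_1(\Sigma),G))$ onto the coset of $\h^1(\Sigma;\FF_2)$ in $\h^1(T^1\Sigma;\FF_2)$ consisting of the classes whose restriction to a fibre $T^1_x\Sigma$ equals $\sigma_*(1)\in Z\cong\FF_2$. To finish the first part I would show this coset is exactly $\h^1(T^1\Sigma;\FF_2)\moins\h^1(\Sigma;\FF_2)$, i.e.\ that $\sigma_*(1)$ is the nontrivial element of $Z$: using the exact sequence $0\to\h^1(\Sigma;\FF_2)\to\h^1(T^1\Sigma;\FF_2)\to\h^1(T^1_x\Sigma;\FF_2)\cong\FF_2\to0$ (Gysin sequence of $T^1\Sigma\to\Sigma$) it suffices to restrict $sw_1$ of one Hitchin representation to a fibre, and for the irreducible Fuchsian representation $F_1|_{T^1_x\Sigma}$ is an odd tensor power of the stable line bundle of the geodesic flow, which is nonorientable on $T^1_x\Sigma\cong S^1$ --- this is the ``spin-structure-like'' phenomenon already noted for $\SL(2,\RR)$ and $\Sp(2n,\RR)$ in Section~\ref{sec_inv_others}.

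For the last assertion, $o_1(\rho)([\gamma])$ is by definition the image in $\pi_0(ZH_0)\cong\FF_2$ of the holonomy of the Anosov $ZH_0$-reduction along the canonical lift of $\gamma$ to $T^1\Sigma$, which by Corollary~\ref{cor:anosov_holonomy} is the conjugacy class of $\rho(\gamma)$ in $ZH_0$; and the $\pi_0(ZH_0)\cong\FF_2$-component of an element of $ZH_0$ is precisely the common sign of its (real, by regularity of Hitchin holonomies) eigenvalues. Hence $sw_1(\rho)([\gamma])\in\{\pm1\}$ is that common sign.

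The hard part is the bookkeeping behind the two identifications: checking, for each of the four families and each relevant parity of $n$, that $G\to G^{ad}$ is a covering with $\ker\pi\cong\FF_2$; verifying cleanly that the first obstruction class $o_1$ of the Anosov $ZH_0$-reduction coincides with $w_1$ of the distinguished line bundle $F_1$ under $\pi_0(ZH_0)\cong\FF_2$; and confirming that $\sigma_*(1)$ is nontrivial in $Z$ via the explicit principal $\mathfrak{sl}(2,\RR)$. Once these are in place, the statement is an immediate consequence of Theorem~\ref{thm_Hitchin_cover} and Corollary~\ref{cor:anosov_holonomy}.
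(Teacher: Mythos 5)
Your proposal follows exactly the route the paper intends: the paper gives no separate proof of Theorem~\ref{thm_Hitchinclassical}, presenting it as an immediate consequence of Theorem~\ref{thm_Hitchin_cover}, and your argument supplies precisely the bookkeeping that derivation requires (identifying $Z\cong\FF_2$ and $o_1$ with $sw_1$ of the line bundle $F_1$, checking $\sigma_*(1)\neq 0$ on a fibre via the irreducible Fuchsian representation, and reading off the sign of the eigenvalues from $\pi_0(ZH_0)$ as in Lemma~\ref{lem:sw1=sign}). The details check out, and your explicit restriction to the parities of $n$ for which $G\to G^{ad}$ is a proper double cover is in fact more careful than the paper's own phrasing.
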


\subsubsection{Maximal representations into covers of $\Sp(2n, \RR)$}
\label{sec:cover_sp}

The discussion of Section~\ref{sec:char_class_in_gal} can be used to
analyze maximal representations into covers of $\Sp(2n, \RR)$.

Let $\Gk$ be the $k$-fold connected cover of $\Sp(2n, \RR)$, i.e.\ it
corresponds to the subgroup $k\ZZ \subset \ZZ \cong \pi_1(\Sp(2n,
\RR))$. The kernel $Z_k$ of the projection $\pi : \Gk \to \Sp(2n,
\RR)$ is isomorphic to $\ZZ/ k\ZZ$. The subgroups $H_{(k)} =
\pi^{-1}(\GL(n, \RR) )$ and $H^{+}_{(k)} = \pi^{-1}(\GL^+(n, \RR))$
are $k$-fold covers of $\GL(n, \RR)$ and $\GL^+(n, \RR)$.

\begin{lem}\label{lem:coverGk}
  The covers $H^{+}_{(k)} \to \GL^+(n, \RR)$ and $H_{(k)} \to \GL(n,
  \RR)$ are trivial. The natural map
  \[ Z_k \longto \pi_0(H^{+}_{(k)})\] is an isomorphism. The map $Z_k
  \to \pi_0(H_{(k)})$ is injective with image a subgroup of index $2$.
  The group $\pi_0(H_{(k)})$ is isomorphic to $\ZZ/ 2k\ZZ$ when $n$ is
  odd and to $\ZZ/k\ZZ \times \ZZ/2\ZZ$ when $n$ is even.
\end{lem}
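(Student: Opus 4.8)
The plan is to reduce the statement to two computations with fundamental groups together with the identification of one group extension.

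\emph{Triviality of the coverings.} All three coverings in question are pulled back from $\Gk \to \Sp(2n,\RR)$, which is classified by the quotient map $\pi_1(\Sp(2n,\RR)) \cong \ZZ \to \ZZ/k\ZZ \cong Z_k$. Hence $H^{+}_{(k)} \to \GL^+(n,\RR)$ is classified by the composite $\pi_1(\GL^+(n,\RR)) \to \pi_1(\Sp(2n,\RR)) \to Z_k$, and similarly for $\pi^{-1}(\GL^-(n,\RR)) \to \GL^-(n,\RR)$ and $H_{(k)} \to \GL(n,\RR)$. I would show that $\pi_1(\GL^+(n,\RR)) \to \pi_1(\Sp(2n,\RR))$ is already zero: retracting $\GL^+(n,\RR)$ onto $\SO(n)$ and $\Sp(2n,\RR)$ onto its maximal compact $\U(n)$, the inclusion becomes $\SO(n)\hookrightarrow\U(n)$, and the determinant character $\U(n)\to\U(1)$ — which induces the isomorphism $\pi_1(\U(n))\xrightarrow{\sim}\pi_1(\Sp(2n,\RR))\cong\ZZ$ — is identically $1$ on $\SO(n)$, so the induced map on $\pi_1$ vanishes. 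The same argument on the other component $\GL^-(n,\RR)\simeq\O^-(n)\hookrightarrow\U(n)$, where $\det$ is identically $-1$ (still locally constant), shows that covering is trivial too. Therefore all three coverings are trivial; in particular $\pi_0(H^{+}_{(k)})$ has $k$ elements and $\pi_0(H_{(k)})$ has $2k$ elements.

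\emph{The subgroup $Z_k$.} Since $Z_k=\ker(\Gk\to\Sp(2n,\RR))$ is central in $\Gk$ and is contained in $H^{+}_{(k)}$, and since each sheet of the trivial covering $H^{+}_{(k)}\to\GL^+(n,\RR)$ maps homeomorphically, $Z_k$ meets the identity component of $H^{+}_{(k)}$ only in the identity; by the count of components this gives an isomorphism $Z_k\xrightarrow{\sim}\pi_0(H^{+}_{(k)})$. Composing with $\pi_0(H^{+}_{(k)})\hookrightarrow\pi_0(H_{(k)})$ yields the injection $Z_k\hookrightarrow\pi_0(H_{(k)})$, whose cokernel is $\pi_0(\GL(n,\RR))=\ZZ/2\ZZ$; so the image has index $2$, and we obtain a central extension
\[
1\longrightarrow Z_k\longrightarrow \pi_0(H_{(k)})\longrightarrow \ZZ/2\ZZ\longrightarrow 1,
\]
central because $Z_k$ is central in $\Gk$. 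Its isomorphism type is governed by its class in $\h^2(\ZZ/2\ZZ;Z_k)\cong Z_k/2Z_k$.

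\emph{Splitting the extension.} Finally I would compute this class. Fix a reflection $g\in\GL^-(n,\RR)$ (so $g^2=\id$), a path $\alpha$ from $\id$ to $g$ in $\Sp(2n,\RR)$, and a lift $\tilde g$ of $g$ to $\Gk$; then $\tilde g^{\,2}\in Z_k$ is the image under $\pi_1(\Sp(2n,\RR))\to Z_k$ of the homotopy class of the loop obtained by concatenating $\alpha$ with its left translate $g\cdot\alpha$. Taking $\alpha$ inside $\U(n)$ and tracking the winding number of the determinant along this loop reduces the problem to an elementary computation, which shows that the class in $Z_k/2Z_k$ is trivial exactly when $n$ is even (it is also automatically trivial when $k$ is odd). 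Consequently, for $n$ even the extension splits and $\pi_0(H_{(k)})\cong Z_k\times\ZZ/2\ZZ\cong\ZZ/k\ZZ\times\ZZ/2\ZZ$; for $n$ odd and $k$ even a lift of the generator of $\ZZ/2\ZZ$ has order $2k$, so $\pi_0(H_{(k)})\cong\ZZ/2k\ZZ$; and since $\ZZ/2k\ZZ\cong\ZZ/k\ZZ\times\ZZ/2\ZZ$ when $k$ is odd, the remaining case ($n$ odd, $k$ odd) is consistent with both formulas. The one genuine obstacle is this determinant/winding-number bookkeeping that isolates the parity of $n$; everything else in the argument is formal.
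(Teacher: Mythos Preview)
Your approach to the first three claims (triviality of the covers, the isomorphism $Z_k\cong\pi_0(H^{+}_{(k)})$, and the index-$2$ embedding into $\pi_0(H_{(k)})$) is correct and essentially the paper's, with the welcome addition that you actually explain why $\pi_1(\GL^+(n,\RR))\to\pi_1(\Sp(2n,\RR))$ vanishes; the paper merely asserts it.

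The gap is in the final step. You correctly reduce the structure of $\pi_0(H_{(k)})$ to the class of the central extension $1\to Z_k\to\pi_0(H_{(k)})\to\ZZ/2\ZZ\to 1$, computed as $\tilde g^{\,2}\in Z_k$ for a lift of a reflection $g$, and you correctly say this amounts to a winding number of the determinant. But you do not carry out the computation, and the outcome you assert (``trivial exactly when $n$ is even'') is not what one actually gets. Take $g=\diag(-1,1,\dots,1)\in\O^-(n)\subset\U(n)$ and the path $\alpha(t)=\diag(e^{i\pi t},1,\dots,1)$; then $g\cdot\alpha(t)=\diag(e^{i\pi(t+1)},1,\dots,1)$, so the loop $\alpha*(g\alpha)$ is $s\mapsto\diag(e^{i\pi s},1,\dots,1)$ for $s\in[0,2]$, whose determinant $e^{i\pi s}$ has winding number~$1$ \emph{independently of $n$}. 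One can also check this directly for the metaplectic cover ($k=2$): the preimage of $\O(n)$ in the double cover $\{(A,z)\in\U(n)\times\U(1):\det A=z^2\}$ of $\U(n)$ has $\pi_0$ generated by the class of $(g,i)$, and $(g,i)^2=(\id,-1)$ is the nontrivial central element, so $\pi_0\cong\ZZ/4\ZZ$ for every~$n$. Hence the extension class is always the image of $1$ in $Z_k/2Z_k$, and $\pi_0(H_{(k)})\cong\ZZ/2k\ZZ$ for every $n$. The paper's own argument for this last sentence is literally ``Determining the latter group is then easy'', and the stated dependence on the parity of $n$ appears to be a slip---perhaps a conflation with the center of $\widetilde{\Sp(2n,\RR)}$, which \emph{does} depend on the parity of $n$ since $-\id_n\in\GL^+(n,\RR)$ precisely when $n$ is even. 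Fortunately nothing downstream uses the precise isomorphism type; only the injectivity of $Z_k\hookrightarrow\pi_0(H_{(k)})$ is needed.
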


\begin{proof}
  It is enough to do the calculation for $H^{+}_{(\infty)}$ the
  pullback of $\GL^+(n, \RR)$ in the universal cover of $\Sp(2n,
  \RR)$. The identity component of $H^{+}_{(\infty)}$ is then the
  cover of $\GL^+(n, \RR)$ corresponding to the subgroup of
  $\pi_1(\GL^+(n, \RR))$ which is the kernel of the map
  $\pi_1(\GL^+(n, \RR)) \to \pi_1(\Sp(2n, \RR))$. Since this map is
  zero the identity component of $H^{+}_{(\infty)}$ is isomorphic to
  $\GL^+(n, \RR)$ and the cover $H^{+}_{(\infty)} \to \GL^+(n, \RR)$
  is the trivial cover. Hence $\pi_0(H^{+}_{(\infty)})$ is isomorphic
  to $Z \cong \pi_1(\Sp(2n, \RR))$ the kernel of $\widetilde{\Sp}(2n,
  \RR) \to \Sp(2n, \RR)$.

  Clearly $\pi_0(H_{(k)}) /\pi_0(H^{+}_{(k)}) \cong \pi_0(\GL(n,
  \RR))$ so $\pi_0(H^{+}_{(k)})$ is the subgroup of index $2$ in
  $\pi_0(H_{(k)})$. Determining the latter group is then easy.
\end{proof}

The characteristic classes for a $(\Gk, H_{(k)})$-Anosov
representation $\rho$ are then the obstruction classes $o_1(\rho)$ in
$\h^1(T^1 \Sigma, \pi_0(H_{(k)}))$ and $o_2(\rho)$ in $\h^2(T^1
\Sigma, \pi_1(H_{(k)0})) \cong \h^2(T^1 \Sigma, \FF_2)$. They project
to the first and second Stiefel-Whitney classes $sw_1(\pi \circ \rho)$
and $sw_2(\pi \circ \rho)$ (in fact $o_2(\rho) = sw_2(\pi \circ
\rho)$). A pair $(c_1,c_2)$ in $\h^1(T^1 \Sigma, \pi_0(H_{(k)}))
\times \h^2(T^1 \Sigma, \pi_1(H_{(k)0}))$ is called
$n$\emph{-admissible} if it projects to a $n$-admissible pair in
$\h^1(T^1 \Sigma, \FF_2) \times \h^2(T^1 \Sigma, \FF_2 )$
(Definition~\ref{defi:admissible}).

\begin{thm}
  The space $\hommax(\pi_1(\Sigma), \Gk)$ is nonempty if and only if
  $k$ divides $n(g(\Sigma)-1)$. In that case the map
  \begin{multline*}
    (o_1,o_2) : \pi_0\big( \hommax(\pi_1(\Sigma), \Gk) \moins
    \homHit(\pi_1(\Sigma), \Gk) \big) \\ \longrightarrow
    \h^1(T^1\Sigma; \pi_0(H_{(k)})) \times \h^2(T^1\Sigma;
    \pi_1(H_{(k)0}))
  \end{multline*}
  is the bijection onto the space of $n$-admissible pairs. The number
  of components of maximal representations is then:
  \[ \#\pi_o(\hommax(\pi_1(\Sigma), \Gk)) = \left \{
    \begin{array}{ll}
      (3\times 2^{2g} + 2g(\Sigma) - 4)k^{2g} & -\text{if }n=2,\\
      3(2k)^{2g}  & -\text{if }n>2.
    \end{array}
  \right.\]
\end{thm}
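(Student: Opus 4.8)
The plan is to deduce everything from the already-known topology of $\hommax(\pi_1(\Sigma),\Sp(2n,\RR))$ by studying the covering map $p\colon\hom(\pi_1(\Sigma),\Gk)\to\hom(\pi_1(\Sigma),\Sp(2n,\RR))$, $\rho\mapsto\pi\circ\rho$, through Proposition~\ref{prop:taking_cover} and Lemma~\ref{lem:coverGk}. \emph{Nonemptiness:} a maximal representation $\pi_1(\Sigma)\to\Sp(2n,\RR)$ always exists, and by definition its Toledo invariant --- which is the obstruction to lifting it to the universal cover of $\Sp(2n,\RR)$, sitting in $\h^2(\Sigma;\pi_1(\Sp(2n,\RR)))\cong\ZZ$ --- equals $n(g-1)$ (Examples~\ref{exintro}.(\ref{exactcount})). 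Hence it lifts to the $k$-fold cover $\Gk$ if and only if $n(g-1)\equiv 0\pmod k$. Since every maximal representation has this same Toledo invariant, either all of them lift (when $k\mid n(g-1)$) or none do; in the latter case $\hommax(\pi_1(\Sigma),\Gk)=\emptyset$, because any maximal representation into $\Gk$ would project to a lifting maximal representation into $\Sp(2n,\RR)$. This proves the first assertion, and from now on I assume $k\mid n(g-1)$.

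\emph{Counting components:} by Theorem~\ref{thm:maximal_anosov} maximal representations into $\Sp(2n,\RR)$ are $(\Sp(2n,\RR),\GL(n,\RR))$-Anosov, so $\hommax(\pi_1(\Sigma),\Sp(2n,\RR))\subset\homGLano(\pi_1(\Sigma),\Sp(2n,\RR))$. I apply Proposition~\ref{prop:taking_cover} with $G'=\Gk$, $G=\Sp(2n,\RR)$, $W=Z_k\cong\ZZ/k\ZZ$ and $H'=H_{(k)}$; its hypothesis holds because $Z_k$ injects into the abelian group $\pi_0(H_{(k)})$ (Lemma~\ref{lem:coverGk}), so $\h^1(\Sigma;Z_k)\hookrightarrow\h^1(T^1\Sigma;\pi_0(H_{(k)}))$ by Lemma~\ref{lem:inject}. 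Taking $\mathcal{C}$ to be any connected component of $\hommax(\pi_1(\Sigma),\Sp(2n,\RR))$, the preimage $p^{-1}(\mathcal{C})$ is nonempty (the lifting obstruction is the common Toledo invariant, which now vanishes) and consists of exactly $|\h^1(\Sigma;Z_k)|=k^{2g}$ components, mutually separated by $o_1$, with $o_1$ ranging over one coset of $\h^1(\Sigma;Z_k)$. Summing over the components of $\hommax(\pi_1(\Sigma),\Sp(2n,\RR))$ gives $\#\pi_0\big(\hommax(\pi_1(\Sigma),\Gk)\big)=k^{2g}\,\#\pi_0\big(\hommax(\pi_1(\Sigma),\Sp(2n,\RR))\big)$, and inserting the known values $3\times 2^{2g}$ (for $n\ge 3$) and $3\times 2^{2g}+2g-4$ (for $n=2$) yields the two displayed formulas $3(2k)^{2g}$ and $(3\times 2^{2g}+2g-4)k^{2g}$.

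\emph{The invariant $(o_1,o_2)$:} applying the same argument to a Hitchin component shows that the $k^{2g}$ lifts of a fixed irreducible Fuchsian representation already meet all $k^{2g}$ components of its preimage, so $p^{-1}(\homHit(\pi_1(\Sigma),\Sp(2n,\RR)))=\homHit(\pi_1(\Sigma),\Gk)$ and hence $\hommax(\Gk)\moins\homHit(\Gk)=p^{-1}\big(\hommax\moins\homHit\big)$. On this set $o_2=sw_2\circ p$, while $o_1$ maps to $sw_1\circ p$ under $\pi_0(H_{(k)})\to\pi_0(\GL(n,\RR))=\FF_2$ (Propositions~\ref{prop:frst_ob_center} and \ref{prop:first_obstr_cover}). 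By Theorem~\ref{thm_intro:symp_maximal} for $n\ge 3$ --- and for $n=2$ by Theorem~\ref{thm_intro:n=2_components}, where the role of $o_2$ is played by the relative Euler class, in accordance with $\pi_1(H_{(k)0})\cong\pi_1(\GL^+(n,\RR))$ being $\FF_2$ for $n\ge 3$ and $\ZZ$ for $n=2$ --- the map $(sw_1,sw_2)$ identifies $\pi_0(\hommax(\Sp(2n,\RR))\moins\homHit)$ with the set of $\FF_2$-admissible pairs, and then Proposition~\ref{prop:taking_cover} makes $(o_1,o_2)$ injective on $\pi_0(\hommax(\Gk)\moins\homHit(\Gk))$ with image a union of $\h^1(\Sigma;Z_k)$-cosets, one over each $\FF_2$-admissible pair. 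To see that this image is exactly the set of $n$-admissible pairs (Definition~\ref{defi:admissible}), it remains to compute the restriction $i^{*}o_1\in\h^1(T^{1}_{x}\Sigma;\pi_0(H_{(k)}))\cong\pi_0(H_{(k)})$ to a fibre $T^{1}_{x}\Sigma$ of $T^1\Sigma\to\Sigma$; this is carried out, exactly as in the proof of Theorem~\ref{thm_Hitchin_cover}, by transporting a generator of $\pi_1(T^{1}_{x}\Sigma)$ through the Anosov reduction of a Fuchsian example, and its value pins down the coset. This fibre computation, together with the Gysin-sequence bookkeeping comparing $\h^{*}(\Sigma;-)$ with $\h^{*}(T^1\Sigma;-)$ for the coefficient groups $Z_k$ and $\pi_0(H_{(k)})$, is the main obstacle; everything else is a formal consequence of Proposition~\ref{prop:taking_cover} and Lemma~\ref{lem:coverGk}.
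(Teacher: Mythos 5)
Your proposal is correct and follows essentially the same route as the paper: the Toledo invariant as the lifting obstruction gives the nonemptiness criterion, and the component count and the behaviour of $(o_1,o_2)$ follow from Lemma~\ref{lem:coverGk}, Lemma~\ref{lem:inject} and Proposition~\ref{prop:taking_cover} applied componentwise over $\hommax(\pi_1(\Sigma),\Sp(2n,\RR))$. The paper's own proof is in fact terser than yours (it stops at ``the result follows from Proposition~\ref{prop:taking_cover}''), so the fibre computation and coset bookkeeping you flag as the remaining work are details the authors leave implicit rather than gaps in your argument.
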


\begin{proof}
  The obstruction to lifting a representation $\rho: \pi_1(\Sigma) \to
  \Sp(2n, \RR)$ to the cover $\Gk \to \Sp(2n, \RR)$ is the image of
  the Toledo number $\tau(\rho)$ by the map $\ZZ \cong \h^2(\Sigma;
  \pi_1(\Sp(2n;\RR))) \to \h^2(\Sigma; Z_k ) \cong \ZZ/ k\ZZ $. Hence
  $\rho$ lifts if and only if $\tau(\rho)$ is in $k\ZZ$. Since maximal
  representations have Toledo number $n(g-1)$, this gives the
  restriction on $k$.

  Lemma~\ref{lem:coverGk} and Lemma~\ref{lem:inject} imply that the
  map $\h^1(\Sigma; Z_k) \to \h^1(T^1 \Sigma; \pi_0(H_{(k)}))$ is
  injective. The result follows then from
  Proposition~\ref{prop:taking_cover}.
\end{proof}

It is sometimes possible to calculate the number of components of
maximal representations in $\PSp(2n ,\RR)$. Note that
\[ \pi_1(\PSp(2n, \RR)) = \left\{
  \begin{array}{ll}
    \ZZ & -\text{if }n\text{ is odd},\\
    \ZZ\times \ZZ/2\ZZ & -\text{if }n\text{ is even},
  \end{array}
\right.\] and contains $\pi_1(\Sp(2n, \RR))$ as a subgroup of index
$2$. Moreover the bound for the Toledo number $\tau(\rho)$ of a
representation $\rho : \pi_1(\Sigma) \to \PSp(2n,\RR)$ are
\[ \begin{array}{ll}
  |\tau(\rho)| \leq 2n(g-1) & -\text{if }n\text{ is odd},\\
  |\tau(\rho)| \leq n(g-1) & -\text{if }n\text{ is even}.
\end{array} \]
Hence
maximal representations $\pi_1(\Sigma) \to \PSp(2n, \RR)$ always lift
when $n$ is odd. In that case  $\PGL(n,
\RR)$ is connected and its fundamental group is
\[ \pi_1( \PGL(n, \RR)) = \ZZ/2\ZZ .\] Therefore, for a maximal
representation $\rho: \pi_1(\Sigma) \to \PSp(2n, \RR)$ (with $n$ odd),
only the second obstruction class $o_2(\rho)$ in $\h^2(T^1 \Sigma;
\FF_2)$ is relevant. Since $\rho$ lifts to $\rho': \pi_1(\Sigma) \to
\Sp(2n, \RR)$, $o_2(\rho) = sw_2(\rho')$ satisfies the conditions of
Proposition~\ref{prop:restriction}: $o_2(\rho) \in \h^2(\Sigma;
\FF_2)$.

\begin{thm}
  Let $n$ be an odd integer, $n\geq 3$.

  Then the space $\hommax(\pi_1(\Sigma), \PSp(2n, \RR))$ has $3$
  connected components. Furthermore $o_2: \pi_0\big(
  \hommax(\pi_1(\Sigma), \PSp(2n, \RR)) \moins \homHit(\pi_1(\Sigma),
  \PSp(2n, \RR)) \big) \to \h^2(\Sigma; \FF_2)$ is a bijection.
\end{thm}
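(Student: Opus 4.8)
The plan is to run the machinery of Section~\ref{sec:char_class_in_gal} for the specific covering $\pi: \Sp(2n,\RR) \to \PSp(2n,\RR)$, with $n$ odd and $n \geq 3$. First I would observe that for $n$ odd the Toledo bound for $\PSp(2n,\RR)$ is $2n(g-1)$, twice the Toledo bound for $\Sp(2n,\RR)$, and $\pi_1(\Sp(2n,\RR)) = \ZZ \subset \pi_1(\PSp(2n,\RR)) = \ZZ$ has index $2$; so the obstruction to lifting a $\PSp(2n,\RR)$-representation $\rho$ of Toledo number $n(g-1)$ to $\Sp(2n,\RR)$ lies in $\ZZ/2\ZZ$ and equals $n(g-1) \bmod 2$ times the generator -- hence (as remarked just before the statement) is zero and every maximal $\rho$ lifts to some $\rho': \pi_1(\Sigma) \to \Sp(2n,\RR)$. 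The fiber of $p$ over $[\rho]$ is the set $\{\eta \cdot \rho' : \eta \in \hom(\pi_1(\Sigma), Z)\}$ where $Z = \ker\pi$ has order $2$; thus $p: \hommax(\pi_1(\Sigma),\Sp(2n,\RR)) \to \hommax(\pi_1(\Sigma),\PSp(2n,\RR))$ is, over each component, a covering of degree $|\h^1(\Sigma;\ZZ/2\ZZ)| = 2^{2g}$.

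Next I would apply Proposition~\ref{prop:taking_cover}: for a connected subset $\mathcal{C} \subset \homHano(\pi_1(\Sigma),\PSp(2n,\RR))$ of maximal representations (which are $\GL$-Anosov, or rather their image under the analogue of $H$ for $\PSp$), the preimage $p^{-1}(\mathcal{C})$ is either empty or a disjoint union of $|\h^1(\Sigma;\ZZ/2\ZZ)|$ components distinguished by $o_1$, provided $\h^1(\Sigma;\ZZ/2\ZZ) \to \h^1(T^1\Sigma;\pi_0(H'))$ is injective, where $H' = \pi^{-1}(\GL(n,\RR))$ for the symplectic side; this injectivity is Lemma~\ref{lem:inject} (the relevant inclusion of coefficient groups is injective). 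Since $p^{-1}(\mathcal{C})$ is never empty, each component $\mathcal{C}$ downstairs is covered by exactly $2^{2g}$ components upstairs. By the previous theorem in the excerpt, for $n$ odd $n \geq 3$ the space $\hommax(\pi_1(\Sigma),\Sp(2n,\RR))$ has $3 \cdot 2^{2g}$ components. Therefore $\hommax(\pi_1(\Sigma),\PSp(2n,\RR))$ has $3 \cdot 2^{2g} / 2^{2g} = 3$ components -- here I use that $p$ restricted to each component of the target is exactly a $2^{2g}$-to-one covering, so component counts divide exactly.

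For the second assertion, the relevant invariant on the $\PSp$-side is the single obstruction class $o_2(\rho) \in \h^2(T^1\Sigma;\pi_1(\PGL(n,\RR))) = \h^2(T^1\Sigma;\FF_2)$, because $\pi_1(\PGL(n,\RR)) = \ZZ/2\ZZ$ and (for $n$ odd) $\PGL(n,\RR)$ is connected so there is no first obstruction. Given a lift $\rho'$ to $\Sp(2n,\RR)$ one has $o_2(\rho) = sw_2(\rho')$ by naturality of obstruction classes under the covering (the $\pi_1(H_0)$-extension upstairs maps to the $\pi_1(\PGL)$-extension downstairs), so by Proposition~\ref{prop:restriction}(ii) the class $o_2(\rho)$ lies in the image of $\h^2(\Sigma;\FF_2) \to \h^2(T^1\Sigma;\FF_2)$, which is isomorphic to $\h^2(\Sigma;\FF_2) \cong \FF_2$. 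Combined with the Hitchin component (where $sw_2$ of the standard Hitchin representation is determined, filling the third value, or rather the Hitchin components are collected into one $\PSp$-component distinguished from the two non-Hitchin ones), we conclude that $o_2: \pi_0(\hommax(\pi_1(\Sigma),\PSp(2n,\RR)) \moins \homHit) \to \h^2(\Sigma;\FF_2)$ hits each of the two values once, hence is a bijection.

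The main obstacle I anticipate is not the counting -- that is forced once the covering-degree argument is in place -- but verifying cleanly that $p$ is a genuine covering of the prescribed degree on \emph{maximal} representations, i.e.\ that every maximal $\PSp$-representation lifts (the Toledo-parity computation) and that the fibers have no extra identifications; this requires knowing that the centralizer issues do not collapse the $\hom(\pi_1(\Sigma),Z)$-action on the fiber, which is where one leans on maximal representations being reductive/discrete embeddings so that twisting by distinct central cocycles yields non-conjugate representations. Once that is secured, matching the third value of $o_2$ (resp.\ identifying the Hitchin piece) is a short check using the explicit Hitchin/Fuchsian representations already computed in Section~\ref{sec:computations}.
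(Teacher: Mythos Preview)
Your proposal is correct and follows essentially the same route as the paper, which simply says ``the proof is straightforward since Proposition~\ref{prop:taking_cover} applies.'' You have unpacked exactly that application: every maximal $\PSp(2n,\RR)$-representation lifts (Toledo parity), the injectivity hypothesis of Proposition~\ref{prop:taking_cover} holds by Lemma~\ref{lem:inject} (for $n$ odd, $-\id_n$ has determinant $-1$, so $Z\to\pi_0(\GL(n,\RR))$ is injective), hence each connected component downstairs is covered by exactly $2^{2g}$ components upstairs, and $3\cdot 2^{2g}/2^{2g}=3$.

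Your closing worry about centralizers is misplaced. The statement is at the level of $\hom$, not $\rep$, so conjugation plays no role; and the question of whether the $2^{2g}$ lifts $\eta\cdot\rho'$ lie in distinct \emph{components} is precisely what Proposition~\ref{prop:taking_cover} settles via the first obstruction $o_1$ and the injectivity hypothesis---no appeal to discreteness or reductivity of maximal representations is needed.
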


The proof is straightforward since Proposition~\ref{prop:taking_cover}
applies.

For the case when $n$ is even, one gets

\begin{prop}
  Let $n$ be an even integer, $n\geq 2$.

  Then the image of $\hommax(\pi_1(\Sigma), \Sp(2n, \RR))$ in
  $\hom(\pi_1(\Sigma), \PSp(2n,\RR) )$ is the union of
  \begin{enumerate}
  \item $1 + (2g-2) + (2^{2g} -1)$ connected components of
    $\hommax(\pi_1(\Sigma), \PSp(2n, \RR))$ when $n=2$,
  \item $1 + 2 + (2^{2g} -1)$ components of $\hommax(\pi_1(\Sigma),
    \PSp(2n, \RR))$ when $n>2$.
  \end{enumerate}
\end{prop}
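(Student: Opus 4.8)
The plan is to identify the image with a quotient of $\hommax(\pi_1(\Sigma),\Sp(2n,\RR))$ by a finite group and then count the orbits of that group on the set of connected components, using Theorems~\ref{thm_intro:symp_maximal} and \ref{thm_intro:n=2_components} to parametrize those components.

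First I would set up the quotient. Let $Z=\{\pm\id\}$ be the kernel of the covering $\Sp(2n,\RR)\to\PSp(2n,\RR)$, let $p$ be the induced map on representation spaces, and let $V=\hom(\pi_1(\Sigma),Z)\cong\h^1(\Sigma;\FF_2)\cong\FF_2^{2g}$ act on $\hom(\pi_1(\Sigma),\Sp(2n,\RR))$ by $(\eta,\rho)\mapsto\eta\cdot\rho$ as in Proposition~\ref{prop:frst_ob_center}. This action is free, and $p(\rho)=p(\rho')$ if and only if $\rho$ and $\rho'$ lie in the same $V$-orbit. Twisting by $\eta$ changes the obstruction class in $\h^2(\Sigma;\pi_1(\Sp(2n,\RR)))\cong\ZZ$ by a quantity depending additively on $\eta\in\h^1(\Sigma;\FF_2)$; as the source is $2$-torsion and the target torsion free, this quantity vanishes, so the Toledo invariant, hence maximality, is unchanged (this also shows the image consists of maximal $\PSp(2n,\RR)$-representations). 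Thus $V$ preserves $\hommax(\pi_1(\Sigma),\Sp(2n,\RR))$, the map $p$ restricts to a covering onto its image with free deck group $V$, the image is open and closed in $\hommax(\pi_1(\Sigma),\PSp(2n,\RR))$, and it is homeomorphic to $\hommax(\pi_1(\Sigma),\Sp(2n,\RR))/V$. Therefore the number of its components equals the number of $V$-orbits on $\pi_0\big(\hommax(\pi_1(\Sigma),\Sp(2n,\RR))\big)$.

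Next I would compute this action on $\pi_0$. Twisting by $\eta$ tensors the Lagrangian reduction $L^s(\rho)$ with the real line bundle $\ell_\eta$ over $T^1\Sigma$ pulled back from the line bundle on $\Sigma$ with $w_1=\eta$. Since $n$ is even, $\det(-\id_n)=1$, so by Proposition~\ref{prop:frst_ob_center} (the map $Z\to\pi_0(\GL(n,\RR))=\FF_2$ being trivial) $sw_1(\eta\cdot\rho)=sw_1(\rho)$, while the Whitney formula for $w_2$ of a tensor product with a line bundle, together with $w_1(\ell_\eta)\cupprod w_1(\ell_\eta)=0$ (Wu: $x\cupprod x=x\cupprod w_1(T\Sigma)=0$ on the closed oriented surface $\Sigma$), gives
\[
sw_2(\eta\cdot\rho)=sw_2(\rho)+sw_1(\rho)\cupprod\bar\eta,
\]
with $\bar\eta\in\h^1(\Sigma;\FF_2)\subset\h^1(T^1\Sigma;\FF_2)$. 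For $n=2$, $\rho\in\homMaxFZ$ and a fixed enhancement $(\gamma,\LZPlus)$: one checks $(\eta\cdot\rho,\LZPlus)$ again lies in $\homMaxPLnot$ (the condition $sw_1=0$ forces $\det\rho(\gamma)|_{L^s}>0$, so $-\rho(\gamma)$ still fixes the oriented Lagrangian), that the oriented Lagrangian reduction is again tensored by $\ell_\eta$, and hence $e_{\gamma,\LZPlus}(\eta\cdot\rho)=e_{\gamma,\LZPlus}(\rho)+c_1(\ell_\eta\otimes_\RR\CC)$; but $c_1$ of the complexification of a real line bundle is the integral Bockstein of its $w_1$, which vanishes here for the same torsion-freeness reason. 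So $V$ fixes the Euler class, hence fixes each component of $\homMaxFZ\moins\homHit$. Finally, the $2^{2g}$ Hitchin components of $\Sp(2n,\RR)$ all project to the single Hitchin component of $\PSp(2n,\RR)$, so they form one $V$-orbit.

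With these facts the count is formal. By Theorem~\ref{thm_intro:symp_maximal} (for $n>2$) or Theorem~\ref{thm_intro:n=2_components} (for $n=2$) the non-Hitchin components with $sw_1=0$ are: the two components with $sw_2\in\{0,\ast\}$ in the image of $\h^2(\Sigma;\FF_2)$ when $n>2$, and the $2g-2$ components of $\homMaxFZ\moins\homHit$ (parametrized by the Euler class in the order-$2g-2$ cyclic image of $\h^2(\Sigma;\ZZ)$) when $n=2$; by the previous paragraph each of these is its own $V$-orbit. For each of the $2^{2g}-1$ nonzero values of $sw_1$, nondegeneracy of the cup pairing $\h^1(\Sigma;\FF_2)\times\h^1(\Sigma;\FF_2)\to\h^2(\Sigma;\FF_2)\cong\FF_2$ provides $\eta$ with $sw_1\cupprod\bar\eta\neq0$, so the two components carrying that value of $sw_1$ form a single orbit, giving $2^{2g}-1$ orbits. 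Adding the single orbit of Hitchin components, the image has $1+2+(2^{2g}-1)=2^{2g}+2$ components when $n>2$ and $1+(2g-2)+(2^{2g}-1)=2^{2g}+2g-2$ components when $n=2$, which is the assertion. The main obstacle is the third paragraph: the orientation bookkeeping for $L^s_+(\rho)\otimes\ell_\eta$ and the verification that the correction terms $w_1(\ell_\eta)^2$ and the Bockstein of $\bar\eta$ genuinely die on a closed oriented surface; everything else is routine.
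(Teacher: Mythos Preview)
Your proof is correct and follows essentially the same route as the paper: identify the image with the quotient by the $\h^1(\Sigma;\FF_2)$-action, compute the effect of twisting on $sw_1$, $sw_2$, and $e_\gamma$ via the tensor-product formula for the Lagrangian reduction, and then count orbits using nondegeneracy of the cup pairing. The only cosmetic differences are in two auxiliary vanishings: the paper gets $sw_1(D_\eta)\cupprod sw_1(D_\eta)=0$ from the fact that the mod-$2$ intersection form on $\h^1(\Sigma;\FF_2)$ is alternating (rather than your Wu-formula argument), and it shows the Euler-class correction $e(S_\eta)$ vanishes by noting it depends continuously on $\eta$ in the connected group $\hom(\pi_1(\Sigma),S^1)\cong(S^1)^{2g}$ (rather than your Bockstein argument).
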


The above theorem and this proposition imply
Theorem~\ref{thm_intro_comp_PSp} of the introduction.

\begin{proof}
  All the components of $\homHit( \pi_1(\Sigma), \Sp(2n, \RR))$
  projects to the same component in $\hommax( \pi_1(\Sigma), \PSp(2n,
  \RR))$.  Let $\mathcal{C}$ be a connected component of
  $\hommax(\pi_1(\Sigma), \Sp(2n, \RR)) \moins \homHit(\pi_1(\Sigma),
  \Sp(2n, \RR))$. We need to understand how many components are
  identified with $\mathcal{C}$ under the projection $p$, i.e.\ the
  number of components of $\mathcal{D} = p^{-1}(p(\mathcal{C}))$. Note
  that, denoting $Z= \{\pm \id\}$ the center of $\Sp(2n, \RR)$,
  \[\mathcal{D} = \{ \eta \cdot \rho \mid \rho \in \mathcal{C}, \eta
  \in \hom(\pi_1(\Sigma, Z)) \}.\] Let $sw_1$ in $\h^1(T^1\Sigma;
  \FF_2)$ the first Stiefel-Whitney class of any representation in
  $\mathcal{C}$. We shall prove:
  \[ \begin{array}{ll}
    |\pi_0( \mathcal{D})|=2 & -\text{if }sw_1 \neq 0\\
    |\pi_0( \mathcal{D})|=1 & -\text{if }sw_1=0.
  \end{array}
  \]
  By Theorems~\ref{thm_intro:symp_maximal} and
  \ref{thm_intro:n=2_components} it is enough to show that, for any
  $\rho \in \mathcal{C}$ and $\eta \in \hom(\pi_1(\Sigma, Z))$,
  \begin{itemize}
  \item $sw_1(\eta\cdot \rho) = sw_1(\rho)$.
  \item if $sw_1(\rho)=0$, $sw_2(\eta\cdot \rho) = sw_2(\rho)$ (or
    $e_\gamma(\eta\cdot \rho) = e_\gamma(\rho)$ when $n=2$).
  \item if $sw_1(\rho)\neq 0$, there exists $\eta'$ such that
    $sw_2(\eta' \cdot \rho) \neq sw_2(\rho)$.
  \end{itemize}
  If $D_\eta$ is the flat line bundle associated with $\eta$, the
  Lagrangian reduction for $\eta\cdot\rho$ is
  \[ L(\eta \cdot \rho) = D_\eta \otimes L(\rho).\] From
  Proposition~\ref{prop_swtensprod} the Stiefel-Whitney classes are
  \[sw_1(\eta\cdot \rho) = sw_1(\rho) + n sw_1(D_\eta) = sw_1(\rho)\]
  since $n$ is even, and
  \[sw_2(\eta\cdot \rho) = sw_2(\rho) + (n-1) sw_1(D_\eta) \cupprod
  sw_1(\rho) + \frac{n(n-1)}{2} sw_1(D_\eta) \cupprod sw_1(D_\eta).\]
  In the above equality every class is in the cohomology of $\Sigma$.
  Since the map $\h^1(\Sigma; \FF_2) \otimes \h^1(\Sigma; \FF_2) \to
  \h^2(\Sigma; \FF_2) \cong \FF_2\sep a\otimes b \mapsto a \cupprod b$
  is a nondegenerate antisymmetric form on $\h^1(\Sigma; \FF_2)$, one
  has $sw_1(D_\eta) \cupprod sw_1(D_\eta)=0$ and thus
  \[sw_2(\eta\cdot \rho) = sw_2(\rho) + sw_1(D_\eta) \cupprod
  sw_1(\rho).\] When $sw_1(\rho) = 0$ this equality become
  $sw_2(\eta\cdot \rho) = sw_2(\rho)$. When $sw_1(\rho)\neq 0$, by
  nondegeneracy, there exists $\eta'$ with $sw_1(D_{\eta'}) \cupprod
  sw_1(\rho) \neq 0$. Therefore $sw_2(\eta'\cdot \rho) \neq
  sw_2(\rho)$.  When $n=2$ and $sw_1(\rho)$ is zero, the Euler classes
  $e_\gamma(\eta\cdot \rho) $ and $e_\gamma(\rho)$ are the Euler
  classes of the $S^1$-bundles $S_{\eta\cdot\rho}$ and $S_\rho$. The
  above formula for $L(\eta \cdot \rho)$ implies that $S_{\eta \cdot
    \rho} = S_\eta \times_{S^1} S_\rho$ where $S_\eta$ is the flat
  $S^1$-bundle associated with $\eta$. Therefore one has $e_\g(\eta
  \cdot \rho) = e(S_\eta) + e_\g(\rho)$. Since $e(S_\eta)$ depends
  continuously on $\eta$ in $\hom(\pi_1(\Sigma), S^1) \cong
  (S^1)^{2g}$, this Euler class is zero.
\end{proof}

\section{Computations of the Invariants}
\label{sec:computations}

In this section we calculate the invariants for the maximal
representations introduced in Section~\ref{sec:examples}. First the
computation is performed for standard maximal representations
(Sections~\ref{subsec:comp_irr}, \ref{subsec:comp_diag} and
\ref{sec:CompTwistedDiag}). In Section~\ref{sec_cons_for_max} we
deduce Theorems~\ref{thm_intro:symp_maximal} and
\ref{thm_intro:symp_n>2} and Corollary~\ref{cor_intro:deform} of the
introduction.  In Section~\ref{sec:sta_res} we calculate the
invariants for the hybrid representations defined in
Section~\ref{sec:description_hybrid}.

\subsection{Irreducible Fuchsian representations}\label{subsec:comp_irr}
Let $\iota: \pi_1(\Sigma) \to \SL(2,\RR)$ be a discrete embedding and
$\rho_{irr} = \phi_{irr} \circ \iota: \pi_1(\Sigma) \to \Sp(2n,\RR)$
an irreducible Fuchsian representation.  We observed in
Fact~\ref{facts:irreducible}.(\ref{item:lag_irr}) that the Lagrangian
reduction $L^s(\rho_{irr})$ is given by
\[
L^s(\rho_{irr}) = L^s(\iota)^{2n-1} \oplus L^s(\iota)^{2n-3} \oplus
\cdots \oplus L^s(\iota),
\]
where $L^s(\iota)$ is the line bundle associated with $\iota$.

Therefore, by the multiplicative properties of Stiefel-Whitney
classes, the first and second Stiefel-Whitney classes are given by
\[
sw_1(\rho_{irr}) = sw_1(L^s(\rho_{irr})) = \big (\sum_{i=1}^n (2i-1)
\big)sw_1(L^s(\iota)) = n sw_1(L^s(\iota)), \] and
\[
\begin{array}{ll}
  sw_2(\rho_{irr}) = sw_2(L^s(\rho_{irr})) &\displaystyle =
  \frac{n(n-1)}{2} sw_1(L^s(\iota))\cupprod sw_1(L^s(\iota)) \\ 
  &\displaystyle=  \frac{n(n-1)}{2} (g-1) \mod 2, 
\end{array} \]
where the last equality follows from the calculation for $n=2$; in
that case $sw_1(L^s(\iota))\cupprod sw_1(L^s(\iota)) =
sw_2(\rho_{irr}) = e_\gamma(\rho_{irr}) \mod 2 = (g-1) \mod 2$ by
Proposition~\ref{prop:eulerHybrid}. 
Note that in particular $sw_1(\rho_{irr}) = 0$ if $n$ is even. 
\subsection{Diagonal Fuchsian representations}\label{subsec:comp_diag}
Let $\iota: \pi_1(\Sigma) \to \SL(2,\RR)$ be a discrete embedding and
$\rho_{\Delta} = \phi_{\Delta} \circ \iota: \pi_1(\Sigma) \to
\Sp(2n,\RR)$ a diagonal Fuchsian representation.  We observed in
Facts~\ref{facts:diagonal}.(\ref{item:lag_diag}) that the Lagrangian
reduction $L^s(\rho_{\Delta})$ is given by
\[
L^s(\rho_\Delta) = L^s(\iota)\oplus \cdots \oplus L^s(\iota),
\]
where $L^s(\iota)$ is the line bundle associated with $\iota$.

Therefore the first and second Stiefel-Whitney classes are given by
\[
sw_1(\rho_{\Delta}) = sw_1(L^s(\rho_{\Delta})) = n
sw_1(L^s(\iota)), \] and
\[
\begin{array}{ll}
  sw_2(\rho_{\Delta}) = sw_2(L^s(\rho_{\Delta})) 
  &=\displaystyle \frac{n(n-1)}{2} sw_1(L^s(\iota))\cupprod sw_1(L^s(\iota)) \\
  &=\displaystyle  \frac{n(n-1)}{2} (g-1) \mod 2. 
\end{array}\]
Again, $sw_1(\rho_{\Delta}) = 0$ if $n$ is even. 

\subsection{Twisted diagonal representations}
\label{sec:CompTwistedDiag}

The aim of this section is to prove the

\begin{prop}\label{prop:realize_invariants} 
  Let $n\geq 2$. Let $(\alpha, \beta) \in \h^1(T^1\Sigma; \FF_2)\times
  \h^2(T^1\Sigma; \FF_2)$ be an $n$-admissible pair (see
  Definition~\ref{defi:admissible}).  If $n=2$ and $\alpha=0$ we
  suppose furthermore that $\beta=(g-1) \mod 2$.

  Then there exists a twisted diagonal representation
  \[
  \rho_{\Theta}: \pi_1(\Sigma) \to \Sp(2n,\RR)
  \]
  such that $sw_1(\rho_{\Theta}) = \alpha$ and $sw_2(\rho_{\Theta}) =
  \beta$.
\end{prop}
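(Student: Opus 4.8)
The plan is to construct the twisted diagonal representation $\rho_\Theta = \iota \otimes \Theta$ by choosing $\iota : \pi_1(\Sigma) \to \SL(2,\RR)$ and an orthogonal representation $\Theta : \pi_1(\Sigma) \to \O(n)$ carefully, and then to compute $sw_1(\rho_\Theta)$ and $sw_2(\rho_\Theta)$ using Facts~\ref{facts:twisted}.(\ref{item:lag_twisted}), which identifies the Lagrangian reduction as $L^s(\rho_\Theta) = L^s(\iota)\otimes \overline W$, where $\overline W$ is the flat $n$-plane bundle over $T^1\Sigma$ associated with $\Theta$. The Stiefel-Whitney classes of a tensor product $L \otimes W$ of a line bundle with a rank-$n$ bundle are given by universal formulas (the same formula quoted in the proof of the $\PSp$-proposition above, via $w(L\otimes W)$ expressed through $w_1(L)$ and the $w_i(W)$); in particular $sw_1(\rho_\Theta) = sw_1(\overline W) + n\, sw_1(L^s(\iota))$ and $sw_2(\rho_\Theta) = sw_2(\overline W) + (n-1)\,sw_1(\overline W)\cupprod sw_1(L^s(\iota)) + \binom{n}{2}\,sw_1(L^s(\iota))\cupprod sw_1(L^s(\iota))$. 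Recall $sw_1(L^s(\iota))$ is a fixed spin structure class $v \in \h^1(T^1\Sigma;\FF_2)\moins\h^1(\Sigma;\FF_2)$ and $v\cupprod v = (g-1)\bmod 2$.

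The key point is that $\overline W$ is pulled back from $\Sigma$, so $sw_1(\overline W)$ and $sw_2(\overline W)$ lie in the image of $\h^*(\Sigma;\FF_2)$. As $\Theta$ ranges over all homomorphisms $\pi_1(\Sigma)\to \O(n)$, the pair $(sw_1(\overline W), sw_2(\overline W)) \in \h^1(\Sigma;\FF_2)\times\h^2(\Sigma;\FF_2)$ ranges over \emph{all} possible pairs: indeed, since $n\geq 2$, one can realize any $w_1 \in \h^1(\Sigma;\FF_2) = \hom(\pi_1(\Sigma),\FF_2)$ by a representation into $\O(1)\times\O(1)\subset\O(2)\subset\O(n)$, and one can independently add any $w_2 \in \h^2(\Sigma;\FF_2)\cong\FF_2$ by taking a connected sum with an $\O(2)$- or $\O(3)$-representation of nonzero second Stiefel-Whitney class — concretely, a representation $\pi_1(\Sigma)\to\O(2)$ obtained from the $\SO(2)$-lift obstruction, or the standard construction using the $2g$ generators mapping to reflections. (This is the $n=1$ versus $n\geq 2$ dichotomy: for $n=1$ only $sw_2(\overline W)=0$ is available, which is exactly why the case $n=2$, $\alpha=0$ requires the extra hypothesis.) The dichotomy in the admissibility condition — $\alpha \in \h^1(\Sigma;\FF_2)$ for $n$ even and $\alpha\notin\h^1(\Sigma;\FF_2)$ for $n$ odd — matches precisely the parity of the term $n\,v$ in the formula for $sw_1(\rho_\Theta)$, so given an $n$-admissible $\alpha$ we solve $sw_1(\overline W) = \alpha - n v \in \h^1(\Sigma;\FF_2)$ for the first Stiefel-Whitney class of $\Theta$, and then solve for $sw_2(\overline W)$ to hit $\beta$ — unless $n=2$ and $\alpha = 0$, in which case $sw_1(\overline W)=0$ forces $\overline W$ to be pulled back from an $\SO(2)$-bundle on $\Sigma$... no, from an $\O(2)$-bundle with trivial $w_1$, whose $w_2$ can still be either value, but here the constraint is that the Euler-class refinement forces $\beta = (g-1)\bmod 2 = v\cupprod v$, consistent with the hypothesis.

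The main obstacle I expect is the surjectivity claim for the map $\Theta \mapsto (sw_1(\overline W), sw_2(\overline W))$ onto $\h^1(\Sigma;\FF_2)\times\h^2(\Sigma;\FF_2)$, i.e.\ verifying that for $n\geq 2$ one can independently prescribe both classes by an honest homomorphism into the \emph{compact} group $\O(n)$ (not merely into $\O(n,\CC)$ or a larger group), together with carefully tracking the interaction terms $sw_1(\overline W)\cupprod v$ and $v\cupprod v$ in the tensor-product formula. The cleanest route is: first handle $w_1$ by a diagonal representation into $\O(1)^n$; then, to adjust $w_2$ without disturbing $w_1$, twist by an $\O(3)\subset\O(n)$ (or $\O(2)$) factor built from a representation whose second Stiefel-Whitney class is the nontrivial element of $\h^2(\Sigma;\FF_2)$ — such exists because $\hom(\pi_1(\Sigma),\O(2))$ already contains representations not lifting to $\Spin$, equivalently with $w_2\neq 0$. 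Once both components of $(sw_1(\overline W),sw_2(\overline W))$ are under control, the computation of $sw_1(\rho_\Theta)$ and $sw_2(\rho_\Theta)$ via the tensor-product formula is routine, and choosing the appropriate $\overline W$ for the given admissible $(\alpha,\beta)$ completes the proof.
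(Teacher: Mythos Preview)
Your approach is the same as the paper's: express $sw_i(\rho_\Theta)$ via the tensor-product formula for $L^s(\iota)\otimes\overline W$ and then realize the needed pairs $(sw_1(\Theta),sw_2(\Theta))\in\h^1(\Sigma;\FF_2)\times\h^2(\Sigma;\FF_2)$ by explicit orthogonal representations. The paper packages the latter as a separate lemma (constructing representations into $\O(1)$, $\O(2)$, $\SO(3)$ with each of the four combinations of $(sw_1\overset{?}{=}0,\,sw_2\overset{?}{=}0)$, then using $\O(m)\hookrightarrow\O(n)$ and mapping-class-group transitivity on $\h^1(\Sigma;\FF_2)\moins\{0\}$).

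There is, however, a genuine error in your analysis of the exceptional case. You write that an $\O(2)$-bundle with trivial $w_1$ ``can still have either $w_2$'', and you attribute the hypothesis $\beta=(g-1)\bmod 2$ to an ``Euler-class refinement''. In fact the opposite holds: for a representation $\Theta:\pi_1(\Sigma)\to\O(2)$ with $sw_1(\Theta)=0$, the image lies in $\SO(2)$, and since $\hom(\pi_1(\Sigma),\SO(2))\cong(S^1)^{2g}$ is connected, every such flat bundle has $sw_2(\Theta)=0$. The combination $sw_1(\Theta)=0$, $sw_2(\Theta)\neq 0$ first becomes available in $\SO(3)$ (e.g.\ via a representation that does not lift to $\Spin(3)$), hence only for $n\geq 3$. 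This is the true source of the $n=2$ exception: when $n=2$ and $\alpha=0$ you are forced to take $sw_1(\overline W)=0$, whence $sw_2(\overline W)=0$, and the tensor-product formula gives $sw_2(\rho_\Theta)=\binom{2}{2}v\cupprod v=(g-1)\bmod 2$ with no freedom left. Your parenthetical locating the obstruction at ``$n=1$ versus $n\geq 2$'' and your suggestion in the last paragraph that an $\O(2)$-factor can adjust $w_2$ ``without disturbing $w_1$'' are both off for the same reason: any $\O(2)$-representation with $sw_2\neq 0$ necessarily has $sw_1\neq 0$. Once you replace those claims with the $\SO(3)$ construction for $n\geq 3$, your argument goes through and coincides with the paper's.
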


\begin{remark}
  Note that when $n =2$ twisted diagonal representations $\rho_\Theta$
  with $sw_1(\rho_\Theta) = 0$ will always have Euler class
  $e_{\gamma, \LZPlus}(\rho_\Theta) = (g-1) \GenTor \in
  \h^2(T^1\Sigma; \ZZ)$ (Proposition~\ref{prop:eulerHybrid}) so that
  the second Stiefel-Whitney class is $sw_2(\rho_\Theta) = g-1 \mod
  2$.
\end{remark}

Let $\iota: \pi_1(\Sigma) \to \SL(2,\RR)$ be a discrete embedding,
$\Theta: \pi_1(\Sigma) \to \O(n)$ an orthogonal representation and
$\rho_\Theta = \iota \otimes \Theta: \pi_1(\Sigma) \to \Sp(2n,\RR)$
the corresponding twisted diagonal representation.  By
Fact~\ref{facts:twisted}.(\ref{item:lag_twisted}) the Lagrangian
reduction $L^s(\rho_{\Theta})$ is given by
\[
L^s(\rho_\Theta) = L^s(\iota)\otimes \overline{W}_\Theta,
\]
where $L^s(\iota)$ is the line bundle associated with $\iota$ and
$\overline{W}_\Theta$ is the pullback to $T^1\Sigma$ of the flat
$n$-plane bundle $W_\Theta$ over $\Sigma$ associated with $\Theta$.

We have $sw_i(\overline{W}_\Theta) = \pi^* sw_i(W_\Theta)$, where
$\pi^*: \h^i(\Sigma; \FF_2) \to \h^i(T^1\Sigma; \FF_2)$ is induced by
the projection $\pi: T^1\Sigma \to \Sigma$.

Thus, to compute the invariants for twisted diagonal representations
we will need to study the first and second Stiefel-Whitney classes of
orthogonal representations $\Theta: \pi_1(\Sigma) \to \O(n)$.  For
such a $\Theta$ denote by $sw_i(\Theta) = sw_i(W_\Theta) \in
\h^i(\Sigma; \FF_2)$ the Stiefel-Whitney classes.  We first note

\begin{lem}
  \label{lem_SW_Om_to_On}
  Let $m \leq n$ and let $\phi: \O(m) \to \O(n)$ be the injection as
  the subgroup fixing pointwise a $(n-m)$-dimensional subspace. Then
  for any $\Theta: \pi_1( \Sigma) \to \O(m)$ and any $i$ one has \[
  sw_i( \Theta) = sw_i( \phi \circ \Theta). \]
\end{lem}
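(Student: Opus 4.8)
The plan is to identify the flat $n$-plane bundle attached to $\phi\circ\Theta$ as a Whitney sum of $W_\Theta$ with a trivial bundle, and then invoke the multiplicativity of Stiefel--Whitney classes.

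First I would unwind the definition of $\phi$: by hypothesis $\O(m)$ is embedded in $\O(n)$ as the pointwise stabilizer of a fixed $(n-m)$-dimensional subspace $U\subset\RR^n$, so that, writing the orthogonal decomposition $\RR^n = U^\perp\oplus U$ with $U^\perp\cong\RR^m$, an element $A\in\O(m)$ acts as $A$ on $U^\perp$ and as the identity on $U$; in other words $\phi$ is conjugate in $\O(n)$ to the block inclusion $A\mapsto\diag(A,\id_{n-m})$. Since this decomposition is $\phi(\O(m))$-invariant, it passes to the associated flat bundles over $\Sigma$ and gives
\[
W_{\phi\circ\Theta}\;\cong\;W_\Theta\,\oplus\,(\Sigma\times U),
\]
where $W_\Theta$ is the flat $m$-plane bundle associated with $\Theta$ and $\Sigma\times U$ is the trivial (product) $(n-m)$-plane bundle.

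Next I would apply the Whitney product formula to the total Stiefel--Whitney class $sw(\cdot)=1+sw_1(\cdot)+sw_2(\cdot)+\cdots$. As the total Stiefel--Whitney class of a trivial bundle is $1$, this gives
\[
sw(W_{\phi\circ\Theta})\;=\;sw(W_\Theta)\cupprod sw(\Sigma\times U)\;=\;sw(W_\Theta)
\]
in $\h^*(\Sigma;\FF_2)$, and comparing the degree-$i$ parts yields $sw_i(\phi\circ\Theta)=sw_i(\Theta)$ for every $i$, as claimed.

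There is no real obstacle here; the only point deserving a word is the reduction to the block inclusion, which is immediate because any two $(n-m)$-dimensional subspaces of $\RR^n$ are conjugate under $\O(n)$, and conjugating the representation $\phi\circ\Theta$ by a fixed element of $\O(n)$ changes neither the isomorphism class of the associated flat bundle nor, a fortiori, its characteristic classes.
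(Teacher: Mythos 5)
Your proof is correct. The paper actually states Lemma~\ref{lem_SW_Om_to_On} without proof, and your argument — identifying $W_{\phi\circ\Theta}$ as $W_\Theta$ plus a trivial $(n-m)$-plane bundle and invoking the Whitney product formula — is exactly the standard stability argument the authors evidently have in mind, including the correct remark that conjugating by an element of $\O(n)$ to reach the block inclusion changes nothing.
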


\subsubsection{The first Stiefel-Whitney class of an orthogonal
  representation}\label{sec:first_sw_on}

Let $ \det: \O(n) \to \{\pm 1\} $ be the determinant homomorphism.
Then the homomorphism
\[
\det\circ \Theta: \pi_1(\Sigma) \longrightarrow \{\pm 1\}
\]
corresponds to the first Stiefel-Whitney class $sw_1(\Theta) \in
\h^1(\Sigma; \FF_2)$ under the identification $\h^1(\Sigma; \FF_2)
\cong \hom(\h^1(\Sigma; \ZZ), \FF_2) \cong \hom(\pi_1(\Sigma), \{ \pm
1\})$.  In particular, the first Stiefel-Whitney class is zero if the
representation has image in $\SO(n)$.

\begin{lem}\label{lem:sw1}
  Let $\alpha \in \h^1(\Sigma; \FF_2)$. Then, for any $n\geq 1$, there
  exists a representation $\Theta_\alpha: \pi_1(\Sigma) \to \O(n)$
  such that $sw_1(\Theta_\alpha) = \alpha$.
\end{lem}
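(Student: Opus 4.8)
The plan is to reduce everything to rank one and invoke Lemma~\ref{lem_SW_Om_to_On}. Recall the identifications used just above the statement: $\h^1(\Sigma;\FF_2) \cong \hom(\h_1(\Sigma;\ZZ),\FF_2) \cong \hom(\pi_1(\Sigma),\{\pm 1\})$, the last isomorphism because $\{\pm 1\}$ is abelian so homomorphisms from $\pi_1(\Sigma)$ factor through $\pi_1(\Sigma)^{ab}=\h_1(\Sigma;\ZZ)$. Under these identifications the given class $\alpha$ corresponds to a homomorphism $\bar\alpha\colon \pi_1(\Sigma)\to\{\pm 1\}$, which is nothing but an orthogonal representation $\pi_1(\Sigma)\to\O(1)$.

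First I would treat the case $n=1$ directly: here $\O(1)=\{\pm 1\}$ and the determinant homomorphism $\det\colon\O(1)\to\{\pm 1\}$ is the identity, so the discussion of Section~\ref{sec:first_sw_on} gives $sw_1(\bar\alpha)=\alpha$, and $\Theta_\alpha:=\bar\alpha$ works. For general $n\geq 1$, let $\phi\colon\O(1)\to\O(n)$ be the inclusion as the subgroup fixing pointwise an $(n-1)$-dimensional subspace of $\RR^n$, and set $\Theta_\alpha:=\phi\circ\bar\alpha$. By Lemma~\ref{lem_SW_Om_to_On} applied with $m=1$ one has $sw_1(\Theta_\alpha)=sw_1(\bar\alpha)=\alpha$, which is the claim.

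There is essentially no obstacle here; the only point requiring care is the bookkeeping of the canonical isomorphisms between $\h^1(\Sigma;\FF_2)$ and $\hom(\pi_1(\Sigma),\FF_2)$ together with the fact, recorded in Section~\ref{sec:first_sw_on}, that for a rank-one orthogonal representation the first Stiefel-Whitney class coincides with the class of the defining homomorphism. Alternatively, and even more concretely, one could exhibit $\Theta_\alpha$ by hand: choose standard generators of $\pi_1(\Sigma)$, send each generator to $\diag(\pm 1,1,\dots,1)\in\O(n)$ according to the value of $\alpha$ on that generator, observe that this assignment extends to a homomorphism since the image lies in the abelian subgroup $\{\pm 1\}\times\{1\}^{n-1}$, and check that its first Stiefel-Whitney class is $\alpha$ by the same rank-one computation.
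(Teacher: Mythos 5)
Your proof is correct and follows exactly the paper's argument: the case $n=1$ is immediate since $\det$ is an isomorphism on $\O(1)$, and the general case follows by applying Lemma~\ref{lem_SW_Om_to_On} to the inclusion $\O(1)\hookrightarrow\O(n)$. The extra bookkeeping of identifications and the explicit description on generators are fine but not needed beyond what the paper records.
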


\begin{proof} 
  The result is obvious for $n=1$ since $\det$ is then an
  isomorphism. For general $n$ apply Lemma~\ref{lem_SW_Om_to_On}.
\end{proof}

\subsubsection{The second Stiefel-Whitney class of an orthogonal representation}
Let $\Theta:\pi_1(\Sigma) \to \O(n)$ be an orthogonal representation.
The second Stiefel-Whitney class of $\Theta$ can be described as the
obstruction to lifting the representation $\rho$ to the nontrivial
double cover $\mathrm{Pin}(n)$: \[ 1 \to \{\pm 1\} \to
\mathrm{Pin}(n)\to \O(n) \to 1.
\]
This obstruction class lies in $\h^2( \Sigma, \{ \pm 1 \} ) = \{ \pm 1
\} \cong \FF_2$ and can be explicitly calculated via the following
procedure.  First, fix a standard presentation
\[
\pi_1(\Sigma) = \langle a_1, b_1, \dots, a_g, b_g \, |\, \Pi_{i=1}^g
[a_i, b_i] = 1\rangle.
\]
Second, since the extension $\mathrm{Pin}(n) \to \O(n)$ is central,
the commutator map for $\mathrm{Pin}(n)$ factors through $\O(n)$:
\[ [\cdot, \cdot]^{\widetilde{}}: \O(n) \times \O(n) \longrightarrow
\mathrm{Pin}(n).
\]
The second Stiefel-Whitney class $sw_2(\Theta) \in \h^2(\Sigma;
\FF_2)$ is given by
\[
\Pi_{i=1}^g [\Theta(a_i), \Theta(b_i)]^{\widetilde{}} \in \FF_2.
\]

\begin{lem}\label{lem:all_classes_3} 
  Let $\alpha \in \h^1(\Sigma; \FF_2) $ and $\beta \in\h^2(\Sigma;
  \FF_2)$.
  \begin{asparaitem}
  \item If $\alpha \neq 0$ or $\beta=0$ then, for any $n \geq 2$,
    there exists a representation $\Theta_{\alpha,\beta}:
    \pi_1(\Sigma) \to \O(n)$ such that $sw_1(\Theta_{\alpha,\beta}) =
    \alpha$ and $sw_2(\Theta_{\alpha,\beta}) = \beta$.
  \item If $\alpha = 0$ and $\beta\neq 0$ then, for any $n \geq 3$,
    there exists a representation $\Theta_{\alpha,\beta}$ such that
    $sw_1(\Theta_{\alpha,\beta}) = \alpha$ and
    $sw_2(\Theta_{\alpha,\beta}) = \beta$.
  \end{asparaitem}
  Furthermore, in every case, $\Theta_{\alpha,\beta}$ can be chosen to
  have finite image in $\O(n)$.
\end{lem}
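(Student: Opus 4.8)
The plan is to prove Lemma~\ref{lem:all_classes_3} by constructing the required representations $\Theta_{\alpha,\beta}: \pi_1(\Sigma) \to \O(n)$ essentially by hand, reducing each case to the smallest possible $n$ and then invoking Lemma~\ref{lem_SW_Om_to_On} to embed $\O(m) \hookrightarrow \O(n)$ for $m \leq n$ without changing the Stiefel-Whitney classes. Since the classes $sw_1, sw_2$ of $\Theta$ depend only on the image (indeed only on the composed homomorphism $\pi_1(\Sigma) \to \O(m)$), it suffices in every case to produce a homomorphism with finite image realizing $(\alpha,\beta)$; this automatically gives the ``finite image'' clause at the end.

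\medskip

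\emph{Case $\beta = 0$.} If $\alpha = 0$ take $\Theta$ trivial. If $\alpha \neq 0$, use Lemma~\ref{lem:sw1} to get $\Theta_\alpha : \pi_1(\Sigma) \to \O(1) \subset \O(n)$ with $sw_1 = \alpha$; since $\O(1) = \{\pm 1\}$ is abelian, $sw_2(\Theta_\alpha)$ is the image of $\prod [\Theta(a_i),\Theta(b_i)]^{\widetilde{}} = 1$, so $sw_2 = 0$. This handles $\alpha \neq 0$, $\beta = 0$ for all $n \geq 1$.

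\emph{Case $\alpha \neq 0$, $\beta \neq 0$.} Here I would work with $n = 2$, i.e.\ $\O(2)$, using the explicit description of $sw_2$ via the commutator map $[\cdot,\cdot]^{\widetilde{}} : \O(2) \times \O(2) \to \mathrm{Pin}(2)$ given just before the lemma. Choose the standard generators $a_1,b_1,\dots,a_g,b_g$. Pick $\Theta(a_1)$ to be a reflection and $\Theta(b_1)$ to be a well-chosen rotation (or another reflection) so that the single commutator lift $[\Theta(a_1),\Theta(b_1)]^{\widetilde{}}$ equals $-1 \in \mathrm{Pin}(2)$, while simultaneously arranging $\det \circ \Theta$ to realize the prescribed $\alpha \in \h^1(\Sigma;\FF_2) \cong \hom(\pi_1(\Sigma),\{\pm1\})$ by controlling which of the $\Theta(a_i),\Theta(b_i)$ are reflections (determinant $-1$) versus rotations (determinant $+1$); the surface relation $\prod[a_i,b_i]=1$ is automatically respected because any assignment of generators to commuting... \emph{no} --- one must be slightly careful: the relation forces $\prod [\Theta(a_i),\Theta(b_i)] = 1$ in $\O(2)$, which is satisfied for \emph{any} choice of the $\Theta(a_i),\Theta(b_i)$ since $[\Theta(a_i),\Theta(b_i)]$ is a commutator and we are free to pick the remaining generators trivially; more simply, set $\Theta(a_i) = \Theta(b_i) = \id$ for $i \geq 2$ and realize both $\alpha$ and $\beta$ using $a_1, b_1$ (and, if $\alpha$ pairs nontrivially with other generators, adjust by making some later $\Theta(a_i)$ or $\Theta(b_i)$ a reflection, which contributes to $\det\circ\Theta$ but keeps that generator's commutator trivial). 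This is the step where the bookkeeping is most delicate: one needs that the parity count giving $sw_2$ and the $\FF_2$-linear functional giving $sw_1$ can be prescribed independently, which follows from the fact that reflections appearing in ``far-apart'' generators contribute to $sw_1$ without affecting the single nontrivial commutator.

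\emph{Case $\alpha = 0$, $\beta \neq 0$.} This is the reason the statement requires $n \geq 3$: inside $\SO(2)$ (rotations) every commutator lifts trivially, so one cannot get $sw_2 \neq 0$ with $sw_1 = 0$ in dimension $2$. In dimension $3$, take the homomorphism $\pi_1(\Sigma) \to \SO(3)$ coming from a genus-$g$ ``quaternionic'' representation: lift to $\SU(2) = \Spin(3)$ by sending $a_1 \mapsto \mathbf{i}$, $b_1 \mapsto \mathbf{j}$ (as unit quaternions) and the other generators to $1$; then $\prod[\,\cdot\,,\cdot\,] = [\mathbf{i},\mathbf{j}] = \mathbf{i}\mathbf{j}\mathbf{i}^{-1}\mathbf{j}^{-1} = -1 \neq 1$ in $\SU(2)$, so the projected representation $\Theta : \pi_1(\Sigma) \to \SO(3)$ does not lift to $\Spin(3)$, i.e.\ $sw_2(\Theta) \neq 0$, while $\Theta$ has image in $\SO(3)$ so $sw_1(\Theta) = 0$. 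Then embed $\SO(3) \subset \O(3) \subset \O(n)$ via Lemma~\ref{lem_SW_Om_to_On}. In every case the constructed representation has finite image (image in $\{\pm1\}$, in the finite dihedral-type subgroup of $\O(2)$ generated by the chosen reflections and a finite-order rotation, or in the quaternion group $Q_8 \subset \SU(2)$ projected to $\SO(3)$), completing the proof.

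\medskip

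The main obstacle I anticipate is making the independence of $sw_1$ and $sw_2$ in the mixed case ($\alpha \neq 0$, $\beta \neq 0$) fully rigorous: one must check that the explicit cocycle formula $sw_2(\Theta) = \prod_{i=1}^g [\Theta(a_i),\Theta(b_i)]^{\widetilde{}}$ really does decouple from the determinant data when one distributes reflections among generators carefully, and that a single pair $(a_1,b_1)$ can be used to toggle $sw_2$ while leaving $sw_1$ adjustable via the other generators. Everything else is a direct computation in $\O(1)$, $\O(2)$, and $\SU(2)$.
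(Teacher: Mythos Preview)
Your cases $\beta=0$ and $(\alpha,\beta)=(0,\neq 0)$ are essentially verbatim the paper's: trivial or $\O(1)$ representations for the former, the quaternion homomorphism $\pi_1(\Sigma)\to\SO(3)$ (via $a_1\mapsto i$, $b_1\mapsto j$ in $\SU(2)\cong\Spin(3)$) for the latter, then Lemma~\ref{lem_SW_Om_to_On} to pass to $\O(n)$.

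The one genuine difference is in the case $\alpha\neq 0$, $\beta\neq 0$. The paper does not attempt your direct construction for every $\alpha$; instead it uses that $\map(\Sigma)$ acts transitively on $\h^1(\Sigma;\FF_2)\smallsetminus\{0\}$ and trivially on $\h^2(\Sigma;\FF_2)$, so it suffices to exhibit a \emph{single} representation with $sw_1\neq 0$ and $sw_2\neq 0$ (namely $\Theta(a_1)=R$, $\Theta(b_1)=e^{i\pi}$, all other generators trivial); precomposing with mapping classes then realizes every nonzero $\alpha$. This cleanly dissolves the ``bookkeeping'' you flag as the main obstacle.

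Your direct approach can also be made to work, but as written it has a small gap: you place the nontrivial lifted commutator at the handle $(a_1,b_1)$, which forces at least one of $\Theta(a_1),\Theta(b_1)$ to be a reflection (two rotations in $\SO(2)$ commute, so their lifted commutator in $\mathrm{Pin}(2)$ is trivial). If $\alpha(a_1)=\alpha(b_1)=0$ this is incompatible with realizing $\alpha$. The fix is easy---since $\alpha\neq 0$, choose a handle $(a_j,b_j)$ where $\alpha$ is nonzero and put the nontrivial commutator there (e.g.\ $\Theta(a_j)=R$, $\Theta(b_j)=e^{i\pi}$ if $\alpha(a_j)\neq 0$, $\alpha(b_j)=0$, or two perpendicular reflections if both are nonzero), then set the remaining generators to $\id$ or to a fixed reflection $R$ according to $\alpha$; each such pair contributes trivially to the lifted commutator product. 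Either route yields finite image.
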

Similar statements may be found in \cite[Sec.~4.2]{Oliveira_2009}.

\begin{remark}
  Note that for $\Theta$ in $\hom(\pi_1( \Sigma), \O(2))$ the equality
  $sw_1( \Theta)=0$ automatically implies $sw_2( \Theta)=0$.
\end{remark}

\begin{proof}
  Since the mapping class group acts transitively on $\h^1(\Sigma;
  \FF_2)\moins \{0\}$ and trivially on $\h^2(\Sigma; \FF_2)$ and in
  view of Lemma~\ref{lem_SW_Om_to_On}, we need to construct the
  following representations:
  \begin{enumerate}
  \item \label{item_pfLAC1} $\Theta: \pi_1(\Sigma) \to \O(1)$ with
    $sw_1( \Theta) = 0$ and $sw_2( \Theta) = 0$.
  \item \label{item_pfLAC2} $\Theta: \pi_1(\Sigma) \to \O(1)$ with
    $sw_1( \Theta) \neq 0$ and $sw_2( \Theta) = 0$.
  \item \label{item_pfLAC3} $\Theta: \pi_1(\Sigma) \to \O(2)$ with
    $sw_1( \Theta) \neq 0$ and $sw_2( \Theta) \neq 0$.
  \item \label{item_pfLAC4} $\Theta: \pi_1(\Sigma) \to \O(3)$ with
    $sw_1( \Theta) = 0$ and $sw_2( \Theta) \neq 0$.
  \end{enumerate}
  By Lemma~\ref{lem:sw1}, the existence of representations required in
  (\ref{item_pfLAC1}) and (\ref{item_pfLAC2}) is obvious.

  \smallskip

  For (\ref{item_pfLAC3}) let us denote by $e^{i\theta}$ elements of
  $\SO(2)$ and by $R$ an element of $O(2) \moins \SO(2)$.  Then
  $\det(e^{i\theta}) = 1$ and $\det (Re^{i\theta}) = -1$.

  We define the representation $\Theta: \pi_1(\Sigma) \to \O(2)$ by
  \[
  \Theta(a_1) = R, \ \Theta(b_1)= e^{i\pi}, \text{ and } \Theta(a_i) =
  \Theta(b_i) = 1 \ \text{ for any } i>1.
  \]
  The relation $\Pi_{i=1}^g [\Theta(a_i), \Theta(b_i)]=1$ is obviously
  satisfied since $e^{i\pi}$ is central in $\O(2)$. Clearly $sw_1(
  \Theta) \neq 0$.

  The group $\mathrm{Pin}(2)$ is in this case isomorphic to $\O(2)$
  and the restriction to $\SO(2)$ of the cover $\O(2) \cong
  \mathrm{Pin}(2) \to \O(2)$ is $\SO(2) \to \SO(2)\sep{} x \mapsto
  x^2$, the image of $R$ being $R$.  We can hence lift $\Theta(b_1)$
  to $e^{i\pi/2}$ and $\Theta(a_1)$ to $R$. Then we have
  \[
  sw_2(\Theta)=\Pi_{i=1}^g [\Theta(a_i), \Theta(b_i)]^{\widetilde{}} =
  e^{-i\pi} = -1.
  \]

  \smallskip

  For (\ref{item_pfLAC4}) we are looking for a representation
  $\Theta:\pi_1(\Sigma) \to \SO(3)$ (i.e.\ $sw_1(\Theta) = 0$) which
  does not lift to $\mathrm{Spin}(3)$.  Let us realize
  $\mathrm{Spin}(3)\cong S^3$ as the quaternions of norm one. The
  covering map $\mathrm{Spin}(3) \to \SO(3)$ is realized by the action
  by conjugation on the space of imaginary quaternions.

  Let us denote by $\{ 1, i, j, k \}$ the standard basis of the
  quaternions.  We define a representation $\Theta: \pi_1(\Sigma) \to
  \SO(3)$ by sending $a_1$ to the projection in $ \SO(3)$ of $i$,
  $b_1$ to the projection of $j$ and all other generators of
  $\pi_1(\Sigma)$ to the trivial element. Since $[\Theta(a_1),
  \Theta(b_1)]^{\widetilde{}} = [i,j] = -1$ this defines a
  homomorphism into $\SO(3)$ which does not lift to
  $\mathrm{Spin}(3)$.
\end{proof}

\subsubsection{Proof of Proposition~\ref{prop:realize_invariants}}
\label{sec:proof-prop-ref}

We have to show that we can realize all $2^{2g}$ different choices for
$sw_1(\rho)$ in the fixed coset of $\h^1(T^1\Sigma; \FF_2)$ and the
$2$ choices for $sw_2(\rho)$ in the image of $\h^2(\Sigma; \FF_2)$ in
$\h^2(T^1\Sigma; \FF_2)$.

Let $\iota: \pi_1(\Sigma) \to \SL(2,\RR)$ be the fixed discrete
embedding, $\Theta: \pi_1(\Sigma) \to \O(n)$ an orthogonal
representation and $\rho_\Theta: \pi_1(\Sigma) \to \Sp(2n,\RR)$ the
corresponding twisted diagonal representation.

The following formulas hold for the first and second Stiefel-Whitney
classes \cite[Corollary~5.4]{Thomas_tensor}
(Proposition~\ref{prop_swtensprod} provides a short proof for the
reader's convenience):
\begin{equation}
  \label{eq:formula}
  sw_1(\rho_{\Theta}) =  sw_1(L^s(\iota)\otimes \overline{W}) = n sw_1(L^s(\iota)) +sw_1(\overline{W})
\end{equation}
\begin{multline*}
  sw_2(\rho_{\Theta}) = sw_2(L^s(\iota)\otimes \overline{W}) =
  \frac{n(n-1)}{2} sw_1(L^s(\iota))\cupprod sw_1(L^s(\iota)) \\ +
  (n-1) sw_1(L^s(\iota))\cupprod
  sw_1(\overline{W})+sw_2(\overline{W}),
\end{multline*}
where $\overline{W}$ is the pullback to $T^1\Sigma$ of the flat
$n$-plane bundle over $\Sigma$ associated with $\Theta$.  Note that
$sw_i(\overline{W})$ is the image of $sw_i(\Theta)$ under the natural
map in cohomology.
 
Lemma~\ref{lem:all_classes_3} implies that by choosing different
$\O(n)$-representations for $\Theta$ we can realize $2^{2g} \times 2$
different choices for $sw_1(\Theta)$ and $sw_2(\Theta)$.  Since $n
sw_1(\iota)$ is fixed, as we vary $sw_1(\overline{W})$ over the
$2^{2g}$ distinct classes in the image of $\h^1(\Sigma; \FF_2)$ in
$\h^1(T^1\Sigma; \FF_2)$ we realize all possible $2^{2g}$ classes in
the $\h^1(\Sigma; \FF_2)$-coset in $\h^1(T^1\Sigma; \FF_2)$ determined
by $n sw_1(\iota)$.  Similarly for any $sw_1(\overline{W})$ we can
realize the two possibilities for $sw_2(\overline{W})$, so we can
realize the two possibilities for $sw_2$ by an appropriately chosen
twisted diagonal maximal representation $\rho_\Theta: \pi_1(\Sigma)
\to \Sp(2n,\RR)$.

\subsection{Consequences for maximal representations}
\label{sec_cons_for_max}
Let us now derive some of the results stated in the introduction.

Every representation in the Hitchin component is purely loxodromic
\cite[Prop.~3.4]{Labourie_anosov}, therefore twisted diagonal
representations are never contained in Hitchin
components. Proposition~\ref{prop:realize_invariants} implies then
that for $n\geq3$ the space
\[\hommax(\pi_1(\Sigma), \Sp(2n,\RR)) \moins
\homHit(\pi_1(\Sigma), \Sp(2n,\RR))\] consists of at least $2\times
2^{2g}$ connected components.  As the total number of connected
components of $\hommax(\pi_1(\Sigma), \Sp(2n,\RR))$, if $n\geq 3$, is
$3\times 2^{2g}$ \cite[Theorem~8.7]{GarciaPrada_Gothen_Mundet} and as
$\homHit(\pi_1(\Sigma), \Sp(2n,\RR))$ has $2^{2g}$ components, we
conclude that the space $\hommax(\pi_1(\Sigma), \Sp(2n,\RR)) \moins
\homHit(\pi_1(\Sigma), \Sp(2n,\RR))$ has $2\times 2^{2g}$ components
and that the first and second Stiefel-Whitney classes of the Anosov
bundle associated with the representation distinguish them; this
proves Theorem~\ref{thm_intro:symp_maximal}.  In the $2^{2g}$ Hitchin
components any representation can be deformed into an irreducible
Fuchsian representation. The remaining $2\times 2^{2g}$ connected
components all contain a twisted diagonal representation. This gives
Theorem~\ref{thm_intro:symp_n>2} and
Corollary~\ref{cor_intro:deform_closed} of the introduction.
Corollary~\ref{cor_intro:deform} then follows directly from the
computations in Section~\ref{subsec:comp_irr} and
Section~\ref{subsec:comp_diag}.

\subsection{Hybrid representations}
\label{sec:sta_res}
The goal of this section is to prove the following
\begin{thm}
  \label{thm:ReducedEulerCC}
  Let $\gamma \in \pi_1(\Sigma)\moins\{1\}$ and $\LZPlus\in \Ll_+$ an
  oriented Lagrangian subspace of $\RR^4$. Then the relative Euler
  class distinguishes the connected components of
  \[ \homMaxFourZero\moins\homHitFour.
  \]
  More precisely, given any $l$ in $\ZZ/ (2g-2)\ZZ \cong \h^2( T^1
  \Sigma; \ZZ)^{tor}$,
  \begin{itemize}
  \item if $l \neq (g-1)$ the set $e^{-1}_{\gamma, \LZPlus}(\{l\})$ is
    a connected component of the space $\homMaxFourZero$. Every
    representation in $e^{-1}_{\gamma, \LZPlus}(\{l\})$ can be
    deformed to a $k$-hybrid representation, where $k = g-1 -l
    \mod{2g-2}$;
  \item the set $e^{-1}_{\gamma, \LZPlus}(\{g-1\})$ is the union of
    the connected component of $\homMaxFourZero$ containing diagonal
    Fuchsian representations in $\Sp(4,\RR)$ and of the Hitchin
    components $\homHitFour$.
  \end{itemize}
\end{thm}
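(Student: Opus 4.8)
The statement combines three ingredients: a computation of the relative Euler class of the model ($k$-hybrid) representations, a count of the number of connected components of $\homMaxFourZero$ (and of $\homHitFour$) coming from the Higgs bundle literature, and a "surjectivity + separation" argument that glues these together. The plan is to first establish the values of $e_{\gamma,\LZPlus}$ on the standard and hybrid models, then compare the two sides numerically, then invoke the constraint of Proposition~\ref{prop:restriction}.(iii) together with continuity (Proposition~\ref{prop:openano}) to conclude that each fiber of $e_{\gamma,\LZPlus}$ is open and closed, hence a union of components, and finally a counting argument forces each nonexceptional fiber to be a single component.

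\medskip

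\textbf{Step 1: values on the models.} I would begin by recording, from Section~\ref{sec:computations}, that for a $k$-hybrid representation $\rho$ one has $e_{\gamma,\LZPlus}(\rho) = (g-1-k)\GenTor$ in $\h^2(T^1\Sigma;\ZZ)^{tor}\cong \ZZ/(2g-2)\ZZ$ (this is the content of the computation referred to around Proposition~\ref{prop:eulerHybrid}/\ref{prop:eulerHybrid}). Since hybrids exist for every $k$ with $\chi(\Sigma)+1\le k\le -1$, i.e.\ $3-2g\le k\le -1$, the values $g-1-k$ run exactly over $\{2g-2,\dots,g\}\equiv\{0,1,\dots,g-2\}$ in $\ZZ/(2g-2)\ZZ$; combining with the diagonal Fuchsian computation of Section~\ref{subsec:comp_diag}, whose Euler class is $(g-1)\GenTor$, and the identification of the image of $e_{\gamma,\LZPlus}$ with the image of $\h^2(\Sigma;\ZZ)$ from Proposition~\ref{prop:restriction}.(iii), I would conclude the \emph{image} of $e_{\gamma,\LZPlus}$ on $\homMaxFourZero$ is all of $\ZZ/(2g-2)\ZZ$. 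I also need that $\homHitFour\subset\homMaxFourZero$ (true since $n=2$ is even, as remarked after Theorem~\ref{thm_intro:symp_maximal}) and that Hitchin representations have $e_{\gamma,\LZPlus}=(g-1)\GenTor$ — this follows by deforming a Hitchin representation to an irreducible Fuchsian one and using the irreducible Fuchsian computation of Section~\ref{subsec:comp_irr}.

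\medskip

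\textbf{Step 2: the map factors through $\pi_0$ and fibers are unions of components.} By Proposition~\ref{prop:openano} the Anosov reduction, hence its oriented refinement from Definition~\ref{defi:canocurve}, varies continuously with $\rho$; since the Euler class takes values in a discrete set, $e_{\gamma,\LZPlus}$ is locally constant on $\homMaxPLnot$, hence descends to a locally constant function on $\homMaxFourZero$ (using Lemma~\ref{lem:fibr_con}, which says the fibers of $\homMaxPLnot\to\homMaxFourZero/\Sp(4,\RR)$ are connected, so the relative Euler class is well defined and the quotient has the same $\pi_0$). Therefore each $e^{-1}_{\gamma,\LZPlus}(\{l\})$ is open and closed in $\homMaxFourZero$, i.e.\ a union of connected components.

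\medskip

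\textbf{Step 3: counting.} Now I invoke the exact count from the Higgs bundle theory: $\homMaxFour$ has $3\times 2^{2g}+2g-4$ components (Gothen, cited in Examples~\ref{exintro}.(iii)), of which $2^{2g}$ are Hitchin and, by Theorem~\ref{thm_intro:n=2_components}.(ii), the components with $sw_1\ne 0$ are exactly the $(2^{2g}-1)\cdot 2$ \dots{} more precisely I would extract from Theorem~\ref{thm_intro:n=2_components} and its proof (combined with the Gothen count and Proposition~\ref{prop:restriction}) that $\homMaxFourZero$ has $2^{2g}-1 + 2g-2 + 2^{2g}$ components: one Hitchin family ($2^{2g}$), and the non-Hitchin part $\homMaxFourZero\moins\homHitFour$ has $2g-3$ "exceptional" components plus one component containing the diagonal Fuchsian locus — total $2g-2$. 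On the other hand Step 1 shows $e_{\gamma,\LZPlus}$ surjects onto the $2g-2$ elements of $\ZZ/(2g-2)\ZZ$. By Step 2 each fiber is a nonempty union of components; since there are $2g-2$ fibers and, away from $l=g-1$, exactly $2g-3$ exceptional components must be distributed among the $2g-3$ values $l\ne g-1$, each such fiber is a single (exceptional, $k$-hybrid, $k=g-1-l$) component; and the fiber over $l=g-1$ is the remaining component (the diagonal Fuchsian one) together with the $2^{2g}$ Hitchin components. This yields both bullet points. The exhaustion claim "every representation in $e^{-1}_{\gamma,\LZPlus}(\{l\})$ can be deformed to a $k$-hybrid" is then immediate: that fiber \emph{is} one connected component, and it contains a $k$-hybrid by Step 1.

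\medskip

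\textbf{Main obstacle.} The genuinely delicate point is matching my component count for $\homMaxFourZero$ with Gothen's count and making sure no exceptional component is "lost" or "doubled" — i.e.\ proving that the $2g-3$ hybrids realize $2g-3$ \emph{distinct} components and that nothing else lies outside the Hitchin $\cup$ diagonal-Fuchsian fiber. This rests entirely on the explicit Euler class computation for hybrids (Step 1): it is precisely the fact that the Euler class \emph{detects} the Euler characteristic $k$ of the cutting subsurface that makes different hybrids inequivalent, a separation that, as the introduction stresses, is invisible geometrically. So the heart of the proof is the (deferred) computation $e_{\gamma,\LZPlus}(k\text{-hybrid})=(g-1-k)\GenTor$; everything else is bookkeeping with continuity, Lemma~\ref{lem:fibr_con}, Proposition~\ref{prop:restriction}.(iii), and the external component count.
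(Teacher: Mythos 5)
Your proposal follows essentially the same route as the paper: the theorem is deduced from the Euler class computation for hybrid representations (Proposition~\ref{prop:eulerHybrid}, whose proof is the real technical content of Section~\ref{sec:sta_res}) together with Gothen's component count and the locally constant character of the invariants, exactly as you describe. One small arithmetic slip: the hybrid values $g-1-k$ for $3-2g\le k\le -1$ form the set $\{g,\dots,3g-4\}$, which modulo $2g-2$ is $\ZZ/(2g-2)\ZZ\moins\{g-1\}$ rather than $\{0,\dots,g-2\}$, but this does not affect your conclusion that adjoining the diagonal Fuchsian value $g-1$ makes the image of $e_{\gamma,\LZPlus}$ all of $\ZZ/(2g-2)\ZZ$.
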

For simplicity we restrict the discussion to the case of $k$-hybrid
representations which are constructed from the decomposition of the
surface $\Sigma = \Sigma_l \cup \Sigma_r$ along a simple closed
separating geodesic $\gamma$ (see
Section~\ref{sec:description_hybrid}).  The general computations
follow by a straightforward extension of this case.

The first step is to calculate the first Stiefel-Whitney class.
\begin{prop}
  \label{prop:swhybrid}
  Let $\rho: \pi_1( \Sigma) \to \Sp(4, \RR)$ be a $k$-hybrid
  representation as defined in Section~\ref{sec:description_hybrid},
  then \[ sw_1( \rho) = 0.\]
\end{prop}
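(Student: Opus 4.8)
The plan is to compute $sw_1(\rho)$ via the geometric characterization of Lemma~\ref{lem:sw1=sign}: for every $\delta\in\pi_1(\Sigma)\moins\{1\}$ one has $sw_1(\rho)([\delta]) = \mathrm{sign}\,\det\rho(\delta)|_{\xi(t^s_\delta)}$, where $\xi$ is the equivariant limit curve of the maximal representation $\rho$. So it suffices to show that for every nontrivial $\delta$, the holonomy $\rho(\delta)$ acts with positive determinant on its attracting Lagrangian $L^s(\delta)=\xi(t^s_\delta)$. By Lemma~\ref{lem:sw1}-type reasoning (actually by the structure of $\h^1(T^1\Sigma;\FF_2)$ and naturality), it is in fact enough to check this on a generating set of $\pi_1(\Sigma)$ represented by simple closed curves, but I will argue uniformly for all $\delta$.

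First I would recall the explicit form of the hybrid representation $\rho = {\rho_l}|_{\pi_1(\Sigma_l)} * {\rho_r}|_{\pi_1(\Sigma_r)}$. On $\pi_1(\Sigma_l)$, $\rho = \phi_{irr}\circ\iota$ for a discrete embedding $\iota:\pi_1(\Sigma)\to\SL(2,\RR)$; on $\pi_1(\Sigma_r)$, $\rho = \psi\circ(\tau_{1,1},\tau_{2,1})$ with $\tau_{i,1}:\pi_1(\Sigma)\to\SL(2,\RR)$ discrete embeddings. For a discrete embedding $\iota$ into $\SL(2,\RR)$ and any nontrivial $\delta$, the element $\iota(\delta)$ is hyperbolic with two positive eigenvalues $e^{\ell}, e^{-\ell}$, so its attracting eigenline $L^s_\iota(\delta)$ carries the action by the \emph{positive} scalar $e^{\ell}$; hence $\det$ on this line is positive. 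Now:
\begin{itemize}
\item For $\delta\in\pi_1(\Sigma_l)$: by Facts~\ref{facts:irreducible}.(\ref{item:lag_irr}), the attracting Lagrangian of $\phi_{irr}(\iota(\delta))$ is $L^s_\iota(\delta)^{3}\oplus L^s_\iota(\delta)^{1}$, on which the holonomy acts by $\mathrm{diag}(e^{3\ell}, e^{\ell})$; the determinant $e^{4\ell}>0$.
\item For $\delta\in\pi_1(\Sigma_r)$: by Facts~\ref{facts:diagonal}.(\ref{item:lag_diag}), the attracting Lagrangian of $\psi(\tau_{1,1}(\delta),\tau_{2,1}(\delta))$ is $L^s_{\tau_1}(\delta)\oplus L^s_{\tau_2}(\delta)$, on which the holonomy acts by $\mathrm{diag}(e^{\ell_1}, e^{\ell_2})$ with $\ell_1,\ell_2>0$; determinant $e^{\ell_1+\ell_2}>0$.
\end{itemize}
So on each piece, and in particular for $\delta=\gamma$ where the two descriptions agree ($\rho(\gamma)=\mathrm{diag}(e^{3m}, e^m, e^{-3m}, e^{-m})$), the determinant on the attracting Lagrangian is positive.

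The remaining issue is that a general $\delta\in\pi_1(\Sigma)$ is a word alternating between $\pi_1(\Sigma_l)$ and $\pi_1(\Sigma_r)$, so $\rho(\delta)$ need not lie in either $\phi_{irr}(\SL(2,\RR))$ or $\psi(\SL(2,\RR)\times\SL(2,\RR))$, and its attracting Lagrangian is not obviously one of the model line-bundle sums above. The clean way around this is \emph{deformation invariance}: $sw_1$ is locally constant on $\hommax$ (Proposition~\ref{prop:invariants}), the constraint of Proposition~\ref{prop:restriction} forces $sw_1(\rho)\in\h^1(\Sigma;\FF_2)\subset\h^1(T^1\Sigma;\FF_2)$ since $n=2$ is even, and one only needs to pin down the value of $sw_1(\rho)$ on the classes $[a_i],[b_i]$ of a standard system of simple closed curves, which can be taken (up to the mapping class group action, which acts trivially here since the target value is forced to live in a single coset and we only need to rule out the nonzero classes) to lie inside $\pi_1(\Sigma_l)$ or $\pi_1(\Sigma_r)$. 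Alternatively, and more robustly, one connects $\rho$ through maximal representations to a diagonal Fuchsian representation by the continuous paths $(\tau_{i,t})$ and a path collapsing $\Sigma_l$, keeping $sw_1$ constant along the way, and invokes the computation $sw_1(\rho_\Delta)=2\,sw_1(L^s(\iota))=0$ from Section~\ref{subsec:comp_diag}.

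\textbf{Main obstacle.} The one genuinely non-formal point is justifying that $sw_1(\rho)$ can be read off from curves lying entirely within one of the two subsurfaces — equivalently, that the \emph{sign} datum $\mathrm{sign}\,\det\rho(\delta)|_{\xi(t^s_\delta)}$ is a homomorphism $\pi_1(\Sigma)\to\{\pm1\}$ determined by its values on $a_i,b_i$. But this is exactly the content of Lemma~\ref{lem:sw1=sign} together with the fact that $sw_1(\rho)\in\h^1(T^1\Sigma;\FF_2)\cong\hom(\pi_1(T^1\Sigma),\FF_2)$ is a homomorphism; combined with the surjection $\pi_1(T^1\Sigma)\twoheadrightarrow\pi_1(\Sigma)$ and the parity constraint $n=2$ even (which puts $sw_1(\rho)$ in the image of $\h^1(\Sigma;\FF_2)$), the value of $sw_1(\rho)$ is determined by a homomorphism $\pi_1(\Sigma)\to\FF_2$, hence by its values on the $a_i,b_i$. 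Choosing these curves adapted to the decomposition $\Sigma=\Sigma_l\cup_\gamma\Sigma_r$ and applying the two bulleted computations above finishes the proof.
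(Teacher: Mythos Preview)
Your main argument is correct and is essentially the paper's own proof: invoke Proposition~\ref{prop:restriction} (with $n=2$ even) to place $sw_1(\rho)$ in $\h^1(\Sigma;\FF_2)$, choose a standard generating set $a_1,b_1,\dots,a_g,b_g$ adapted to the decomposition so that each generator lies in $\pi_1(\Sigma_l)$ or $\pi_1(\Sigma_r)$, and then verify via Lemma~\ref{lem:sw1=sign} that the determinant on the attracting Lagrangian is positive for each such generator, exactly as in your two bulleted computations.

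One caveat: your proposed \emph{alternative} route --- deforming $\rho$ through maximal representations to a diagonal Fuchsian representation via the paths $(\tau_{i,t})$ and ``collapsing $\Sigma_l$'' --- does not work. Hybrid representations lie in connected components of $\hommax(\pi_1(\Sigma),\Sp(4,\RR))$ distinct from the one containing $\rho_\Delta$ (this is precisely what the Euler class computation in Proposition~\ref{prop:eulerHybrid} establishes), so no such path of maximal representations exists. Since you do not actually rely on this alternative, the proof stands; just drop that sentence.
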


\begin{proof}
  By Proposition~\ref{prop:restriction} we have $sw_1( \rho) \in
  \h^1(\Sigma; \FF_2) \subset \h^1( T^1 \Sigma; \FF_2)$, thus it is
  sufficient to show that $sw_1( \rho)$ is zero on a basis of the
  first homology group of $\Sigma$.
  
  Let $\pi_1(\Sigma) = \langle a_1, b_1, \ldots, a_g, b_g \, |\,
  \Pi_{i=1}^g [a_i, b_i] =1\rangle$ be a standard presentation, then $
  a_1, b_1, \ldots, a_g, b_g $ form a basis of $\h^1(\Sigma; \FF_2)$.
  We can choose such a standard presentation of $\pi_1(\Sigma)$ with
  the property that for any element $h$ of the generating set, either
  $h \in \pi_1(\Sigma_l)$ or $h\in \pi_1(\Sigma_r)$, where the hybrid
  representation $\rho$ was constructed with respect to a
  decomposition $\Sigma = \Sigma_l \cup \Sigma_r$.  Then the sign of
  $\det (\rho(h)|_{L^s(h)})$, where $L^s(h)$ is the attracting
  Lagrangian for $\rho(h)$, is positive for every element $h$ in the
  above generating set. This can be checked independently for the
  irreducible Fuchsian representation $\rho_{irr}$ and (deformations
  of) the diagonal Fuchsian representation $\rho_\Delta$.  In view of
  Lemma~\ref{lem:sw1=sign} this implies $sw_1(\rho) = 0$.
\end{proof}

\begin{prop}
  \label{prop:eulerHybrid}
  Let $\Sigma = \Sigma_l \cup\Sigma_r$ and $\rho = \rho_l * \rho_r:
  \pi_1( \Sigma) \to \Sp(4, \RR)$ be a hybrid representation as
  defined in Section~\ref{sec:the_rep_hybrid}.
  
  Let $\gamma \in \pi_1(\partial \Sigma_l)$ and let
  $(\epsilon_i)_{i=1, \dots, 4}$ one of the basis appearing in
  Definition~\ref{defi_good_triples}. Suppose that $\LZPlus = \langle
  \epsilon_1, \epsilon_2\rangle$ is an attracting fixed point for
  $\rho(\gamma)$.

  Then \[ e_{ \gamma, \LZPlus}( \rho) = ( g-1 - \chi( \Sigma_l))
  \GenTor \in \h^2(T^1\Sigma; \ZZ).\]
\end{prop}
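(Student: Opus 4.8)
The plan is to compute the relative Euler class $e_{\gamma,\LZPlus}(\rho)$ of a $k$-hybrid representation by exploiting the additivity of the Euler class under the decomposition $\Sigma = \Sigma_l \cup_\gamma \Sigma_r$, together with the explicit knowledge of the oriented Lagrangian reduction on the two pieces. First I would recall that, by Proposition~\ref{prop:swhybrid}, $sw_1(\rho) = 0$, so that the canonical oriented equivariant curve $\xi_+$ of Definition~\ref{defi:canocurve} exists and gives an oriented Lagrangian subbundle $L^{s}_{+}(\rho)$ of the flat bundle $E$ over $T^1\Sigma$, whose Euler class is $e_{\gamma,\LZPlus}(\rho)$. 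The strategy is then to cut $T^1\Sigma$ along the torus $T^1\Sigma|_\gamma$ (or rather along the preimage of $\gamma$) into the pieces $T^1\Sigma|_{\Sigma_l}$ and $T^1\Sigma|_{\Sigma_r}$, and to express $e_{\gamma,\LZPlus}(\rho)$ as a sum of relative Euler classes of $L^{s}_{+}$ restricted to these two pieces, with a correction term measuring the ``twisting'' needed to glue the chosen orientations along $\gamma$.

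Next I would compute each contribution separately. On $\Sigma_r$, the representation $\rho_r$ is a deformation of a diagonal Fuchsian representation into $\SL(2,\RR)\times\SL(2,\RR)$, for which Facts~\ref{facts:diagonal}.(\ref{item:lag_diag}) gives $L^s(\rho_\Delta) = L^s(\iota)\oplus L^s(\iota)$; the relevant oriented Lagrangian bundle is then (up to the deformation, which does not change the Euler class) built from $L^s(\iota)$, and its restricted Euler class can be read off from the Euler class of $L^s(\iota)$ over $T^1\Sigma_r$, which in turn is governed by $\chi(\Sigma_r)$. On $\Sigma_l$, the representation $\rho_l = \phi_{irr}\circ\iota$ is an irreducible Fuchsian representation, and Facts~\ref{facts:irreducible}.(\ref{item:lag_irr}) gives $L^s(\rho_{irr}) = L^s(\iota)^{3}\oplus L^s(\iota)$; the oriented Lagrangian line (rank-$2$) bundle here has Euler class determined by $L^s(\iota)^{3}\otimes\cdots$, i.e.\ essentially $4\,e(L^s(\iota))$ over $T^1\Sigma_l$, again controlled by $\chi(\Sigma_l)$. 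The special choices of the embeddings $\phi_{irr}$ and $\psi$ recorded in Facts~\ref{facts:irreducible}.(\ref{item:splitting}) and \ref{facts:diagonal}.(\ref{item:embedding_psi}) — in particular that both induce multiplication by $2$ on $\pi_1$ and the precise signs in the symplectic identification — are exactly what is needed to make the orientations along $\gamma$ compatible in the sense of Definition~\ref{defi_good_triples}, so that there is no extra sign ambiguity in the gluing; this is where ``positively adjusted with respect to $\gamma$'' enters.

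Putting the pieces together, I would sum the two local contributions and the gluing term. Using $\chi(\Sigma) = \chi(\Sigma_l) + \chi(\Sigma_r)$ and $g-1 = -\chi(\Sigma)/2$, the Euler characteristics combine so that the diagonal ($\Sigma_r$) part contributes $-\chi(\Sigma_r)/2 \cdot [\Sigma]$-type terms and the irreducible ($\Sigma_l$) part contributes the remaining amount, and the total collapses to $(g-1-\chi(\Sigma_l))\GenTor$ in $\h^2(T^1\Sigma;\ZZ)$. Here one uses that $\GenTor$ is the image of the orientation class of $\Sigma$ under $\h^2(\Sigma;\ZZ)\to\h^2(T^1\Sigma;\ZZ)$ and is torsion of order $2g-2$ (Appendix, and the Gysin sequence); by Proposition~\ref{prop:restriction} we already know $e_{\gamma,\LZPlus}(\rho)$ lies in the image of $\h^2(\Sigma;\ZZ)$, i.e.\ in this cyclic group, so it suffices to pin down the integer multiple.

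The main obstacle, I expect, is the careful bookkeeping of orientations and signs in the cut-and-paste computation: the Euler class is not merely additive in a naive way once one cuts along a torus, and one must verify that the canonical orientations prescribed by $\LZPlus$ on the two sides of $\gamma$ agree under the chosen symplectic bases, rather than differing by an orientation-reversal (which would flip a sign and change the answer). This is precisely the role of the ``positively adjusted'' hypothesis, and making the gluing term rigorous — ideally by working with the equivariant curves $\xi_+$ on $\partial\pi_1(\Sigma_l)$ and $\partial\pi_1(\Sigma_r)$ and tracking how they match over the fixed points of $\gamma$ — is the technical heart of the argument. A secondary subtlety is that $\rho_r$ is only a \emph{deformation} of the diagonal Fuchsian representation, so one must invoke homotopy invariance of the Euler class (legitimate since $e_\gamma$ is continuous and takes values in a discrete set) to reduce to the undeformed model before computing.
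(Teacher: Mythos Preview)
Your high-level plan---cut $T^1\Sigma$ along the torus $T^1\Sigma|_\gamma$ and compute the Euler class via a Mayer--Vietoris/gluing argument---matches the paper's strategy. However, the execution you sketch has two genuine gaps.

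First, your ``local Euler class'' computations are ill-defined. You write that on $\Sigma_l$ the bundle $L^s(\iota)^3\oplus L^s(\iota)$ has Euler class ``essentially $4\,e(L^s(\iota))$'', but $L^s(\iota)$ is a \emph{non-orientable} real line bundle (its $sw_1$ is the nonzero spin class), so $e(L^s(\iota))$ does not exist. More fundamentally, the restrictions of $L^s_+(\rho)$ to $T^1\Sigma|_{\Sigma_l}$ and $T^1\Sigma|_{\Sigma_r}$ are \emph{not} trivial, so there is no obvious way to define relative Euler classes on the pieces and add them. The paper circumvents this by the key trick you are missing: it passes to the twisted representation $\varepsilon\otimes\rho$ of $\widehat{\pi_1(\Sigma)}=\{\pm1\}\times\pi_1(\Sigma)$ (Section~\ref{sec:reduction-group-hat}). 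For this twist, the oriented Lagrangian bundle \emph{is} trivial over each piece (Proposition~\ref{prop:constrsect}, via Lemma~\ref{lem:homogenousTrivial}), so the entire Euler class is carried by the change-of-trivialization map $h:T^1\Sigma|_\gamma\to\GL^+(2,\RR)$. The paper then computes $h_*$ explicitly on the two generators $T^1_x\Sigma$ and $\gamma$ of $\pi_1(T^1\Sigma|_\gamma)$ (Proposition~\ref{prop:calc_lift_in_Hone}), getting $(1,0)$, and pushes this through the connecting homomorphism (Proposition~\ref{prop:mayervietoris}) to obtain $-\chi(\Sigma_l)\GenTor$. Untwisting by $\varepsilon$ adds the shift $(g-1)\GenTor$ (Lemma~\ref{lem:eulerclassSonerep}), giving the final answer.

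Second, you treat it as obvious that $L^s_+(\rho)|_{T^1\Sigma|_{\Sigma_l}}$ coincides with the Lagrangian bundle of the full irreducible Fuchsian representation restricted to $T^1\Sigma|_{\Sigma_l}$. But the Anosov curve $\xi$ for the hybrid $\rho$ is determined by \emph{all} of $\rho$, not just its restriction to $\pi_1(\Sigma_l)$; the paper needs a separate argument (Lemma~\ref{lem:isomorphicRestriction}, using connectedness of spaces of equivariant positive curves, Proposition~\ref{prop:conn_equiv_posit2}) to show these restrictions are homotopic. Your ``homotopy invariance'' remark about the deformation of $\rho_r$ is correct but addresses a different, easier point.
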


This proposition implies easily Theorem~\ref{thm:ReducedEulerCC}.

\begin{remark} 
  The result is valid also when $\Sigma_l$ of $\Sigma_r$ is empty.
\end{remark}
\begin{remark}\label{rem:negatively_adjusted}
  If the pair $(\rho_l, \rho_r)$ is negatively adjusted with respect
  to $\gamma$ (Definition~\ref{defi_good_triples}), then the Euler
  number is \[ e_{ \gamma, \LZPlus}( \rho) = ( g-1 + \chi( \Sigma_l))
  \GenTor \in \h^2(T^1\Sigma; \ZZ).\]
\end{remark}

\subsubsection{Reduction to the group $\widehat{ \pi_1(\Sigma)}$}
\label{sec:reduction-group-hat}
Our strategy to prove Proposition~\ref{prop:eulerHybrid} is to change
the representation $\rho$ slightly.

We denote by $\widehat{ \pi_1(\Sigma)}$ the group $\{ \pm 1\} \times
\pi_1(\Sigma)$; using the morphism $\iota : \pi_1(\Sigma) \to \SL(2,
\RR)$ there is an embedding $\widehat{ \iota} : \widehat{
  \pi_1(\Sigma)} \to \SL(2, \RR)$, with $\widehat{ \iota}( -1) =
-\id_2$ and $\widehat{ \iota} |_{\pi_1(\Sigma)} = \iota$. Observe that
\[ \widehat{ \iota}\big( \widehat{ \pi_1(\Sigma)} \big) \backslash
\SL(2, \RR) \cong \iota( \pi_1(\Sigma)) \backslash \PSL(2, \RR) \cong
T^1 \Sigma,\] so that the group $\widehat{ \pi_1(\Sigma)}$ is a
quotient of the group $ \pi_1( T^1 \Sigma)$ and hence the notion of
Anosov representations and their invariants (see
Section~\ref{sec:prelim} and Section~\ref{sec:invariants}) are well
defined for $\widehat{\pi_1(\Sigma)}$.

Let $\varepsilon : \widehat{ \pi_1(\Sigma)} \to \{ \pm 1\}$ be the
projection onto the first factor; for any representation $\rho :
\pi_1(\Sigma) \to \Sp(4, \RR)$ we define a representation $\varepsilon
\otimes \rho$ of $\widehat{ \pi_1(\Sigma)}$ into $\Sp(4, \RR)$ by
setting $\varepsilon \otimes \rho(x,\gamma)= \varepsilon(x)
\rho(\gamma)$.  The relations between the invariants of $\rho$ and of
$\varepsilon \otimes \rho$ are discussed in
Appendix~\ref{sec:trick}. In view of these relations
Proposition~\ref{prop:eulerHybrid} follows from:

\begin{prop}
  \label{prop:EulerHybrTwisted}
  Let $\rho$, $\Sigma_l$, $\Sigma_r$, $\gamma$ and $\LZPlus$ be as in
  Proposition~\ref{prop:eulerHybrid}.

  Then
  \[ e_{ \gamma, \LZPlus}( \varepsilon \otimes \rho) = - \chi(
  \Sigma_l) \GenTor. \]
\end{prop}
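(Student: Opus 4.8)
The plan is to compute $e_{\gamma,\LZPlus}(\varepsilon\otimes\rho)$ by cutting along $\gamma$. Pulling back under $\pi\colon T^1\Sigma\to\Sigma$ the decomposition $\Sigma=\Sigma_l\cup_\gamma\Sigma_r$ gives a decomposition of the compact $3$-manifold $T^1\Sigma=N_l\cup_T N_r$, with $N_\bullet=\pi^{-1}(\Sigma_\bullet)$ and $T=\pi^{-1}(\gamma)$ a $2$-torus. The oriented Lagrangian reduction $L^s_+(\varepsilon\otimes\rho)$ (Definition~\ref{defi:canocurve}) restricts over $N_l$ to that of the irreducible Fuchsian model $\phi_{irr}\circ\widehat\iota$ and over $N_r$ to that of a deformation of $\phi_\Delta\circ\widehat\iota$ inside $\SL(2,\RR)\times\SL(2,\RR)$ (this uses the description of $\widehat{\pi_1(\Sigma)}$ from Section~\ref{sec:reduction-group-hat}). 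By Facts~\ref{facts:irreducible}.(\ref{item:lag_irr}) and \ref{facts:diagonal}.(\ref{item:lag_diag}), on each piece the bundle is a sum of powers of the Fuchsian line bundle $L^s(\widehat\iota)$. The decisive elementary point — and the whole reason for working with $\varepsilon\otimes\rho$ — is that $w_1(L^s(\widehat\iota))$ lies in $\h^1(\Sigma;\FF_2)$: the half-turn that the limit curve makes along an $S^1$-fibre is cancelled by the sign twist, so the fibre holonomy of $\varepsilon\otimes\rho$ is the central element $-\id$. Consequently $L^s_+|_{N_\bullet}$ is an oriented plane bundle with $w_1=0$ and with Euler class equal to the pullback of a Bockstein class living in $\h^2(\Sigma_\bullet;\ZZ)=0$; hence $L^s_+|_{N_l}$ and $L^s_+|_{N_r}$ are \emph{topologically trivial}.

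Choose trivializations $t_l,t_r$ over $N_l,N_r$. The Mayer--Vietoris sequence then gives $e_{\gamma,\LZPlus}(\varepsilon\otimes\rho)=\delta\big([t_l t_r^{-1}|_T]\big)$, where $\delta\colon\h^1(T;\ZZ)\to\h^2(T^1\Sigma;\ZZ)$ is the connecting homomorphism and $t_lt_r^{-1}|_T\colon T\to\SO(2)$ is the gluing cocycle. One checks that the images of $\h^1(N_l;\ZZ)$ and $\h^1(N_r;\ZZ)$ in $\h^1(T;\ZZ)\cong\ZZ[\gamma]^*\oplus\ZZ[\mathrm{fib}]^*$ span a subgroup with cokernel $\ZZ/(2g-2)\ZZ$: both sides contribute the fibre class up to a $[\gamma]^*$-term, the two $[\gamma]^*$-terms differing by the circle-bundle Euler number $\chi(\Sigma)=2-2g$ split as $\chi(\Sigma_l)+\chi(\Sigma_r)$ (Appendix~\ref{sec:cohomology}). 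Thus $\delta$ kills the fibre class and sends $[\gamma]^*$ to a generator of $\h^2(T^1\Sigma;\ZZ)^{\mathrm{tor}}=\ZZ/(2g-2)\,\GenTor$. By homotopy invariance of the relative Euler class along the deformations $\rho_{l,t},\rho_{r,t}$ that keep $\rho(\gamma)$ diagonal in the basis $(\epsilon_i)$ — which is exactly the content of the \emph{positively adjusted} hypothesis (Definition~\ref{defi_good_triples}), using continuity of the limit curve (Fact~\ref{fact:curve}) — it suffices to evaluate the gluing cocycle for the two model pieces.

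Now the computation of the cocycle. On the diagonal side the splitting $L^s(\widehat{\tau_1})\oplus L^s(\widehat{\tau_2})$ is globally defined over $\Sigma_r$ and, by the normalization of $\psi$ in Facts~\ref{facts:diagonal}.(\ref{item:embedding_psi}), is compatible with the eigenbasis $(\epsilon_1,\epsilon_2)$ of $\rho(\gamma)$; hence $t_r$ may be taken to restrict on $T$ to the constant eigen-framing, with zero winding along both $\gamma$ and the fibre. On the irreducible side the sub-line-bundles $\langle X^3\rangle=\langle\epsilon_1\rangle$ and $\langle X^2Y\rangle=\langle\epsilon_2\rangle$ of $L^s(\phi_{irr}\circ\widehat\iota)$ at $\gamma$ (Facts~\ref{facts:irreducible}.(\ref{item:splitting})) are \emph{not} globally defined over $\Sigma_l$; extending the eigen-framing $(\epsilon_1,\epsilon_2)$ into $N_l$ forces a definite winding, namely the relative Euler number over $(\Sigma_l,\gamma)$ of $L^s(\widehat\iota)^{3}\oplus L^s(\widehat\iota)$ with this boundary framing. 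A direct computation — ultimately because $L^s(\widehat\iota)^{\otimes2}$ is, up to a fibre twist, the pullback of a canonical-type bundle over $\Sigma$, which is the geometric meaning of the normalization of $\phi_{irr}$ (inducing multiplication by $2$ on $\pi_1$, Facts~\ref{facts:irreducible}.(\ref{item:splitting})) — identifies this winding with $-\chi(\Sigma_l)$, the sign being fixed by the orientation convention for $\LZPlus$. Since the cocycle has no fibre-winding (both $t_l|_T,t_r|_T$ are pulled back from $\gamma$), the free part of $e_{\gamma,\LZPlus}(\varepsilon\otimes\rho)$ vanishes and we obtain $e_{\gamma,\LZPlus}(\varepsilon\otimes\rho)=\delta\big(-\chi(\Sigma_l)[\gamma]^*\big)=-\chi(\Sigma_l)\,\GenTor$; the degenerate cases $\Sigma_l=\emptyset$ or $\Sigma_l=\Sigma$ are consistent since $\GenTor$ has order $2g-2$.

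The main obstacle is the last step: identifying the relative Euler number of $L^s(\widehat\iota)^{3}\oplus L^s(\widehat\iota)$ over $(\Sigma_l,\gamma)$, framed at the boundary by the eigenbasis of $\phi_{irr}(\iota(\gamma))$, with $-\chi(\Sigma_l)$. This is the only non-formal input, and it is precisely where the deliberate choices of symplectic identification in Facts~\ref{facts:irreducible}.(\ref{item:splitting}) and \ref{facts:diagonal}.(\ref{item:embedding_psi}) matter — they pin down the framing and hence both the sign and the magnitude of the contribution — and where the positively adjusted bookkeeping of Definition~\ref{defi_good_triples} ensures that the model computation really governs the general hybrid representation. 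Everything else (the cut-and-paste, the Mayer--Vietoris identification of $\delta$, the triviality of $L^s_+$ over each piece, the vanishing of the $N_r$-contribution, and homotopy invariance) is routine.
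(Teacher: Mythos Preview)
Your overall strategy---trivialize $L^s_+(\varepsilon\otimes\rho)$ over the two pieces $N_l,N_r$ and compute the Euler class as the image under the Mayer--Vietoris connecting map $\delta$ of the gluing cocycle on $T=T^1\Sigma|_\gamma$---is exactly the paper's approach (Propositions~\ref{prop:constrsect} and \ref{prop:calc_lift_in_Hone}). But the two concrete claims at the heart of your computation are both false, and they happen to cancel.

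First, $\delta$ does \emph{not} kill the fibre class. You correctly observe that the image of $\h^1(N_l)\oplus\h^1(N_r)$ in $\h^1(T)\cong\ZZ[T^1_x\Sigma]^*\oplus\ZZ[\gamma]^*$ is spanned by elements of the form $[T^1_x\Sigma]^*+a_\bullet[\gamma]^*$, but from this it follows that $\delta([T^1_x\Sigma]^*)=-a_l\,\delta([\gamma]^*)$, not $0$. Proposition~\ref{prop:mayervietoris} computes $a_l=1-2g(\Sigma_l)=\chi(\Sigma_l)$, so in fact $\delta([T^1_x\Sigma]^*)=-\chi(\Sigma_l)\GenTor$.

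Second, the cocycle \emph{does} have fibre-winding and \emph{no} $\gamma$-winding: with the natural trivializations $g\mapsto\phi_{irr}(g)^{-1}$ and $g\mapsto\phi_\Delta(g)^{-1}$ from Lemma~\ref{lem:homogenousTrivial}, Proposition~\ref{prop:calc_lift_in_Hone} shows $h_*=(1,0)$, i.e.\ $h_*(\gamma)=0$ and $h_*(T^1_x\Sigma)=1$. The latter is the genuinely nontrivial calculation, carried out explicitly with the loop $\theta\mapsto\phi_\Delta(\alpha)^{-1}\phi_{irr}(\alpha)$ in $\mathrm{Stab}(\LZPlus)$. Your assertion that ``both $t_l|_T,t_r|_T$ are pulled back from $\gamma$'' is therefore wrong for these trivializations; and your ``relative Euler number over $(\Sigma_l,\gamma)$'' argument never specifies a trivialization for which it would hold, so the claimed $\gamma$-winding $-\chi(\Sigma_l)$ is unjustified.

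A smaller issue: your triviality argument for $L^s_+|_{N_\bullet}$ is also incomplete. The Euler class lives in $\h^2(N_\bullet;\ZZ)\cong\h^1(\Sigma_\bullet;\ZZ)$, which is nonzero, and the bundle is not pulled back from $\Sigma_\bullet$ (the fibre holonomy is $-\id$, not $\id$), so ``pullback of a class in $\h^2(\Sigma_\bullet;\ZZ)=0$'' does not apply. The paper instead exhibits explicit trivializations (Lemma~\ref{lem:homogenousTrivial}) and transports them via homotopy (Lemmas~\ref{lem:homo}, \ref{lem:isomorphicRestriction}).
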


The rest of this section is devoted to the proof of this proposition.

\subsubsection{Constructing sections}
\label{sec:constr-sect}

In order to calculate the Euler class for $\varepsilon \otimes \rho$
we will construct a lift of $e_{ \gamma, \LZPlus}( \varepsilon \otimes
\rho)$ under the connecting homomorphism $\h^1 (T^1 \Sigma |_\gamma;
\ZZ) \to \h^2( T^1 \Sigma; \ZZ) $ appearing in the Mayer-Vietoris
sequence (Appendix~\ref{subsec:cohomologysigma}) and then calculate
this lift in $\h^1 (T^1 \Sigma |_\gamma; \ZZ) \cong \ZZ^2$.  Such a
lift can be constructed from trivializations of the Lagrangian bundle.

\begin{prop}
  \label{prop:constrsect}
  Let $\rho$, $\Sigma_l$, $\Sigma_r$, $\gamma$ and $\LZPlus$ be as in
  Proposition~\ref{prop:eulerHybrid}. Let $L_+^s(\varepsilon \otimes
  \rho)$ be the oriented Lagrangian reduction for the flat
  $\varepsilon \otimes \rho$-flat bundle over $T^1 \Sigma$.

  Then the restrictions of $L_+^s(\varepsilon \otimes \rho)$ to both
  $T^1 \Sigma |_{\Sigma_l}$ and $T^1 \Sigma |_{\Sigma_r}$ are trivial.
\end{prop}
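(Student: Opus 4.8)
The plan is to reduce the statement to the vanishing of an Euler class on the tori $T^1\Sigma|_c$ and then to check this vanishing on Fuchsian model representations, using the connectedness results about positive curves. First, $L_+^s(\varepsilon\otimes\rho)$ is an oriented real plane bundle, hence over any space it is classified by its Euler class in $\h^2(-;\ZZ)$. Over a subsurface $\Sigma_0\in\{\Sigma_l,\Sigma_r\}$ the restriction $T^1\Sigma|_{\Sigma_0}$ is an oriented $S^1$-bundle over the compact surface-with-boundary $\Sigma_0$; since $\h^2(\Sigma_0;\ZZ)=0$ this circle bundle is trivial, $T^1\Sigma|_{\Sigma_0}\cong\Sigma_0\times S^1$. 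Consequently $\h^2(T^1\Sigma|_{\Sigma_0};\ZZ)$ is free and $\h_2(T^1\Sigma|_{\Sigma_0};\ZZ)$ is spanned by the classes $[T^1\Sigma|_c]$, where $c$ runs over a family of disjoint simple closed curves carrying a basis of $\h_1(\Sigma_0;\ZZ)$. Hence $L_+^s(\varepsilon\otimes\rho)|_{T^1\Sigma|_{\Sigma_0}}$ is trivial as soon as its restriction to each such $T^1\Sigma|_c$ is trivial, i.e. as soon as $\langle e(L_+^s(\varepsilon\otimes\rho)|_{T^1\Sigma|_c}),[T^1\Sigma|_c]\rangle=0$ for each such $c$.

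Fix such a curve $c$, realized by a simple closed geodesic, so that $T^1\Sigma|_c$ is the torus $\langle c\rangle\backslash(\partial\pi_1(\Sigma)\times\RR)$ appearing in the proof of Proposition~\ref{prop:restriction}. For $c\subset\Sigma_l$ one has $\rho|_{\pi_1(\Sigma_l)}=\phi_{irr}\circ\iota|_{\pi_1(\Sigma_l)}$, so the reduced holonomy $(\varepsilon\otimes\rho)(\widetilde c)|_{L^s}$ is $\pm\diag(e^{3m},e^{m})$, of positive determinant; for $c\subset\Sigma_r$ the restriction of $\rho$ to $\pi_1(\Sigma_r)$ takes values in $\SL(2,\RR)\times\SL(2,\RR)$ and the reduced holonomy is $\pm\diag(e^{l_1},e^{l_2})$, again of positive determinant; and the holonomy of $\varepsilon\otimes\rho$ along the $S^1$-fibre is $-\id$, of determinant $1$. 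By Lemma~\ref{lem:sw1=sign} this shows that $sw_1(\varepsilon\otimes\rho)$ restricts to $0$ on $\h_1(T^1\Sigma|_{\Sigma_l};\FF_2)$ and on $\h_1(T^1\Sigma|_{\Sigma_r};\FF_2)$ — in particular $L_+^s(\varepsilon\otimes\rho)$ is well defined (the underlying plane bundle is orientable) on these sets, and $sw_1(\varepsilon\otimes\rho)([c])=0$ for every $c$ as above.

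Now, exactly as in the proof of Proposition~\ref{prop:restriction}, the restriction of the oriented Lagrangian reduction to $T^1\Sigma|_c$ is built continuously from the pair $(A,\xi_+)$, where $A\in\GL^+(2,\RR)$ is the reduced holonomy of $\varepsilon\otimes\rho$ along $c$ and $\xi_+$ is the oriented lift of its positive equivariant limit curve. Combining Proposition~\ref{prop:conn_equiv_posit} with Proposition~\ref{prop:lift}, the space of such pairs with $\det A>0$ is connected up to the two choices of orientation; hence $\langle e(L_+^s(\varepsilon\otimes\rho)|_{T^1\Sigma|_c}),[T^1\Sigma|_c]\rangle$ is, up to sign, independent of the representation, and it therefore equals the corresponding number for $\phi_{irr}\circ\widehat{\iota}$ when $c\subset\Sigma_l$, and for $\phi_\Delta\circ\widehat{\iota}$ when $c\subset\Sigma_r$. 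For these model representations the analogues of Facts~\ref{facts:irreducible}.(\ref{item:lag_irr}) and \ref{facts:diagonal}.(\ref{item:lag_diag}) give Lagrangian reductions $L^s(\widehat{\iota})^{3}\oplus L^s(\widehat{\iota})$ and $L^s(\widehat{\iota})\oplus L^s(\widehat{\iota})$; since the real line bundle $L^s(\widehat{\iota})^{\otimes 2}$ has vanishing first Stiefel-Whitney class it is trivial, so in both cases the underlying oriented plane bundle is isomorphic to the complexification $L^s(\widehat{\iota})\otimes_\RR\CC$, whose Euler class $c_1\bigl(L^s(\widehat{\iota})\otimes_\RR\CC\bigr)$ is $2$-torsion because its square is $c_1\bigl(L^s(\widehat{\iota})^{\otimes 2}\otimes_\RR\CC\bigr)=0$. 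Restricted to $T^1\Sigma|_c$, where $\h^2$ with integral coefficients is torsion-free, this Euler class vanishes. Thus $L_+^s(\varepsilon\otimes\rho)$ is trivial on every $T^1\Sigma|_c$, and by the first paragraph it is trivial on all of $T^1\Sigma|_{\Sigma_l}$ and $T^1\Sigma|_{\Sigma_r}$.

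The delicate point is the third paragraph: one must verify that the Euler number of the oriented Lagrangian bundle over $T^1\Sigma|_c$ is a locally constant function on the space of pairs $(A,\xi_+)$ with $\det A>0$. This is the oriented counterpart of the argument used for $sw_2$ in Proposition~\ref{prop:restriction}, and it rests on Proposition~\ref{prop:lift} (both orientations of a positive curve are equivariant precisely when $sw_1=0$) together with Proposition~\ref{prop:conn_equiv_posit}. The remaining ingredients — triviality of an oriented circle bundle over a surface with boundary, triviality of a real line bundle with vanishing $w_1$, and torsion-freeness of $\h^2$ of a $2$-torus — are routine. Alternatively one may write down trivializing frames of $L_+^s(\varepsilon\otimes\rho)$ over $T^1\Sigma|_{\Sigma_l}$ and $T^1\Sigma|_{\Sigma_r}$ directly from the irreducible, respectively diagonal, Fuchsian structure, which is the point of view suggested by the section heading; the cohomological argument above is quicker to state.
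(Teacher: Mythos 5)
Your argument is correct, but it follows a genuinely different route from the paper's. The paper proves the proposition by exhibiting the restricted bundle as homotopic to the restriction of a \emph{globally} trivial bundle: Lemma~\ref{lem:homogenousTrivial} trivializes $L_+^s(\varepsilon\otimes\rho)$ explicitly (via $g\mapsto \phi(g)^{-1}$) when $\varepsilon\otimes\rho=\phi\circ\widehat{\iota}$ factors through $\SL(2,\RR)$, and Lemmas~\ref{lem:homo} and~\ref{lem:isomorphicRestriction} transport this to the hybrid representation over $T^1\Sigma|_{\Sigma_l}$ and $T^1\Sigma|_{\Sigma_r}$; the key input there is the connectedness of the space of positive curves equivariant under the \emph{whole} subsurface group $\pi_1(\Sigma')$, which is reduced to Proposition~\ref{prop:conn_equiv_posit2} on a single complementary interval. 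You instead classify the oriented plane bundle by its Euler class, observe that $\h^2(T^1\Sigma|_{\Sigma_0};\ZZ)$ is free and detected by the tori $T^1\Sigma|_c$, and then only need $\langle c\rangle$-equivariance (Proposition~\ref{prop:conn_equiv_posit}) together with a torsion computation for the Fuchsian models --- in effect an integral refinement of the last step of the proof of Proposition~\ref{prop:restriction}. Your route is shorter and purely cohomological, but it only yields the \emph{existence} of trivializations; the paper's route has the advantage of producing trivializations homotopic to the explicit $\phi(g)^{-1}$ ones, which is exactly what is fed into Proposition~\ref{prop:calc_lift_in_Hone} to compute the transition class $h_*$. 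With your proof, that later computation must still construct its trivializations from scratch (which the paper does anyway), so nothing is lost for the statement at hand.

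Two small points to tighten. First, over $T^1\Sigma|_c$ the holonomy of $\varepsilon\otimes\rho$ along the fibre is $-\id$, so the restricted bundle is not literally the one built from the pair $(A,\xi_+)$ in the sense of Proposition~\ref{prop:conn_equiv_posit}: it is that bundle tensored with $D_\varepsilon|_{T^1\Sigma|_c}$. Since this flat factor is constant along your deformation and a flat line (or circle) bundle over a torus has vanishing Euler class, the locally-constant argument is unaffected, but this should be said. Second, in the model computation the phrase ``its square is $c_1(L^{\otimes 2}\otimes_\RR\CC)$'' should read ``twice it is $\ldots$'': the tensor square of a complex line bundle doubles $c_1$ rather than squaring it. The conclusion that $c_1(L^s(\widehat{\iota})\otimes_\RR\CC)$ is $2$-torsion, hence restricts to zero on the torsion-free $\h^2$ of each torus, is correct.
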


This proposition is a consequence of the following three lemmas.

\begin{lem}
  \label{lem:homogenousTrivial}
  Let $\iota : \pi_1( \Sigma) \to \SL(2, \RR)$ be a maximal
  representation and let $\phi : \SL(2, \RR) \to \Sp( 4, \RR)$ be a
  homomorphism such that $\rho = \phi \circ \iota$ is maximal.

  Then the oriented Lagrangian bundle $L_+^s(\varepsilon \otimes
  \rho)$ is trivial.
\end{lem}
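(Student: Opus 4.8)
\textbf{Proof plan for Lemma~\ref{lem:homogenousTrivial}.}
The plan is to reduce the triviality of $L_+^s(\varepsilon\otimes\rho)$ to a statement about the homogeneous space $T^1\Sigma$ by exploiting that $\varepsilon\otimes\rho$ extends to a homomorphism of $\SL(2,\RR)$. First I would note that, since $\iota$ is maximal, its equivariant limit curve is the orientation-preserving identification $\partial\pi_1(\Sigma)\xrightarrow{\sim}\partial\HH=\POR$, and the oriented Lagrangian reduction for $\varepsilon\otimes\iota$ corresponds to the tautological oriented line bundle over $T^1\Sigma\cong\widehat{\iota}(\widehat{\pi_1(\Sigma)})\backslash\SL(2,\RR)$. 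The key structural observation is that $T^1\Sigma$ is a \emph{homogeneous space} $\Lambda\backslash\SL(2,\RR)$ (with $\Lambda=\widehat{\iota}(\widehat{\pi_1(\Sigma)})$), and the flat $\Sp(4,\RR)$-bundle for $\varepsilon\otimes\rho=\phi\circ\widehat{\iota}$ is $\Lambda\backslash(\SL(2,\RR)\times\Sp(4,\RR))$, with the reduction given at each point $g\in\SL(2,\RR)$ by the Lagrangian $\phi(g)\cdot\xi_+(t^s)$ where $t^s$ depends on $g$ through the boundary point it determines.

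The heart of the argument is then to write down an \emph{explicit} global trivialization of the pulled-back oriented Lagrangian bundle over $\SL(2,\RR)$ that is $\Lambda$-equivariant, hence descends. Concretely, I would use the canonical oriented equivariant curve $\xi_+:\partial\pi_1(\Sigma)\to\Ll_+$ (Definition~\ref{defi:canocurve}) together with the observation that, since $sw_1(\rho)=0$ here (it is zero for any $\phi\circ\iota$ with $\phi$ defined on all of $\SL(2,\RR)$, as the centralizer argument or a direct Fuchsian computation shows), $\xi_+$ is genuinely $\varepsilon\otimes\rho$-equivariant. The oriented rank-$2$ bundle $L_+^s$ over $T^1\Sigma$ is then the quotient by $\Lambda$ of the trivial bundle $\SL(2,\RR)\times\RR^2$ where the $\RR^2$ is identified with $\xi_+(t^s(g))$ via a frame varying continuously (indeed real-analytically) in $g$: one picks a continuous section of oriented symplectic frames over $\SL(2,\RR)$ adapted to the Lagrangian filtration, which exists because $\SL(2,\RR)$ is contractible. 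The point is that the $\Lambda$-action on this trivialized bundle is by $\phi(\lambda)$ acting linearly, and since $\SL(2,\RR)$ (and hence $\Lambda$) is connected, this action on oriented frames is homotopically trivial; but more is true: the bundle is induced from a representation of the \emph{connected} group $\widehat{\iota}(\widehat{\pi_1(\Sigma)})^{\circ}$'s enveloping, so the associated $\GL^+(2,\RR)$-bundle over $\Lambda\backslash\SL(2,\RR)$ is classified by a map $T^1\Sigma\to B\GL^+(2,\RR)\simeq B\SO(2)$ that factors through the classifying map of the principal $\SL(2,\RR)$-homogeneous structure, which is trivial in the relevant range.

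A cleaner route, which I would actually pursue to avoid the homotopy-theoretic hand-waving, is the following: the oriented Lagrangian bundle $L_+^s(\varepsilon\otimes\rho)$ over $T^1\Sigma$ carries a \emph{flat structure along the geodesic flow} by the Anosov property, and because the whole configuration is invariant under the right $\SL(2,\RR)$-action (the representation is $\phi\circ\widehat\iota$, so the flat bundle is the homogeneous bundle $\Lambda\backslash\SL(2,\RR)\times_{\phi}\RR^4$ and the geodesic flow is right translation by the diagonal subgroup), the splitting $L_+^s\oplus L_+^u$ is the homogeneous splitting coming from the $\SL(2,\RR)$-equivariant decomposition of $\RR^4$ under $\phi(A)$ for $A$ the diagonal torus. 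Thus $L_+^s(\varepsilon\otimes\rho)$ is a homogeneous vector bundle $\Lambda\backslash\SL(2,\RR)\times_{\theta}\RR^2$ for a representation $\theta:\SL(2,\RR)_{\mathrm{diag-stab}}\to\GL^+(2,\RR)$ of the appropriate stabilizer; one then checks case by case (for $\phi=\phi_{irr}$ and for $\phi=\phi_\Delta$, and their $\SL(2,\RR)$-conjugates, which is all that occurs in the hybrid construction) that $\theta$ extends over all of $\SL(2,\RR)$, so the bundle is pulled back from a point and hence trivial. The main obstacle is bookkeeping the orientation: one must verify that the extension $\theta$ lands in the identity component $\GL^+(2,\RR)$ and is compatible with the chosen orientation $\LZPlus=\langle\epsilon_1,\epsilon_2\rangle$, which is exactly why the careful choices of symplectic bases in Facts~\ref{facts:irreducible}.\eqref{item:splitting} and \ref{facts:diagonal}.\eqref{item:embedding_psi} were made; granting those conventions the verification is a direct computation with the explicit matrices already displayed.
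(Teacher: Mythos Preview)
Your instincts are correct---the crucial point is indeed that $\varepsilon\otimes\rho = \phi\circ\widehat{\iota}$ extends to a homomorphism defined on all of $\SL(2,\RR)$---but you are working much harder than necessary and never land on the one-line argument the paper gives. Since $\widehat{\iota}(\widehat{\pi_1(\Sigma)})\backslash\SL(2,\RR)\cong T^1\Sigma$, the equivariant map defining the oriented Lagrangian reduction is
\[
\SL(2,\RR)\longrightarrow\Ll_+,\qquad g\longmapsto\phi(g)\cdot\LZPlus.
\]
Then $g\mapsto\phi(g)^{-1}$ is an equivariant map $\SL(2,\RR)\to\Sp(4,\RR)$ in the sense of Remark~\ref{rem_trivialization}: one has $\phi(\widehat{\iota}(\lambda)g)^{-1}=\phi(g)^{-1}\,(\varepsilon\otimes\rho)(\lambda)^{-1}$, and $\phi(g)^{-1}\cdot\big(\phi(g)\cdot\LZPlus\big)=\LZPlus$ is constant. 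That is the entire trivialization, valid for \emph{any} homomorphism $\phi$ with $\phi\circ\iota$ maximal; no case analysis, no classifying spaces, and no stabilizer discussion are needed.

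Your ``cleaner route'' is both more elaborate and slightly muddled. Writing $L_+^s$ as $\Lambda\backslash\SL(2,\RR)\times_\theta\RR^2$ for a representation $\theta$ of a \emph{stabilizer} subgroup conflates two different homogeneous pictures (the left $\Lambda$-quotient versus a would-be right $\SL(2,\RR)$-homogeneous bundle), and your proposal to verify ``case by case for $\phi_{irr}$ and $\phi_\Delta$'' does not prove the lemma as stated, which is for arbitrary $\phi$. The extension you are searching for---$\theta$ extending from the stabilizer to all of $\SL(2,\RR)$---is nothing other than $\phi$ itself, and once you say that, the explicit trivialization $g\mapsto\phi(g)^{-1}$ is immediate.
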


\begin{proof}
  First we observe that $\varepsilon \otimes \rho = \phi \circ
  \widehat{ \iota}$ with $\widehat{ \iota} = \varepsilon \otimes
  \iota$. In this situation the map defining the oriented Lagrangian
  bundle is the equivariant map
  \[
  \begin{array}{rcl}
    \SL(2, \RR) & \longrightarrow & \mathcal{L}_+ \\
    g & \longmapsto & \phi(g) \cdot \LZPlus,
  \end{array}
  \]
  where $\LZPlus$ is the attracting Lagrangian fixed by
  $\rho(\gamma)$.

  An equivariant map $\SL(2, \RR) \to \Sp(4,\RR)$ that trivializes the
  corresponding bundle is given simply by
  \[ g \longmapsto \phi(g)^{-1}.\qedhere \]
\end{proof}

\begin{remark}
  \label{rem_trivialization}
  \begin{asparaenum}
  \item A \emph{trivialization} of a symplectic bundle ${E}= \pi_1(M)
    \backslash (\widetilde{M} \times \RR^4)$ is an isomorphism with
    the trivial bundle $M\times \RR^4$. At the level of the universal
    cover this is the same as an isomorphism from $\widetilde{M}
    \times \RR^4$ to $\widetilde{M} \times \RR^4$ intertwining the
    action of $\pi_1(M)$ by the representation $\rho: \pi_1(M) \to
    \Sp(4, \RR)$ and the trivial action. This is given by a map
    $\varphi: \widetilde{M} \to \Sp(4, \RR)$ satisfying the
    equivariance equation: $\varphi(\gamma \cdot \tilde{m}) =
    \varphi(\tilde{m}) \rho( \gamma)^{-1}$ for all $\gamma$ in
    $\pi_1(M)$ and $\tilde{m}$ in $\widetilde{M}$.

  \item Let ${L}$ be a subbundle $ {E}$ or, what amounts to the same,
    an invariant subbundle of $\widetilde{M} \times \RR^4$, i.e.\ a
    $\rho$-equivariant map $\zeta: \widetilde{M} \to
    \mathrm{Gr}_2(\RR^4)$. The trivialization $\varphi$ induces
    furthermore a trivialization of ${L}$ if the map $\tilde{m}
    \mapsto \varphi( \tilde{m}) \cdot \zeta( \tilde{m})$ is constant.
  \end{asparaenum}
\end{remark}

\begin{lem}
  \label{lem:homo}
  Let $\rho_0$ and $\rho_1$ be two homotopic maximal
  representations. Then the Lagrangian bundles $L^s( \rho_0)$ and
  $L^s( \rho_1)$ are homotopic and hence they are isomorphic.

  If the first Stiefel-Whitney class $sw_1( \rho_0) = sw_1( \rho_1)$
  is zero, then the corresponding oriented Lagrangian bundles are also
  isomorphic.
\end{lem}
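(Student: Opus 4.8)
Let $\rho_0$ and $\rho_1$ be two homotopic maximal representations $\pi_1(\Sigma) \to \Sp(4,\RR)$. Then the Lagrangian bundles $L^s(\rho_0)$ and $L^s(\rho_1)$ over $T^1\Sigma$ are homotopic, hence isomorphic; and if $sw_1(\rho_0) = sw_1(\rho_1) = 0$ the corresponding oriented Lagrangian bundles are isomorphic.

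The plan is as follows. Let me think about what "homotopic" means here — we have a path $(\rho_t)_{t \in [0,1]}$ of representations in $\hom(\pi_1(\Sigma), \Sp(4,\RR))$ connecting $\rho_0$ to $\rho_1$. Since the Toledo invariant is locally constant and $\rho_0$ is maximal, every $\rho_t$ is maximal. By Theorem~\ref{thm:maximal_anosov} each $\rho_t$ is $(\Sp(4,\RR), \GL(2,\RR))$-Anosov, and by Proposition~\ref{prop:openano} the Anosov reduction — equivalently the Lagrangian splitting $E_{\rho_t} = L^s(\rho_t) \oplus L^u(\rho_t)$, equivalently the positive equivariant curve $\xi_t : \partial\pi_1(\Sigma) \to \Ll$ — depends continuously on $t$. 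First I would make this precise: the family $t \mapsto L^s(\rho_t)$ is a continuous family of rank-$2$ subbundles of the (varying) flat bundle $E_{\rho_t}$. To turn this into a genuine homotopy of bundles over the fixed base $T^1\Sigma$, I would trivialize the family of flat bundles: the total space $\bigcup_t E_{\rho_t}$ is a bundle over $T^1\Sigma \times [0,1]$ with structure group $\Sp(4,\RR)$, and since $[0,1]$ is contractible this family is pulled back from $E_{\rho_0}$ (a bundle over $T^1\Sigma \times [0,1]$ is determined by its restriction to any slice). Under such an identification $L^s(\rho_t)$ becomes a continuous family of rank-$2$ subbundles of the single bundle $E_{\rho_0}$, i.e.\ a homotopy between $L^s(\rho_0)$ and (an isomorphic copy of) $L^s(\rho_1)$; isotopic subbundles of a fixed bundle are isomorphic.

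The second, oriented statement requires a little more care, because an orientation is extra data. When $sw_1(\rho_t) = 0$ for all $t$ — which holds along the whole path since $sw_1$ is locally constant on $\hommax$ (Proposition~\ref{prop:swformax}) and takes the value $0$ at the endpoints, hence is identically $0$ if the path stays in a connected set; more robustly, $sw_1(\rho_t)$ is a continuous $\FF_2$-valued function of $t$ — the bundle $L^s(\rho_t)$ is orientable for every $t$. An orientation of $L^s(\rho_0)$ is a section of the associated $\ZZ/2$-bundle of orientations; as $t$ varies this $\ZZ/2$-bundle varies continuously and, over the contractible parameter space $[0,1]$, the chosen orientation at $t=0$ extends uniquely to a continuous family of orientations. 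This produces an isomorphism of \emph{oriented} Lagrangian bundles $L^s_+(\rho_0) \cong L^s_+(\rho_1)$. Concretely, in the curve picture: by Proposition~\ref{prop:lift} each $\xi_t$ admits an equivariant lift $\xi_{t,+} : \partial\pi_1(\Sigma) \to \Ll_+$ precisely because $sw_1(\rho_t) = 0$, and the continuity of $t \mapsto \xi_t$ together with the discreteness of the fiber of $\Ll_+ \to \Ll$ forces a continuous (hence coherent) choice of lifts, which is exactly a homotopy of oriented Lagrangian bundles.

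The main obstacle is the bookkeeping in the first paragraph: one must pass from "the subbundle $L^s$ sits inside a flat bundle that itself changes with $t$" to a statement about a single bundle over $T^1\Sigma$. I expect this to be handled cleanly by the observation that any $G$-bundle over $X \times [0,1]$ is isomorphic to the pullback of its restriction to $X \times \{0\}$ (a standard consequence of the homotopy invariance / covering-homotopy theorem for principal bundles over paracompact bases), applied to $X = T^1\Sigma$ and $G = \Sp(4,\RR)$; after that, everything reduces to the elementary fact that a continuous family of (oriented) subbundles of a fixed vector bundle gives isomorphic (oriented) subbundles at the two ends. No delicate analysis of the Anosov dynamics is needed beyond the continuity statement already recorded in Proposition~\ref{prop:openano} and Fact~\ref{fact:curve}.
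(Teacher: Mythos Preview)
Your proposal is correct and follows essentially the same approach as the paper, which simply invokes the continuous dependence of the equivariant positive curve on the representation (Fact~\ref{fact:curve}) together with the standard fact that homotopic bundles are isomorphic. You have spelled out in detail the bookkeeping (trivializing the family of flat bundles over $T^1\Sigma \times [0,1]$, and propagating the orientation via Proposition~\ref{prop:lift}) that the paper leaves implicit in its one-line proof.
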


\begin{proof}
  This is a consequence of the fact that the equivariant positive
  curve depends continuously on the representation
  (Fact~\ref{fact:curve}) and the fact that homotopic bundles are
  isomorphic \cite[p.~53]{Steenrod}.
\end{proof}

\begin{lem}
  \label{lem:isomorphicRestriction}
  Let $\rho$ and ${ \rho'}$ be two maximal representations with zero
  first Stiefel-Whitney class and such that $ \rho |_{\pi_1{ \Sigma'}}
  = { \rho'} |_{\pi_1{ \Sigma'}}$ for a subsurface $\Sigma' \subset
  \Sigma$.

  Then the two bundles $L_+^s( \varepsilon \otimes \rho)$ and $L_+^s(
  \varepsilon \otimes {\rho'})$ are homotopic when \emph{restricted}
  to $T^1 \Sigma|_{ \Sigma'}$.
\end{lem}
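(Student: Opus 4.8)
The plan is to exhibit the desired homotopy directly at the level of equivariant maps from the universal cover, using the freedom coming from the trivializations of $L_+^s(\varepsilon\otimes\rho)$ away from a collar of $\partial\Sigma'$. First I would recall (Remark~\ref{rem_trivialization}) that the oriented Lagrangian bundle $L_+^s(\varepsilon\otimes\rho)$ is encoded by a $(\varepsilon\otimes\rho)$-equivariant map $\zeta_\rho:\widetilde{T^1\Sigma}\to\Ll_+$, namely the lift $\xi_+$ of the equivariant positive curve $\xi$ provided by Proposition~\ref{prop:lift} (this lift exists and is equivariant precisely because $sw_1(\rho)=0$, which is the hypothesis). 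Since $\rho$ and $\rho'$ agree on $\pi_1(\Sigma')$, the representations $\varepsilon\otimes\rho$ and $\varepsilon\otimes\rho'$ agree on $\widehat{\pi_1(\Sigma')}$, hence the restrictions of the two flat bundles to $T^1\Sigma|_{\Sigma'}$ are \emph{the same} flat $\Sp(4,\RR)$-bundle; only the two sections (Anosov reductions) $\zeta_\rho$ and $\zeta_{\rho'}$ can differ.

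The second step is to connect these two sections. Here I would invoke the connectedness of the space of positive equivariant curves, but applied \emph{over the subsurface}: the restrictions $\xi|_{\partial\pi_1(\Sigma')}$ and $\xi'|_{\partial\pi_1(\Sigma')}$ are both positive curves equivariant for the \emph{same} representation $\rho|_{\pi_1(\Sigma')}=\rho'|_{\pi_1(\Sigma')}$, and by Fact~\ref{fact:curve} (continuous dependence on the representation) together with the connectedness results of Appendix~\ref{sec:app_positive} (Proposition~\ref{prop:conn-posit-curv} and Proposition~\ref{prop:conn_equiv_posit}), these two curves lie in the same connected component of the relevant space of equivariant positive curves, provided their first Stiefel--Whitney classes match — and both are zero by hypothesis. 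A path of such equivariant positive curves then lifts (again by Proposition~\ref{prop:lift}, applied uniformly along the path, using that $sw_1$ is locally constant) to a path of equivariant oriented curves $\zeta_t:\widetilde{T^1\Sigma|_{\Sigma'}}\to\Ll_+$ from (a lift of) $\zeta_\rho$ to (a lift of) $\zeta_{\rho'}$. Choosing the two lifts compatibly — which one can do because the two lifts of a given curve differ by the deck transformation of $\Ll_+\to\Ll$ and this ambiguity is rigid along a connected family — yields the required homotopy of sections, hence a homotopy of the oriented Lagrangian bundles restricted to $T^1\Sigma|_{\Sigma'}$. Finally, homotopic bundles over the same base are isomorphic \cite[p.~53]{Steenrod}, giving the last clause.

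The main obstacle I anticipate is the bookkeeping of orientations: a priori $\zeta_\rho$ and $\zeta_{\rho'}$ restricted to the subsurface could be \emph{opposite} lifts of homotopic unoriented curves, in which case the oriented bundles would not be homotopic but rather differ by a change of orientation. The key point to nail down is therefore that along the subsurface $\Sigma'$ the ambiguity in the choice of lift is the \emph{same} for $\rho$ and $\rho'$ — this should follow because the lift is pinned down by its value at the attracting fixed point of a boundary element of $\pi_1(\Sigma')$ (as in Definition~\ref{defi:canocurve}), and on that boundary element $\rho$ and $\rho'$ have the same holonomy and the same attracting oriented Lagrangian $\LZPlus$. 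Making this precise, i.e.\ checking that the ``canonical oriented equivariant curve'' construction is continuous in the representation along the whole homotopy and compatible with restriction to $\Sigma'$, is the one place requiring care; everything else is a routine combination of Fact~\ref{fact:curve}, Proposition~\ref{prop:lift}, and the connectedness statements of Appendix~\ref{sec:app_positive}.
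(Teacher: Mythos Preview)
Your overall strategy is the paper's strategy: reduce the question to finding a $\pi_1(\Sigma')$-equivariant homotopy between the two positive curves $\xi$ and $\xi'$ on $\partial\pi_1(\Sigma)$, then conclude that the induced Lagrangian reductions over $T^1\Sigma|_{\Sigma'}$ are homotopic. However, the connectedness step you cite does not exist in the form you need. Proposition~\ref{prop:conn-posit-curv} gives connectedness with \emph{no} equivariance constraint, and Proposition~\ref{prop:conn_equiv_posit} treats only equivariance under a \emph{cyclic} group $\langle\gamma\rangle$. Neither yields connectedness of the space of positive curves $\partial\pi_1(\Sigma)\to\Ll$ that are equivariant for the full subgroup $\pi_1(\Sigma')$; and Fact~\ref{fact:curve} is irrelevant here since you are not varying the representation. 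The paper supplies precisely this missing piece as a separate lemma: one observes that $\xi$ and $\xi'$ are forced to agree on the limit set $\Lambda_{\pi_1(\Sigma')}$, so the freedom lies entirely on the complementary intervals, which are permuted transitively by $\pi_1(\Sigma')$; the problem then reduces to a single interval with cyclic equivariance under a boundary element, and \emph{that} is where Proposition~\ref{prop:conn_equiv_posit2} applies. This reduction is the actual content of the proof, and your proposal skips it.

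A secondary point: your orientation worry is real but avoidable. The paper sidesteps it entirely by first observing that a homotopy of the \emph{unoriented} Lagrangian bundles $L^s(\varepsilon\otimes\rho)$ and $L^s(\varepsilon\otimes\rho')$ automatically upgrades to a homotopy of the oriented ones (an orientable rank-$2$ bundle has two orientations, and a homotopy of the underlying unoriented bundles carries one to one of them), and then factoring out the line bundle $D_\varepsilon$ to reduce to $L^s(\rho)$ versus $L^s(\rho')$. This is cleaner than tracking the canonical oriented lift along a path, and avoids the bookkeeping you flag as the main obstacle.
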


\begin{proof}
  Note that if $L^s( \varepsilon \otimes \rho)$ is homotopic to $L^s(
  \varepsilon \otimes {\rho'})$ then $L_+^s( \varepsilon \otimes
  \rho)$ is homotopic to $L_+^s( \varepsilon \otimes
  {\rho'})$. Therefore it is sufficient to prove the result without
  considering orientations.  Let $D_\varepsilon$ be the flat line
  bundle associated with $\varepsilon$, so that $L^s( \varepsilon
  \otimes \rho) = D_\varepsilon \otimes L^s( \rho)$ and $L^s(
  \varepsilon \otimes {\rho'}) = D_\varepsilon \otimes L^s(
  {\rho'})$. Thus, we need to show that $L^s( \rho) |_{M'} $ is
  homotopic to $L^s( {\rho'}) |_{M'}$ with $M' = T^1 \Sigma|_{
    \Sigma'} \subset M = T^1 \Sigma$.

  Let $\widetilde{ \Sigma}'$ be the universal cover of $\Sigma'$; it
  can be realized as a $\pi_1( \Sigma')$-invariant subset of
  $\widetilde{ \Sigma}$. More precisely, under an identification
  $\widetilde{ \Sigma} \cong \HH_2$ we can set $\widetilde{ \Sigma}' =
  \mathrm{Conv}( \Lambda_{\pi_1( \Sigma')})$ the convex hull of the
  limit set $\Lambda_{\pi_1 (\Sigma')}$ of $\pi_1( \Sigma')$ in the
  boundary $\partial \HH_2$ of the hyperbolic plane.

  The manifold $\overline{ M} = T^1 \widetilde{ \Sigma}$ is a $\pi_1(
  \Sigma)$-cover of $M$ and we set $\overline{M}' = T^1 \widetilde{
    \Sigma}|_{ \widetilde{ \Sigma}'} \subset T^1 \widetilde{ \Sigma} =
  \overline{ M}$ so that $M' \cong \pi_1( \Sigma') \backslash
  \overline{M}'$.  When we identify the unit tangent bundle
  $\overline{ M}$ with $\partial \pi_1(\Sigma)^{(3+)}$, the set of
  positively oriented triples of $\partial \pi_1(\Sigma)$, we can
  identify $\overline{ M}'$ with the subset of $\partial
  \pi_1(\Sigma)^{(3+)}$ whose projection to $\widetilde{ \Sigma}$
  belongs to $\widetilde{ \Sigma}'$.

  The bundle $L^s( \rho)$ is constructed via the $\rho$-equivariant
  map
  \[\begin{array}{rl}
    \partial \pi_1(\Sigma)^{(3+)} &
    \overset{p}{\longrightarrow} \partial \pi_1(\Sigma)
    \overset{\xi}{\longrightarrow} \mathcal{L} \\
    (t^s, t, t^u) & \longmapsto t^s \longmapsto \xi( t^s)
  \end{array}\]
  where $\xi$ is the positive $\rho$-equivariant curve. Similarly
  $L^s( {\rho'})$ is constructed from the positive ${
    \rho'}$-equivariant curve ${ \xi'}$.

  This means that the restriction $L^s( \rho)|_{M'}$ is constructed
  from the $\rho|_{\pi_1 (\Sigma')}$-equivariant map
  \[ \overline{ M}' \xrightarrow{ p|_{ \overline{M}'}} \partial
  \pi_1(\Sigma) \overset{ \xi}{ \longrightarrow} \mathcal{L}. \]
  Conversely, any $\rho|_{\pi_1 (\Sigma')}$-equivariant continuous map
  $\partial \pi_1(\Sigma) \to \mathcal{L}$ defines a Lagrangian
  reduction of the symplectic $\RR^4$-bundle over $M'$. Hence we get a
  homotopy of the two bundles $L^s( \rho)|_{ M'}$ and $L^s(
  {\rho'})|_{ M'}$ once we have a $\rho|_{\pi_1
    (\Sigma')}$-equivariant homotopy between the two maps
  \begin{align*}
    \overline{ M}' & \xrightarrow{ p|_{ \overline{M}'}} \partial
    \pi_1(\Sigma)
    \overset{ \xi}{ \longrightarrow} \mathcal{L} \\
    \overline{ M}' & \xrightarrow{ p|_{ \overline{M}'}} \partial
    \pi_1(\Sigma) \overset{ {\xi'}}{ \longrightarrow} \mathcal{L}.
  \end{align*}
  For this it is sufficient to construct a $\rho|_{\pi_1
    (\Sigma')}$-equivariant homotopy between the two positive maps
  $\xi$ and ${\xi'}$. This is the content of the next lemma.
\end{proof}

\begin{lem} 
  Let $\rho: \pi_1( \Sigma ) \to \Sp( 4, \RR)$ be a maximal
  representation and let $\Sigma' $ be a subsurface. Then the set
  \[ \mathcal{C} = \{ \xi : \partial \pi_1(\Sigma)\to \Ll \mid \xi
  \text{ is positive, continuous and } \rho|_{\pi_1(
    \Sigma')}\text{-equivariant} \}\] is connected.
\end{lem}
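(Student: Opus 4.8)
\emph{Proof proposal.} Fix a hyperbolic metric on $\Sigma$, so that $\partial\pi_1(\Sigma)$ is identified with $\partial\HH_2$ and $\pi_1(\Sigma)$ with a cocompact Fuchsian group. Two cases are immediate from the appendix: if $\Sigma'=\emptyset$ the equivariance requirement is empty and $\mathcal{C}$ is the space of \emph{all} continuous positive curves $\partial\pi_1(\Sigma)\to\Ll$, connected by Proposition~\ref{prop:conn-posit-curv}; if $\Sigma'=\Sigma$ then $\pi_1(\Sigma)$ is cocompact and the positive $\rho$-equivariant curve is unique --- it is the limit curve, which I will call $\xi_\rho$, of Theorem~\ref{thm:positivecurve} --- so $\mathcal{C}$ is a single point. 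So the plan is to treat the remaining case, where $\Sigma'$ is a proper incompressible subsurface. Then $\pi_1(\Sigma')$ is convex cocompact, its limit set $\Lambda=\Lambda_{\pi_1(\Sigma')}$ is a Cantor subset of $\partial\pi_1(\Sigma)$, and $\partial\pi_1(\Sigma)\moins\Lambda$ is a countable disjoint union of open arcs $\{I_j\}$ which $\pi_1(\Sigma')$ permutes with finitely many orbits; the stabilizer of $I_j$ in $\pi_1(\Sigma')$ is infinite cyclic, generated by a peripheral element $\delta_j$ whose attracting and repelling fixed points are the two endpoints of $I_j$.

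\emph{Step 1 (rigidity on the limit set).} First I would show that every $\xi\in\mathcal{C}$ agrees with $\xi_\rho$ on $\Lambda$. The restriction $\xi|_\Lambda$ is a continuous, positive, $\rho|_{\pi_1(\Sigma')}$-equivariant curve on $\Lambda$; for a loxodromic $\gamma\in\pi_1(\Sigma')$ with attracting fixed point $t^s_\gamma\in\Lambda$, equivariance forces $\xi(t^s_\gamma)$ to be a fixed Lagrangian of $\rho(\gamma)$, and --- since $\rho$ is maximal, so that by Corollary~\ref{cor:anosov_holonomy} and Fact~\ref{fact:attracL} the element $\rho(\gamma)$ has a \emph{unique} attracting fixed point on $\Ll$ whose basin is the set of Lagrangians transverse to the repelling fixed Lagrangian of $\rho(\gamma)$ --- positivity of $\xi$ near $t^s_\gamma$ together with the transversality built into Definition~\ref{def:positivetriple} forces $\xi(t^s_\gamma)=\xi_\rho(t^s_\gamma)$. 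Since attracting fixed points of loxodromics in $\pi_1(\Sigma')$ are dense in $\Lambda$ and both $\xi$ and $\xi_\rho$ are continuous, this gives $\xi|_\Lambda=\xi_\rho|_\Lambda$.

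\emph{Step 2 (decomposition over the gaps).} By Step~1, $\mathcal{C}$ consists exactly of the continuous positive curves that equal $\xi_\rho$ on $\Lambda$, restrict on each $\overline{I_j}$ to a positive arc, and are $\pi_1(\Sigma')$-equivariant. Choosing one representative $I_1,\dots,I_k$ in each of the finitely many $\pi_1(\Sigma')$-orbits of gaps, such a curve is determined by the restrictions $\xi|_{\overline{I_m}}$, which are required to be positive, continuous, $\delta_m$-equivariant, and to take the values $\xi_\rho$ at the two endpoints of $I_m$; conversely any such tuple extends, by equivariance, to an element of $\mathcal{C}$, and the resulting map $\prod_{m=1}^{k}\mathcal{C}_m\to\mathcal{C}$ is a continuous surjection, where $\mathcal{C}_m$ denotes the space of positive continuous $\delta_m$-equivariant curves $\overline{I_m}\to\Ll$ with the prescribed endpoint data. (The point to check here is continuity at the points of $\Lambda$ accumulated by gaps: this holds because there are finitely many gap-orbits and equivariance forces $\mathrm{diam}\,\xi(I_j)\to 0$ keeping pace with $\mathrm{diam}\,I_j$ against the uniformly continuous $\xi_\rho|_\Lambda$.)

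\emph{Step 3 (each factor is connected).} After passing to a fundamental interval for the $\delta_m$-action on $I_m$, an element of $\mathcal{C}_m$ is precisely a positive, continuous, $\rho(\delta_m)$-equivariant arc interpolating the attracting and repelling fixed Lagrangians of $\rho(\delta_m)$, and the space of such arcs is connected by the analysis of positive (equivariant) curves in Appendix~\ref{sec:app_positive}, i.e.\ Propositions~\ref{prop:conn-posit-curv} and~\ref{prop:conn_equiv_posit}. Since a finite product of connected spaces is connected, $\mathcal{C}$ is the continuous image of the connected space $\prod_m\mathcal{C}_m$ and hence connected. I expect the main obstacle to be Step~1 --- showing that positivity alone, without the full Anosov contraction, already pins the curve to $\xi_\rho$ on all of $\Lambda$ --- while the gluing bookkeeping of Step~2 is a secondary technical point.
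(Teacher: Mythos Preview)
Your proof is correct and follows essentially the same approach as the paper: fix the curve on the limit set $\Lambda_{\pi_1(\Sigma')}$ (the paper asserts this in one line; your Step~1 gives the actual argument), reduce to the finitely many gap orbits, and invoke the connectedness of equivariant positive curves on each gap. One small remark: in Step~3 the precise reference you want is Proposition~\ref{prop:conn_equiv_posit2} (connectedness of $\mathcal{P}^\rho$ for \emph{fixed} $\rho$), which is exactly what the paper cites, rather than Proposition~\ref{prop:conn_equiv_posit} about pairs $(\rho,\xi)$.
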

\begin{proof}
  To simplify notation we assume that the boundary $\partial \Sigma' =
  \gamma$ consists of one component.  Note first that the restriction
  to the limit set $\Lambda_{ \pi_1( \Sigma')}$ of any curve $\xi$ in
  $\mathcal{C}$ is completely determined by the action of
  $\pi_1(\Sigma')$. The complement $\partial \pi_1(\Sigma) \moins
  \Lambda_{ \pi_1( \Sigma')}$ is a countable union of open intervals,
  which are transitively exchanged by the action of
  $\pi_1(\Sigma')$. The interval $( t_{\gamma}^{u}, t_{\gamma}^{s})$,
  where $t^{u}_{\gamma}$ and $t^{s}_{\gamma}$ are the fixed points of
  $\gamma$ in $\partial \pi_1(\Sigma)$, is one such interval. The map
  $\xi$ is completely determined by its restriction to this interval.

  Conversely given a positive $\rho|_{ \langle \gamma
    \rangle}$-equivariant continuous curve $\beta : (t^{u}_{\gamma},
  t^{s}_{\gamma}) \to \mathcal{L}_+$, one obtains a continuous
  $\rho|_{ \pi_1( \Sigma')}$-equivariant positive curve $\partial
  \pi_1(\Sigma) \moins \Lambda_{ \pi_1( \Sigma')} \to \mathcal{L}$,
  which one shows to have a continuous extension $\partial
  \pi_1(\Sigma) \to \mathcal{L}$.

  Thus the map:
  \begin{align*}
    \mathcal{C} & \longrightarrow \{ \beta : (t^{u}_{\gamma},
    t^{s}_{\gamma}) \to \mathcal{L} \mid \beta \text{ positive
      continuous and } \rho|_{
      \langle \gamma \rangle}\text{-equivariant} \}\\
    \xi & \longmapsto \xi|_{(t^{u}_{\gamma}, t^{s}_{\gamma})}
  \end{align*}
  is a homeomorphism. By Proposition~\ref{prop:conn_equiv_posit2},
  this space is connected.
\end{proof}

\subsubsection{Calculating the Euler class}

Proposition~\ref{prop:constrsect} gives precisely what is needed to
construct a lift of the Euler class $e_{ \gamma, \LZPlus} (
\varepsilon \otimes \rho)$ under the connecting homomorphism $\h^1(
T^1 \Sigma |_{ \gamma} ; \ZZ)\to \h^2( T^1 \Sigma; \ZZ)$.  We have
(see Appendix~\ref{subsec:cohomologysigma})
\begin{equation}\label{eq:h_1}
  \h^1( T^1 \Sigma |_{ \gamma} ; \ZZ) \cong \hom ( \pi_1( T^1
  \Sigma |_\gamma), \ZZ) \cong \ZZ \times \ZZ.
\end{equation}
The last identification sends $\phi \in \hom( \pi_1( T^1 \Sigma
|_\gamma), \ZZ)$ to $( \phi( T^{1}_{x} \Sigma ), \phi( \gamma) )$,
where the two circles $T^{1}_{x} \Sigma$ and $\gamma$ are naturally
considered as loops in $T^1 \Sigma|_\gamma$.

\begin{prop}
  \label{prop:calc_lift_in_Hone}
  Let $\rho$, $\Sigma_l$, $\Sigma_r$, $\gamma$ and $\LZPlus$ be as in
  Proposition~\ref{prop:eulerHybrid}. Let $L_+^s(\varepsilon \otimes
  \rho)$ be the oriented Lagrangian reduction for the flat
  $\varepsilon \otimes \rho$-bundle over $T^1 \Sigma$.  Suppose that
  $g_l$ and $g_r$ are trivializations of the restrictions of
  $L_+^s(\varepsilon \otimes \rho)$ to $T^1 \Sigma |_{\Sigma_l}$ and
  $T^1 \Sigma |_{\Sigma_r}$.
  
  Then $g_l \circ g_r^{-1} : T^1 \Sigma |_\gamma \times \RR^2 \to T^1
  \Sigma |_\gamma \times \RR^2$ is a gauge transformation of the
  trivial oriented $\RR^2$-bundle over $T^1 \Sigma|_\gamma$ and
  defines a map $h : T^1 \Sigma |_\gamma \to \GL^+( 2, \RR)$.  Let $
  h_* \in \hom( \pi_1 ( T^1\Sigma |_\gamma) , \pi_1 ( \GL^+(2, \RR) )$
  denote the map induced by $h$ at the level of fundamental groups.
  \begin{enumerate}
  \item The image of $h_* \in \hom( \pi_1 ( T^1\Sigma |_\gamma) ,
    \pi_1 ( \GL^+(2, \RR) )) \cong \h^1(T^1\Sigma |_\gamma; \ZZ)$
    under the connection homomorphism $\h^1( T^1 \Sigma |_{ \gamma} ;
    \ZZ)\to \h^2( T^1 \Sigma; \ZZ)$ is the Euler class: $\delta( h_*)=
    e_{ \gamma, \LZPlus }( \varepsilon \otimes \rho)$.
  \item Under the identification $\h^1(T^1\Sigma; \ZZ) \cong \ZZ
    \times \ZZ$ in \eqref{eq:h_1}, $ h_*$ is equal to $(1,0)$, up to
    the image of the map $\h^1( T^1\Sigma |_{\Sigma_l} ; \ZZ) \oplus
    \h^1( T^1\Sigma |_{\Sigma_r} ; \ZZ) \to \h^1( T^1\Sigma |_\gamma ;
    \ZZ)$.
  \end{enumerate}
\end{prop}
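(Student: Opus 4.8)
The plan is to deduce (1) from the standard clutching description of the Euler class, and then to prove (2) by computing $h_*$ on the two generators of $\pi_1(T^1\Sigma|_\gamma)\cong\ZZ\times\ZZ$. For (1): choose open sets $A\supset T^1\Sigma|_{\Sigma_l}$ and $B\supset T^1\Sigma|_{\Sigma_r}$ covering $T^1\Sigma$ and deformation retracting, together with $A\cap B$, onto $T^1\Sigma|_{\Sigma_l}$, $T^1\Sigma|_{\Sigma_r}$ and $T^1\Sigma|_\gamma$ respectively. By Proposition~\ref{prop:constrsect} the bundle $L_+^s(\varepsilon\otimes\rho)$ is trivial over $A$ and over $B$, so through $g_l$ and $g_r$ it is the oriented $\RR^2$-bundle clutched along $h=g_l\circ g_r^{-1}:A\cap B\to\GL^+(2,\RR)$. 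Since $\GL^+(2,\RR)$ is homotopy equivalent to $\SO(2)=K(\ZZ,1)$, the homotopy class of $h$ is recorded by $h_*\in\hom(\pi_1(A\cap B),\ZZ)\cong\h^1(A\cap B;\ZZ)$, and the Euler class of a clutched bundle is the image of the clutching class under the Mayer--Vietoris connecting homomorphism $\delta:\h^1(A\cap B;\ZZ)\to\h^2(T^1\Sigma;\ZZ)$; this is the usual identification of the primary obstruction to a nowhere-vanishing section with the transition data (cf.\ \cite{Steenrod}), and it gives assertion (1). Replacing $g_l$ by $u\,g_l$ for a gauge transformation $u:A\to\GL^+(2,\RR)$, and similarly $g_r$, changes $h$ by $u|_{A\cap B}$, hence $h_*$ by an element in the image of the restriction map $s:\h^1(A;\ZZ)\oplus\h^1(B;\ZZ)\to\h^1(A\cap B;\ZZ)$; so $h_*$ is well defined only modulo that image, which is precisely the ambiguity in the statement and which, by exactness of Mayer--Vietoris, equals $\ker\delta$.

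For (2) I would pick convenient representatives. Using Lemma~\ref{lem:homogenousTrivial} together with Lemmas~\ref{lem:homo} and~\ref{lem:isomorphicRestriction}, and the fact that a homotopy does not change $h_*$, one may take $g_l$ to be the trivialization of $L_+^s(\varepsilon\otimes(\phi_{irr}\circ\iota))$ given by $g\mapsto\phi_{irr}(g)^{-1}$, restricted to $T^1\Sigma|_{\Sigma_l}$, and $g_r$ the trivialization of $L_+^s(\varepsilon\otimes(\phi_\Delta\circ\iota))$ given by $g\mapsto\phi_\Delta(g)^{-1}$, restricted to $T^1\Sigma|_{\Sigma_r}$. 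Along the velocity lift of $\gamma$, the group parameter moves in the geodesic (diagonal Cartan) direction, and since the pair $(\rho_l,\rho_r)$ is positively adjusted with respect to $\gamma$ (Definition~\ref{defi_good_triples}) both trivializations remain inside the same diagonal torus of $\GL^+(2,\RR)$, which is contractible; hence the restriction of $h$ to the $\gamma$-circle is null-homotopic and $h_*(\gamma)=0$.

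It then remains to compute $h_*$ on the fiber class $T^1_x\Sigma$, $x\in\gamma$: this is the difference, in $\pi_1(\GL^+(2,\RR))\cong\ZZ$, of the winding numbers of the two model trivializations of the oriented Lagrangian as the unit tangent direction turns once around a fiber. Here the normalisations of $\phi_{irr}$ and $\phi_\Delta$ fixed in Facts~\ref{facts:irreducible}.\eqref{item:splitting} and \ref{facts:diagonal}.\eqref{item:embedding_psi} intervene. Although both $\phi_{irr}$ and $\phi_\Delta$ induce multiplication by $2$ on $\pi_1(\SL(2,\RR))\to\pi_1(\Sp(4,\RR))$, the map $\pi_1(\GL^+(2,\RR))\to\pi_1(\Sp(4,\RR))$ vanishes (this is used already in the proof of Lemma~\ref{lem:coverGk}), so the Lagrangian winding is \emph{not} controlled by the symplectic winding and must be computed directly; comparing the expansion of $\phi_{irr}(k_\theta)$ on $\sym^3\RR^2$ (rotation by $3\theta$ on the weight-three plane, by $\theta$ on the weight-one plane) with that of $\phi_\Delta(k_\theta)$ (rotation by $\theta$ on each of the two planes), and taking into account the $\{\pm1\}$-extension $\widehat{\pi_1(\Sigma)}$ (in which $-\id_2$ acts as $-\id_4$) and the specific lift $\xi_+$ singled out by $\LZPlus$, one obtains a difference equal to $1$. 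Hence $h_*=(1,0)$ modulo the image of $s$, as asserted; combined with the identification of $\h^1(T^1\Sigma|_\gamma;\ZZ)$ modulo the image of $s$ with $\h^2(T^1\Sigma;\ZZ)^{tor}$ from Appendix~\ref{subsec:cohomologysigma}, Proposition~\ref{prop:calc_lift_in_Hone} follows. The main obstacle I expect is exactly this fiber winding-number computation: one must pass carefully between the three incarnations ($\SL(2,\RR)$-model, the curve in $\Ll_+$, and the $\GL^+(2,\RR)$-valued transition function) and keep the signs straight, so that one gets exactly $(1,0)$ rather than merely the correct class up to the remaining $(0,2g-2)$-ambiguity; the $\gamma$-direction and step (1) are routine by comparison.
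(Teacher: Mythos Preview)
Your treatment of part (1) is correct and essentially the same as the paper's: the Euler class is the image under the Mayer--Vietoris connecting map of the clutching class, and the ambiguity in $h_*$ coming from changing $g_l, g_r$ is exactly the image of the restriction map (this is the content of Proposition~\ref{prop_eulclassMV} and the remark following the statement).

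Part (2), however, has a genuine gap. You propose to take $g_l$ to be the model trivialization $g\mapsto\phi_{irr}(g)^{-1}$ of $L^s_+(\varepsilon\otimes(\phi_{irr}\circ\iota))|_{\Sigma_l}$ and $g_r$ the model trivialization $g\mapsto\phi_\Delta(g)^{-1}$ of $L^s_+(\varepsilon\otimes(\phi_\Delta\circ\iota))|_{\Sigma_r}$. But these are trivializations of \emph{different} bundles than $L^s_+(\varepsilon\otimes\rho)|_{\Sigma_\star}$. Lemmas~\ref{lem:homo} and~\ref{lem:isomorphicRestriction} only give bundle homotopies over each piece separately; the trivializations of $L^s_+(\varepsilon\otimes\rho)|_{\Sigma_\star}$ they produce are $(\text{model}_\star)\circ I_\star$ for some isomorphisms $I_\star$ you have no control over. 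Thus on $T^1\Sigma|_\gamma$ the clutching function is $(\text{model}_l)\circ I_l\circ I_r^{-1}\circ(\text{model}_r)^{-1}$, not $(\text{model}_l)\circ(\text{model}_r)^{-1}$, and the term $I_l\circ I_r^{-1}$ is a priori uncontrolled. In particular your argument that $h|_\gamma$ lands in a contractible diagonal torus applies to the model comparison, not to the actual $h$. Note also that $\rho_\Delta(\gamma)\neq\rho(\gamma)$ (the eigenvalues differ), so even the ambient flat bundles over the geodesic lift of $\gamma$ do not coincide for $\rho$ and $\rho_\Delta$; the composition $(\text{model}_l)\circ(\text{model}_r)^{-1}$ is not literally a self-map of a single trivial bundle.

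The paper's fix is to introduce, for each generating circle of $T^1\Sigma|_\gamma$, an \emph{intermediate} trivialization of $L^s_+(\varepsilon\otimes\rho)$ restricted to that circle, defined directly from the holonomy $A_\gamma$ (for the geodesic circle) or from the positive curve $\xi_+$ (for the fiber circle). This splits $h_*(\gamma)=[M_l]-[M_r]$ and $h_*(T^1_x\Sigma)=[N_l]-[N_r]$, where each bracket depends only on the $l$- or $r$-data and can therefore be deformed to the model case \emph{independently}, without the cross-term $I_l\circ I_r^{-1}$ appearing. For the fiber circle the paper then carries out an explicit matrix computation (the matrix $A(\theta)$ at the end of the proof), showing the winding number is exactly $1$; your weight-space heuristic points in the right direction but does not replace that computation, and as you yourself note this is where the proof is delicate.
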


\begin{remark}
  The ambiguity given by the image of the map $\h^1( T^1\Sigma
  |_{\Sigma_l} ; \ZZ) \oplus \h^1( T^1\Sigma |_{\Sigma_r} ; \ZZ) \to
  \h^1( T^1\Sigma |_\gamma ; \ZZ)$ accounts for changing the
  trivializations $g_l$ and $g_r$ of the restrictions of
  $L_+^s(\varepsilon \otimes \rho)$ to $T^1 \Sigma |_{\Sigma_l}$ and
  $T^1 \Sigma |_{\Sigma_r}$.
\end{remark}

\begin{proof}[Proof of Proposition~\ref{prop:EulerHybrTwisted}]
  With the notation of Proposition~\ref{prop:calc_lift_in_Hone}
  $\delta (h_*) = e_{ \gamma, \LZPlus }( \varepsilon \otimes
  \rho)$. Since $ h_*$ is equal to $(1,0)$ in the identification
  \eqref{eq:h_1}, Proposition~\ref{prop:mayervietoris} precisely says
  that $\delta (h_*) = ( 2 g( \Sigma_l ) - 1) \GenTor = -\chi(
  \Sigma_l) \GenTor$.
\end{proof}

\begin{proof}[Proof of Proposition~\ref{prop:calc_lift_in_Hone}]
  By Proposition~\ref{prop:constrsect} the restrictions of
  $L_+^s(\varepsilon \otimes \rho)$ to $T^1 \Sigma |_{\Sigma_l}$ and
  $T^1 \Sigma |_{\Sigma_r}$ are indeed trivial.

  The first statement $\delta (h_*) = e_{ \gamma, \LZPlus }(
  \varepsilon \otimes \rho)$ follows from the fact that the Euler
  class is the obstruction to trivialize the oriented Lagrangian
  bundle $L^s_+(\varepsilon \otimes\rho)$ over $T^1\Sigma$ (see
  Proposition~\ref{prop_eulclassMV}).
  
  For the second statement, let $T^{1}_{x} \Sigma$ and $\gamma$ be the
  two generators of the fundamental group of the $2$-torus $T^1 \Sigma
  |_\gamma$. We have to show, identifying $\pi_1( \GL^+(2, \RR) )$
  with $\ZZ$, that
  \[ h_*( T^{1}_{x} \Sigma) = 1 \text{ and } h_*( \gamma) =0.\]
   
  In both cases our strategy will be the same: we write the homotopy
  class we want to calculate as a product of two homotopy classes, the
  first depending only on the restriction to $\Sigma_l$ and the second
  depending on the restriction to $\Sigma_r$. With this we can deform
  the representations and the fiber bundles independently on
  $\Sigma_l$ and on $\Sigma_r$, without changing the homotopy classes
  we are considering. By construction of the hybrid representations
  this means that we can reduce the calculation of the homotopy class
  to the case when the representations are the restrictions of the
  irreducible Fuchsian representation for $\Sigma_l$ and the diagonal
  Fuchsian representation for $\Sigma_r$.

  \smallskip

  We start by proving the second equality: $ h_*( \gamma) =0$.
 
  We identify $\gamma$ with $\ZZ \backslash \RR$ so that the
  restriction of the symplectic bundle to $\gamma$ is identified with
  $\ZZ \backslash (\RR \times \RR^4)$ where $\ZZ$ acts diagonally on
  $\RR \times \RR^4$, $n \cdot ( t, v) = (n+t, (A_\gamma)^n v)$, with
  $A_\gamma = \rho( \gamma)$ being the diagonal element $\diag(
  e^{l_\g}, e^{k_\g}, e^{-l_\g}, e^{-k_\g} )$.  Furthermore, the
  restriction of the oriented Lagrangian reduction $L^{s}_{+}(
  \varepsilon \otimes \rho)$ to the geodesic $\gamma$ is flat and
  hence of the form $\ZZ \backslash ( \RR \times \LZPlus)$.

  A trivialization of the symplectic bundle over $\gamma$ is given by
  an equivariant map $H : \RR \to \Sp(4, \RR)$, i.e.\ $H( t+1) = H(t)
  A_{\gamma}^{-1}$ (see Remark~\ref{rem_trivialization}).  Such a
  trivialization induces a trivialization of the Lagrangian bundle
  $L^s_{+}( \varepsilon \otimes \rho)|_\gamma$ if furthermore for all
  $t$ the element $H(t)$ stabilizes $\LZPlus$.  We now provide a
  ``canonical'' trivialization in our situation.  For this let $A =
  \diag( e^l, e^k, e^{-l}, e^{-k})$ be a diagonal element and let us
  denote by $L_A$ the Lagrangian bundle $\ZZ \backslash ( \RR \times
  \LZPlus)$ where $n \cdot ( t, v) = (n+t, A^n v)$. Then the
  continuous map
  \[ H(A, \cdot) : \RR \longrightarrow \Sp(4, \RR)\sep{} t \longmapsto
  H(A,t) = \diag( e^{-tl}, e^{-tk}, e^{tl}, e^{tk})\] provides a
  trivialization of $L_A$.

  \medskip

  The homotopy class $h_*( \gamma)$ we are calculating comes from two
  trivializations $g_l|_\gamma, g_r|_\gamma : L_+^s( \varepsilon
  \otimes \rho)|_\gamma \to \gamma \times \LZPlus$, and
  \[ g_l|_\gamma \circ (g_{r} |_\gamma)^{-1} (t, v) = (t ,
  h(\gamma(t)) v),\] for $(t,v) \in \gamma \times \LZPlus$.

  With the trivialization $H(A_\gamma, \cdot)$ of $L_+^s( \varepsilon
  \otimes \rho)|_\gamma$ given above, we have
  \[ g_\star |_\gamma \circ H( A_\gamma, \cdot)^{-1} (t,v) = (t,
  M_\star(t) v), \text{ for } \star=l,r.\] This means that in $\pi_1(
  \GL^+(2, \RR)) = \ZZ$ we have the equality
  \[ h_*(\gamma) = [M_l] - [M_r].\]
  
  We now prove that both $[M_l]$ and $[M_r]$ are trivial.

  By construction of the hybrid representation $\rho = \rho_l* \rho_r$
  in Section~\ref{sec:the_rep_hybrid} we know that $A_\gamma =
  \rho_l(\gamma) =\rho_{irr}(\gamma) = \phi_{irr}\circ \iota(\gamma)$,
  where $\iota: \pi_1(\Sigma) \to \SL(2,\RR)$ is a discrete embedding
  and $\phi_{irr}: \SL(2,\RR) \to \Sp(4,\RR)$ is the irreducible
  representation.  The trivialization given in the proof of
  Lemma~\ref{lem:homogenousTrivial} and the formula for $H( \cdot,
  \cdot)$ imply that $M_l$ is (homotopic to) the constant map, thus
  $[M_l]=0$.
  
  To compute $[M_r] = 0$ we have to consider $A_\gamma =
  \rho_r(\gamma) =\rho_{r,1}(\gamma)$, where $(\rho_{r,s})_{s \in
    [0,1]}$ is a continuous path of maximal representations with
  $\rho_{r,0} = \rho_\Delta = \phi_\Delta \circ \iota$ and, for all
  $s$, $\rho_{r,s}(\gamma)$ is diagonal and where $\iota$ is as above
  and $\phi_\Delta: \SL(2, \RR) \to \Sp(4,\RR)$ is the diagonal
  embedding.  Thus the family of changes of trivializations $g_{r,s}
  \circ H( { \rho}_{r,s} ( \gamma),\cdot)^{-1}$, $s\in [0,1]$, provide
  a homotopy from the loop $M_r = M_{r,1}$ to the loop $M_{r,0}$,
  which is the constant map. Therefore $[M_r] = 0$.

  \medskip

  We now turn to the proof of the first equality: $h_*(T_{x}^{1}
  \Sigma) = 1$.

  In contrast to the previous calculation, here no equivariance
  properties are to be satisfied, but the trivializations we choose
  will not be that natural.

  We identify $T_{x}^{1} \Sigma$ with the group $\PSO(2)$, via $T^1
  \widetilde{ \Sigma} \cong \PSL(2, \RR)$, and identify it also with
  the boundary $\partial \pi_1(\Sigma)$, via the projection $T^1
  \widetilde{ \Sigma} \cong \partial \pi_1(\Sigma)^{(3+)} \to \partial
  \pi_1(\Sigma)\sep{} (t^s, t, t^u) \mapsto t^s$. We can suppose that
  under these identifications the attractive fixed point
  $t^s_{\gamma}$ of $\gamma$ is sent to $[\id_2]$ in $\PSO(2)$ whereas
  the repulsive fixed point $t^u_{\gamma}$ is sent to $[J]=\left[
    \left(
      \begin{array}{cc}
        0 & -1 \\ 1 & 0
      \end{array}
    \right)\right]$.

  Since we are working with the representation $\varepsilon \otimes
  \rho$, the flat $\Sp(4, \RR)$-bundle over $T^{1}_{x} \Sigma \cong
  \PSO(2)$ is the quotient
  \[ \SO(2) \times_{\{ \pm 1\}} \Sp(4, \RR) = \{ \pm 1\} \backslash
  (\SO(2) \times \Sp(4, \RR))\] of the trivial bundle over $\SO(2)$ by
  the group $\{ \pm 1\}$, where the action is given by
  \[ (-1) \cdot (s,g) = (-s, -g).\] The oriented Lagrangian reduction
  $L_+^s( \varepsilon \otimes \rho) |_{\PSO(2)}$ is given by the
  positive continuous curve associated with $\rho$:
  \[ \SO(2) \to \PSO(2) \cong \partial \pi_1(\Sigma)
  \overset{\xi_+}{\longrightarrow} \mathcal{L}_+\] into the space of
  oriented Lagrangians.

  A trivialization of the bundle $\SO(2) \times_{\{ \pm 1\}} \Sp(4,
  \RR)$ is then a $\{ \pm 1\}$-equivariant map
  \[ g : \SO(2) \longrightarrow \Sp(4, \RR).\] This trivialization
  induces furthermore a trivialization of the Lagrangian reduction
  $L_+^s( \varepsilon \otimes \rho) |_{\PSO(2)}$ if for all $\alpha$
  in $\SO(2)$
  \[ g(\alpha) \cdot \xi_+(\alpha) = \LZPlus^s.\] We observe that
  \[ \xi_+(\id_2) = \LZPlus^s \text{ and } \xi_+\left(
    \begin{array}{cc}
      0 & -1 \\ 1 & 0
    \end{array}
  \right) = \xi_+(J) = \LZPlus^{u},\] where $\LZPlus^s = \langle e_1,
  e_2 \rangle$ and $\LZPlus^u = \langle e_3, e_4 \rangle$ with $e_1,
  \dots, e_4$ the standard symplectic basis of $\RR^4$.

  \begin{lem}
    Let $\eta : \SO(2) \to \mathcal{L}_+$ be a continuous, $\{ \pm
    1\}$-invariant, positive curve. Hence $\eta$ defines a Lagrangian
    reduction $L_\eta$ of the bundle $\SO(2) \times_{\{ \pm 1\}}
    \Sp(4, \RR)$.

    Suppose that
    \[ \eta(\id_2) = \LZPlus^s \quad \text{and} \quad \eta(J) =
    \LZPlus^u.\] Then for all $\alpha$, $\eta(\alpha)$ and
    $\phi_\Delta(\alpha) \cdot \LZPlus^u$ are transverse
    Lagrangians. This means that there exists a unique symmetric $2$
    by $2$ matrix $M(\alpha)$ such that:
    \[ \left(
      \begin{array}{cc}
        \id_2 & 0 \\ M(\alpha) & \id_2
      \end{array} \right) \phi_\Delta(\alpha)^{-1} \cdot \eta(\alpha) = \LZPlus^s.\]
    The map
    \[ \beta_\eta : \SO(2) \longto \Sp(4, \RR)\sep{} \alpha
    \longmapsto \left(
      \begin{array}{cc}
        \id_2 & 0 \\ M(\alpha) & \id_2
      \end{array} \right) \phi_\Delta(\alpha)^{-1}
    \]
    is a trivialization of $L_\eta$.
  \end{lem}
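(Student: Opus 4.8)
The plan is to treat the three assertions of the lemma in turn --- transversality of $\eta(\alpha)$ with $\phi_\Delta(\alpha)\cdot\LZPlus^u$, then the existence, uniqueness and symmetry of $M(\alpha)$, then the fact that $\beta_\eta$ is a trivialization --- the transversality statement being the only substantial one. For that, the first step is to recognize the auxiliary curve $\alpha\mapsto\phi_\Delta(\alpha)\cdot\LZPlus^u$ as a reparametrized diagonal Fuchsian positive curve. In the symplectic identification $\RR^4\cong V_0\otimes W$ from the construction of twisted diagonal representations (Section~\ref{sec_standard}), the map $\phi_\Delta$ becomes $g\mapsto g\otimes\id_W$ and one has $\LZPlus^s=\langle v_1\rangle\otimes W$, $\LZPlus^u=\langle v_2\rangle\otimes W$ for a basis $(v_1,v_2)$ of $V_0$; hence $\phi_\Delta(\alpha)\cdot\LZPlus^u=(\alpha\langle v_2\rangle)\otimes W$, i.e.\ the composition of the orbit map $\alpha\mapsto\alpha\langle v_2\rangle$ with $\mu\colon\ell\mapsto\ell\otimes W$, which is the limit curve of a diagonal Fuchsian representation --- a maximal one --- and therefore positive (Theorem~\ref{thm:positivecurve}). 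As $\eta$ is $\{\pm1\}$-invariant it descends to a positive curve $\bar\eta$ on $\PSO(2)\cong\RR/\pi\ZZ$; transporting $\mu$ to $\PSO(2)$ along the (orientation preserving) Fuchsian identification yields a positive curve $\nu$ on $\RR/\pi\ZZ$ with $\phi_\Delta(\alpha)\cdot\LZPlus^u=\nu(\alpha+[\pi/2])$, where $[0],[\pi/2]$ correspond to $\id_2,J$ and, by hypothesis, $\bar\eta([0])=\nu([0])=\LZPlus^s$ and $\bar\eta([\pi/2])=\nu([\pi/2])=\LZPlus^u$. So everything reduces to: $\bar\eta(p)\pitchfork\nu(p+[\pi/2])$ for all $p\in\RR/\pi\ZZ$.

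The two marked points $[0],[\pi/2]$ cut $\RR/\pi\ZZ$ into two arcs of equal length $\pi/2$, and $p\mapsto p+[\pi/2]$ is the half-turn, so for $p\notin\{[0],[\pi/2]\}$ the points $p$ and $p+[\pi/2]$ lie in opposite arcs. Writing each Lagrangian transverse to $\LZPlus^u$ as the graph of a symmetric form on $\LZPlus^s$ (using $\LZPlus^u\cong(\LZPlus^s)^{\ast}$ via $\omega$), positivity of $\bar\eta$ together with $\bar\eta([0])=\LZPlus^s$, $\bar\eta([\pi/2])=\LZPlus^u$ forces, by Definition~\ref{def:positivetriple}, the form of $\bar\eta(p)$ to be definite, with a sign determined by which of the two arcs contains $p$; likewise the form of $\nu(p+[\pi/2])$ is definite, and since $p+[\pi/2]$ lies in the complementary arc, of the \emph{opposite} sign. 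A positive definite minus a negative definite form is nondegenerate, so $\bar\eta(p)$ and $\nu(p+[\pi/2])$ meet only in $0$, hence are transverse. (One also uses here that values of a positive curve are pairwise transverse, which guarantees that $\bar\eta(p)$ and $\nu(p+[\pi/2])$ are indeed transverse to $\LZPlus^u$ and to $\LZPlus^s$ when $p$ avoids the marked points.) The two remaining values $p\in\{[0],[\pi/2]\}$ are immediate since $\LZPlus^s\pitchfork\LZPlus^u$.

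Granting transversality, $\phi_\Delta(\alpha)^{-1}\cdot\eta(\alpha)$ is transverse to $\LZPlus^u$, hence the graph of a unique linear map $\LZPlus^s\to\LZPlus^u$, which is symmetric as the graph is Lagrangian; writing it as $-M(\alpha)$ produces a unique symmetric $M(\alpha)$, continuous in $\alpha$ by continuity of $\eta$, and the symplectic matrix $\left(\begin{smallmatrix}\id_2&0\\M(\alpha)&\id_2\end{smallmatrix}\right)$ carries $\phi_\Delta(\alpha)^{-1}\cdot\eta(\alpha)=\{(u,-M(\alpha)u)\}$ onto $\{(u,0)\}=\LZPlus^s$, which is the displayed identity. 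To conclude that $\beta_\eta$ is a trivialization I would use Remark~\ref{rem_trivialization}: since $\eta$ is $\{\pm1\}$-invariant and $-\id_4=\phi_\Delta(-\id_2)$ fixes every subspace, $M((-\id_2)\alpha)=M(\alpha)$ and therefore $\beta_\eta((-\id_2)\alpha)=\beta_\eta(\alpha)(-\id_4)$, which is precisely the equivariance making $\beta_\eta$ a trivialization of $\SO(2)\times_{\{\pm1\}}\Sp(4,\RR)$; moreover $\beta_\eta(\alpha)\cdot\eta(\alpha)=\LZPlus^s$ is constant by construction, so $\beta_\eta$ trivializes the Lagrangian reduction $L_\eta$, the orientations matching because $M(\id_2)=0$ and $\beta_\eta(\id_2)=\id_4$.

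The main obstacle is the transversality step, and inside it the bookkeeping needed to ensure that ``positive triple for $\bar\eta$'' and ``positive triple for $\nu$'' genuinely select graph forms of opposite signs: this requires pinning down the orientation conventions on $\partial\pi_1(\Sigma)$, on $\PSO(2)$, and on $\PP(V_0)$. Once they are aligned, the elementary observation that $p$ and $p+[\pi/2]$ always lie in opposite arcs of $\{[0],[\pi/2]\}\subset\RR/\pi\ZZ$, together with ``positive definite $-$ negative definite is nondegenerate'', makes the transversality --- and hence the whole lemma --- robust.
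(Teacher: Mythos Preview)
Your proof is correct and rests on the same key observation as the paper's: for $\alpha\notin\{\pm\id_2,\pm J\}$, both $\eta(\alpha)$ and $\phi_\Delta(\alpha)\cdot\LZPlus^u$ are graphs of definite symmetric forms of \emph{opposite} signs (relative to the pair $(\LZPlus^s,\LZPlus^u)$), hence transverse. The paper arrives at this more directly, though: rather than packaging $\alpha\mapsto\phi_\Delta(\alpha)\cdot\LZPlus^u$ as a shifted diagonal-Fuchsian positive curve $\nu$ and invoking Theorem~\ref{thm:positivecurve}, it simply notes that when $(\id_2,\alpha,J)$ is positively oriented the single triple $(\LZPlus^u,\phi_\Delta(\alpha)\cdot\LZPlus^u,\LZPlus^s)$ is positive --- a fact one reads off the explicit formula for $\phi_\Delta$ in coordinates. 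This bypasses exactly the orientation bookkeeping you flag as the main obstacle, and reduces the whole proof to a short paragraph. Your auxiliary curve $\nu$ and half-turn shift are not wrong, just more machinery than needed once one is willing to check that one triple by hand.
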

  \begin{proof} The only point to prove is that $\eta(\alpha)$ and
    $\phi_\Delta(\alpha) \cdot \LZPlus^u$ are transverse. This is
    immediate for $\alpha = \pm \id_2$ and for $\alpha = \pm J$; for
    other $\alpha$, for example when $(\id_2, \alpha, J)$ is
    positively oriented, the positivity of the triples $(\LZPlus^s,
    \eta(\alpha), \LZPlus^u)$ and $(\LZPlus^u,
    \phi_\Delta(\alpha)\cdot \LZPlus^u, \LZPlus^s)$ implies that
    $\eta(\alpha)$ is the graph of $f: \LZPlus^u \to \LZPlus^s$ with
    $\omega(\cdot, f \cdot)$ positive definite and that
    $\phi_\Delta(\alpha)\cdot \LZPlus^u$ is the graph of $g: \LZPlus^u
    \to \LZPlus^s$ with $\omega(\cdot, g \cdot)$ negative
    definite. Since $\omega(v, f v) = \omega(v, g v)$ implies $v=0$,
    the transversality of $\eta(\alpha)$ and $\phi_\Delta(\alpha)\cdot
    \LZPlus^u$ follows.
  \end{proof}
 
  \medskip

  Going back to the proof of Proposition~\ref{prop:calc_lift_in_Hone},
  the trivializations $\beta_\eta$ enable us to write $h_*(T^{1}_{x}
  \Sigma)$ as the difference:
  \[ h_*(T^{1}_{x} \Sigma) = [N_l] - [N_r],\] where, for $\star =l,r$,
  $N_\star$ is defined as the change of trivializations $g_\star
  |_{T^{1}_{x} \Sigma} \circ \beta_{\xi_+}^{-1}$.

  Again $N_l$ and $N_r$ are in fact homotopic to the corresponding
  changes of trivializations we obtain from the representations
  $\phi_{irr} \circ \iota$ and $\phi_{\Delta} \circ \iota$
  respectively. It is then immediate that $N_r$ is homotopic to the
  constant map and that $N_l$ is homotopic to the map
  \begin{align*}
    \PSO(2) & \longrightarrow \mathrm{Stab}( L^s_{0+} ) \\
    [\alpha] & \longmapsto \left(
      \begin{array}{cc}
        \id_2 & 0 \\ -M(\alpha) & \id_2
      \end{array} \right) \phi_\Delta(\alpha)^{-1} \phi_{irr}(\alpha), 
  \end{align*}
  where $M(\alpha)$ is the only $2\times 2$ symmetric matrix such that
  this product belongs to $\mathrm{Stab}( \LZPlus^s )$. A direct
  calculation with the formulas given in
  Facts~\ref{facts:irreducible}.(\ref{item:splitting}) and
  \ref{facts:diagonal}.(\ref{item:embedding_psi}) gives for $\alpha =
  \left (
    \begin{array}{cc}
      \cos( \theta) & -\sin( \theta) \\  
      \sin( \theta) & \cos( \theta)
    \end{array}
  \right) $
  \[ \left(
    \begin{array}{cc}
      \id_2 & 0 \\ -M(\alpha) & \id_2
    \end{array} \right)  \phi_\Delta(\alpha)^{-1} \phi_{irr}(\alpha) = \left(
    \begin{array}{cc}
      A( \theta) & * \\ 0 & \transpose{}\! A( \theta)^{-1}
    \end{array} \right),
  \]
  where \[A(\theta) = \frac{4}{3+\cos^2( 2\theta)}
  \left(\begin{array}{cc} \cos( 2\theta) & -\frac{ \sqrt{3}}{2} \sin(2
      \theta) \\ \frac{ \sqrt{3}}{2} \sin(2 \theta) & \cos(2 \theta)
    \end{array}\right).\]
  Hence a path in $\GL^+(2, \RR)$ representing $N_l$ is $[0, \pi] \to
  \GL^+(2, \RR)\sep{} \theta \mapsto A( \theta)$.  It follows that
  $h_*( T_{x}^{1} \Sigma) = 1$.
\end{proof}

\begin{remark}
  The calculation for negatively adjusted pairs would amount to
  conjugating the map $h$ by $\diag(1,1,-1,-1)$ hence to changing
  $h_*$ in $-h_*$. This leads to the announced value for the Euler
  class (Remark~\ref{rem:negatively_adjusted}).
\end{remark}

\begin{remark}
  The above computation of the Euler class for hybrid representations
  hints towards a more general gluing formula for topological
  invariants of representations for surfaces with boundary.
\end{remark}

\subsection{Zariski density}
\label{sec:zariski-dens-prop}
Here we prove Theorem~\ref{thmintro_Zdensity} of the introduction.
First we state a lemma describing the possible Zariski closures of
maximal representations.

\begin{lem}
  \label{lem:possble_Zcl}
  Let $\rho: \pi_1(\Sigma) \to \Sp(4, \RR)$ be a maximal
  representation. Then the identity component of the Zariski closure
  of $\rho( \pi_1( \Sigma))$ is (up to conjugation)
  \begin{itemize}
  \item $\Sp(4,\RR)$,
  \item $\SL(2, \RR) \times \SL(2,\RR)$,
  \item the diagonal $\SL(2, \RR)$,
  \item $\SL(2, \RR) \times_{\{\pm 1\}} \SO(2)$, the product of the
    diagonal $\SL(2,\RR)$ and the identity component of its
    centralizer,
  \item or the irreducible $\SL(2, \RR)$.
  \end{itemize}
\end{lem}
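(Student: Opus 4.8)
The plan is to analyze the possible Zariski closures $G_\rho$ of $\rho(\pi_1(\Sigma))$ using the two structural constraints that a maximal representation into $\Sp(4,\RR)$ must satisfy: first, $\rho$ is reductive (being a discrete faithful semisimple representation; in any case maximal representations are reductive), so the identity component $H := (G_\rho)^\circ$ is a connected reductive subgroup of $\Sp(4,\RR)$; second, by Theorem~\ref{thm:positivecurve} the limit curve $\xi : \partial\pi_1(\Sigma) \to \Ll$ is positive, and its image spans $\RR^4$ (a positive triple of Lagrangians is in particular a triple of pairwise transverse Lagrangians, whose union spans). Hence $H$ acts on $\RR^4$ without a nonzero fixed vector and without a proper invariant subspace that contains the limit curve; more importantly the representation $H \hookrightarrow \Sp(4,\RR)$ has no trivial subrepresentation and cannot be contained in a proper parabolic. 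So I would first enumerate the connected reductive subgroups of $\Sp(4,\RR)$ up to conjugacy (equivalently, subalgebras of $\mathfrak{sp}(4,\RR)$ reductive in $\mathfrak{sp}(4,\RR)$), and then eliminate those incompatible with maximality.

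The enumeration is a finite case check on the $4$-dimensional symplectic representation of $\mathfrak h = \mathrm{Lie}(H)$. Decompose $\RR^4$ (or its complexification) into $\mathfrak h$-isotypic pieces. If $\mathfrak h$ is semisimple and acts irreducibly, the only possibility is the irreducible $\mathfrak{sl}(2,\RR)$ (the $4$-dimensional irreducible $\SL(2,\RR)$), giving the last bullet. If $\mathfrak h$ is semisimple but $\RR^4$ is reducible, the symplectic form forces a decomposition into two transverse Lagrangian planes exchanged or preserved; the reductive-in-$\mathfrak{sp}$ condition together with the absence of a nonzero invariant vector forces $\RR^4 = W_1 \oplus W_2$ as a sum of two symplectic planes (giving $\SL(2,\RR)\times\SL(2,\RR)$, or the diagonal $\SL(2,\RR)$ if the two $\SL(2,\RR)$-factors are identified), or $\RR^4 = V_0 \otimes \RR^2$ with $\mathfrak h$ acting on the first factor only, whose connected centralizer is $\SO(2)$ — but a purely semisimple $H$ would then be just the diagonal $\SL(2,\RR)$ again. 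Allowing $\mathfrak h$ to have a central torus: the only torus that can occur in the centralizer of a nontrivial semisimple part compatible with maximality is the $\SO(2)$ centralizing the diagonal $\SL(2,\RR)$ (this is exactly the computation recorded in Section~\ref{sec_standard}: the centralizer of $\phi_\Delta(\SL(2,\RR))$ is $\O(2)$, with identity component $\SO(2)$), yielding $\SL(2,\RR)\times_{\{\pm1\}}\SO(2)$. The cases where $H$ is a torus, or where $H$ preserves a line or a non-symplectic plane, are excluded because then $\rho$ would preserve a point in a proper parabolic, contradicting that the positive limit curve has full image; and the case $H = \Sp(4,\RR)$ is the generic one. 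Finally one checks that no other connected reductive subgroup of $\Sp(4,\RR)$ survives: the maximal connected subgroups of $\Sp(4,\RR)$ that are reductive are, up to conjugacy, $\GL(2,\RR)$ (parabolic Levi, excluded by maximality since it preserves a Lagrangian), $\SL(2,\RR)\times\SL(2,\RR)$, $\U(2)$ (maximal compact, excluded since a maximal representation is a discrete faithful embedding of a surface group and cannot land in a compact), and the irreducible $\SL(2,\RR)$; intersecting the list of all reductive subgroups with the maximality constraint yields precisely the five items.

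The main obstacle is making the case analysis genuinely exhaustive rather than merely plausible: one must be careful to list \emph{all} conjugacy classes of connected reductive subgroups of $\Sp(4,\RR)$ (not just the maximal ones) and check each against the positivity/maximality constraint, since intermediate subgroups such as $\SL(2,\RR)\times\SO(2) \subset \SL(2,\RR)\times\SL(2,\RR)$ with the $\SO(2)$ embedded in the \emph{second} factor, or various tori, also need to be ruled out (the $\SO(2)$ in the second factor would act with eigenvalues of modulus one on a symplectic plane $W_2$, so $\rho$ restricted there is bounded — but a maximal representation restricted to a subsurface is again maximal by Theorem~\ref{thm:gluing_maximal}, forcing unboundedness, a contradiction; so any $\SO(2)$ appearing must centralize \emph{all} of the semisimple part, which pins it to the diagonal case). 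I would organize this via the restriction-to-subsurface maximality argument as the uniform tool for killing all "bounded-direction" subgroups, and the full-span property of the positive limit curve as the uniform tool for killing all "parabolic" subgroups; once those two principles are in place the remaining check is the routine classification of $\mathfrak{sl}(2)$-type subalgebras of $\mathfrak{sp}(4,\RR)$ together with their centralizers.
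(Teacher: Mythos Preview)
Your route differs from the paper's: the paper does not enumerate reductive subgroups and then exclude by geometric means. Instead it writes the identity component $L$ as $R\ltimes U$, decomposes $R=Z\cdot S_{nc}\cdot S_c$, and shows that the Toledo number $\tau(\rho)=2(g-1)$ factors through the map $\pi_1(S_{nc})\to\pi_1(\Sp(4,\RR))\cong\ZZ$ (the contributions of $Z$ and $S_c$ vanish). This forces $\pi_1(S_{nc})$ to have a $\ZZ$ factor, hence $S_{nc}$ is a product of at most two rank-one Hermitian groups; a dimension count leaves only $\mathfrak{sl}(2,\RR)$ factors. The embeddings $\mathfrak{sl}(2,\RR)\hookrightarrow\mathfrak{sp}(4,\RR)$ are then listed by their symplectic module structure on $\RR^4$, and the induced map on $\pi_1$ is computed for each; only the two modules inducing multiplication by $2$ survive the constraint $\tau(\rho)=2(g-1)$. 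The remaining possibilities for $\mathfrak{l}$ are read off from the $\mathfrak{s}_{nc}$-module decomposition of $\mathfrak{sp}(4,\RR)$, and $\mathfrak{u}=0$ drops out. Everything is driven by a single principle, the Toledo number as an obstruction class.

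Your sketch has two genuine gaps. First, your exclusion of ``$\SO(2)$ in the second $\SL(2,\RR)$ factor'' invokes Theorem~\ref{thm:gluing_maximal}, but that theorem concerns restriction to a \emph{subsurface} of $\Sigma$, not projection to a \emph{factor} of the target; these are unrelated operations. What you actually need is additivity of the Toledo invariant under products of Hermitian targets (so the projection to the second $\SL(2,\RR)$ must itself be maximal, hence unbounded)---which is exactly the $\pi_1$ computation the paper performs. Second, your ``positive curve spans $\RR^4$'' argument does rule out a nonzero \emph{fixed vector} (since Anosov holonomies have no eigenvalue~$1$), but it does not by itself rule out a fixed \emph{Lagrangian}: a single $\rho(\gamma)$ generically has four invariant Lagrangians, and nothing in the span condition forces a common invariant Lagrangian to coincide with some $\xi(t)$. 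To exclude $\GL(2,\RR)$ you again need the Toledo computation (the module $V_2\oplus V_{2}^{*}$ induces the zero map on $\pi_1$, so $\tau(\rho)=0$). Once you replace both uses of the positive curve and of Theorem~\ref{thm:gluing_maximal} by the appropriate Toledo arguments, your outline converges to the paper's proof.
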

\begin{remark}
  A similar statement can be found in
  \cite[Prop.~4.8]{Bradlow_GarciaPrada_Gothen_inpreparation}.
\end{remark}
\begin{proof}
  This proof uses basic terminology and concepts in the theory of Lie
  groups as available in \cite{Knapp_LieGrp}.

  Let $L$ be the identity component of the Zariski closure of
  $\rho(\pi_1( \Sigma))$ and suppose that $L \neq G=\Sp(4, \RR)$. Up
  to taking a finite cover one can suppose $\rho(\pi_1( \Sigma))
  \subset L$. The Toledo number $\tau( \rho)$ is the image under the
  map $\pi_1(L) \to \pi_1(G)= \ZZ$ of the obstruction class $o(\rho)$
  of $\rho: \pi_1( \Sigma) \to L$. The group $L$ is the semidirect
  product $R \ltimes U$ of its unipotent radical $U$ and a reductive
  group $R < G$. Since $U$ is contractible, $\pi_1(L) = \pi_1(R)$ and
  the obstruction class $o( \rho_R)$ of the representation $\rho_R:
  \pi_1(\Sigma) \to R = L/U$ equals $o( \rho)$.

  The reductive group $R$ is the almost product of its center $Z$ and
  a semisimple group $S$, i.e.\ $R = Z \cdot S$. Let $S_c$ the product
  of the simple compact factors of $S$ and $S_{nc}$ the product of the
  noncompact ones so that $S$ is the almost product of $S_{nc}$ and
  $S_c$, i.e.\ $S = S_{nc} \cdot S_c$.

  Again, up to taking a finite cover we can suppose that $\rho_R$
  lifts to a representation $(\rho_Z, \rho_{nc}, \rho_c) : \pi_1(
  \Sigma) \to Z \times S_{nc} \times S_c$ and that $Z$ is
  connected. Hence the Toledo number $\tau( \rho)$ is the image of the
  obstruction $(o(\rho_Z), o(\rho_{nc}), o(\rho_c))$ under the map
  $\pi_1(Z) \times \pi_1( S_{nc}) \times \pi(S_c) \to \pi_1(G)$
  induced by the multiplication map $Z \times S_{nc} \times S_c \to
  G$. Since $Z$ is connected and abelian, $\hom(\pi_1( \Sigma),Z)
  \cong Z^{2g}$ is connected and $o( \rho_Z) =0$. Since $\pi_1( S_c)$
  is finite, its image in $\pi_1(G) \cong \ZZ$ is $\{ 0\}$. These last
  remarks imply that $\tau( \rho)$ is the image of $o( \rho_{nc})$
  under $\pi_1( S_{nc}) \to \pi_1(G)$.

  Since $\tau(\rho_{nc}) \neq 0$ it follows that the abelian group
  $\pi_1( S_{nc})$ has a $\ZZ$ factor. The real rank of $S_{nc}$ must
  be $1$ or $2$ since it is obviously bounded by the rank of $G$.

  \smallskip

  Suppose first that the rank of $S_{nc}$ is $1$. The restriction on
  the fundamental group and classification give that $S_{nc}$ is a
  cover of $\PU(1,n)$. Since the dimension of $\mathfrak{g}=
  \mathfrak{sp}(4, \RR)$ is $10$ we have $n \leq 2$. If the
  $8$-dimensional Lie algebra $\mathfrak{su}(1,2)$ were a subalgebra
  of the $10$-dimensional Lie algebra $\mathfrak{g}$, this would give
  a $\mathfrak{su}(1,2)$-module of dimension $2=10-8$ hence a morphism
  $\mathfrak{su}(1,2) \to \mathfrak{gl}(2, \RR)$, this is
  impossible. Hence $\mathfrak{s}_{nc} \cong \mathfrak{su}(1,1) \cong
  \mathfrak{sl}( 2, \RR)$ and $S_{nc}$ is a finite cover of $\PSL(2,
  \RR)$.

  Embeddings of $\mathfrak{sl}( 2, \RR)$ into $\mathfrak{g}$ are in
  correspondence with $\mathfrak{sl}( 2, \RR)$-modules of dimension
  $4$ together with an invariant symplectic form. Their list is the
  following\footnote{Here $V_n$ denote the irreducible $\mathfrak{sl}(
    2, \RR)$-module of dimension $n$, $V_{2k}$ has an invariant
    symplectic form $\omega_{2k}$ and for any vector space $V$, $V
    \oplus V^*$ has a natural symplectic form given by the pairing
    between $V$ and $V^*$. Hence the two modules $(V_2, \omega_2)
    \oplus (V_2, \omega_2)$ and $V_2 \oplus V_{2}^{*}$ although
    isomorphic as $\mathfrak{sl}( 2, \RR)$-modules are not isomorphic
    \emph{symplectic} $\mathfrak{sl}( 2, \RR)$-modules.}
  \begin{enumerate}
  \item \label{itempfZcl1} $(V_4, \omega_4)$, $(V_2, \omega_2) \oplus
    (V_2, \omega_2)$
  \item \label{itempfZcl2} $(V_2, \omega_2) \oplus V_1 \oplus
    V_{1}^{*}$
  \item \label{itempfZcl3} $V_2 \oplus V_{2}^{*}$, $V_1 \oplus
    V_{1}^{*}\oplus V_1 \oplus V_{1}^{*}$.
  \end{enumerate}
  Note that in any case there is a corresponding morphism $\SL(2, \RR)
  \to G$ so that (up to taking a finite cover of $\Sigma$) one can
  always suppose that $S_{nc} = \SL(2, \RR)$. The obstruction
  $o(\rho_{nc})$ is an integer bounded in absolute value by $g-1$ by
  Milnor-Wood inequality \cite[Th.~1]{Milnor}. However in the above
  list, the modules in (\ref{itempfZcl1}) induce multiplication by $2$
  from $\pi_1( \SL(2, \RR)) \cong \ZZ$ to $\pi_1(G) \cong \ZZ$, the
  module in (\ref{itempfZcl2}) gives the identity $\ZZ \to \ZZ$ and
  the module in (\ref{itempfZcl3}) the zero map. Since $\tau(\rho)$ is
  $2(g-1)$ this shows that $o(\rho_{nc}) = g-1$ and that the embedding
  $S_{nc} \to G$ is induced by one of the modules in
  (\ref{itempfZcl1}).

  It remains to analyze what $\mathfrak{l} = \mathrm{Lie}(L)$ could
  be. Clearly it is a sub-$\mathfrak{s}_{nc}$-module of $\mathfrak{g}$
  so we first need to describe $\mathfrak{g}$ as a $\mathfrak{sl}(2,
  \RR)$-module. In the first case of (\ref{itempfZcl1}), one easily
  find $\mathfrak{g}= V_3 \oplus V_7 = \mathfrak{s}_{nc} \oplus V_7$
  and hence $\mathfrak{l} = \mathfrak{r} = \mathfrak{s}_{nc}$ and $L$
  is the irreducible $\SL(2, \RR)$. In the second case of
  (\ref{itempfZcl1}), $\mathfrak{g}= V_3 \oplus V_1 \oplus V_3 \oplus
  V_3 = \mathfrak{s}_{nc} \oplus \mathfrak{z} \oplus V_3 \oplus V_3$
  where $\mathfrak{z} \cong \mathfrak{so}(2)$ is the centralizer of
  $\mathfrak{s}_{nc}$ in $\mathfrak{g}$. In this case one easily sees
  that the only nilpotent subalgebra invariant by $\mathfrak{s}_{nc}$
  is $\mathfrak{z}$, hence $\mathfrak{u}=\{ 0 \}$ and $\mathfrak{l} =
  \mathfrak{r}$. Since (by construction) $\mathfrak{s}_{nc} \subset
  \mathfrak{r} \subset \mathfrak{s}_{nc} \oplus \mathfrak{z}$ there
  are only two possibilities for $L$: either the diagonal $\SL(2,\RR)$
  or the product of this $\SL(2,\RR)$ and the identity component of
  its centralizer, i.e.\ $L= \SL(2, \RR) \times_{\{\pm 1\}} \SO(2)$.

  \smallskip

  We turn now to the case where $S_{nc}$ is of rank $2$. If $S_{nc}$
  were simple and a proper subgroup of $\Sp(4,\RR)$ then it had to be
  isomorphic to $\SL(3,\RR)$. This would imply that $\tau(\rho)
  =0$. Thus $S_{nc}$ is a product $S_1 \times S_2$. Again by a
  dimension count $\mathfrak{s}_1 \cong \mathfrak{s}_2 \cong
  \mathfrak{sl}(2, \RR)$. Since the centralizer of $\mathfrak{s}_1$ in
  $\mathfrak{g}$ is big, inspecting the above list
  (\ref{itempfZcl1})-(\ref{itempfZcl3}) implies that the only possible
  structure of $\mathfrak{s}_1$-module on $\RR^4$ is $(V_2, \omega_2)
  \oplus V_1 \oplus V_{1}^{*}$. The similar observation holds also for
  $\mathfrak{s}_2$. Consequently the embedding $\mathfrak{s}_1 \times
  \mathfrak{s}_2 \subset \mathfrak{g}$ is the embedding associated to
  a decomposition of $\RR^4= V \oplus W$ into two symplectic planes:
  \[ \mathfrak{s}_1 \times \mathfrak{s}_2 \cong \mathfrak{sp}(V)
  \times \mathfrak{sp}(W) \subset \mathfrak{sp}(V \oplus W) \cong
  \mathfrak{g}.\] The decomposition of $\mathfrak{g}$ as a
  $\mathfrak{s}_{nc}$-module is now $\mathfrak{g} = \mathfrak{s}_{nc}
  \oplus V \otimes W$ and the equality $\mathfrak{l} =
  \mathfrak{s}_{nc}$ follows. Hence $L$ is the group $\SL(2, \RR)
  \times \SL(2, \RR) \subset G$.
\end{proof}

\begin{remark}
  As a corollary we find that $L$ is always a reductive group of
  Hermitian type, that its centralizer is compact and that the
  embedding $L \to G$ is tight. This is a special case of a general
  result for maximal representations
  \cite[Theorem~4]{Burger_Iozzi_Wienhard_tight}. However this
  additional a priori knowledge on $L$ would not have simplified the
  proof of the lemma much.  Certainly classification of Lie groups can
  be avoided.
\end{remark}

\begin{proof}[Proof of Theorem~\ref{thmintro_Zdensity}]
  Let $\rho$ in $\homMaxFourZero$ be a representation such that
  $e_\gamma(\rho) \neq g(\Sigma)-1$. If $\Sigma' \to \Sigma$ is a
  $k$-fold cover and $\gamma'$ is the simple curve above $\gamma$,
  then $e_{\gamma'}( \rho|_{\pi_1(\Sigma')}) = k e_\gamma(\rho)$ and
  $g(\Sigma')-1 = k(g(\Sigma)-1)$ so that the assumptions of the
  theorem are still satisfied for $\Sigma'$. Therefore, passing to a
  finite cover, we can suppose that the Zariski closure $L$ of $\rho(
  \pi_1(\Sigma))$ is connected.

  We now inspect the different possibilities for $L$ given by
  Lemma~\ref{lem:possble_Zcl}. If $L$ is the irreducible or the
  diagonal $\SL(2,\RR)$ then, by Proposition~\ref{prop:eulerHybrid},
  we would have $e_\gamma( \rho) = g-1$. If $L= \SL(2, \RR)
  \times_{\{\pm 1\}} \SO(2)$ then $\rho$ could be deformed to a
  representation $\rho'$ into the diagonal $\SL(2,\RR)$ and the Euler
  class would be $e_\gamma(\rho) = e_\gamma(\rho') = g-1$. If $L =
  \SL(2, \RR) \times \SL(2, \RR)$ then $\rho = (\rho_1, \rho_2)$ is a
  pair of maximal representations $\pi_1( \Sigma) \to \SL(2,
  \RR)$. Connectedness of the Teichm\"uller space implies that $\rho$
  can be deformed to a representation $\rho'=(\rho_{1}^{\prime},
  \varepsilon \cdot \rho_{1}^{\prime})$ with $\rho_{1}^{\prime}:
  \pi_1( \Sigma) \to \SL(2, \RR)$ maximal and $\varepsilon: \pi_1(
  \Sigma) \to \{ \pm 1\}$. Up to taking a cover, $\rho'$ is a
  representation in the diagonal $\SL(2, \RR)$ and we would have
  $e_\gamma(\rho) = e_\gamma(\rho') = g-1$. Hence the only possibility
  is $L=\Sp(4, \RR)$ proving the Zariski density of $\rho$.
\end{proof}

\section{Action of the mapping class group}
\label{sec:mapgroup}

This section gives the proof of
Theorem~\ref{thm:components_quotient_intro} of the introduction, see
Corollaries~\ref{cor:comp_mod_n>2} and \ref{cor:comp_mod_n=2} below.

\smallskip

It is known that the action of the mapping class group $\map(\Sigma)$
on
\[
\repmax(\pi_1(\Sigma), \Sp(2n,\RR)) = \hommax(\pi_1(\Sigma),
\Sp(2n,\RR))/\Sp(2n,\RR)
\]
is properly discontinuous \cite[Theorem 1.0.2]{Labourie_energy}
\cite[Theorem 1.1]{Wienhard_mapping}. Furthermore the mapping class
group acts naturally on $\h^i(T^1\Sigma; \FF_2)$ and the maps
\begin{equation*}
  sw_i: \repmax(\pi_1(\Sigma), \Sp(2n,\RR)) \longrightarrow \h^i(T^1\Sigma; \FF_2)
\end{equation*}
are equivariant.

Therefore, understanding the action of $\map(\Sigma)$ on the subspaces
of $\h^i(T^1\Sigma; \FF_2)$ where the Stiefel-Whitney classes take
their values in, allows us to determine the number of connected
components of the quotient of $\repmax(\pi_1(\Sigma), \Sp(2n,\RR))$ by
$\map(\Sigma)$.

\begin{lem}
  Let $A = \ZZ \text{ or } \FF_2$. The mapping class group of $\Sigma$
  acts trivially on the image of $\h^2(\Sigma; A)$ in $\h^2(T^1\Sigma;
  A)$.
\end{lem}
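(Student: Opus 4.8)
The plan is to use the Gysin sequence for the circle bundle $\pi : T^1\Sigma \to \Sigma$ to identify the image of $\h^2(\Sigma;A)$ in $\h^2(T^1\Sigma;A)$ as a canonical, topologically defined subgroup, and then to invoke naturality of that sequence under orientation-preserving self-homeomorphisms. First I would recall (from Appendix~\ref{sec:cohomology}, cited in the introduction via Equation~\eqref{eq_gysin}) the Gysin exact sequence associated with the oriented $S^1$-bundle $T^1\Sigma \to \Sigma$, whose Euler class is $\chi(\Sigma)$ times the fundamental class; the relevant segment reads $\h^0(\Sigma;A)\xrightarrow{\cup e}\h^2(\Sigma;A)\xrightarrow{\pi^*}\h^2(T^1\Sigma;A)$. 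Hence the image of $\pi^*:\h^2(\Sigma;A)\to\h^2(T^1\Sigma;A)$ is exactly $\pi^*\bigl(\h^2(\Sigma;A)\bigr)$, which for $A=\ZZ$ is cyclic of order $2g-2$ generated by $\GenTor$, and for $A=\FF_2$ is either $0$ (when $g$ is odd, since then $\chi(\Sigma)$ is even) or $\FF_2$ (when $g$ is even).

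Next I would treat the two coefficient cases. If this image is the zero subgroup there is nothing to prove, so assume it is nontrivial. In that case the image is a finite \emph{cyclic} subgroup of $\h^2(T^1\Sigma;A)$: of order $2g-2$ when $A=\ZZ$ and of order $2$ when $A=\FF_2$. I would then observe that any group homomorphism from a nontrivial finite cyclic group to itself that is \emph{surjective} (equivalently bijective) and arises from a degree-one geometric map must act on it in a constrained way --- and in the $\FF_2$ case the conclusion is immediate since the automorphism group of $\ZZ/2\ZZ$ is trivial. For $A=\ZZ$ one needs slightly more: the mapping class group acts on $\h^2(\Sigma;\ZZ)\cong\ZZ$ trivially because every element of $\map(\Sigma)$ is orientation-preserving, hence preserves the fundamental class $[\Sigma]\in\h_2(\Sigma;\ZZ)$, hence acts as the identity on $\h^2(\Sigma;\ZZ)$ by the universal coefficient theorem and Poincar\'e duality. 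By naturality of $\pi^*$ with respect to the lifted action of $\map(\Sigma)$ on $T^1\Sigma$ (any $\varphi\in\map(\Sigma)$ has a canonical lift $\hat\varphi$ to $T^1\Sigma$, namely its derivative after isotoping to a diffeomorphism, and $\pi\circ\hat\varphi=\varphi\circ\pi$), the square relating $\pi^*$ before and after applying $\varphi$ commutes, so $\hat\varphi^*\circ\pi^* = \pi^*\circ\varphi^* = \pi^*$ on $\h^2(\Sigma;A)$. This is exactly the assertion that $\map(\Sigma)$ acts trivially on $\pi^*\bigl(\h^2(\Sigma;A)\bigr)$.

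The main obstacle, such as it is, is purely bookkeeping: making sure the action of $\map(\Sigma)$ on $\h^*(T^1\Sigma)$ referred to in the statement is indeed the one induced by the derivative lift $\hat\varphi$ (this is what makes the maps $sw_i$ equivariant, as asserted just above the lemma), and checking that $\pi^*$ is natural for this lift, i.e.\ that $\pi\circ\hat\varphi$ is homotopic to $\varphi\circ\pi$ --- which holds because both cover the identity-isotopic-to-$\varphi$ map and the lift is constructed precisely to commute with $\pi$. One should also note the lift $\hat\varphi$ is only well-defined up to isotopy, but that ambiguity does not affect the induced map on cohomology. With these identifications in place the argument is a one-line diagram chase combined with the observation that orientation-preserving maps fix the fundamental class; no further computation is required.
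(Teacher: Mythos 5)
Your proof is correct and follows the same route as the paper's one-line argument: the mapping class group acts trivially on $\h^2(\Sigma;A)$ (being orientation-preserving) and commutes with $\pi^*$, hence acts trivially on the image. One small factual slip worth fixing: since $\chi(\Sigma)=2-2g$ is even for \emph{every} $g$, the mod-$2$ Euler class always vanishes, so $\pi^*$ is injective on $\h^2(\Sigma;\FF_2)$ and the image is always $\FF_2$ (never $0$); this does not affect your argument, since $\mathrm{Aut}(\ZZ/2\ZZ)$ is trivial in any case.
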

\begin{proof}
  This is immediate from the fact that $\map(\Sigma)$ preserves the
  image of $\h^2(\Sigma; A)$ in $\h^2(T^1\Sigma; A)$ and acts
  trivially on $\h^2(\Sigma; A)$.
\end{proof}

For $\h^1(T^1\Sigma; \FF_2)$ the picture is a bit more complicated.
\begin{prop}\label{prop:mapgroup}
  The action of the mapping class group on $\h^1(T^1\Sigma; \FF_2)$
  preserves the two cosets $\h^1(\Sigma; \FF_2)$ and $\h^1(T^1\Sigma;
  \FF_2) \moins\h^1(\Sigma; \FF_2)$.
  \begin{enumerate}
  \item On $\h^1(\Sigma; \FF_2)$ the action of the mapping class group
    has two orbits, $\{0\}$ and $\h^1(\Sigma; \FF_2) \moins \{0\}$.
  \item On $\h^1(T^1\Sigma; \FF_2) \moins\h^1(\Sigma; \FF_2)$ the
    action of the mapping class group has two orbits.
  \end{enumerate}
\end{prop}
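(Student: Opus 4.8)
The plan is to understand $\h^1(T^1\Sigma;\FF_2)$ through the Gysin sequence of the $S^1$-bundle $T^1\Sigma\to\Sigma$ together with the classical identification of the coset $\h^1(T^1\Sigma;\FF_2)\moins\h^1(\Sigma;\FF_2)$ with the set of spin structures on $\Sigma$. First I would recall from Appendix~\ref{sec:cohomology} (and the Gysin sequence \eqref{eq_gysin}) that $\pi^*:\h^1(\Sigma;\FF_2)\to\h^1(T^1\Sigma;\FF_2)$ is injective with image of index $2$, and that the Euler class of $T^1\Sigma$ reduces mod $2$ to $\chi(\Sigma)\equiv 0$, so that $\h^1(T^1\Sigma;\FF_2)\cong\h^1(\Sigma;\FF_2)\oplus\FF_2$ as groups; the nontrivial coset is the complement of $\h^1(\Sigma;\FF_2)$. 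Since the mapping class group acts by bundle automorphisms of $T^1\Sigma\to\Sigma$ covering its action on $\Sigma$, it preserves $\pi^*\h^1(\Sigma;\FF_2)$ and hence also its complement; this gives the first assertion immediately.

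For part (i), on $\h^1(\Sigma;\FF_2)\cong\hom(\pi_1(\Sigma),\FF_2)$ the $\map(\Sigma)$-action is dual to the action on $\h_1(\Sigma;\FF_2)$, which preserves the symplectic intersection form. By Poincaré duality it suffices to show $\map(\Sigma)$ acts transitively on $\h_1(\Sigma;\FF_2)\moins\{0\}$. I would invoke the standard fact that the image of $\map(\Sigma)$ in $\Sp(2g,\FF_2)=\Sp(\h_1(\Sigma;\FF_2))$ is all of $\Sp(2g,\FF_2)$ (realizing Dehn twists as symplectic transvections), and that $\Sp(2g,\FF_2)$ acts transitively on nonzero vectors of a symplectic $\FF_2$-space (an easy Witt-type argument: any nonzero vector is part of a symplectic basis, or is isotropic-but-every-nonzero-vector-in-char-2-has-a-partner). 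Thus the orbits on $\h^1(\Sigma;\FF_2)$ are exactly $\{0\}$ and its complement.

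For part (ii), I would use that $\h^1(T^1\Sigma;\FF_2)\moins\h^1(\Sigma;\FF_2)$ is the set of spin structures on $\Sigma$ (this is exactly the description used later when comparing with Higgs bundle invariants, citing \cite{Atiyah_spin}), and that the $\map(\Sigma)$-action is the natural action on spin structures. The set of spin structures is a torsor over $\h^1(\Sigma;\FF_2)$, and the $\map(\Sigma)$-invariant that separates orbits is the Arf invariant (equivalently, the associated $\FF_2$-quadratic form refining the intersection form). The classical result — due to Mumford, or Johnson's description of the action — is that $\map(\Sigma)$ acts on spin structures with exactly two orbits, distinguished by $\mathrm{Arf}\in\FF_2$ (the $2^{g-1}(2^g+1)$ even ones and the $2^{g-1}(2^g-1)$ odd ones). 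I would cite this and conclude. The main obstacle is purely expository: making precise the identification of the nontrivial coset with spin structures compatibly with the $\map(\Sigma)$-action, so that the Arf-invariant dichotomy applies directly; once that identification is in place (it is already used elsewhere in the paper), the two-orbit statement is a citation. An alternative self-contained route, avoiding spin structures, is to pick an explicit element $v_0$ of the coset, note its $\map(\Sigma)$-stabilizer acts on $\h^1(\Sigma;\FF_2)$ as the stabilizer of the associated quadratic form, and count orbits via $|\map(\Sigma)\text{-orbit of }v_0|=|\h^1(\Sigma;\FF_2)|/|\text{stab}|$; summing the two orbit sizes must give $2^{2g}$, which forces exactly the even/odd split — but this still implicitly uses the quadratic-form structure, so the spin-structure formulation is cleanest.
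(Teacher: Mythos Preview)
Your proposal is correct and follows essentially the same approach as the paper's own proof: the paper also observes that the cosets are preserved, invokes the surjection $\map(\Sigma)\twoheadrightarrow\Sp(2g,\FF_2)$ (citing \cite{Farb_Margalit}) for part~(i), and identifies the nontrivial coset with spin structures to cite Johnson~\cite{Johnson} for part~(ii). Your write-up is simply more expository, spelling out the Gysin-sequence and Arf-invariant details that the paper leaves implicit.
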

\begin{proof}
  It is obvious that the mapping class group of $\Sigma$ preserves the
  two cosets $\h^1(\Sigma; \FF_2)$ and $\h^1(T^1\Sigma; \FF_2)
  \moins\h^1(\Sigma; \FF_2)$.

  On $\h^1(\Sigma; \FF_2)\cong \FF_2^{2g}$ the mapping class group
  acts by symplectomorphisms and it is a classical fact that it
  generates the whole group $\Sp(2n,\FF_2)$
  \cite[Prop.~7.3]{Farb_Margalit}. This action has two orbits, $0$ and
  $ \FF_2^{2g} \moins \{0\}$.

  The action of the mapping class on the space of spin structures
  $\h^1(T^1\Sigma; \FF_2) \moins\h^1(\Sigma; \FF_2)$ has two orbits
  \cite[Corollary~2]{Johnson}.
\end{proof}

\begin{cor}\label{cor:comp_mod_n>2}
  Let $n\geq 3$. The space $\repmax(\pi_1(\Sigma),
  \Sp(2n,\RR))/\map(\Sigma)$ has $6$ connected components.
\end{cor}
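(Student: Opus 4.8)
The plan is to reduce the statement to a bookkeeping count of orbits of the mapping class group on the finite set of connected components. First I would note that, since $\Sp(2n,\RR)$ is connected, the projection $\hommax(\pi_1(\Sigma),\Sp(2n,\RR)) \to \repmax(\pi_1(\Sigma),\Sp(2n,\RR))$ is a bijection on $\pi_0$. Moreover $\repmax$ has only finitely many connected components, each open and closed (for $n\geq 3$ there are $3\times 2^{2g}$ of them), and $\map(\Sigma)$ permutes them by homeomorphisms; hence the natural map $\pi_0(\repmax)/\map(\Sigma) \to \pi_0\big(\repmax/\map(\Sigma)\big)$ is a bijection (the image of each component is open, closed and connected, hence a component of the quotient, and two components have the same image precisely when they lie in one $\map(\Sigma)$-orbit). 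So it suffices to count $\map(\Sigma)$-orbits on $\pi_0\big(\hommax(\pi_1(\Sigma),\Sp(2n,\RR))\big)$.

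Next I would split $\hommax = \homHit \ \sqcup\ \big(\hommax\moins\homHit\big)$; this decomposition into unions of components is $\map(\Sigma)$-invariant because precomposing a Fuchsian representation with a mapping class again yields a Fuchsian representation, so I can count orbits on the two pieces separately. On $\pi_0(\homHit)$ I would invoke Remark~\ref{rem_intro_inv_for_Hitchin} and Theorem~\ref{thm_Hitchinclassical}: the refined invariant $sw_1^A$ gives a $\map(\Sigma)$-equivariant bijection of $\pi_0(\homHit)$ with the set of spin structures $\h^1(T^1\Sigma;\FF_2)\moins\h^1(\Sigma;\FF_2)$ (equivariance being the naturality of Theorem~\ref{thm_intro:invariants}), and Proposition~\ref{prop:mapgroup}(2) says this set has exactly $2$ orbits. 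On $\pi_0(\hommax\moins\homHit)$, Theorem~\ref{thm_intro:symp_maximal} provides a $\map(\Sigma)$-equivariant bijection with the set $\mathcal{A}$ of $n$-admissible pairs $(\alpha,\beta)$ of Definition~\ref{defi:admissible}; since the two admissibility conditions are imposed separately on $\alpha$ and on $\beta$, one has $\mathcal{A}=\mathcal{A}_1\times\mathcal{A}_2$ with $\mathcal{A}_1\subset\h^1(T^1\Sigma;\FF_2)$ the admissible $sw_1$-values and $\mathcal{A}_2\subset\h^2(T^1\Sigma;\FF_2)$ the image of $\h^2(\Sigma;\FF_2)$ (a two-element set), and $\map(\Sigma)$ acts diagonally. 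Proposition~\ref{prop:mapgroup} (part (1) for $n$ even, part (2) for $n$ odd) gives $2$ orbits on $\mathcal{A}_1$, while the lemma preceding Proposition~\ref{prop:mapgroup} says $\map(\Sigma)$ acts trivially on $\mathcal{A}_2$; because the action on the second factor is trivial, the number of orbits on $\mathcal{A}_1\times\mathcal{A}_2$ is $2\cdot|\mathcal{A}_2| = 2\cdot 2 = 4$. Adding the contributions, $\map(\Sigma)$ has $2+4=6$ orbits on $\pi_0(\hommax)$, so the quotient has $6$ connected components.

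The argument is essentially routine once the earlier structural theorems are granted; the two points that genuinely deserve care are (i) the interchange $\pi_0(\repmax/\map(\Sigma)) = \pi_0(\repmax)/\map(\Sigma)$, which relies on finiteness of $\pi_0$ and on components being open and closed, and (ii) keeping straight which invariant detects which family of components — in particular that $(sw_1,sw_2)$ is \emph{constant} on the Hitchin locus, so one must pass to the finer invariant $sw_1^A$ there — together with the observation that the admissibility constraints factor as a condition on $sw_1$ and an independent condition on $sw_2$, which is what makes the orbit count multiply. I expect this bookkeeping, rather than any serious new argument, to be the only place one must be attentive.
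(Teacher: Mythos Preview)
Your argument is correct and follows essentially the same route as the paper's own proof: split $\hommax$ into the Hitchin locus and its complement, use $sw_1^A$ and Proposition~\ref{prop:mapgroup}(ii) to get $2$ orbits on the former, and use the equivariant bijection of Theorem~\ref{thm_intro:symp_maximal} together with Proposition~\ref{prop:mapgroup} and the triviality of the action on $\h^2$ to get $4$ orbits on the latter. One small inaccuracy in your commentary (not in the proof itself): $(sw_1,sw_2)$ is not literally constant on the Hitchin locus---$sw_1$ varies with the underlying spin structure---rather, the point is that Theorem~\ref{thm_intro:symp_maximal} only gives a bijection off the Hitchin locus, so a separate invariant is needed there.
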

\begin{proof}
  Let us first consider the Hitchin components $\repHit(\pi_1(\Sigma),
  \Sp(2n,\RR))$. This subset is invariant by the action of
  $\map(\Sigma)$. The $2^{2g}$ different Hitchin components are
  indexed by elements $ sw_{1}^{A} \in \h^1(T^1\Sigma; \FF_2)
  \moins\h^1(\Sigma; \FF_2)$ (see
  Section~\ref{sec:invforHitchin}). The action of the mapping class
  group has $2$ orbits on $\h^1(T^1\Sigma; \FF_2) \moins\h^1(\Sigma;
  \FF_2)$. Therefore the quotient $\repHit(\pi_1(\Sigma),
  \Sp(2n,\RR))/\map(\Sigma)$ has $2$ connected components.

  The $2\times 2^{2g}$ connected components of
  \[\repmax(\pi_1(\Sigma), \Sp(2n,\RR)) \moins \repHit(\pi_1(\Sigma),
  \Sp(2n,\RR))\] are indexed by the first and second Stiefel-Whitney
  classes. The mapping class group acts trivially on the image of
  $\h^2(\Sigma; \FF_2)$ in $\h^2(T^1\Sigma; \FF_2)$ and has two orbits
  in the coset of $\h^1(\Sigma; \FF_2)$ in $\h^1(T^1\Sigma;
  \FF_2)$. This implies that it has $4$ orbits in the set of
  $n$-admissible pairs (Definition~\ref{defi:admissible}), hence the
  quotient
  \[\big(\repmax(\pi_1(\Sigma), \Sp(2n,\RR)) \moins
  \repHit(\pi_1(\Sigma), \Sp(2n,\RR))\big) / \map(\Sigma)\] has $4$
  connected components.
\end{proof}

\subsection{The case of  $\Sp(4,\RR)$}\label{sec:mapgroup_n=2}
To define the Euler class of a representation $\rho: \pi_1(\Sigma) \to
\Sp(4,\RR)$ with $sw_1(\rho)=0$ we had to make several choices (see
Section~\ref{sec:sp4case}). In particular, we fixed a nontrivial
element $\gamma \in \pi_1(\Sigma)$ to define the space $\homMaxPLnot$
and the Euler class $e_{\gamma, \LZPlus}$. Denoting $\repMaxFourZero$
the quotient of $\homMaxFourZero$ by the action of $\Sp(4, \RR)$, the
equivariance of
\[ e_{\gamma, \LZPlus} : \repMaxFourZero \longrightarrow \h^2( T^1
\Sigma ; \ZZ)\] only holds for the subgroup $\stab(\gamma) \subset
\map( \Sigma)$ of the mapping class group stabilizing the homotopy
class of $\gamma$.

This implies
\begin{prop}
  \label{prop:MCGonSWzerow}
  Let $\mathcal{C}$ be a component of $\repMaxFourZero \moins
  \repHitFour$.  Then $\mathcal{C}$ is sent to itself by the action of
  the mapping class group of $\Sigma$.
\end{prop}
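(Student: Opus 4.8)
The plan is to show that the action of $\map(\Sigma)$ on the finite set $\pi_0\big(\repMaxFourZero \moins \repHitFour\big)$ is trivial. First I would check that $\map(\Sigma)$ genuinely acts on this set: the Toledo invariant is invariant under the orientation–preserving action of $\map(\Sigma)$, so $\hommax$ and hence $\repmax$ is preserved; the first Stiefel--Whitney class is equivariant and fixes the zero class, so $\repMaxFourZero$ is preserved; and $\repHitFour$ is preserved because it is the intrinsic union of the connected components containing representations of the form $\phi_{irr}\circ\iota$ with $\iota$ a discrete embedding into $\SL(2,\RR)$, a class of representations that is stable under precomposition by an automorphism of $\pi_1(\Sigma)$. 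Thus $\map(\Sigma)$ permutes the connected components of $\repMaxFourZero \moins \repHitFour$.

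Next, fix a nontrivial $\gamma\in\pi_1(\Sigma)$ and an oriented Lagrangian $\LZPlus\in\Ll_+$. By Lemma~\ref{lem:fibr_con} together with the conjugation invariance of Euler classes, the relative Euler class descends to a well-defined function $e_{\gamma,\LZPlus}\colon \repMaxFourZero \to \h^2(T^1\Sigma;\ZZ)$, and by Proposition~\ref{prop:restriction} its values lie in the image of $\h^2(\Sigma;\ZZ)$ in $\h^2(T^1\Sigma;\ZZ)$, which is cyclic of order $2g-2$, generated by $\GenTor$. I would then combine two facts: the equivariance of $e_{\gamma,\LZPlus}$ under the subgroup $\stab(\gamma)\subset\map(\Sigma)$ stabilizing the homotopy class of $\gamma$, and the fact (recalled above) that $\map(\Sigma)$ acts trivially on the image of $\h^2(\Sigma;\ZZ)$ in $\h^2(T^1\Sigma;\ZZ)$, hence on $\langle\GenTor\rangle$, because $\GenTor$ is the image of the orientation class and $\map(\Sigma)$ preserves orientation. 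Together these give $e_{\gamma,\LZPlus}(\phi\cdot[\rho]) = e_{\gamma,\LZPlus}([\rho])$ for every $\phi\in\stab(\gamma)$. By Theorem~\ref{thm:ReducedEulerCC}, each level set $e_{\gamma,\LZPlus}^{-1}(\{l\})$ with $l\neq g-1$ is a single connected component of $\repMaxFourZero$, while $e_{\gamma,\LZPlus}^{-1}(\{g-1\})\cap\big(\repMaxFourZero\moins\repHitFour\big)$ is the single component containing the diagonal Fuchsian representations; since $\stab(\gamma)$ preserves all of these level sets and also preserves $\repHitFour$, it fixes every connected component of $\repMaxFourZero\moins\repHitFour$.

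Finally I would let $\gamma$ vary. The mapping class group $\map(\Sigma)$ is generated by Dehn twists $T_{c_1},\dots,T_{c_k}$ along finitely many essential simple closed curves $c_i$ (for instance the Humphries generators), and each twist $T_{c_i}$ lies in $\stab(c_i)$. By the previous paragraph every $T_{c_i}$ fixes each connected component of $\repMaxFourZero\moins\repHitFour$, hence so does the group they generate, namely all of $\map(\Sigma)$. In particular the given component $\mathcal{C}$ is sent to itself, which is the assertion.

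The hard part will be the bookkeeping in the middle step rather than any serious geometry: one must make sure $e_{\gamma,\LZPlus}$ really descends to $\repMaxFourZero$ independently of the auxiliary oriented Lagrangian $\LZPlus$, that the exceptional fibre $l=g-1$ is treated correctly (it is a component only after deleting the Hitchin locus, not of $\repMaxFourZero$ itself), and that the equivariance of the Euler class holds for the full stabilizer of the free homotopy class of $\gamma$ and not merely for some finer-structure-preserving subgroup. Once these points are in place, the passage from the $\stab(\gamma)$'s to all of $\map(\Sigma)$ via Dehn twist generators is purely formal.
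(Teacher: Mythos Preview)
Your proposal is correct and follows essentially the same route as the paper's proof: both arguments reduce to checking invariance under each $\stab(\gamma)$ via the $\stab(\gamma)$-equivariance of $e_{\gamma,\LZPlus}$ together with the bijection of Theorem~\ref{thm:ReducedEulerCC}, and then pass to all of $\map(\Sigma)$ using generation by Dehn twists. Your write-up is somewhat more explicit than the paper's (spelling out why $\repHitFour$ and the $sw_1=0$ locus are $\map(\Sigma)$-invariant, and separating out the $l=g-1$ fibre), but the underlying strategy is identical.
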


\begin{proof}
  It is a classical fact that the mapping class group $\map( \Sigma)$
  is generated by Dehn twists along simple closed curves (see for
  example \cite[Theorem~4.2.D]{Ivanov_Mapping}). As a consequence
  $\map( \Sigma)$ is generated by $\{ \stab( \gamma) \}_{\gamma \in
    \pi_1( \Sigma)\moins\{1\}}$. With this said it is enough to show
  the invariance of $\mathcal{ C}$ under $\stab( \g)$. However we know
  that $\repmaxzero \moins \repHit$ is invariant under $\map( \Sigma)$
  and that
  \begin{multline*}
    e_{\gamma, \LZPlus} : \pi_0(\repMaxFourZero \moins \repHitFour) \\
    \longrightarrow \h^2( T^1 \Sigma ; \ZZ)^{tor}
  \end{multline*}
  is a bijection. The equivariance property of $e_{\gamma, \LZPlus}$
  with respect to $\stab(\gamma)$ implies the claim.
\end{proof}

\begin{cor}\label{cor:comp_mod_n=2}
  The space $\repmax(\pi_1(\Sigma), \Sp(4,\RR))/\map(\Sigma)$ has
  $2g+2$ connected components.
\end{cor}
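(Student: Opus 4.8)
The plan is to split $\repmax(\pi_1(\Sigma),\Sp(4,\RR))$ into three pieces, each invariant under $\map(\Sigma)$, count the connected components of the quotient of each, and add up. The pieces are the Hitchin locus $\repHitFour$, the rest of the vanishing--$sw_1$ locus $\repMaxFourZero\moins\repHitFour$, and the locus $\repmaxneqzero$ where $sw_1\neq 0$. Each is $\map(\Sigma)$-invariant: $\repHitFour$ is a union of connected components and $\map(\Sigma)$ permutes components; and $sw_1$ is $\map(\Sigma)$-equivariant with values in $\h^1(\Sigma;\FF_2)\subset\h^1(T^1\Sigma;\FF_2)$ by Proposition~\ref{prop:restriction}, so the conditions ``$sw_1=0$'' and ``$sw_1\neq0$'' cut out invariant subsets. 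Since the three pieces are distinguished by the $\map(\Sigma)$-invariant data ``Hitchin or not'' and the value of $sw_1$, the components of their quotients remain disjoint in $\repmax(\pi_1(\Sigma),\Sp(4,\RR))/\map(\Sigma)$, so $\#\pi_0$ of the quotient is the sum of the three contributions.

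First I would treat $\repHitFour$: the discussion in Section~\ref{sec:invforHitchin} (see also Remark~\ref{rem_intro_inv_for_Hitchin}) identifies the $2^{2g}$ Hitchin components with the values of the $\map(\Sigma)$-equivariant invariant $sw^A_1\in\h^1(T^1\Sigma;\FF_2)\moins\h^1(\Sigma;\FF_2)$, on which $\map(\Sigma)$ has exactly two orbits by Proposition~\ref{prop:mapgroup}; hence $\repHitFour/\map(\Sigma)$ has $2$ components, exactly as in the proof of Corollary~\ref{cor:comp_mod_n>2}. Next, for $\repMaxFourZero\moins\repHitFour$: this space has $2g-2$ connected components (the image of $\h^2(\Sigma;\ZZ)$ in $\h^2(T^1\Sigma;\ZZ)$ is torsion of order $2g-2$, and $e_{\gamma,\LZPlus}$ induces a bijection from $\pi_0$ onto it), and by Proposition~\ref{prop:MCGonSWzerow} each of these components is sent to itself by $\map(\Sigma)$; a quotient of a connected space being connected, $(\repMaxFourZero\moins\repHitFour)/\map(\Sigma)$ has $2g-2$ components. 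Finally, for $\repmaxneqzero$: by Theorem~\ref{thm_intro:n=2_components} its components are in $\map(\Sigma)$-equivariant bijection with $(\h^1(\Sigma;\FF_2)\moins\{0\})\times\h^2(\Sigma;\FF_2)$, and since $\map(\Sigma)$ acts transitively on $\h^1(\Sigma;\FF_2)\moins\{0\}$ and trivially on $\h^2(\Sigma;\FF_2)$ by Proposition~\ref{prop:mapgroup}, there are two orbits, so $\repmaxneqzero/\map(\Sigma)$ has $2$ components. Adding, $2+(2g-2)+2=2g+2$; as a consistency check this collapses Gothen's count $3\cdot 2^{2g}+2g-4$ as $2^{2g}\mapsto 2$, $2g-2\mapsto 2g-2$, $2(2^{2g}-1)\mapsto 2$.

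The only delicate point is the middle piece: there the natural invariant $e_{\gamma,\LZPlus}$ is equivariant only for the subgroup $\stab(\gamma)\subset\map(\Sigma)$, so one cannot directly count orbits of $\map(\Sigma)$ on $\h^2(T^1\Sigma;\ZZ)^{tor}$. This is exactly the obstruction that Proposition~\ref{prop:MCGonSWzerow} removes, by combining the $\stab(\gamma)$-equivariance and the bijectivity of $e_{\gamma,\LZPlus}$ on $\pi_0$ with the fact that $\map(\Sigma)$ is generated by Dehn twists, hence by $\bigcup_\gamma\stab(\gamma)$. Thus the real content is already contained in Propositions~\ref{prop:mapgroup} and \ref{prop:MCGonSWzerow}, and this corollary is the bookkeeping that assembles the three counts.
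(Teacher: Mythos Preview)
Your proof is correct and follows essentially the same approach as the paper: the same three-piece decomposition into $\repHitFour$, $\repMaxFourZero\moins\repHitFour$, and $\repmaxneqzero$, with the same appeals to Proposition~\ref{prop:mapgroup} for the first and third pieces and to Proposition~\ref{prop:MCGonSWzerow} for the middle one. Your additional paragraph explaining why the middle piece is the delicate one (and how Proposition~\ref{prop:MCGonSWzerow} resolves the $\stab(\gamma)$-equivariance issue) is a useful elaboration but not a departure from the paper's argument.
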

\begin{proof}
  Let us write $\repmax(\pi_1(\Sigma), \Sp(4,\RR))$ as a union of
  subspaces
  \begin{multline*}
    \repmax(\pi_1(\Sigma), \Sp(4,\RR)) =\\
    \repmaxneqzero \bigcup \repHit(\pi_1(\Sigma), \Sp(4,\RR))
    \bigcup \\
    \big((\repMaxFourZero \moins \repHit(\pi_1(\Sigma),
    \Sp(4,\RR))\big).
  \end{multline*}
  The mapping class group preserves this decomposition. Since
  $\map(\Sigma)$ has two orbits on $\h^1(T^1\Sigma;\FF_2) \moins
  \h^1(\Sigma; \FF_2)$, the space $ \repHit(\pi_1(\Sigma),
  \Sp(4,\RR))/\map(\Sigma)$ has $2$ connected components.  The mapping
  class group acts transitively on $\h^1(\Sigma; \FF_2) \moins \{0\}$
  and trivially on the image of $\h^2(\Sigma; \FF_2) \subset
  \h^2(T^1\Sigma; \FF_2)$, thus the space
  \[\repmaxneqzero\]
  has $2$ connected components. By Proposition~\ref{prop:MCGonSWzerow}
  the mapping class group stabilizes the others components.  This
  gives a total of $2g+2$ connected components.
\end{proof}

\section{Holonomy of maximal representations}
\label{sec:HoloOfMaxRep}

In this section we prove Theorem~\ref{thm_intro:holonomy} of the
introduction.

\smallskip

Let $\rho: \pi_1(\Sigma) \to \Sp(2n,\RR)$ be a maximal representation.
As already noted in Corollary~\ref{cor:anosov_holonomy} for any
nontrivial $\gamma$ the element $\rho(\gamma)$ is (conjugate to) an
element of $\GL(n, \RR)$ whose eigenvalues are in absolute value
bigger than $1$.  For representations in the Hitchin components,
Corollary~\ref{cor:anosov_holonomy} implies moreover that
$\rho(\gamma)$ is semi-simple. The following statement shows that we
cannot expect anything similar for maximal representations in general.

\begin{thm}
  \label{thm:holonomy}
  Let $\hH$ be a connected component of \[\repmax(\pi_1(\Sigma),
  \Sp(2n,\RR)) \moins \repHit(\pi_1(\Sigma), \Sp(2n,\RR)),\] and let
  $\g$ be an element in $\pi_1(\Sigma)\moins\{1\}$ corresponding to a
  simple curve.

  If $n=2$, the genus of $\Sigma$ is $2$ and $\g$ is separating, we
  require that $\hH$ is not the connected component determined by
  $sw_1 = 0$ and $e_{\gamma} = 0$.

  Then there exist
  \begin{enumerate}
  \item a representation $\rho \in \hH$ such that the Jordan
    decomposition of $\rho(\g)$ in $\GL(n,\RR)$ has a nontrivial
    parabolic component.
  \item a representation $\rho' \in \hH$ such that the Jordan
    decomposition of $\rho'(\g)$ in $\GL(n,\RR)$ has a nontrivial
    elliptic component.
  \end{enumerate}
\end{thm}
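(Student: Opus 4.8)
The idea is to use the explicit model representations from Theorem~\ref{thm_intro:symp_n>2} and Theorem~\ref{thm_intro:symp_n=2}, together with the deformation flexibility inside a fixed connected component, to produce the required holonomies. Since the invariants $sw_1$, $sw_2$ (and $e_\gamma$ in the $\Sp(4,\RR)$ case) are locally constant and completely determine $\pi_0$ of $\repmax\moins\repHit$, it suffices to exhibit, \emph{for each $\hH$}, one representation $\rho\in\hH$ with $\rho(\g)$ having a parabolic Jordan block and one $\rho'\in\hH$ with $\rho'(\g)$ having an elliptic component. First I would treat the case $n\geq 3$, where by Theorem~\ref{thm_intro:symp_n>2} every non-Hitchin component contains a twisted diagonal representation $\rho_\Theta=\iota\otimes\Theta$ with $\Theta:\pi_1(\Sigma)\to\O(n)$; here $\rho_\Theta(\g)=\phi_\Delta(\iota(\g),\Theta(\g))$ acts on $L^s\cong\RR^n$ essentially as $e^{\ell(\g)}\Theta(\g)$, so I need to arrange, after a deformation \emph{within the component}, that $\Theta(\g)$ is conjugate to a matrix with a nontrivial rotation block (for the elliptic case) and, in a separate deformation, that the twisting is replaced by a genuinely non-semisimple deformation of $\rho_\Theta$ inside $\Sp(2n,\RR)$ (for the parabolic case). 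The elliptic case is the easy one: choosing $\Theta$ so that $\Theta(\g)$ has an eigenvalue not equal to $\pm 1$ can be done while keeping $sw_1(\Theta),sw_2(\Theta)$ fixed, because the Stiefel--Whitney classes of $\Theta$ depend only on its homotopy class in $\hom(\pi_1(\Sigma),\O(n))$, and $\O(n)$ has positive dimension for $n\ge 2$, so one can rotate the image of $\g$ continuously; one only needs $n\ge 2$ and for $n=2$ one uses that $\g$ can be chosen nonseparating when needed. The parabolic case requires producing a deformation of $\rho_\Theta$ along which $\rho(\g)$ becomes non-diagonalizable; for this I would bend along $\g$ (or along an auxiliary curve meeting $\g$) inside the centralizer direction, i.e. use a path of representations $\rho_t$ with $\rho_0=\rho_\Theta$ that are still Anosov (hence still in $\hH$) but for which the restriction to $\langle\g\rangle$ develops a Jordan block --- concretely, conjugating $\rho_\Theta$ on one side of $\g$ by a unipotent element of the stabilizer of the attracting Lagrangian $\xi(t^s_\g)$ and amalgamating, using Theorem~\ref{thm:gluing_maximal} to stay maximal.

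\textbf{The $\Sp(4,\RR)$ case.} Here I would handle the non-Hitchin components via Theorem~\ref{thm_intro:symp_n=2}: each such component either contains a twisted diagonal representation (when $sw_1\neq 0$, or when $sw_1=0$ and $e_\gamma=(g-1)\GenTor$) --- treated as above --- or is one of the $2g-3$ exceptional components, each of which contains a $k$-hybrid representation $\rho=\rho_l*\rho_r$ for a suitable $k$. For the hybrid representations I would use the amalgamated structure directly. For the elliptic conclusion I would deform $\rho_r$ (the diagonal-Fuchsian side) within the family of maximal representations into $\SL(2,\RR)\times\SL(2,\RR)<\Sp(4,\RR)$ so that $\rho_r(\g')$, for $\g'$ a simple curve in $\Sigma_r$, acquires a complex eigenvalue --- but since the statement asks about a \emph{prescribed} simple curve $\g$, I would instead invoke the mapping class group equivariance and the known action of $\map(\Sigma)$ on $\pi_0$ (Section~\ref{sec:mapgroup}) to reduce to the case where $\g$ lies in a standard position relative to the hybrid decomposition. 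Concretely, any simple closed $\g$ can be moved by a mapping class to one of finitely many model curves, and a mapping class only permutes components according to Proposition~\ref{prop:MCGonSWzerow}, so it suffices to treat model positions of $\g$ and then transport the constructed representation back. For the parabolic conclusion on the hybrid representations I would again bend along $\g$: conjugate $\rho_r|_{\pi_1(\Sigma_r)}$ by a one-parameter unipotent subgroup of the stabilizer in $\Sp(4,\RR)$ of the relevant Lagrangian fixed by $\rho(\g)$, keeping the representation Anosov and maximal by Theorem~\ref{thm:gluing_maximal} and Proposition~\ref{prop:openano}, producing a nontrivial Jordan block in $\GL(2,\RR)\cong\GL(L^s)$.

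\textbf{The exceptional genus-two case.} The hypothesis excluding $n=2$, $g=2$, $\g$ separating, $sw_1=0$, $e_\gamma=0$ corresponds exactly to the $0$-hybrid component with $\chi(\Sigma_l)=\chi(\Sigma_r)=-1$; in that single case the separating curve $\g$ is homotopic to the gluing curve along which both holonomies are forced to be regular semi-simple by the construction (the Hitchin side forces $\rho_l(\g)$ regular), so the argument genuinely breaks down there and the exclusion is necessary. In all other cases the curve $\g$ is either non-separating or can be taken disjoint from the singular part of the decomposition, giving enough room to bend.

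\textbf{Main obstacle.} The technical heart is showing that the bending deformations used to create a parabolic (resp. elliptic) component of $\rho(\g)$ stay inside the \emph{same} connected component $\hH$ --- equivalently, that they do not change $sw_1,sw_2,e_\gamma$. This follows because these invariants are locally constant (Proposition~\ref{prop:invariants}, and the continuity statements in Section~\ref{sec:sp4case}) and the deformations are continuous paths of Anosov representations (using Proposition~\ref{prop:openano} and Theorem~\ref{thm:maximal_anosov} to stay Anosov, and Theorem~\ref{thm:gluing_maximal} to stay maximal), so no topological invariant can jump along the path; the only real work is checking that the bending cocycle can be chosen so that the path remains in $\hommax$ and that at the endpoint the holonomy of $\g$ is genuinely non-semisimple (resp. has an elliptic factor), which is a concrete $2\times2$ or $n\times n$ linear-algebra verification in the stabilizer of a Lagrangian.
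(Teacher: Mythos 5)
There is a genuine gap, and it sits exactly where the real work of the theorem is: producing the \emph{parabolic} holonomy. Your mechanism is to bend along $\g$ (or amalgamate after conjugating one side) by a unipotent element $u$ of the stabilizer of the attracting Lagrangian. But for the amalgamated representation to be well defined, $u$ must centralize $\rho(\g)$, and then the new representation satisfies $\rho'(\g)=u\rho(\g)u^{-1}=\rho(\g)$: bending along a curve never changes the holonomy of that curve. If instead $u$ merely stabilizes $\xi(t^s_\g)$ without centralizing $\rho(\g)$, the two sides no longer agree on $\g$ and there is no amalgam. Either way no Jordan block appears. There is a second, related omission: even with a more flexible deformation, you cannot reach the parabolic (or elliptic) locus by a small perturbation from a generic model representation, because a regular semisimple element of $\GL(n,\RR)$ with distinct real eigenvalues has a neighborhood consisting entirely of such elements. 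One must first arrange $\rho(\g)$ to be degenerate. This is precisely what the paper does: Lemma~\ref{lem:existGoodRep} produces, in each admissible component $\hH$, a representation $\rho_0$ with $\rho_0(\g)$ a \emph{multiple of the identity} in $\GL(n,\RR)$ and with finite centralizers for the restrictions to the two sides of $\g$; Lemma~\ref{lem:surjOfDiff} (Goldman's computation of the differential of $\rho\mapsto\rho(\g)$ on a surface with boundary) then makes the evaluation map locally surjective onto $\Sp(2n,\RR)$ near the scalar, so every nearby Jordan type --- in particular nontrivial parabolic and nontrivial elliptic --- is realized in $\hH$. The finite-centralizer condition is not cosmetic: it is the hypothesis that makes the evaluation map a submersion, and verifying it for (deformations of) the model representations is the bulk of the paper's proof. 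The genus-two exclusion also enters exactly there, through the impossibility of placing $\g$ away from the irreducible subsurface of a hybrid while keeping $e_\gamma=0$, roughly as you guessed.

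Your ``main obstacle'' paragraph also misidentifies the difficulty. Staying in $\hH$ along a continuous path of maximal representations is automatic ($\repHit$ is open and closed in $\repmax$, and connected components are preserved by paths); no invariance of $sw_1,sw_2,e_\gamma$ needs to be checked. The elliptic half of your argument for twisted diagonal representations (rotating $\Theta(\g)$ inside $\hom(\pi_1(\Sigma),\O(n))$) is closer to workable, but it too needs the degenerate starting point $\Theta(\g)=\id$ and an argument that the rotation can be introduced compatibly with the surface relation when $\g$ is separating (there $\Theta(\g)$ is a product of commutators and cannot be prescribed freely); the paper's submersion argument handles both Jordan types and both topological types of $\g$ uniformly and avoids the mapping-class-group detour, which in any case does not obviously preserve the given component $\hH$ when $sw_1\neq 0$.
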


We first establish some preliminary results towards the proof of the
theorem.

\begin{lem}
  \label{lem:surjOfDiff}
  Let $\Sigma$ be a surface with one boundary component $\g = \partial
  \Sigma$. Let $G$ be a semisimple Lie group and $\rho_0: \pi_1 (
  \Sigma) \to G$ a representation whose centralizer in $G$ is finite.

  Then the differential of the map \[
  \begin{array}{rcl}
    \hom( \pi_1( \Sigma) , G) & \overset{ \tau_\g}{\longrightarrow} & G \\
    \rho & \longmapsto & \rho( \g)
  \end{array}
  \] at the point $\rho_0$ is surjective.
\end{lem}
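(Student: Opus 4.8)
The plan is to compute the differential $(d\tau_\g)_{\rho_0}$ via group cohomology and to identify its cokernel with a cohomology group that vanishes precisely when the centralizer is finite. Since $\Sigma$ has nonempty boundary, $\pi_1(\Sigma)$ is a free group; fix a free basis $a_1,b_1,\dots,a_g,b_g$ so that $\g=\prod_{i=1}^{g}[a_i,b_i]$, and let $\bar\Sigma$ denote the closed oriented surface obtained by capping off the boundary, so that $\pi_1(\bar\Sigma)=\langle a_1,\dots,b_g\mid\prod[a_i,b_i]\rangle$. Regard $\mathfrak{g}=\mathrm{Lie}(G)$ as a $\pi_1(\Sigma)$-module through $\mathrm{Ad}\circ\rho_0$. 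Because $\pi_1(\Sigma)$ is free, the tangent space $T_{\rho_0}\hom(\pi_1(\Sigma),G)$ is the space of $\mathfrak{g}$-valued $1$-cocycles $Z^1(\pi_1(\Sigma);\mathfrak{g})$, and evaluation on the basis identifies it with $\mathfrak{g}^{2g}$.

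Next I would make $(d\tau_\g)_{\rho_0}$ explicit. Using left translation by $\rho_0(\g)^{-1}$ to identify $T_{\rho_0(\g)}G$ with $\mathfrak{g}$, this differential becomes the map $Z^1(\pi_1(\Sigma);\mathfrak{g})\to\mathfrak{g}$, $u\mapsto u(\g)$. Expanding $u(\prod[a_i,b_i])$ with the cocycle identity $u(xy)=u(x)+\mathrm{Ad}_{\rho_0(x)}u(y)$ reproduces exactly the Fox-calculus expression attached to the relator $\prod[a_i,b_i]$; equivalently, $(d\tau_\g)_{\rho_0}$ is the coboundary $d^1$ in the cellular cochain complex with local coefficients
\[ \mathfrak{g}\ \xrightarrow{\ d^0\ }\ \mathfrak{g}^{2g}\ \xrightarrow{\ d^1\ }\ \mathfrak{g} \]
of the (aspherical) standard presentation $2$-complex of $\bar\Sigma$. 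Hence $\mathrm{coker}\,(d\tau_\g)_{\rho_0}\cong\h^2(\bar\Sigma;\mathfrak{g})$, the second cohomology of $\bar\Sigma$ (equivalently of $\pi_1(\bar\Sigma)$) with these coefficients.

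To finish, I would invoke Poincaré duality on the closed oriented surface $\bar\Sigma$. The Killing form of the semisimple Lie algebra $\mathfrak{g}$ is nondegenerate and $\mathrm{Ad}$-invariant, hence yields an isomorphism of $\pi_1(\bar\Sigma)$-modules $\mathfrak{g}\cong\mathfrak{g}^{*}$; therefore
\[ \h^2(\bar\Sigma;\mathfrak{g})\ \cong\ \h_0(\bar\Sigma;\mathfrak{g})\ \cong\ \h^0(\bar\Sigma;\mathfrak{g})^{*}\ =\ (\mathfrak{g}^{\rho_0})^{*}, \]
where $\mathfrak{g}^{\rho_0}$ denotes the $\mathrm{Ad}\,\rho_0$-invariants. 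Since $Z_G(\rho_0)$ is finite it is discrete, so $\mathfrak{g}^{\rho_0}=\mathrm{Lie}(Z_G(\rho_0))=0$; hence $\mathrm{coker}\,(d\tau_\g)_{\rho_0}=0$, i.e.\ $(d\tau_\g)_{\rho_0}$ is surjective, as claimed.

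The step requiring the most care is the identification of $(d\tau_\g)_{\rho_0}$ with the cochain differential $d^1$ of the relator — the Fox-derivative computation together with the bookkeeping of the left-translation that makes the target $\mathfrak{g}$ rather than $T_{\rho_0(\g)}G$; both are classical (cf.\ Goldman). If one prefers to avoid Poincaré duality, the same conclusion can be reached directly: if $X\in\mathfrak{g}$ satisfies $\langle X,u(\g)\rangle=0$ (Killing form) for every cocycle $u$, then varying $u$ over $Z^1(\pi_1(\Sigma);\mathfrak{g})\cong\mathfrak{g}^{2g}$ forces $\mathrm{Ad}_{\rho_0(\delta)}X=X$ for all $\delta\in\pi_1(\Sigma)$, whence $X=0$ by the finiteness hypothesis; thus the image of $u\mapsto u(\g)$ is all of $\mathfrak{g}$.
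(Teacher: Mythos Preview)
Your approach is correct and is precisely the content of Goldman's Proposition~3.7 in \cite{Goldman_symplectic}, which is all the paper invokes. One caveat: the identification of $\mathrm{coker}\,(d\tau_\g)_{\rho_0}$ with $\h^2(\bar\Sigma;\mathfrak g)$ is only literally valid when $\mathrm{Ad}\,\rho_0(\g)$ acts trivially on $\mathfrak g$, since otherwise the local system does not extend across the $2$-cell and the displayed sequence is not a cochain complex (indeed $d^1 d^0(X)=(\mathrm{Ad}\,\rho_0(\g)-1)X$). The clean fix is to replace $\bar\Sigma$ by the pair $(\Sigma,\partial\Sigma)$ and invoke Poincar\'e--Lefschetz duality, or simply to rely on the direct Killing-form computation you give at the end: that argument is self-contained, needs no cohomological interpretation, and is exactly how Goldman proceeds.
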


\begin{proof}
  One can always find a set $\{ a_1, \dots, a_k, b_1, \dots, b_k\}$
  freely generating $\pi_1( \Sigma)$ and such that $\gamma = [ a_1,
  b_1] \cdots [ a_k, b_k]$. The result is then simply a reformulation
  of \cite[Proposition~3.7]{Goldman_symplectic}.
\end{proof}

\begin{lem}
  \label{lem:existGoodRep}
  Let $\gamma$ be a simple closed separating curve on a closed surface
  $\Sigma$; denote by $\Sigma_1$ and $\Sigma_2$ the components of
  $\Sigma\moins \g$. Let $\hH$ be a component of $\hommax( \pi_1(
  \Sigma), \Sp( 2n, \RR))$ such that the conditions of
  Theorem~\ref{thm:holonomy} are satisfied.

  Then there exists $\rho$ in $\hH$ such that
  \begin{itemize}
  \item $\rho( \gamma)$ (considered as an element of $\GL(n, \RR) <
    \Sp( 2n, \RR)$) is a multiple of the identity,
  \item the restriction of $\rho$ to $\pi_1( \Sigma_1)$ (resp. $\pi_1(
    \Sigma_2)$) has finite centralizer.
  \end{itemize}
\end{lem}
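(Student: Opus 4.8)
The plan is to build $\rho$ by amalgamating two maximal representations $\rho_1 : \pi_1(\Sigma_1) \to \Sp(2n,\RR)$ and $\rho_2 : \pi_1(\Sigma_2) \to \Sp(2n,\RR)$ that agree on $\gamma$, chosen so that $\rho_i(\gamma)$ is a scalar multiple of the identity in the relevant $\GL(n,\RR)$-block and so that each $\rho_i$ has finite centralizer on $\pi_1(\Sigma_i)$; by Theorem~\ref{thm:gluing_maximal} the amalgam $\rho = \rho_1 * \rho_2$ is then maximal. The subtle point is that $\rho$ must land in the prescribed component $\hH$, so the construction has to be flexible enough to realize every admissible value of the invariants $sw_1$, $sw_2$ and (for $n=2$) $e_\gamma$ — except the one excluded in the hypotheses.

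First I would treat the value of $\rho(\gamma)$: I want $\rho_i(\gamma)$ conjugate to $\lambda \cdot \id_n \in \GL(n,\RR) < \Sp(2n,\RR)$ for a common $\lambda > 1$. Starting from the model representations of Section~\ref{sec:examples} — twisted diagonal representations when $n \geq 3$ (Theorem~\ref{thm_intro:symp_n>2}) and standard maximal or hybrid representations when $n = 2$ (Theorem~\ref{thm_intro:symp_n=2}) — one already has, in each component $\hH$, an explicit representation. For such a model the holonomy $\rho(\gamma)$ along the simple curve $\gamma$ (for twisted diagonal $\iota \otimes \Theta$ it is $\iota(\gamma) \otimes \Theta(\gamma)$; for a hybrid one it is diagonal by construction) need not be scalar, but one can deform: the point of using $\gamma$ \emph{separating} and Lemma~\ref{lem:surjOfDiff} is that on a surface-with-one-boundary the map $\rho \mapsto \rho(\partial)$ is a submersion at any representation with finite centralizer, so one can push the boundary holonomy of the restriction $\rho|_{\pi_1(\Sigma_i)}$ to be exactly a chosen scalar $\lambda\cdot\id$ while staying maximal and keeping (by openness and the computations of Section~\ref{sec:computations}) the same topological invariants. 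The finite-centralizer condition on each piece is generic and can be arranged by a further small perturbation (an irreducible, or at least Zariski-dense-enough, maximal representation of $\pi_1(\Sigma_i)$ has finite centralizer; if $\Sigma_i$ forces small genus one uses that the excluded case $g = 2$, $\gamma$ separating, $sw_1 = 0$, $e_\gamma = 0$ is precisely where such a piece cannot be arranged).

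The main obstacle I expect is the bookkeeping that the deformation used to make $\rho(\gamma)$ scalar, and the perturbations used to make the centralizers finite, do not leave the component $\hH$. This is where one must invoke that the invariants $sw_1, sw_2, e_{\gamma,L_{0+}}$ are locally constant (Proposition~\ref{prop:invariants}, the continuity statements in Section~\ref{sec:invariants}) and that by Theorems~\ref{thm_intro:symp_maximal}, \ref{thm_intro:n=2_components} and \ref{thm:ReducedEulerCC} they completely classify $\pi_0$ of $\hommax \moins \homHit$: as long as every intermediate representation stays maximal and non-Hitchin, its invariants cannot jump, so it stays in $\hH$. One must also check that after making $\rho_i(\gamma)$ scalar the two pieces can genuinely be glued, i.e.\ $\rho_1(\gamma) = \rho_2(\gamma)$, which is automatic once both equal the same $\lambda\cdot\id_n$ in a common $\GL(n,\RR)$-block; conjugating one piece by an element of $\Sp(2n,\RR)$ commuting with $\lambda\cdot\id_n$ (that is, by $\GL(n,\RR)$) does not change its maximality nor, being an inner automorphism, the isomorphism class of $\hH$. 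Assembling these observations gives the desired $\rho$, with $\rho(\gamma)$ a multiple of the identity and $\rho|_{\pi_1(\Sigma_i)}$ of finite centralizer, completing the proof.
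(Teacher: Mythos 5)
Your overall target is right (exhibit the desired representation in each component by deforming a model representation, using the fact that the invariants are locally constant), but the mechanism you propose has a genuine circularity. You want to invoke Lemma~\ref{lem:surjOfDiff} to push the boundary holonomy of $\rho|_{\pi_1(\Sigma_i)}$ to a scalar; but that lemma requires the restriction to $\pi_1(\Sigma_i)$ to have \emph{finite centralizer}, which is precisely the second conclusion you are trying to establish, and which the model representations do \emph{not} satisfy: a diagonal Fuchsian or twisted diagonal representation restricted to $\pi_1(\Sigma_i)$ is centralized by a positive-dimensional subgroup of $\O(n)$. Your remark that finite centralizer ``is generic and can be arranged by a further small perturbation'' is exactly where the content of the lemma lies, and it is deferred without argument. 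The paper's proof inverts your order of operations: the standard models already have $\rho(\gamma)$ scalar in $\GL(n,\RR)$ (for $\rho_\Delta$ the holonomy is $e^m\id_n$, and for $\iota\otimes\Theta$ one arranges $\Theta(\gamma)=\id$ using that $\gamma$ is separating), and the work consists in explicitly deforming the restrictions to $\Sigma_1$ and $\Sigma_2$ \emph{while keeping $\rho(\gamma)$ fixed} — bending the $n$ Fuchsian factors independently via Lemma~\ref{lem:deformation} — so that the Zariski closures of $\rho_t(\pi_1(\Sigma_i))$ become large enough to force finite centralizers. Note also that deforming the two pieces separately, as you propose, does not by itself produce a path in $\hom(\pi_1(\Sigma),\Sp(2n,\RR))$: one needs $\rho_{1,t}(\gamma)=\rho_{2,t}(\gamma)$ at every time $t$ to stay inside the fiber product, which is why the paper's deformations fix the holonomy of $\gamma$ throughout.

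A second gap is the $n=2$ hybrid case and the excluded component. A hybrid representation is irreducible-Fuchsian (hence purely loxodromic, with regular semisimple, never scalar, holonomies) on the subsurface $\Sigma'$, so one cannot hope to make $\rho(\gamma)$ scalar unless the hybrid decomposition is chosen with $\gamma$ contained in $\Sigma\moins\Sigma'$ and not homotopic to $\partial\Sigma'$; this forces $\chi(\Sigma')\neq 3-2g$, and it is only by running over the remaining values of $\chi(\Sigma')$ and using both positively and negatively adjusted gluings (Proposition~\ref{prop:eulerHybrid} and Remark~\ref{rem:negatively_adjusted}, giving $e_\gamma = g-1\pm\chi(\Sigma')$) that one sees every component is reached except, when $g=2$, the one with $sw_1=0$ and $e_\gamma=0$. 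Your proposal restates this exclusion rather than deriving it.
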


\begin{proof}
  By Theorem~\ref{thm_intro:symp_n>2} and
  Theorem~\ref{thm_intro:symp_n=2} we only need to prove that there
  are representations satisfying the conclusions of the lemma in a
  neighborhood of a model representation (i.e.\ a standard maximal
  representation or a hybrid representation).

  First consider a diagonal Fuchsian representation $\rho_0=
  \phi_\Delta \circ \iota$.  There exist deformations $\iota_{1,t},
  \dots, \iota_{n,t}$ ($t \in [0,1]$) such that $\iota_{i,t}( \g) =
  \iota( \g)$ and $\iota_{i,0} = \iota$ and, for all $t>0$, the
  representation $\rho_t= ( \iota_{1,t}, \dots, \iota_{n,t})$ sends
  $\pi_1( \Sigma_1)$ and $\pi_1( \Sigma_2)$ into a Zariski dense
  subgroup of $\SL(2, \RR)^n < \Sp(2n, \RR)$ (this construction was
  already used in \cite[Section~9]{Burger_Iozzi_Wienhard_toledo}). As
  the centralizer of $\SL(2, \RR)^n$ inside $\Sp(2n, \RR)$ is finite,
  the statement of the lemma follows.  \medskip

  Now consider a twisted diagonal representation $\rho_0 = \iota
  \otimes \Theta$ which cannot be deformed to a diagonal Fuchsian
  representation.  By Lemma~\ref{lem:all_classes_3} it is sufficient
  to consider the case when $\Theta$ has finite image in $\O(2) <
  \O(n)$ or in $\SO(3) < \SO(n)$. In the first case, the
  representation $\rho_0$ takes values in $\Sp(4, \RR) \times \SL(2,
  \RR)^{n-2} < \Sp(2n, \RR)$; we write $\rho_0 = ( \iota \otimes
  \Theta, \iota, \dots, \iota)$. As above, we can find a deformation
  $\rho_t = ( \iota_{1,t} \otimes \Theta, \iota_{2,t}, \dots
  ,\iota_{n-1, t})$ of $\rho_0$ such that, for all $t$, $\rho_t( \g) =
  \rho( \g)$ and, for all $t>0$, the Zariski closure of $\rho_t(
  \pi_1( \Sigma_1))$ contains $\phi_\Delta( \SL(2, \RR)) \times \SL(2,
  \RR)^{n-2} < \Sp(4, \RR) \times \SL(2, \RR)^{n-2}< \Sp(2n,
  \RR)$. The same is true for $\rho_t( \pi_1( \Sigma_2))$. This
  already means that the centralizer of $\rho_t( \pi_1( \Sigma_1))$ is
  contained in $\O(2) < \O(n) < \Sp(2n, \RR)$; it also implies that
  the image $\Theta( \pi_1( \Sigma_1))$ is contained in the Zariski
  closure of $\rho_t( \pi_1 ( \Sigma_1))$.

  Suppose now that the image of $\pi_1( \Sigma_1)$ by $\Theta$ is not
  contained in $\SO(2)$. Then there exists a reflection $R \in \O(2)
  \moins \SO(2)$ that belongs to the Zariski closure $\overline{
    \rho_t( \pi_1( \Sigma_1))}^Z$. Therefore the centralizer of
  $\overline{ \rho_t( \pi_1( \Sigma_1))}^Z$ (which equals the
  centralizer of $\rho_t( \pi_1( \Sigma_1))$) will be contained in the
  centralizer of $R$ in $\O(2)$ which is finite.

  If the restriction of $\Theta$ to $\pi_1( \Sigma_1)$ is contained in
  $\SO(2)$, we can suppose that this restriction is the trivial
  representation.  In this situation we write $\rho_0$ as amalgamated
  representation $\rho_0 = \rho_1 * \rho_2$, where $\rho_i =
  \rho_0|_{\pi_1(\Sigma_i)}$. Then the restriction of $\Theta$ to
  $\pi_1(\Sigma_2)$ is not contained in $\SO(2)$ (otherwise $\rho_0$
  would be in the same connected component as a diagonal Fuchsian
  representation).  We then deform $\rho_0$ as an amalgamated
  representation: $\rho^{(1)}_t * \rho^{(2)}_t$, where $\rho^{(1)}_t$
  is a deformation of $\rho_1$ considered as a diagonal Fuchsian
  representation (the first case we investigated) and $\rho^{(2)}_t $
  is a deformation of the twisted diagonal representation $\rho_2$.
  The centralizer of $\pi_1( \Sigma_2)$ is finite by the same argument
  as above.

  The case when $\Theta$ is in $\SO(3)$ with finite image can be
  treated in a similar way and is left to the reader (observe that,
  when $\Theta$ does not lift to $\mathrm{Spin}(3)$, the centralizer
  of $\Theta$ is $\{\pm \id_3\}$).

  \smallskip

  Let us now assume that $n=2$ and and $\rho_0$ is a hybrid
  representation in $\homMaxFourZero \moins \homHitFour$.  The
  definition of a hybrid representation $\rho$ involves the choice of
  a subsurface $\Sigma'$ in $\Sigma$, for whose fundamental group we
  choose an irreducible Fuchsian representation. Since we want the
  holonomy around $\gamma$ to be a multiple of the identity in
  $\GL(2,\RR)$ the curve $\gamma$ has to be contained in $\Sigma
  \moins \Sigma'$ and not homotopic to a boundary component of
  $\Sigma'$. This requires the Euler characteristic $\chi(\Sigma')$ to
  be different from $3-2g$.

  The way hybrid representations are constructed in
  Section~\ref{sec:amalagamatedRep}, and with the hypothesis on $\g$
  and $\Sigma'$, one can ensure that $\rho( \gamma)$ is a multiple of
  the identity in $\GL(2, \RR)< \Sp(4, \RR)$ and that the Zariski
  closure $\overline{ \rho( \pi_1( \Sigma_1))}^Z$ (resp. $\overline{
    \rho( \pi_1( \Sigma_2))}^Z$) contains $\SL(2, \RR) \times \SL(2,
  \RR)$ or an irreducible $\SL(2, \RR)$ in $\Sp(4,\RR)$. Therefore,
  the hybrid representation $\rho$ satisfies all the desired
  conclusions.  Considering not only hybrid representations
  constructed from positively adjusted pairs as in
  Section~\ref{sec:description_hybrid}, but also such constructed from
  negatively adjusted pairs (Definition~\ref{defi_good_triples}) we
  get representations with Euler class $g-1 \pm\chi( \Sigma') \in
  \ZZ/(2g-2)\ZZ$ (see Proposition~\ref{prop:eulerHybrid} and
  Remark~\ref{rem:negatively_adjusted}). Varying the subsurface
  $\Sigma'$, every Euler characteristic $\chi( \Sigma')$ different
  form $3-2g$ can be attained, hence, when $g>2$, by
  Theorem~\ref{thm_intro:n=2_components} we obtain representations in
  any connected component of
  \[
  \homMaxFourZero \moins \homHitFour.
  \]
  Only in the case when $g=2$ the above construction does not give
  representations with Euler class $e_\gamma = 0$.
\end{proof}

\begin{proof}[Proof of Theorem~\ref{thm:holonomy}]
  Suppose that $\gamma$ is a separating curve and let $\rho_0$ in
  $\hH$ be a representation satisfying the conclusions of
  Lemma~\ref{lem:existGoodRep}. Let us denote by $\Sigma_1$ and
  $\Sigma_2$ the components of $\Sigma \moins \g$. We call $\sigma_1$
  and $\sigma_2$ the two evaluation maps:
  \[
  \begin{array}{rcl}
    \sigma_i : \hom( \pi_1( \Sigma_i), \Sp(2n, \RR)) & \longrightarrow &
    \Sp(2n, \RR) \\
    \rho & \longmapsto & \rho( \g).
  \end{array}
  \]
  The representation space for $\pi_1( \Sigma)$ is the fiber product
  of the representation space for $\pi_1( \Sigma_1)$ and $\pi_1(
  \Sigma_2)$ over $\Sp(2n, \RR)$:
  \begin{multline*}
    \hom( \pi_1( \Sigma), \Sp(2n, \RR)) = \\
    \big \{ (\rho_1, \rho_2) \in \hom( \pi_1( \Sigma_2), \Sp(2n, \RR))
    \times \hom( \pi_1( \Sigma_2), \Sp(2n, \RR)) \mid \sigma_1(
    \rho_1) = \sigma_2( \rho_2) \big \}.
  \end{multline*}
  By Lemma~\ref{lem:surjOfDiff} the map $\sigma_1$ (resp. $\sigma_2$)
  is locally surjective in a neighborhood of $\rho_0|_{ \pi_1(
    \Sigma_1)}$ (resp. $\rho_0|_{ \pi_1( \Sigma_2)}$). This implies
  that the map
  \[ \begin{array}{rcl} \sigma : \hom( \pi_1( \Sigma), \Sp(2n, \RR)) &
    \longrightarrow &
    \Sp(2n, \RR) \\
    \rho & \longmapsto & \rho( \g) \end{array} \] is locally
  surjective in a neighborhood of $\rho_0$. As $\sigma( \rho_0) =
  \rho_0( \g)$ is a multiple of the identity in $\GL(n , \RR)$ we
  obtain representations $\rho$ and $\rho'$ in a neighborhood of
  $\rho_0$ having the desired properties.

  \smallskip

  When $\gamma$ is not separating, the proof follows a similar
  strategy. Let $\eta$ a simple closed curve separating on $\Sigma$
  such that $\Sigma \moins \eta = \Sigma_1 \cup \Sigma_2$ with
  $\Sigma_1$ a once punctured torus containing $\gamma$. In
  $\mathcal{H}$ one finds easily a representation $\rho_0$ such that
  $\rho_0(\gamma)$ is a multiple of $\id_n$ in $\GL(n, \RR)$ and such
  that $\rho_0 |_{\pi_1(\Sigma_2)}$ satisfies the hypothesis of
  Lemma~\ref{lem:surjOfDiff}. Therefore any small enough deformation
  of $\rho_0 |_{\pi_1(\Sigma_1)}$ can be extended to a deformation of
  $\rho_0$. Since $\gamma$ can be made an element of a subset freely
  generating the free group $\pi_1(\Sigma_1)$, the result follows.
\end{proof}

\appendix

\section{Maximal representations}\label{sec:app_max}

\subsection{The space of positive curves}
\label{sec:app_positive}
In this section we establish certain connectedness properties of the
space of positive curves into the Lagrangian Grassmannian.

We will use the notation from Section~\ref{sec:prelim_maximal}:
$\RR^{2n}$ is a symplectic vector space, with symplectic basis $(e_i)
_{i=1, \dots, 2n}$; $\xX \subset \mathcal{L} \times\mathcal{L}$ is the
space of pairwise transverse Lagrangian subspaces of $\RR^{2n}$,
$L_0^s = \Span ( e_i)_{1 \leq i \leq n}$, $L_0^u = \Span(
e_i)_{n+1\leq i \leq 2n}$ are two transverse Lagrangian subspaces of $
\RR^{2n}$, $P^s, P^u \subset \Sp(2n,\RR)$ are their stabilizers. The
unipotent radical of $P^s$ is
\begin{equation*}
  U^s = \left \{ u^s(M) = \left (
      \begin{array}{cc}
        \id_n & M \\ 0 & \id_n
      \end{array} \right ) \mid M \in \mathrm{M}(n , \RR),
    \transpose{} M = M
  \right \}.
\end{equation*}
A Lagrangian $L$ can be written as $u^s( M) \cdot L_0^u$ for some $M$
if and only if $L$ and $L_0^s$ are transverse, in which case $M$ is
uniquely determined by $L$. The triple of Lagrangians $(L_0^s, L,
L_0^u)$ is positive (Definition~\ref{def:positivetriple}) if and only
if the symmetric matrix $M$ such that $L = u^s( M) \cdot L_0^u$ is
positive definite.

Recall that a curve $\xi: S^1 \to \mathcal{L}$ is said to be
\emph{positive} if it sends every positive triple of $S^1$ to a
positive triple of Lagrangians.

\begin{prop}
  \label{prop:conn-posit-curv}
  The space $\mathcal{P}$ of \emph{continuous and positive} curves
  from $S^1$ to $\mathcal{L}$ is connected. In fact, fixing two points
  $x^s \neq x^u$ in $S^1$, the fibers of the map
  \begin{equation*}
    \begin{array}{ccc}
      \mathcal{P} & \longrightarrow & \mathcal{X} \subset \mathcal{L}
      \times \mathcal{L} \\
      \xi & \longmapsto & ( \xi( x^s), \xi( x^u)) 
    \end{array}
  \end{equation*}
  are contractible.
\end{prop}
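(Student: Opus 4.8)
The plan is to fix the base point and reduce the statement to a convexity property for paths of symmetric matrices. First note that $\Sp(2n,\RR)$ acts by postcomposition on $\mathcal{P}$, that the map $q\colon \xi\mapsto(\xi(x^s),\xi(x^u))$ is equivariant for this action and the natural action on $\xX$, and that $\Sp(2n,\RR)$ is transitive on $\xX$. Since $\xX\cong\Sp(2n,\RR)/H$ is connected, it suffices to prove that the fiber $\mathcal{P}_0=q^{-1}(L_0^s,L_0^u)$ is contractible: indeed any $\xi$ with $q(\xi)=(L^s,L^u)$ can be moved into $\mathcal{P}_0$ along $t\mapsto g_t\cdot\xi$, where $g_t$ is a path in the connected group $\Sp(2n,\RR)$ from $\mathrm{id}$ to an element carrying $(L^s,L^u)$ to $(L_0^s,L_0^u)$, so connectedness of $\mathcal{P}$ follows from connectedness of $\mathcal{P}_0$.

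Next I would describe $\mathcal{P}_0$ concretely. The points $x^s\neq x^u$ split $S^1$ into two open arcs, $A$ (from $x^s$ to $x^u$) and $B$ (from $x^u$ to $x^s$). If $\xi\in\mathcal{P}_0$, then for $t$ in the interior of either arc the Lagrangian $\xi(t)$ is transverse to both $L_0^s$ and $L_0^u$ (apply positivity to a positively ordered triple with underlying set $\{x^s,x^u,t\}$, which in particular forces pairwise transversality), so one can write $\xi(t)=u^s(M(t))\cdot L_0^u$ for a uniquely determined symmetric $M(t)$, depending continuously on $t$. Using that $u^s(N)$ fixes $L_0^s$ and that the triple $(L_0^s,u^s(N)\cdot L_0^u,L_0^u)$ is positive exactly when $N$ is positive definite, a direct computation translates positivity of $\xi$ into the following conditions on $M$: along each arc $M$ is strictly decreasing in the Loewner order, $M(t)\succ0$ on $A$ and $M(t)\prec0$ on $B$, the relevant extreme eigenvalue of $M(t)$ tends to $+\infty$ (resp. $-\infty$) as $t$ approaches the $x^s$-end of $A$ (resp. of $B$), and $M(t)\to0$ as $t$ approaches $x^u$ from either side; the last two limits encode continuity of $\xi$ at $x^s$ and $x^u$ together with the prescribed values $L_0^s$, $L_0^u$. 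Conversely any pair of matrix paths with these properties assembles to an element of $\mathcal{P}_0$, and this identifies $\mathcal{P}_0$ homeomorphically with a space of pairs of monotone matrix paths (the uniform topology on $\mathcal{P}$ corresponding to uniform convergence on compacta, with suitable control at the non-compact ends). The set is nonempty since Fuchsian representations yield positive curves (Theorem~\ref{thm:positivecurve}) and $\Sp(2n,\RR)$ is transitive on $\xX$.

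The last step is then immediate: the space of matrix paths just described is \emph{convex} — a convex combination of strictly decreasing paths is strictly decreasing, convex combinations preserve positive and negative definiteness, and they preserve the prescribed boundary behaviour. A nonempty convex set is contractible via the straight-line homotopy onto any fixed basepoint, so $\mathcal{P}_0$ is contractible and the proposition follows.

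The step I expect to be the real work is the translation in the second paragraph: positivity of $\xi$ is quantified over \emph{all} positively oriented triples of $S^1$, including triples with one point in each arc and triples straddling $x^s$ or $x^u$, and one must verify that these are all subsumed by the monotonicity and definiteness conditions on $M$ (they are, by transitivity of the Loewner order together with the definiteness of $M$ on each arc). One also has to make the correspondence $\xi\leftrightarrow M$ genuinely bicontinuous, with honest control near the ends where $M$ blows up, so that the straight-line homotopy is a continuous homotopy of curves $S^1\to\Ll$. The group-theoretic reduction and the convexity argument are formal.
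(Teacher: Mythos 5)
Your argument is correct and is essentially the proof in the paper: reduce by the transitive $\Sp(2n,\RR)$-action to the fiber $\mathcal{P}_0$ over $(L_0^s,L_0^u)$, identify that fiber with a set of monotone paths of symmetric matrices via $\xi(t)=u^s(M(t))\cdot L_0^u$, and conclude by convexity. The only cosmetic difference is that the paper parametrizes over the single interval $S^1\moins\{x^s\}$ normalized by $\tilde\xi(x^u)=0$ rather than over the two arcs separately, and it likewise leaves the translation of positivity into Loewner monotonicity, and the continuity at the endpoints, to the reader.
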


\begin{proof}
  Consider the set
  \begin{equation*}
    \mathcal{P}_0 = \{ \xi \in \mathcal{P} \mid \xi( x^s) = L^{s}_{0}, \xi(x^u) = L^{u}_{0} \}.
  \end{equation*}
  Since $\Sp(2n,\RR)$ acts transitively on $\mathcal{X}$, it is enough
  to show that $\mathcal{P}_0$ is contractible.  For every $\xi$ in
  $\mathcal{P}_0$ and every $x \neq x^s$, $\xi(x)$ and $\xi( x^s)$ are
  transverse, therefore we can regard $\mathcal{P}_0$ as a subset of
  the space of maps from $S^1 \moins \{ x^s \}$ to $\mathrm{Sym}(n,
  \RR)$. It is precisely the set of maps $\tilde{ \xi} : S^1 \moins \{
  x^s \} \to \mathrm{Sym}(n, \RR)$ such that
  \begin{itemize}
  \item $\lim_{ x \to x^s } u^s( \tilde{ \xi}( x) ) \cdot L_{0}^u =
    L_{0}^s$
  \item $\tilde{ \xi}$ is continuous
  \item for all $x>x'$ the symmetric matrix $\tilde{ \xi}( x) -
    \tilde{ \xi}( x')$ is positive definite.
  \item $\tilde{ \xi}( x^u) = 0$.
  \end{itemize}
  This set of maps is a convex subset of the space of all maps from ${
    S^1 \moins \{ x^s \}}$ into $\mathrm{Sym}(n, \RR)$ (this follows
  from the fact that the set of positive definite matrices is
  convex). The contractibility of $\mathcal{P}_0$ follows.
\end{proof}

\begin{prop}
  \label{prop:conn_equiv_posit}
  Let $\gamma$ be a nontrivial element of $\pi_1(\Sigma)$. Then the
  space of pairs
  \begin{equation*}
    \mathcal{P}^\gamma = \big \{ (\rho, \xi) \in \hom( \langle \gamma \rangle, \Sp(2n,\RR))
    \times \mathcal{P} \mid \xi \text{ is } \rho\text{-equivariant} \big \}
  \end{equation*}
  has two connected components. If $t^{s}_{\gamma}$ and
  $t^{u}_{\gamma}$ are the fixed points of $\gamma$ in $\partial
  \pi_1(\Sigma)$, then the connected components are detected by which
  component of $ \mathrm{Stab}( \xi( t^{s}_{\gamma} ) ) \cap
  \mathrm{Stab}( \xi( t^{u}_{\gamma}) ) \cong \GL(n, \RR)$ contains
  $\rho( \gamma)$.
\end{prop}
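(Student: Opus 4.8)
\textbf{Plan for the proof of Proposition~\ref{prop:conn_equiv_posit}.}
The plan is to reduce the statement, as in the proof of Proposition~\ref{prop:conn-posit-curv}, to an essentially convex picture by normalizing the data at the two fixed points of $\gamma$. First I would record that any $\rho$-equivariant positive curve $\xi$ must send $t^s_\gamma$ and $t^u_\gamma$ to a pair of \emph{transverse} Lagrangians that are both invariant under $A := \rho(\gamma)$, and that (as in Fact~\ref{fact:attracL} applied to the dynamics of $\gamma$ on $\partial\pi_1(\Sigma)$ and the positivity of $\xi$) $\xi(t^s_\gamma)$ is the attracting and $\xi(t^u_\gamma)$ the repelling Lagrangian of $A$. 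Thus $A$ already lies in $\stab(\xi(t^s_\gamma))\cap\stab(\xi(t^u_\gamma))\cong\GL(n,\RR)$, which makes the assertion about which component of $\GL(n,\RR)$ contains $\rho(\gamma)$ meaningful. Since $\Sp(2n,\RR)$ acts transitively on the space $\mathcal X$ of transverse pairs, and the action on $\mathcal P^\gamma$ by conjugation/precomposition is continuous, I may restrict to the fiber where $\xi(t^s_\gamma)=L^s_0$ and $\xi(t^u_\gamma)=L^u_0$; call this subspace $\mathcal P^\gamma_0$. Then $\rho(\gamma)\in\GL(n,\RR)=\stab(L^s_0)\cap\stab(L^u_0)$, and I must show $\mathcal P^\gamma_0$ has two components, detected by $\sign\det\rho(\gamma)$.

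Next I would parametrize $\mathcal P^\gamma_0$ the way $\mathcal P_0$ was parametrized in the proof of Proposition~\ref{prop:conn-posit-curv}. Fixing the identification $\partial\pi_1(\Sigma)\smallsetminus\{t^s_\gamma\}\cong\RR$ with $t^u_\gamma\mapsto 0$ and the $\gamma$-action becoming $x\mapsto \lambda x$ for the eigenvalue-type factor $\lambda>0$ determined by $A$ on $\RR$ (more precisely $\gamma$ acts on this line as a dilation, say by a factor $>1$), a curve $\xi\in\mathcal P^\gamma_0$ is the same datum as a continuous map $\tilde\xi:\RR\to\sym(n,\RR)$ with: $\tilde\xi(x)-\tilde\xi(x')$ positive definite for $x>x'$; $\tilde\xi(0)=0$; the limit condition $\lim_{x\to\pm\infty}u^s(\tilde\xi(x))\cdot L^u_0=L^s_0$, i.e.\ $\tilde\xi(x)\to\pm\infty$ in the cone of positive/negative definite matrices; and the equivariance $\tilde\xi(\lambda x)=A\cdot\tilde\xi(x)$, where $A=\rho(\gamma)\in\GL(n,\RR)$ acts on $\sym(n,\RR)$ by $M\mapsto AM\,{}^t\!A$. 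This last equation is the only genuinely new feature compared to Proposition~\ref{prop:conn-posit-curv}. I would then observe that the set of all $A\in\GL(n,\RR)$ for which such a $\tilde\xi$ exists, together with the space of such $\tilde\xi$, fibers over $\GL(n,\RR)$; the component of $\GL(n,\RR)$ containing $A$ is the claimed invariant, so it remains to see (i) that the projection $\mathcal P^\gamma_0\to\GL(n,\RR)$, $(\rho,\xi)\mapsto\rho(\gamma)$ has image exactly $\GL(n,\RR)$ (or at least meets both components) and (ii) that its fibers are connected — indeed contractible.

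For contractibility of the fibers: the fiber over a fixed $A$ is the set of $\tilde\xi:[1,\lambda]\to\sym(n,\RR)$ (a fundamental domain for $x\mapsto\lambda x$ on $(0,\infty)$, together with the symmetric piece on $(-\infty,0)$ — in fact by $\xi(0)=0$ and positivity one gets the negative part forced similarly) satisfying the monotonicity inequality and the matching condition $\tilde\xi(\lambda)=A\tilde\xi(1)\,{}^t\!A$ at the endpoints plus the prescribed asymptotics, and this is a convex subset of a space of $\sym(n,\RR)$-valued maps, exactly as in Proposition~\ref{prop:conn-posit-curv}: convexity holds because the positive-definiteness conditions and the linear matching/endpoint conditions are preserved under convex combinations, and it is nonempty because one can build an explicit increasing path. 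Hence each fiber is contractible, so $\pi_0(\mathcal P^\gamma_0)\cong\pi_0$ of the image in $\GL(n,\RR)$. For (i), I would exhibit, for each sign $\pm$, an explicit equivariant positive curve (e.g.\ coming from a Fuchsian or diagonal representation twisted by an element of determinant of the appropriate sign, as in Facts~\ref{facts:diagonal} and \ref{facts:twisted}) whose holonomy along $\gamma$ has $\det$ of that sign; this shows the image meets both components and no more, since $\GL(n,\RR)$ has exactly two. Finally, undoing the $\Sp(2n,\RR)$-normalization identifies $\stab(\xi(t^s_\gamma))\cap\stab(\xi(t^u_\gamma))$ with $\GL(n,\RR)$ and transports the $\sign\det$ invariant, giving the stated description of the two components. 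The main obstacle I anticipate is verifying carefully that the equivariance relation $\tilde\xi(\lambda x)=A\tilde\xi(x)\,{}^t\!A$ together with the asymptotic and $\xi(0)=0$ conditions still cuts out a \emph{convex} (hence connected) set once $A$ is fixed — in particular controlling the behaviour near $x^s=t^s_\gamma$ (the $x\to\infty$ limit) so that the convex-combination argument does not destroy the limit condition; this is where the bulk of the routine but delicate checking lies.
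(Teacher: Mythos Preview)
Your overall strategy matches the paper's: normalize $(\xi(t^s_\gamma),\xi(t^u_\gamma))$ to $(L^s_0,L^u_0)$, project $\mathcal P^\gamma_0$ to $\GL(n,\RR)$ via $\rho\mapsto\rho(\gamma)$, show the fibers are convex, and read off the components from the image. The organizational difference---you remove only $t^s_\gamma$ and get a dilation $x\mapsto\lambda x$, while the paper removes both fixed points and parametrizes each of the two arcs so that $\gamma$ acts by the translation $t\mapsto t+1$---is cosmetic.

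There is, however, a genuine gap at your step (i). You write that exhibiting one example with each sign ``shows the image meets both components and no more, since $\GL(n,\RR)$ has exactly two.'' This does not follow: a subset of $\GL(n,\RR)$ meeting both components can have any number of components. And the image is certainly \emph{not} all of $\GL(n,\RR)$: for $A=\id_n$ the equivariance $\tilde\xi(\lambda x)=\tilde\xi(x)$ forces periodicity, contradicting strict monotonicity; more generally any $A$ with an eigenvalue of modulus $\leq 1$ is excluded. So after establishing connectedness of the fibers you still owe an argument that the image itself has exactly two components.

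The paper supplies this via an extra normalization you omit. It uses the transitive $\GL(n,\RR)$-action on positive definite matrices (by $M\mapsto gM\,{}^t g$) to arrange $\tilde\xi(0)=\id_n$; the monotonicity $\tilde\xi(1)-\tilde\xi(0)>0$ together with the equivariance $\tilde\xi(1)=A\,{}^t\!A$ then identifies the image as
\[
\{A\in\GL(n,\RR)\mid A\,{}^t\!A-\id_n\text{ is positive definite}\},
\]
and the Cartan decomposition $A=K_1DK_2$ makes it immediate that this set has exactly two components, detected by $\sign\det A$. Without this (or an equivalent) identification of the image, your argument only shows $\pi_0(\mathcal P^\gamma_0)\geq 2$, not $=2$.
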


\begin{proof}
  There are at least two connected components since the sign of $\det(
  \rho( \gamma) |_{\xi( t^{s}_{\gamma}) })$ varies continuously.

  It is sufficient to understand the connected components of the
  fibers of the map
  \begin{equation*}
    \begin{array}{rcl}
      \phi :\mathcal{P}^\gamma & \longrightarrow & \mathcal{X} \\
      ( \rho, \xi) & \longmapsto & ( \xi( t^{s}_{\gamma} ), \xi( t^{u}_{\gamma} ) ). 
    \end{array}
  \end{equation*}
  Again it is enough to calculate the components of
  $\mathcal{P}^{\gamma}_{0} := \phi^{-1}( L_{0}^s, L_{0}^u)$. The
  points $t^{s}_{\gamma}$ and $t^{u}_{\gamma}$ divide the circle
  $\partial \pi_1(\Sigma)$ in two intervals $I_{su}$ and $I_{us}$:
  they are chosen so that $x$ belongs to $I_{su}$ (respectively
  $I_{us}$) if and only if the triple $(t^{s}_{\gamma}, x ,
  t^{u}_{\gamma})$ (respectively $(t^{u}_{\gamma}, x ,
  t^{s}_{\gamma})$) is positively oriented. These two intervals are
  homeomorphic to $\RR$ and isomorphisms are chosen so that the action
  of $\gamma$ is conjugate to $t \mapsto t+1$ on $\RR$.

  It is not difficult to show that a curve $\xi : \partial
  \pi_1(\Sigma) \to \mathcal{L}$, such that $\xi( t^{s}_\gamma ) =
  L_{0}^{s}$ and $\xi( t^{u}_\gamma ) = L_{0}^{u}$, is positive if and
  only if the following conditions are satisfied:
  \begin{itemize}
  \item for all $x$ in $I_{su}$ the triple $(L_{0}^s, \xi(x),
    L_{0}^u)$ is positive.
  \item for all $x$ in $I_{us}$ the triple $(L_{0}^u, \xi(x),
    L_{0}^s)$ is positive.
  \item the restriction of $\xi$ to $I_{su}$ is positive (i.e.\ it
    sends positive triples to positive triples).
  \item the restriction of $\xi$ to $I_{us}$ is positive.
  \end{itemize}
  
  Therefore we can consider the two intervals $I_{su}$ and $I_{us}$
  separately. Using the parametrization by symmetric matrices, it is
  sufficient to show that the set
  \begin{multline*}
    \mathcal{S} = \big \{ (A, \tilde{ \xi}) \in \GL(n, \RR) \times
    C^0( \RR, \mathrm{Sym}_{>0}(n, \RR)) \mid \\ \tilde{ \xi}( t+1) =
    A \tilde{ \xi}( t) \transpose{}\! A, \text{ and } \forall s<t,\,
    \tilde{ \xi}( t) - \tilde{ \xi}( s) > 0 \big \}
  \end{multline*}
  has two connected components that are distinguished by the sign of
  $\det A$. (Note that the $\rho$-equivariance of $\xi$ guarantees
  that $\lim_{t \to \infty} u^s( \tilde{\xi}(t)) \cdot L_{0}^u =
  L_{0}^s$ and $\lim_{t \to -\infty} u^s( \tilde{\xi}(t)) \cdot
  L_{0}^u = L_{0}^s$). Taking into account the natural action of
  $\GL(n, \RR)$ on $\mathcal{S}$ reduces the question to determining
  the connected components of the subset $\mathcal{S}_0 := \{ (A,
  \tilde{ \xi} ) \in \mathcal{S} \mid \tilde{ \xi}(0) = \id_n \}$. The
  map
  \begin{equation*}
    \begin{array}{rcl}
      \mathcal{S}_0 & \longrightarrow & \GL(n, \RR) \\
      ( A, \tilde{ \xi} ) & \longmapsto & A.
    \end{array}
  \end{equation*}
  has convex, hence contractible fibers. Its image is
  \begin{equation*}
    \big \{ A \in \GL(n, \RR) \mid A \transpose{}\! A - \id_n \in
    \mathrm{Sym}_{>0}(n, \RR) \big \}.
  \end{equation*}
  Using the Cartan decomposition of $\GL(n,\RR)$, it is easy to show
  that this set has precisely two connected components given by the
  sign of $\det A$.
\end{proof}

Note that the proof gives the following

\begin{prop}
  \label{prop:conn_equiv_posit2}
  Let $\gamma$ be a nontrivial element of $\pi_1(\Sigma)$ and $\rho $
  a representation $\langle \gamma \rangle \to \Sp(2n,\RR))$. Then the
  space
  \[ \mathcal{P}^\rho = \big \{ \xi \in \mathcal{P} \mid \xi \text{ is
  } \rho\text{-equivariant} \big \} \] is connected.
\end{prop}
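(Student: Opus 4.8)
The plan is to repeat the argument used to prove Proposition~\ref{prop:conn_equiv_posit}, this time with the representation $\rho$ held fixed. Recall that in that proof the space $\mathcal{P}^\gamma$ was understood by (a) using the transitivity of $\Sp(2n,\RR)$ on $\mathcal{X}$ to reduce to the fiber $\mathcal{P}^{\gamma}_{0}$ of $(\rho,\xi)\mapsto(\xi(t^{s}_{\gamma}),\xi(t^{u}_{\gamma}))$ over $(L_0^s,L_0^u)$, (b) cutting $\partial\pi_1(\Sigma)$ at the fixed points of $\gamma$ into the two intervals $I_{su}$ and $I_{us}$ and treating them separately, and (c) identifying the resulting data with the set $\mathcal{S}$ (and its analogue for $I_{us}$), whose projection $(A,\tilde\xi)\mapsto A$ to $\GL(n,\RR)\cong\stab(L_0^s)\cap\stab(L_0^u)$ has convex — hence contractible, in particular connected — fibers and image $\{A\mid A\transpose A-\id_n>0\}$. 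The two components of $\mathcal{P}^\gamma$ arose \emph{only} from the two components of that image, detected by $\sign\det A$.

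If $\mathcal{P}^\rho$ is empty there is nothing to prove, so fix $\xi_0\in\mathcal{P}^\rho$. Since a positive curve sends distinct points to transverse Lagrangians, $\xi_0(t^{s}_{\gamma})$ and $\xi_0(t^{u}_{\gamma})$ are transverse, and after replacing $\rho$ by a conjugate (which induces a homeomorphism at the level of the spaces $\mathcal{P}^{\bullet}$, via $\xi\mapsto g\circ\xi$) I may assume $\xi_0(t^{s}_{\gamma})=L_0^s$ and $\xi_0(t^{u}_{\gamma})=L_0^u$, so that $\rho(\gamma)=\diag(A_0,\transpose A_0^{-1})$ with $A_0\in\GL(n,\RR)$. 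Carrying out step (c) for $\xi_0$ forces $A_0\transpose A_0-\id_n>0$; a short computation of the linearized action of $\rho(\gamma)$ on $T_{L_0^s}\Ll$ and $T_{L_0^u}\Ll$ then places us in the setting of Fact~\ref{fact:attracL}, so $L_0^s$ is the \emph{unique} attracting and $L_0^u$ the \emph{unique} repelling fixed Lagrangian of $\rho(\gamma)$. The key step is now to check that \emph{every} $\xi\in\mathcal{P}^\rho$ has $\xi(t^{s}_{\gamma})=L_0^s$ and $\xi(t^{u}_{\gamma})=L_0^u$: indeed $\xi(t^{s}_{\gamma})$ is $\rho(\gamma)$-fixed by equivariance, and running the same analysis as in step (c) on $\xi$ (in a chart adapted to $\xi(t^{s}_{\gamma})$) shows that $\rho(\gamma)$ acts on $\xi(t^{s}_{\gamma})$ as a matrix conjugate to some $B$ with $B\transpose B-\id_n>0$, in particular with all eigenvalues of modulus $>1$; by Fact~\ref{fact:attracL} this forces $\xi(t^{s}_{\gamma})$ to be an attracting fixed Lagrangian of $\rho(\gamma)$, hence $\xi(t^{s}_{\gamma})=L_0^s$, and symmetrically $\xi(t^{u}_{\gamma})=L_0^u$.

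With this, $\mathcal{P}^\rho$ coincides with the fiber $\mathcal{P}^{\rho}_{0}$ of curves with prescribed endpoint values $L_0^s$ and $L_0^u$; in the identification of step (c) this is precisely the preimage of the \emph{single point} $A_0$ under the projection to $\GL(n,\RR)$, i.e.\ a product of two convex sets, hence connected. The only subtle point is the forcing of the endpoints $\xi(t^{s}_{\gamma})$ and $\xi(t^{u}_{\gamma})$ in the second paragraph: without it $\mathcal{P}^\rho$ could a priori fiber over a disconnected set of fixed Lagrangian pairs, and it is exactly there that the positivity of the curve, together with Fact~\ref{fact:attracL}, is genuinely used.
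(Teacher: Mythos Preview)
Your proof is correct and follows the same strategy the paper intends: the paper simply remarks that the proof of Proposition~\ref{prop:conn_equiv_posit} ``gives'' this statement, meaning that once $\rho$ is fixed one lands in a single fiber of the map $(A,\tilde\xi)\mapsto A$ on $\mathcal{S}$, and such fibers were shown to be convex. You have made explicit the one point the paper glosses over, namely that for fixed $\rho$ every $\xi\in\mathcal{P}^\rho$ is forced to have $\xi(t^{s}_{\gamma})$ and $\xi(t^{u}_{\gamma})$ equal to the unique attracting and repelling Lagrangians of $\rho(\gamma)$; your use of Fact~\ref{fact:attracL} together with the condition $B\,\transpose\!B-\id_n>0$ (up to conjugacy) coming from the analysis of $\mathcal{S}_0$ is exactly the right way to pin this down.
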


\subsection{Deforming maximal representations in $\SL(2,\RR)$}
The following fact follows from classical Fricke-Klein theory using
Fenchel-Nielsen coordinates.
\begin{lem}\label{lem:deformation}
  Let $(\gamma_i)_{i=1,\ldots, k}$ be a family of pairwise
  nonhomotopic simple closed curves on $\Sigma$ and denote by
  $\gamma_i \in \pi_1(\Sigma)$ the corresponding elements of the
  fundamental group. Let $\iota_0: \pi_1(\Sigma) \to \SL(2,\RR)$ a
  discrete embedding with $\iota_0(\gamma_i) = \varepsilon_i g_{i,0}
  \left(\begin{array}{cc} e^{\lambda_{i,0}} & 0 \\ 0
      &e^{-\lambda_{i,0}}
    \end{array}\right) g_{i,0}^{-1}$, $\varepsilon_i \in \{ \pm 1\}$,
  $\lambda_{i,0} \in \RR \moins \{ 0 \}$, $g_{i,0} \in \SL(2,
  \RR)$. Let $(\lambda_{i,t})_{t\in [0,1]}$ be continuous paths in
  $\RR \moins \{ 0 \}$.
 
  Then there exists a continuous path of discrete embeddings
  $(\iota_t)_{t\in [0,1]}$, and continuous paths $g_{i,t}$ such that
  for any $t \in [0,1]$, $\iota_t(\gamma_i) = \varepsilon_i g_{i,t}
  \left(\begin{array}{cc} e^{\lambda_{i,t}} & 0 \\ 0
      &e^{-\lambda_{i,t}}
    \end{array}\right) g_{i,t}^{-1}$.
\end{lem}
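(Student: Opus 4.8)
The plan is to deduce Lemma~\ref{lem:deformation} from the theory of Fenchel--Nielsen coordinates on the Teichm\"uller space $\mathcal{T}(\Sigma)$, which parametrizes discrete and faithful representations $\pi_1(\Sigma) \to \PSL(2,\RR)$ up to conjugation. First I would fix a pants decomposition of $\Sigma$ by simple closed curves containing (up to isotopy) all the curves $\gamma_1, \ldots, \gamma_k$; this is possible since the $\gamma_i$ are pairwise nonhomotopic and simple, after first completing the (possibly non-maximal) multicurve $\{\gamma_i\}$ to a maximal one. With respect to this decomposition the Fenchel--Nielsen coordinates give a diffeomorphism $\mathcal{T}(\Sigma) \cong (\RR_{>0})^{3g-3} \times \RR^{3g-3}$, where the length coordinates associated with $\gamma_i$ read off precisely $2|\lambda_i|$ (the translation length of $\iota(\gamma_i)$), and the twist coordinates are free. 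In particular, the map $\mathcal{T}(\Sigma) \to (\RR_{>0})^k$, recording the translation lengths along $\gamma_1, \ldots, \gamma_k$, is a surjective submersion with connected (indeed contractible, being a product of a Euclidean factor with a product of half-lines) fibers.

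The second step is to upgrade this to a statement about representations into $\SL(2,\RR)$ rather than $\PSL(2,\RR)$, keeping track of the signs $\varepsilon_i$ and the conjugating elements $g_{i,t}$. The point here is that the choice of lift of a discrete embedding $\pi_1(\Sigma) \to \PSL(2,\RR)$ to $\SL(2,\RR)$ is a discrete datum (an element of $\h^1(\Sigma;\FF_2)$, equivalently a spin structure), so it is locally constant along any continuous path of discrete embeddings; hence along the deformation we produce, the sign $\varepsilon_i = \mathrm{sign}(\mathrm{tr}\,\iota_t(\gamma_i))$ with which $\iota_t(\gamma_i)$ is conjugate to $\varepsilon_i \diag(e^{\lambda_{i,t}}, e^{-\lambda_{i,t}})$ stays equal to $\varepsilon_i = \mathrm{sign}(\mathrm{tr}\,\iota_0(\gamma_i))$, as required (note the $\lambda_{i,t}$ are allowed to change sign, which is exactly why one writes $\varepsilon_i$ out front rather than absorbing it). Concretely, I would start from the path in $\mathcal{T}(\Sigma)$ obtained by lifting $(|\lambda_{1,t}|, \ldots, |\lambda_{k,t}|)$ through the length map, using that this map is a fibration with path-connected total space so paths lift; compose with the section $\mathcal{T}(\Sigma) \to \hom(\pi_1(\Sigma), \PSL(2,\RR))$ given by a fixed continuous choice of representative (e.g.\ normalizing $\iota_t(a_1), \iota_t(b_1)$); then lift the resulting continuous path of representations to $\SL(2,\RR)$ starting at $\iota_0$, which is possible and unique since $\SL(2,\RR) \to \PSL(2,\RR)$ is a covering and the base of representations is simply connected in the relevant sense (or more elementarily: a continuous path of $\PSL(2,\RR)$-representations with a chosen lift of its initial point lifts continuously, by the homotopy lifting property applied componentwise on generators subject to the surface relation, which lifts since the obstruction is locally constant).

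Finally, the conjugating elements $g_{i,t}$: for each $t$ the matrix $\iota_t(\gamma_i)$ is hyperbolic (its translation length $\lambda_{i,t} \neq 0$), hence diagonalizable over $\RR$, and $g_{i,t}$ can be taken to be the matrix whose columns are unit eigenvectors ordered so that the attracting eigenvalue comes first; this depends continuously on $\iota_t(\gamma_i)$ as long as the two eigenvalues stay distinct, which holds throughout since $\lambda_{i,t} \neq 0$, and one can normalize $\det g_{i,t} = 1$ by scaling. (A minor point: one must choose the eigenvector signs consistently, which is possible on the interval $[0,1]$ by continuity since $\SL(2,\RR)$-eigenlines carry no monodromy over a contractible parameter space.) I expect the main obstacle, such as it is, to be purely bookkeeping: making precise the passage between Teichm\"uller space, the representation variety, its $\SL(2,\RR)$-lifts, and the eigenvector normalizations, while ensuring the resulting $\iota_t$ are genuine discrete embeddings for all $t$ --- which is automatic because $\mathcal{T}(\Sigma)$ consists precisely of (conjugacy classes of) discrete embeddings and our path stays inside it by construction. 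No deep input beyond classical Fricke--Klein theory is needed.
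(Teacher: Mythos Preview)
Your approach via Fenchel--Nielsen coordinates is exactly what the paper invokes: the paper gives no proof beyond the single sentence preceding the lemma, ``The following fact follows from classical Fricke-Klein theory using Fenchel-Nielsen coordinates,'' and your proposal is a correct fleshing-out of that citation, including the passage to $\SL(2,\RR)$ and the continuous choice of conjugators.

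One caveat worth flagging: your assertion that the $\gamma_i$ can be completed to a pants decomposition ``since the $\gamma_i$ are pairwise nonhomotopic and simple'' is not literally justified by those hypotheses alone --- two simple, nonhomotopic closed curves may intersect essentially, in which case they cannot both appear in a pants decomposition. The lemma as stated (and the paper's one-line proof) tacitly assumes the $\gamma_i$ are pairwise disjoint, and indeed in every use of the lemma in the paper the curves are boundary components of a subsurface, hence disjoint. With that implicit hypothesis made explicit, your argument goes through.
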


\subsection{Twisting representations}
\label{sec:trick}

In this section we explain the strategy which we used to calculate the
topological invariants for maximal representations.

\subsubsection{The group $\widehat{\pi_1(\Sigma)}$ }
\label{sec:group}
We fix a discrete embedding of $\pi_1(\Sigma)$ into $\PSL(2, \RR)$:
$\pi_1(\Sigma) < \PSL(2, \RR)$. Let $\pi : \SL(2, \RR) \to \PSL(2,
\RR)$ be the projection.

We set $\widehat{\pi_1(\Sigma)} = \pi^{-1}( \pi_1(\Sigma)) \subset
\SL(2,\RR)$. The group $\widehat{\pi_1(\Sigma)}$ is a two-to-one cover
of $\pi_1(\Sigma)$, which is isomorphic to $ \{ \pm 1 \} \times
\pi_1(\Sigma)$. The isomorphism can be chosen so that it intertwines
$\pi$ with the second projection $\{ \pm 1 \} \times \pi_1(\Sigma) \to
\pi_1(\Sigma)$. Any choice of such an isomorphism amounts to choosing
a lift of $\pi_1(\Sigma)<\PSL(2, \RR)$ to $\SL(2, \RR)$; such lifts
are in one-to-one correspondence with spin-structures on $\Sigma$.
  
For the rest of this section we fix such an isomorphism
$\widehat{\pi_1(\Sigma)} = \{ \pm 1 \} \times \pi_1(\Sigma)$.
  
\subsubsection{Maximal representation of $\widehat{ \pi_1(\Sigma)}$}
\label{sec:maxrepGammaBar}

\begin{defi}
  \label{defi_maxhat}
  A representation $\widehat{\rho} : \widehat{\pi_1(\Sigma)} = \{ \pm
  1 \} \times \pi_1(\Sigma) \to \Sp(4, \RR)$ is said to be
  \emph{maximal} if the restriction $\widehat{\rho}|_{\pi_1(\Sigma)}$
  is maximal (see Definition~\ref{defi:maximal}).
  
  The set of maximal representation is denoted by $\hommax( \widehat{
    \pi_1(\Sigma)}, \Sp(4, \RR))$.
\end{defi}

Let $\widehat{\rho} : \widehat{ \pi_1(\Sigma)} \to \Sp( 4, \RR)$ be a
maximal representation and $\varepsilon : \widehat{ \pi_1(\Sigma)} \to
\{ \pm 1 \}$ be any representation, then the representation
$\varepsilon \cdot \widehat{\rho}$, defined by $\g \mapsto
\varepsilon( \g) \widehat{\rho}( \g)$, is also maximal.

If $\rho : \pi_1(\Sigma) \to \Sp(4, \RR)$ is a maximal representation,
then $\widehat{ \rho} = \rho \circ pr_2 : \widehat{\pi_1(\Sigma)} \to
\Sp(4, \RR)$ is a maximal representation, where $pr_2 :
\widehat{\pi_1(\Sigma)} = \{ \pm 1 \} \times \pi_1(\Sigma) \to
\pi_1(\Sigma)$ denotes the projection onto the second factor.

Since $T^1 \Sigma \cong \widehat{ \pi_1(\Sigma)} \backslash \SL(2,
\RR)$ the notion of \emph{Anosov representations} and \emph{Anosov
  reductions} (see Section~\ref{sec_anosov}) can be easily extended to
representation of $\widehat{\pi_1(\Sigma)}$. The following lemma is an
immediate consequence of Theorem~\ref{thm:maximal_anosov}.

\begin{lem}
  Every maximal representation $\widehat{\rho}: \widehat{
    \pi_1(\Sigma)}\to \Sp(4,\RR)$ is Anosov.
\end{lem}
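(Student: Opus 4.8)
The statement to prove is that every maximal representation $\widehat{\rho}\colon \widehat{\pi_1(\Sigma)} \to \Sp(4,\RR)$ is Anosov, where ``maximal'' means that the restriction $\widehat{\rho}|_{\pi_1(\Sigma)}$ is maximal in the sense of Definition~\ref{defi:maximal}. The plan is to reduce this immediately to Theorem~\ref{thm:maximal_anosov} applied to the restriction. First I would recall that $\widehat{\pi_1(\Sigma)}$ is a finite (two-to-one) extension of $\pi_1(\Sigma)$, and that because $T^1\Sigma \cong \widehat{\pi_1(\Sigma)}\backslash \SL(2,\RR) \cong \pi_1(\Sigma)\backslash\PSL(2,\RR)$, the group $\widehat{\pi_1(\Sigma)}$ is a quotient of $\pi_1(T^1\Sigma)$; hence the notion of an Anosov representation (with respect to the geodesic flow on $T^1\Sigma$) makes sense for $\widehat{\pi_1(\Sigma)}$, exactly as noted in the paragraph preceding the lemma.

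Next I would observe that the restriction $\widehat{\rho}|_{\pi_1(\Sigma)}$ is by hypothesis maximal, hence Anosov by Theorem~\ref{thm:maximal_anosov}: more precisely it is $(\Sp(4,\RR),H)$-Anosov, where $H$ is the stabilizer of a pair of transverse points in the Shilov boundary $\Ll$, i.e.\ $H\cong\GL(2,\RR)$. The key point is then that being Anosov is a property of the associated flat $\Sp(4,\RR)$-bundle over $T^1\Sigma$, and the flat bundle attached to $\widehat{\rho}$ is the \emph{same} as the flat bundle attached to $\widehat{\rho}|_{\pi_1(\Sigma)}$ in the following sense: the flat bundle over $T^1\Sigma$ associated with a representation $\pi_1(T^1\Sigma)\to\Sp(4,\RR)$ depends only on the representation, and both $\widehat{\rho}$ and $\widehat{\rho}|_{\pi_1(\Sigma)}$ factor through the common quotient $\widehat{\pi_1(\Sigma)}$ of $\pi_1(T^1\Sigma)$ and agree there. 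So the corresponding flat $\Sp(4,\RR)$-bundle over $T^1\Sigma$, its section $\sigma$ into the $\mathcal{X}$-bundle, and the contraction/dilation properties of the induced flow on $\sigma^*E^s$, $\sigma^*E^u$ are literally identical. Therefore the flat bundle of $\widehat{\rho}$ is a $(\Sp(4,\RR),H)$-Anosov bundle, which is exactly the assertion that $\widehat{\rho}$ is Anosov.

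Alternatively, and perhaps more cleanly, I would invoke Proposition~\ref{prop:coverM} (or Lemma~\ref{lem:finite_cover} in the covering-of-groups direction is not needed here; it is the covering-of-$M$ statement that applies): the projection $T^1\Sigma \to \Sigma$ is not finite, so instead one uses the description via equivariant maps from $T^1\widetilde{\Sigma}$ in Section~\ref{sec:terms-univ-cover}. Since $\widehat{\rho}$ and its restriction to $\pi_1(\Sigma)$ have the same equivariant limit curve $\xi\colon\partial\pi_1(\Sigma)\to\Ll$ and the same flow-invariant splitting of the flat $\RR^4$-bundle, the contraction estimates transfer verbatim. I do not anticipate a real obstacle here: the lemma is essentially a bookkeeping statement that passing from $\pi_1(\Sigma)$ to the finite overgroup $\widehat{\pi_1(\Sigma)}$ (equivalently, allowing the larger quotient of $\pi_1(T^1\Sigma)$) does not affect the flat bundle over $T^1\Sigma$, so the Anosov property, which is a property of that bundle, is preserved. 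The only point requiring a sentence of care is to make explicit that the $H$-reduction $\sigma$ for $\widehat{\rho}$ is well defined on $T^1\Sigma$ (not just on some cover), which follows because $\widehat{\pi_1(\Sigma)}$ is precisely the group with $\widehat{\pi_1(\Sigma)}\backslash\SL(2,\RR)\cong T^1\Sigma$.
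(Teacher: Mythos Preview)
Your main argument contains a genuine error. You claim that the flat $\Sp(4,\RR)$-bundles over $T^1\Sigma$ attached to $\widehat{\rho}$ and to $\rho:=\widehat{\rho}|_{\pi_1(\Sigma)}$ are ``literally identical'' because both representations ``factor through the common quotient $\widehat{\pi_1(\Sigma)}$ and agree there.'' This is false: viewed on $\widehat{\pi_1(\Sigma)}$, the representation induced by $\rho$ sends $(\epsilon,\gamma)\mapsto\rho(\gamma)$, whereas $\widehat{\rho}$ sends $(\epsilon,\gamma)\mapsto\widehat{\rho}(\epsilon,1)\,\rho(\gamma)$. These agree only when $z:=\widehat{\rho}(-1,1)=\id$, which is not part of the hypothesis (and is not true for the twisted representations $\varepsilon\otimes\rho$ the paper actually cares about). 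So the two flat bundles have different holonomy and your identification fails.

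The fix is short but must be said. Since $(-1,1)$ is central in $\widehat{\pi_1(\Sigma)}$, the element $z$ centralizes $\rho(\pi_1(\Sigma))$. As remarked in Section~\ref{sec_standard}, the centralizer of a maximal representation fixes the equivariant positive curve $\xi:\partial\pi_1(\Sigma)\to\Ll$ pointwise; hence $z\cdot\xi(t)=\xi(t)$ for all $t$. Because $(-1,1)$ acts trivially on $\partial\pi_1(\Sigma)$, this shows that the $\rho$-equivariant curve $\xi$ is also $\widehat{\rho}$-equivariant. The resulting section of the $\mathcal{X}$-bundle for $\widehat{\rho}$ is then flat along flow lines, and the contraction/dilation estimates are inherited from those for $\rho$ (a $\rho$-equivariant family of norms can be averaged over $\{1,z\}$ to produce a $\widehat{\rho}$-equivariant one without affecting exponential decay). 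Your ``alternatively'' paragraph gestures at this by asserting the curves coincide, but that is exactly the nontrivial point and it needs the centralizer observation. The paper itself records the lemma as an immediate consequence of Theorem~\ref{thm:maximal_anosov}; the content behind that word ``immediate'' is precisely the argument above.
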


Similarly to the discussion in
Section~\ref{sec:TopologicalInvariants}, the Anosov reduction leads to
a map
\begin{equation*}
  \hommax( \widehat{ \pi_1(\Sigma)}, \Sp(4, \RR)) \xrightarrow{sw_1} \h^1( T^1 \Sigma ; \FF_2 )
\end{equation*}
Fixing a nontorsion element $\widehat{\gamma}$ of
$\widehat{\pi_1(\Sigma)}$, we introduce the space \[\homMaxHatLlPlus\]
of pairs $(\rho, L_+)$ consisting of a maximal representation $\rho$
of $\widehat{\pi_1(\Sigma)}$ whose first Stiefel-Whitney class is zero
and an attracting oriented Lagrangian $L_+$ for $\rho(
\widehat{\gamma})$. For those pairs, following the discussion in
Section~\ref{sec:eulerclassrank2}, we define an Euler class
\begin{equation*}
  \homMaxHatLlPlus \overset{ e_{\widehat{\gamma}} }{
    \longrightarrow} \h^1( T^1 \Sigma ; \ZZ ).
\end{equation*}

\subsubsection{Relations between the invariants}
\label{sec:relbetweenInv}
In this section we describe the relations between topological
invariants of maximal representations of $\pi_1(\Sigma)$ and
$\widehat{\pi_1(\Sigma)}$. More precisely:
\begin{itemize}
\item If $\rho: \pi_1(\Sigma) \to \Sp( 4, \RR)$ is a maximal
  representation, we compare the invariants of $\rho$ and $\widehat{
    \rho} = \rho \circ pr_2$.
\item If $\widehat{\rho} : \widehat{\pi_1(\Sigma)} \to \Sp(4, \RR)$ is
  a maximal representation and $\varepsilon : \widehat{\pi_1(\Sigma)}
  \to \{\pm 1\}$ a homomorphism, we compare the invariants of
  $\widehat{\rho}$ and $\varepsilon \cdot \widehat{\rho}$.
\end{itemize}

\begin{lem}
  Let $\rho$ be a maximal representation of $\pi_1(\Sigma)$ and let
  $\widehat{\rho} = \rho \circ pr_2$. Then
  \begin{equation*}
    sw_1( \rho) = sw_1( \widehat{\rho}).
  \end{equation*}
  
  When $sw_1( \rho) = 0$, let $\gamma$ be a nontrivial element of
  $\pi_1(\Sigma)$, $\widehat{\gamma}$ one of the two elements of
  $\widehat{\pi_1(\Sigma)}$ projecting to $\gamma$ and let $L_+$ be an
  attracting oriented Lagrangian for $\rho( \gamma) = \widehat{ \rho}(
  \widehat{ \gamma})$. Then
  \begin{equation*}
    e_{\gamma}( \rho, L_+) = e_{ \widehat{\gamma}}( \widehat{\rho}, L_+). 
  \end{equation*}
\end{lem}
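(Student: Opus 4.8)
The key point is that the map $\mathrm{pr}_2 : \widehat{\pi_1(\Sigma)} \to \pi_1(\Sigma)$ induces an isomorphism between the relevant geometric data. Recall from Section~\ref{sec:terms-univ-cover} and Section~\ref{sec:reduction-group-hat} that $\widehat{\pi_1(\Sigma)}\backslash \SL(2,\RR) \cong \pi_1(\Sigma)\backslash \PSL(2,\RR) \cong T^1\Sigma$, so the flat $\Sp(4,\RR)$-bundle over $T^1\Sigma$ attached to $\rho$ and the one attached to $\widehat{\rho} = \rho\circ \mathrm{pr}_2$ are \emph{literally the same bundle}: both are $\pi_1(T^1\Sigma)\backslash(\widetilde{T^1\Sigma}\times \Sp(4,\RR))$ with the action factoring through $\pi_1(\Sigma)$. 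First I would make this identification explicit: the representation $\pi_1(T^1\Sigma) \to \Sp(4,\RR)$ associated with $\widehat{\rho}$ is the composition $\pi_1(T^1\Sigma) \to \widehat{\pi_1(\Sigma)} \xrightarrow{\widehat{\rho}} \Sp(4,\RR)$, and since $\widehat{\rho} = \rho\circ\mathrm{pr}_2$ this equals $\pi_1(T^1\Sigma) \to \pi_1(\Sigma) \xrightarrow{\rho} \Sp(4,\RR)$, which is exactly the representation defining the flat bundle for $\rho$ in the sense of Section~\ref{sec:spec-t1sigma}.

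\textbf{First Stiefel--Whitney class.} Since the two flat $\Sp(4,\RR)$-bundles over $T^1\Sigma$ coincide, Proposition~\ref{prop:uniqsigma} gives that their (unique) Anosov $\GL(2,\RR)$-reductions coincide as well, hence the associated oriented $\RR^2$-bundles $L^s(\rho)$ and $L^s(\widehat{\rho})$ over $T^1\Sigma$ are the same bundle. Taking first Stiefel--Whitney classes yields $sw_1(\rho) = sw_1(\widehat{\rho})$ directly. Concretely one can also argue via the limit curves: the $\rho$-equivariant curve $\xi : \partial\pi_1(\Sigma)\to\Ll$ is tautologically $\widehat{\rho}$-equivariant once we view $\partial\widehat{\pi_1(\Sigma)} = \partial\pi_1(\Sigma)$ and note $\widehat{\rho}(x,\gamma)\cdot\xi(t) = \rho(\gamma)\cdot\xi(t)$, so the construction of the Lagrangian reduction from the curve (as in Notation~\ref{defi:Lagrangianreduction}) produces the identical subbundle.

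\textbf{Euler class.} Assume $sw_1(\rho) = 0$. By Proposition~\ref{prop:lift} both $\xi$ (as a $\rho$-equivariant curve) and $\xi$ (as a $\widehat{\rho}$-equivariant curve) admit equivariant lifts $\xi_+$ to $\Ll_+$; since $L^s(\rho)$ and $L^s(\widehat{\rho})$ are the same bundle, $sw_1(\widehat{\rho}) = 0$ too, so the lift is equivariant in both senses. Now $\gamma$ and $\widehat{\gamma}$ have the same pair of fixed points in $\partial\pi_1(\Sigma) = \partial\widehat{\pi_1(\Sigma)}$ — indeed $\widehat{\gamma}$ projects to $\gamma$ under $\mathrm{pr}_2$ and the action of $\widehat{\gamma}$ on the boundary is through $\gamma$ — and $\rho(\gamma) = \widehat{\rho}(\widehat{\gamma})$ has $L_+$ as attracting oriented Lagrangian for both. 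Therefore, by Definition~\ref{defi:canocurve}, the canonical oriented equivariant curve for $(\rho, L_+)$ and for $(\widehat{\rho}, L_+)$ are the \emph{same} curve $\xi_+$ (the unique equivariant lift sending the attracting fixed point to $L_+$), hence the oriented Lagrangian reductions $L^s_+(\rho)$ and $L^s_+(\widehat{\rho})$ coincide as oriented $\RR^2$-bundles over $T^1\Sigma$. Taking Euler classes gives $e_\gamma(\rho, L_+) = e_{\widehat{\gamma}}(\widehat{\rho}, L_+)$.

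\textbf{Main obstacle.} There is no deep obstacle here; the proof is essentially a bookkeeping verification that the two flat bundles, their Anosov reductions, and the associated (oriented) Lagrangian subbundles over $T^1\Sigma$ all literally agree under the identification $\widehat{\pi_1(\Sigma)}\backslash\SL(2,\RR)\cong T^1\Sigma$. The only point requiring a little care is checking that the chosen element used to rigidify the orientation ($\gamma$ versus $\widehat{\gamma}$) yields the same canonical lift, i.e.\ that $\gamma$ and $\widehat{\gamma}$ share attracting/repelling fixed points on $\partial\pi_1(\Sigma)$ and that $\rho(\gamma) = \widehat{\rho}(\widehat{\gamma})$ as elements of $\Sp(4,\RR)$; both are immediate from $\widehat{\rho} = \rho\circ\mathrm{pr}_2$ and $\mathrm{pr}_2(\widehat{\gamma}) = \gamma$.
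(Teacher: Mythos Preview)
Your proposal is correct and follows exactly the same approach as the paper: the paper's proof is the one-sentence observation that the (oriented) Lagrangian reductions associated with $\rho$ and $\widehat{\rho}$ are literally the same bundle over $T^1\Sigma$, hence their characteristic classes coincide. You have simply unpacked this observation in detail.
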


\begin{proof}
  The (oriented) Lagrangian reductions associated with $\rho$ and for
  $\widehat{\rho}$ are exactly the same, hence their characteristic
  classes coincide.
\end{proof}

\begin{lem}
  Let $\widehat{\rho}$ be a maximal representation of
  $\widehat{\pi_1(\Sigma)}$ and $\varepsilon : \widehat{\pi_1(\Sigma)}
  \to \{ \pm 1\}$ a representation. Then the first Stiefel-Whitney
  class of $\widehat{\rho}$ and $\varepsilon \cdot \widehat{\rho}$
  coincide:
  \begin{equation*}
    sw_1( \widehat{\rho}) = sw_1(\varepsilon \cdot \widehat{\rho}).
  \end{equation*}
\end{lem}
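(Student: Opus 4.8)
The plan is to exhibit $\varepsilon\cdot\widehat\rho$ as a twist of $\widehat\rho$ by a representation into the center of $\Sp(4,\RR)$ and then to apply Proposition~\ref{prop:frst_ob_center}. Since $\widehat\rho$ is maximal (Definition~\ref{defi_maxhat}) it is $(\Sp(4,\RR),\GL(2,\RR))$-Anosov by (the extension to $\widehat{\pi_1(\Sigma)}$ of) Theorem~\ref{thm:maximal_anosov}, so that $sw_1(\widehat\rho)$ is by definition the first obstruction class $o_1(\widehat\rho)$ of the canonical $\GL(2,\RR)$-reduction of the associated flat bundle over $T^1\Sigma$. Identify $\{\pm1\}$ with the center $Z=\{\pm\id_4\}$ of $\Sp(4,\RR)$; then $\varepsilon$ is a representation of $\widehat{\pi_1(\Sigma)}$ into $Z$ (equivalently of $\pi_1(T^1\Sigma)$, which surjects onto $\widehat{\pi_1(\Sigma)}$), and $\varepsilon\cdot\widehat\rho\colon\gamma\mapsto\varepsilon(\gamma)\widehat\rho(\gamma)$ is precisely the representation considered in Proposition~\ref{prop:frst_ob_center}. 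That proposition tells us $\varepsilon\cdot\widehat\rho$ is again $(\Sp(4,\RR),\GL(2,\RR))$-Anosov and that
\[
o_1(\varepsilon\cdot\widehat\rho)=o_1(\widehat\rho)+\overline\varepsilon,
\]
where $\overline\varepsilon$ is the composition of $\varepsilon$ with the natural map $Z\to\pi_0(\GL(2,\RR))^{ab}=\FF_2$.

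It remains to check that this map $Z\to\FF_2$ is trivial. Under the identification $\GL(2,\RR)\xrightarrow{\sim}H=P^s\cap P^u$, $A\mapsto\diag(A,\transpose A^{-1})$, the nontrivial central element $-\id_4$ corresponds to $A=-\id_2\in\GL^+(2,\RR)$, which lies in the identity component; hence its class in $\pi_0(\GL(2,\RR))$ vanishes. (Equivalently: for any $\gamma$, the element $(\varepsilon\cdot\widehat\rho)(\gamma)$ fixes the same transverse pair of Lagrangians as $\widehat\rho(\gamma)$ and acts on the attracting one by $\varepsilon(\gamma)\,\widehat\rho(\gamma)|_{L^s}$, whose determinant equals $\varepsilon(\gamma)^2\det(\widehat\rho(\gamma)|_{L^s})=\det(\widehat\rho(\gamma)|_{L^s})$ since $L^s$ has dimension $2$; so Lemma~\ref{lem:sw1=sign} shows the two classes agree on every $[\gamma]$.) Thus $\overline\varepsilon=0$ and $sw_1(\varepsilon\cdot\widehat\rho)=o_1(\varepsilon\cdot\widehat\rho)=o_1(\widehat\rho)=sw_1(\widehat\rho)$.

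There is essentially no hard step: the one point requiring care is the observation that a central element of $\Sp(4,\RR)$ restricts to a map of positive determinant on a $2$-dimensional Lagrangian --- this is exactly where it matters that $n=2$ is even. Alternatively, one can bypass Proposition~\ref{prop:frst_ob_center} by noting that the Lagrangian reduction of $\varepsilon\cdot\widehat\rho$ is $D_\varepsilon\otimes L^s(\widehat\rho)$, where $D_\varepsilon$ is the flat real line bundle attached to $\varepsilon$, and invoking the tensor-product formula (Proposition~\ref{prop_swtensprod}) to get $sw_1(\varepsilon\cdot\widehat\rho)=2\,sw_1(D_\varepsilon)+sw_1(\widehat\rho)=sw_1(\widehat\rho)$ in $\h^1(T^1\Sigma;\FF_2)$.
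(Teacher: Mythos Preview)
Your proof is correct and follows essentially the same approach as the paper: both apply Proposition~\ref{prop:frst_ob_center} and observe that the map $Z=\{\pm\id\}\to\pi_0(\GL(2,\RR))$ is trivial because $-\id_2\in\GL^+(2,\RR)$. Your write-up is more detailed and offers two alternative justifications (via Lemma~\ref{lem:sw1=sign} and via the tensor-product formula of Proposition~\ref{prop_swtensprod}), but the core argument is identical.
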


\begin{proof}
  This lemma follows immediately from
  Proposition~\ref{prop:frst_ob_center} since, in this case, the map
  from $Z=\{\pm \id\}$ to the group $\pi_0(\GL(2,\RR))$ is zero
  ($-\id$ is in $\GL^+(2,\RR)$).
\end{proof}

\begin{prop}
  Let $\widehat{ \rho}: \widehat{ \pi_1(\Sigma)} \to \Sp(4, \RR)$ be a
  maximal representation with $sw_1(\widehat{\rho}) = 0$.  Let
  $\widehat{ \gamma}\in\widehat{ \pi_1(\Sigma)} $ be a nontorsion
  element and $L_+$ an attracting oriented Lagrangian for $\widehat{
    \rho}( \widehat{ \gamma})$. Let $\varepsilon : \widehat{
    \pi_1(\Sigma)} \to \{ \pm 1\}$ be a homomorphism. Then $L_+$ is an
  attracting oriented Lagrangian for $( \varepsilon \cdot \widehat{
    \rho})( \widehat{ \gamma})$ and the Euler class (relative to
  $\widehat{ \gamma}$) for the pairs $( \varepsilon \cdot \widehat{
    \rho}, L_+)$ and $( \widehat{ \rho}, L_+)$ are
  \begin{equation*}
    e_{\widehat{\gamma}}( \varepsilon \cdot \widehat{\rho} , L_+) =
    e_{\widehat{\gamma}}(\widehat{\rho} , L_+) \in  \h^2(T^1\Sigma;
    \ZZ)  \text{ if } \varepsilon( -1) = 1,
  \end{equation*}
  and
  \begin{equation*}
    e_{\widehat{\gamma}}( \varepsilon \cdot \widehat{\rho}, L_+) =
    e_{\widehat{\gamma}}(\widehat{\rho}, L_+) + (g-1)\GenTor \in
    \h^2(T^1\Sigma; \ZZ) \text{ if } \varepsilon( -1) = -1.
  \end{equation*}
\end{prop}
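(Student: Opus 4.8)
The strategy is to reduce the computation to the effect of tensoring the oriented Lagrangian bundle by the flat line bundle $D_\varepsilon$ associated with $\varepsilon$, and then to identify the resulting change in Euler class as a multiple of $\GenTor$. First I would recall, as in the proof of Lemma~\ref{lem:isomorphicRestriction}, that the Lagrangian reduction transforms tensorially: if $D_\varepsilon$ is the flat line bundle over $T^1\Sigma$ given by $\varepsilon$, then $L^s(\varepsilon\cdot\widehat\rho) = D_\varepsilon\otimes L^s(\widehat\rho)$, and likewise for the oriented reductions once one checks that the orientation data transports correctly (the canonical oriented curve of Definition~\ref{defi:canocurve} for $(\varepsilon\cdot\widehat\rho,L_+)$ is obtained from that of $(\widehat\rho,L_+)$ by tensoring with an orientation of $D_\varepsilon$, which exists precisely because $D_\varepsilon$ is a real line bundle; the fact that $L_+$ remains attracting for $(\varepsilon\cdot\widehat\rho)(\widehat\gamma)$ is immediate since multiplying by $\pm 1$ does not change fixed Lagrangians or the sign of the relevant determinant when $\varepsilon(\widehat\gamma)=\pm1$ — here one should note $\varepsilon(\widehat\gamma)^2=1$ so the attracting Lagrangian is unchanged, and $L_+$ is still attracting).

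The key case is $\varepsilon(-1)=-1$. In that case $D_\varepsilon$ is \emph{not} the pullback of a line bundle from $\Sigma$: its first Stiefel--Whitney class lies in $\h^1(T^1\Sigma;\FF_2)\moins\h^1(\Sigma;\FF_2)$, because $\varepsilon$ is nontrivial on the center of $\widehat{\pi_1(\Sigma)}=\pi_1(T^1\Sigma)/(\text{something})$, i.e.\ on the fiber class of $T^1_x\Sigma$. For an oriented rank-$2$ bundle $V$ with Euler class $e(V)$ and a real line bundle $D$, the tensor product $D\otimes V$ (which is again orientable, being the complexification-compatible tensor of an $\O(1)$ with an $\SO(2)$ bundle) satisfies $e(D\otimes V) = e(V) + (\text{correction depending on } sw_1(D))$; the precise statement I would use is the standard formula for the Euler class of the tensor of a line bundle with an oriented plane bundle, which when $sw_1(D)$ has a lift to a class $\delta\in\h^1$ of a double cover structure gives $e(D\otimes V) - e(V) = \delta \cupprod e(\text{determinant datum})$-type term; more concretely, since $D_\varepsilon$ restricted to each fiber $T^1_x\Sigma$ is the nontrivial flat line bundle, the twisted bundle acquires exactly one extra unit of Euler number per fiber, which translates into adding the image of the generator of $\h^2(\Sigma;\ZZ)$ — that is, $(g-1)\GenTor$ after accounting for $\GenTor$ being of order $2g-2$ and the identification of the relevant multiplicity. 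When $\varepsilon(-1)=1$, $D_\varepsilon$ \emph{is} pulled back from $\Sigma$, so $D_\varepsilon\otimes L^s_+(\widehat\rho)$ differs from $L^s_+(\widehat\rho)$ by a bundle trivial in the torsion part $\h^2(T^1\Sigma;\ZZ)^{tor}$, whence the Euler classes agree.

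Concretely I would carry this out by the same fibered/Mayer--Vietoris technique used in Section~\ref{sec:computations}: restrict to $T^1_x\Sigma\cong\PSO(2)$ and to a torus $T^1\Sigma|_\eta$, compute the change of trivialization induced by tensoring with $D_\varepsilon$, and observe that it contributes $0$ along $\gamma$-type curves (so no change on the $\h^1(\Sigma)$ part) and $\pm1$ along the fiber $T^1_x\Sigma$ exactly when $\varepsilon(-1)=-1$. Pushing this through the connecting homomorphism $\h^1(T^1\Sigma|_\gamma;\ZZ)\to\h^2(T^1\Sigma;\ZZ)$ with Proposition~\ref{prop:mayervietoris} yields the multiple $(g-1)\GenTor$. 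The main obstacle I anticipate is bookkeeping the orientations carefully — making sure that the chosen oriented lift $\xi_+$ for $\varepsilon\cdot\widehat\rho$ really is the one tensored from $\xi_+$ for $\widehat\rho$ (rather than the other lift, which would flip the sign), and that the tensor-product Euler class formula is applied with the correct sign convention; this is exactly the kind of $\varepsilon$-dependence flagged in the introduction's remark after Proposition~\ref{prop_intro:relations}.
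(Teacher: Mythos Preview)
Your setup is correct and matches the paper: both start from the identification $L^s_+(\varepsilon\cdot\widehat\rho) = D_\varepsilon \otimes L^s_+(\widehat\rho)$, and both reduce the question to understanding how tensoring by $D_\varepsilon$ shifts the Euler class. The divergence is in how that shift is computed.

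Your main proposed mechanism --- a ``standard formula'' $e(D\otimes V) = e(V) + (\text{correction in }sw_1(D))$ for a real line bundle $D$ tensored with an oriented plane bundle $V$ --- is not a well-defined formula as stated. There is no integral characteristic-class identity of this shape: $sw_1(D)$ lives in $\FF_2$-cohomology, and there is no canonical integral lift to cup against. Your heuristic that ``the twisted bundle acquires exactly one extra unit of Euler number per fiber'' is the right intuition but is not a proof, and the Mayer--Vietoris fallback you sketch would work in principle but you have not actually carried it out.

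The paper avoids this difficulty by a cleaner route. Instead of treating $D_\varepsilon$ as a real line bundle, it views $\varepsilon$ as taking values in $\{\pm 1\}\subset S^1$ and passes to the associated \emph{flat $S^1$-bundle} $S_\varepsilon$. At the level of circle bundles the tensor product becomes the fibered product $S_2 = S_\varepsilon \times_{S^1} S_1$, for which Euler classes are genuinely additive: $e(S_2) = e(S_\varepsilon) + e(S_1)$. The problem then reduces to computing $e(S_\varepsilon)$ alone. Here the paper uses a continuity argument: $e(S_\varepsilon)$ depends only on the connected component of $\varepsilon$ in $\hom(\widehat{\pi_1(\Sigma)},S^1)\cong \{\pm 1\}\times (S^1)^{2g}$, and the two components are distinguished exactly by $\varepsilon(-1)$. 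So one only needs to evaluate $e(S_\varepsilon)$ for the trivial representation (giving $0$) and for the projection $pr_1:\widehat{\pi_1(\Sigma)}\to\{\pm 1\}$. For the latter, $S_{pr_1}$ is the pullback to $T^1\Sigma$ of the circle bundle $\pi_1(\Sigma)\backslash \SL(2,\RR)\to\Sigma$, whose Euler class is $(g-1)$; hence $e(S_{pr_1})=(g-1)\GenTor$.

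So the missing idea in your proposal is precisely this passage to $S^1$-coefficients plus the connectedness-of-$\hom(\widehat{\pi_1(\Sigma)},S^1)$ reduction, which replaces your undetermined tensor-product formula by a single explicit computation.
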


\begin{proof}
  Let $L_1$ and $L_2$ be the Lagrangian reductions associated with
  $\widehat{ \rho}$ and $\varepsilon \cdot \widehat{ \rho}$. Denote by
  $L_{1+}$ and $L_{2+}$ the corresponding oriented Lagrangian bundles
  determined by the choice of $\widehat{ \gamma}$ and $L_{+}$.  If
  $D_\varepsilon$ is the flat real line bundle over $T^1 \Sigma$
  associated with the representation $\varepsilon$, we have
  \begin{equation*}
    L_{2+} = D_\varepsilon \otimes L_{1+}.
  \end{equation*}
  (Because $L_1$ has even dimension there is a canonical orientation
  on $D_\varepsilon \otimes L_{1+}$, even if $D_\varepsilon$ is
  neither oriented nor necessarily orientable)

  Let $S_1$ and $S_2$ be the associated $S^1$-bundles corresponding to
  $L_{1+}$ and $L_{2+}$ and let $S_\varepsilon$ be the flat
  $S^1$-bundle associated with the representation $\varepsilon :
  \widehat{ \pi_1(\Sigma)} \to \{ \pm 1\} \subset S^1$, then the above
  equality can be restated as
  \begin{equation*}
    S_2 = S_\varepsilon \times_{S^1} S_1.
  \end{equation*}
  This implies for the Euler classes:
  \begin{equation*}
    e( S_2) = e( S_\varepsilon ) + e(S_1).
  \end{equation*}
  Since $e(S_2) = e_{ \widehat{\gamma}}( \varepsilon \cdot \widehat{
    \rho})$ and $e(S_1) = e_{ \widehat{\gamma}}(\widehat{ \rho})$, the
  proposition will follow from the following lemma.
\end{proof}

\begin{lem}
  \label{lem:eulerclassSonerep}
  Let $\varepsilon : \widehat{ \pi_1(\Sigma)} \to S^1$ be a
  representation and let $S_\varepsilon$ be the associated flat
  $S^1$-bundle. Then
  \begin{equation*}
    e(S_\varepsilon) = 0 \text{ if } \varepsilon( -1) = 1,
  \end{equation*}
  and
  \begin{equation*}
    e(S_\varepsilon) = (g-1)\GenTor \text{ if } \varepsilon( -1) = -1.
  \end{equation*}
\end{lem}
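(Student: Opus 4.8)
The plan is to reduce the statement to a single universal flat circle bundle, and then to locate its Euler class inside the torsion subgroup of $\h^2(T^1\Sigma;\ZZ)$ by a Bockstein argument. Recall first that $S_\varepsilon$ is the flat $S^1$-bundle classified by the holonomy $\bar\varepsilon = \varepsilon\circ(\pi_1(T^1\Sigma)\twoheadrightarrow\widehat{\pi_1(\Sigma)})\in\h^1(T^1\Sigma;S^1)$, and that (as for any flat circle bundle) its Euler class is the image of $\bar\varepsilon$ under the connecting homomorphism $\delta\colon\h^1(-;S^1)\to\h^2(-;\ZZ)$ of the coefficient sequence $0\to\ZZ\to\RR\to S^1\to 0$; in particular $e(S_\varepsilon)$ is torsion. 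If $\varepsilon(-1)=1$, then $\varepsilon$ factors through the projection $\widehat{\pi_1(\Sigma)}\to\pi_1(\Sigma)$, so $S_\varepsilon$ is the $\pi$-pullback of a flat $S^1$-bundle over $\Sigma$, whose Euler class lies in the torsion-free group $\h^2(\Sigma;\ZZ)\cong\ZZ$ and hence vanishes. If $\varepsilon(-1)=-1$, write $\varepsilon=\varepsilon_0\cdot\varepsilon_1$ with $\varepsilon_0\colon\widehat{\pi_1(\Sigma)}\to\{\pm1\}$ the first projection and $\varepsilon_1=\varepsilon_0\cdot\varepsilon$; then $\varepsilon_1(-1)=1$, so $e(S_{\varepsilon_1})=0$, and since $S_\varepsilon=S_{\varepsilon_0}\times_{S^1}S_{\varepsilon_1}$ the additivity of the Euler class gives $e(S_\varepsilon)=e(S_{\varepsilon_0})$. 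Thus everything reduces to computing $e(S_{\varepsilon_0})$ for the single homomorphism $\varepsilon_0$.

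Because $\varepsilon_0^2$ is trivial one has $2\,e(S_{\varepsilon_0})=0$. Because $\varepsilon_0$ takes values in $\{\pm1\}=\tfrac12\ZZ/\ZZ\subset S^1$, naturality of the connecting homomorphism shows that $e(S_{\varepsilon_0})=\tilde\beta(u)$, where $\tilde\beta$ is the integral Bockstein and $u=w_1(D_\varepsilon)\in\h^1(T^1\Sigma;\FF_2)$ is the class of $\varepsilon_0$. Now $u$ is a spin structure class in the sense of Section~\ref{sec_inv_others}: the fiber loop of $T^1\Sigma\to\Sigma$ generates $\pi_1(\PSL(2,\RR))$, and under $\pi_1(T^1\Sigma)\twoheadrightarrow\widehat{\pi_1(\Sigma)}$, whose kernel is $\pi_1(\SL(2,\RR))$ sitting with index $2$ in $\pi_1(\PSL(2,\RR))$, it maps to the unique element $-\id_2$ of order $2$ in $\widehat{\pi_1(\Sigma)}$, on which $\varepsilon_0$ takes the value $-1$; hence $u$ does not vanish on the fiber class and so $u\in\h^1(T^1\Sigma;\FF_2)\moins\h^1(\Sigma;\FF_2)$. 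Since the Gysin sequence identifies $\h^1(T^1\Sigma;\ZZ)$ with $\pi^*\h^1(\Sigma;\ZZ)$, such a $u$ is not the reduction mod $2$ of an integral class, and therefore $\tilde\beta(u)\neq 0$.

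Finally, by the Gysin sequence of $T^1\Sigma\to\Sigma$ (Appendix~\ref{subsec:cohomologysigma}) the torsion subgroup of $\h^2(T^1\Sigma;\ZZ)$ is cyclic of order $2g-2$, generated by $\GenTor$; in particular its only nonzero element killed by $2$ is $(g-1)\GenTor$. Combining this with $2\,e(S_{\varepsilon_0})=0$ and $e(S_{\varepsilon_0})\neq 0$ gives $e(S_{\varepsilon_0})=(g-1)\GenTor$, which together with the first paragraph is exactly the assertion of the lemma.

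The delicate point is the non-vanishing $e(S_{\varepsilon_0})\neq 0$. This Euler class is torsion and restricts to zero on every fiber $T^1_x\Sigma$ (where $\h^2$ vanishes) and on every torus $T^1\Sigma|_\gamma$ (where every flat circle bundle has Euler number zero), and it cannot in general be pinned down by its mod $2$ reduction alone; so no naive restriction or mod $2$ computation suffices. One genuinely needs to work globally, identifying the Euler class of the flat bundle with an integral Bockstein and invoking the precise structure of the torsion of $\h^2(T^1\Sigma;\ZZ)$.
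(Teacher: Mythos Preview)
Your proof is correct and takes a genuinely different route from the paper's.

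The paper argues by continuity: since $\hom(\widehat{\pi_1(\Sigma)},S^1)\cong\{\pm1\}\times(S^1)^{2g}$ has two connected components detected by $\varepsilon(-1)$, and the Euler class is locally constant, it suffices to compute $e(S_\varepsilon)$ for one representative in each component. For the trivial representation this is zero; for the projection $\varepsilon_0=pr_1$, the paper identifies $S_{\varepsilon_0}$ concretely as the pullback to $T^1\Sigma$ of the circle bundle $\pi_1(\Sigma)\backslash\SL(2,\RR)\to\pi_1(\Sigma)\backslash\HH$, whose Euler number is the Toledo number $g-1$ of the chosen uniformization, so $e(S_{\varepsilon_0})=(g-1)\GenTor$ directly.

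Your reduction to $\varepsilon_0$ via the splitting $\varepsilon=\varepsilon_0\cdot\varepsilon_1$ and additivity of the Euler class plays the same role as the paper's continuity argument, but your computation of $e(S_{\varepsilon_0})$ is purely cohomological: you recognize it as the integral Bockstein of the spin-structure class $u\in\h^1(T^1\Sigma;\FF_2)\moins\h^1(\Sigma;\FF_2)$, observe that such a $u$ is not the mod~$2$ reduction of any integral class (since $\h^1(T^1\Sigma;\ZZ)=\pi^*\h^1(\Sigma;\ZZ)$), hence $e(S_{\varepsilon_0})$ is a nonzero $2$-torsion element, and the torsion group $\ZZ/(2g-2)\ZZ$ has exactly one such element, namely $(g-1)\GenTor$. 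The paper's argument is shorter and more geometric, pinning down the class by an explicit bundle identification; yours avoids that identification entirely and instead exploits the structure of $\h^2(T^1\Sigma;\ZZ)^{tor}$, which is a nice illustration that the answer is forced by torsion considerations alone.
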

\begin{proof}
  First we note that $e( S_\varepsilon)$ varies continuously with
  $\varepsilon$. Hence $e$ only depends on the connected component of
  $\hom( \widehat{ \pi_1(\Sigma)}, S^1)$ containing $\varepsilon$.
  Since $\hom( \widehat{ \pi_1(\Sigma)} , S^1) = \hom( \{ \pm 1\}
  \times \pi_1(\Sigma), S^1) = \hom( \{ \pm 1 \} \times \ZZ^{2g}, S^1
  ) \cong \{ \pm 1\} \times ( S^1)^{2g}$, this space has two connected
  components distinguished precisely by the value of $\varepsilon(
  -1)$.

  We only need to calculate the Euler class for two specific
  representations. The first is the trivial representation for which
  the result is obvious.  The second one is the projection $pr_1 :
  \widehat{ \pi_1(\Sigma)} \cong \{ \pm 1 \} \times \pi_1(\Sigma) \to
  \{\pm 1\}$ onto the first factor. Consider the (non-flat)
  $S^1$-bundle over the surface: $\pi_1(\Sigma) \backslash \SL(2, \RR)
  \to \pi_1(\Sigma) \backslash \HH$, its Euler class is given by the
  Toledo number of the injection $\pi_1(\Sigma) \to \SL(2, \RR)$,
  which is $(g-1)$. One checks that the $S^1$-bundle $S_\varepsilon$
  is the pullback of this $S^1$-bundle by the natural projection
  $\widehat{ \pi_1(\Sigma)} \backslash \SL(2, \RR) \cong T^1 \Sigma
  \to \pi_1(\Sigma) \backslash \HH \cong \Sigma$. This implies the
  claim.
\end{proof}

\section{Cohomology}\label{sec:cohomology}

\subsection{The cohomology of $T^1\Sigma$}\label{subsec:cohomologysigma}
In this section we compute the cohomology of the unit tangent bundle
$T^1 \Sigma$ with $\ZZ$ and $\FF_2$ coefficients and study the
connecting homomorphism in the Mayer-Vietoris sequence. The results
are used in Section~\ref{sec:constraints} and
Section~\ref{sec:sta_res}.
\begin{prop}
  \label{prop:cohom_unit}
  Let $\Sigma$ be a closed, connected, oriented surface of genus
  $g>1$.  The cohomology groups of $T^1 \Sigma$ with $\ZZ$
  coefficients are:
  \begin{itemize}
  \item $\h^0( T^1 \Sigma; \ZZ) = \ZZ$,
  \item $\h^1( T^1 \Sigma; \ZZ) = \ZZ^{2g}$,
  \item $\h^2( T^1 \Sigma; \ZZ) = \ZZ^{2g} \times \ZZ/(2g-2)\ZZ$,
  \item $\h^3( T^1 \Sigma; \ZZ) = \ZZ$.
  \end{itemize}
  The cohomology groups of $T^1 \Sigma$ with $\FF_2$ coefficients are:
  \begin{itemize}
  \item $\h^0( T^1 \Sigma; \FF_2) = \FF_2$,
  \item $\h^1( T^1 \Sigma; \FF_2) = \FF_{2}^{2g+1}$,
  \item $\h^2( T^1 \Sigma; \FF_2) = \FF_{2}^{2g+1}$,
  \item $\h^3( T^1 \Sigma; \FF_2) = \FF_2$.
  \end{itemize}
\end{prop}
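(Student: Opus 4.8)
The statement is a computation of $\h^*(T^1\Sigma;\ZZ)$ and $\h^*(T^1\Sigma;\FF_2)$, so the natural tool is the Gysin sequence of the circle bundle $S^1 \hookrightarrow T^1\Sigma \xrightarrow{\pi} \Sigma$. The Euler class of this bundle is (up to sign) the Euler class of $T\Sigma$, which evaluated on the fundamental class gives $\chi(\Sigma) = 2-2g$; equivalently $e(T^1\Sigma) = (2g-2)[\Sigma]^\vee$ where $[\Sigma]^\vee$ is the positive generator of $\h^2(\Sigma;\ZZ)\cong\ZZ$. The plan is to write down the Gysin sequence with $\ZZ$ coefficients, identify the map $\cup e: \h^k(\Sigma;\ZZ) \to \h^{k+2}(\Sigma;\ZZ)$ in each degree, and read off the cohomology of $T^1\Sigma$ from the resulting short exact sequences. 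Then repeat the argument with $\FF_2$ coefficients, where the key difference is that $e \bmod 2 = (2g-2)[\Sigma]^\vee \bmod 2 = 0$, so the Gysin sequence splits into short exact sequences with zero connecting maps.

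\textbf{Key steps.} First I would recall the Gysin sequence
\[
\cdots \longrightarrow \h^{k-2}(\Sigma;\ZZ) \xrightarrow{\ \cup e\ } \h^{k}(\Sigma;\ZZ) \xrightarrow{\ \pi^*\ } \h^{k}(T^1\Sigma;\ZZ) \xrightarrow{\ \pi_*\ } \h^{k-1}(\Sigma;\ZZ) \xrightarrow{\ \cup e\ } \h^{k+1}(\Sigma;\ZZ) \longrightarrow \cdots
\]
and use $\h^0(\Sigma;\ZZ)=\ZZ$, $\h^1(\Sigma;\ZZ)=\ZZ^{2g}$, $\h^2(\Sigma;\ZZ)=\ZZ$, and $\h^k=0$ for $k\geq 3$. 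The only nonzero instance of $\cup e$ is $\h^0(\Sigma;\ZZ)\to\h^2(\Sigma;\ZZ)$, which is multiplication by $2g-2$ (injective, with cokernel $\ZZ/(2g-2)\ZZ$); the maps $\h^1\to\h^3$ and $\h^2\to\h^4$ are zero for degree reasons, and $\h^{-1}\to\h^1$ is zero. Feeding this in: in degree $0$, $\h^0(T^1\Sigma)=\h^0(\Sigma)=\ZZ$; in degree $1$, since $\cup e:\h^{-1}\to\h^1$ is zero and $\pi_*:\h^1(T^1\Sigma)\to\h^0(\Sigma)=\ZZ$ lands in $\ker(\cup e) = 0$, one gets $\h^1(T^1\Sigma)\cong\h^1(\Sigma)=\ZZ^{2g}$; in degree $2$ one has a short exact sequence $0\to \mathrm{coker}(\h^0\xrightarrow{\cup e}\h^2)\to \h^2(T^1\Sigma)\to \ker(\h^1\xrightarrow{\cup e}\h^3)\to 0$, i.e.\ $0\to\ZZ/(2g-2)\ZZ\to\h^2(T^1\Sigma)\to\ZZ^{2g}\to 0$, which splits since $\ZZ^{2g}$ is free, giving $\h^2(T^1\Sigma)=\ZZ^{2g}\times\ZZ/(2g-2)\ZZ$; and in degree $3$, $\pi_*:\h^3(T^1\Sigma)\xrightarrow{\sim}\h^2(\Sigma)=\ZZ$ since the adjacent maps vanish. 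This yields all four $\ZZ$-groups. For the $\FF_2$ computation I would run the identical argument, but now $e\bmod 2=0$ in $\h^2(\Sigma;\FF_2)\cong\FF_2$ because $2g-2$ is even, so every connecting map $\cup e$ vanishes and the Gysin sequence breaks into $0\to\h^k(\Sigma;\FF_2)\to\h^k(T^1\Sigma;\FF_2)\to\h^{k-1}(\Sigma;\FF_2)\to 0$. Using $\h^0(\Sigma;\FF_2)=\FF_2$, $\h^1(\Sigma;\FF_2)=\FF_2^{2g}$, $\h^2(\Sigma;\FF_2)=\FF_2$, this gives $\h^0=\FF_2$, $\h^1=\FF_2^{2g}\oplus\FF_2=\FF_2^{2g+1}$, $\h^2=\FF_2\oplus\FF_2^{2g}=\FF_2^{2g+1}$, $\h^3=\FF_2$, as claimed.

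\textbf{Main obstacle.} There is no serious obstacle here; the computation is routine once the Euler class is correctly identified. The one point requiring a little care is pinning down that $e(T^1\Sigma)$ equals $(2g-2)$ times the positive generator of $\h^2(\Sigma;\ZZ)$ (rather than $(2-2g)$, though the sign is immaterial for the group structure since $\ZZ/(2g-2)\ZZ \cong \ZZ/(2-2g)\ZZ$), and that this is the Euler class appearing in the Gysin sequence of the unit tangent circle bundle; this is standard (Poincar\'e--Hopf / the fact that the unit tangent bundle of a surface has Euler number $\chi$). A secondary minor point is justifying that the extension $0\to\ZZ/(2g-2)\ZZ\to\h^2(T^1\Sigma;\ZZ)\to\ZZ^{2g}\to 0$ splits, which is immediate since the quotient is free abelian. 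I would also remark, for later use in Sections~\ref{sec:constraints} and~\ref{sec:sta_res}, that the torsion subgroup $\h^2(T^1\Sigma;\ZZ)^{tor}\cong\ZZ/(2g-2)\ZZ$ is generated by the image $\GenTor$ of the orientation class under $\pi^*:\h^2(\Sigma;\ZZ)\to\h^2(T^1\Sigma;\ZZ)$, which is exactly the content of the exact sequence in degree $2$ (the image of $\pi^*$ is the torsion part). If desired, one can also record the description of the connecting homomorphism in the Mayer--Vietoris sequence for a decomposition along $\gamma$ along the same lines, but that is logically separate from the cohomology computation stated in the proposition.
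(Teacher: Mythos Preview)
Your proposal is correct and follows essentially the same approach as the paper: both use the Gysin sequence of the circle bundle $T^1\Sigma\to\Sigma$, identify the Euler class as $\pm(2g-2)$ times the generator of $\h^2(\Sigma;\ZZ)$, and read off the groups from the resulting short exact sequences, noting that the connecting map $\cupprod e$ vanishes over $\FF_2$ since $2g-2$ is even. Your write-up is in fact slightly more explicit than the paper's (e.g.\ you spell out why the degree-$2$ extension splits), but there is no substantive difference in method.
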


\begin{proof}
  Let $A$ be the ring $\ZZ$ or $\FF_2$.

  The unit tangent bundle $T^1 \Sigma \to \Sigma$ is a principal
  $S^1$-bundle whose Euler class $e$ is $(2-2g)$ in $\ZZ \cong \h^2(
  \Sigma ; \ZZ)$. The Gysin exact sequence with $A$-coefficients for
  this bundle is
  \begin{multline}\label{eq_gysin}
    0 \longrightarrow \h^0( \Sigma; A) \longrightarrow \h^0( T^1
    \Sigma; A) \longrightarrow 0 \longrightarrow \h^1( \Sigma; A) \\
    \longrightarrow \h^1( T^1 \Sigma; A) \longrightarrow \h^0( \Sigma;
    A) \xrightarrow{ \cupprod e_A} \h^2( \Sigma; A) \longrightarrow
    \h^2( T^1 \Sigma; A) \\ \longrightarrow \h^1( \Sigma; A)
    \longrightarrow 0 \longrightarrow \h^3( T^1 \Sigma; A)
    \longrightarrow \h^2( \Sigma; A) \longrightarrow 0,
  \end{multline}
  where $e_A$ is the image of $e$ under the natural map $ \h^2( \Sigma
  ; \ZZ) \to \h^2( \Sigma ; A)$.  The conclusion for $\h^0$ and $\h^3$
  follows immediately from this. When $A$ is $\ZZ$, $\h^0( \Sigma;
  \ZZ) \xrightarrow{ \cupprod e} \h^2( \Sigma; \ZZ)$ is injective and
  we get the exact sequences:
  \begin{equation*}
    0 \longrightarrow \ZZ^{2g} \longrightarrow \h^1( T^1 \Sigma; \ZZ)
    \longrightarrow 0,
  \end{equation*}
  and
  \begin{equation*}
    0 \longrightarrow \ZZ / (2g-2)\ZZ \longrightarrow \h^2( T^1 \Sigma; \ZZ)
    \longrightarrow \ZZ^{2g} \longrightarrow 0.
  \end{equation*}
  From this the result for $\h^1$ and $\h^2$ follows easily. When $A$
  is $\FF_2$, the connecting map $\h^0( \Sigma; A) \xrightarrow{
    \cupprod e_A} \h^2( \Sigma; A)$ is zero.
\end{proof}

The above proof gives a \emph{canonical} isomorphism
\begin{equation*}
  \h^2( T^1 \Sigma; \ZZ)^{tor} \cong \ZZ / (2g-2)\ZZ
\end{equation*}
between the torsion of $\h^2( T^1 \Sigma; \ZZ)$ and $\ZZ / (2g-2)\ZZ$.
In particular, $\GenTor$ is the \emph{canonical} generator of $ \h^2(
T^1 \Sigma; \ZZ)^{tor}$.

Let $\gamma$ be a simple closed oriented separating curve on the
surface $\Sigma$, i.e.\ $\Sigma \moins \gamma$ has two connected
components, $\Sigma_l$ denotes the component on the left of $\gamma$
and $\Sigma_r$ the component on the right (this uses the orientations
of $\gamma$ and $\Sigma$). This induces a decomposition of the unit
tangent bundle: $T^1 \Sigma$ is the union of $T^1 \Sigma|_{ \Sigma_l}$
and $T^1 \Sigma|_{ \Sigma_r}$ identified along $T^1 \Sigma|_{
  \gamma}$. The Mayer-Vietoris sequence for this decomposition reads
as
\begin{multline*}
  0 \longrightarrow \h^0( T^1 \Sigma ; \ZZ) \longrightarrow \h^0( T^1
  \Sigma|_{\Sigma_l} ; \ZZ) \oplus \h^0( T^1 \Sigma|_{\Sigma_r} ; \ZZ)
  \\ \longrightarrow \h^0( T^1 \Sigma|_{\gamma} ; \ZZ) \longrightarrow
  \h^1( T^1 \Sigma ; \ZZ) \longrightarrow \h^1( T^1 \Sigma|_{\Sigma_l}
  ; \ZZ) \oplus \h^1( T^1 \Sigma|_{\Sigma_r} ; \ZZ) \\ \longrightarrow
  \h^1( T^1 \Sigma|_{\gamma} ; \ZZ) \longrightarrow \h^2( T^1 \Sigma ;
  \ZZ) \longrightarrow \h^2( T^1 \Sigma|_{\Sigma_l} ; \ZZ) \oplus
  \h^2( T^1 \Sigma|_{\Sigma_r} ; \ZZ) \\ \longrightarrow \h^2( T^1
  \Sigma|_{\gamma} ; \ZZ) \longrightarrow \h^3( T^1 \Sigma ; \ZZ)
  \longrightarrow 0.
\end{multline*}
This sequence can also be used to compute the cohomology of the unit
tangent bundle. We concentrate on the connecting morphism
\begin{equation*}
  \delta:   \h^1( T^1 \Sigma|_{\gamma} ; \ZZ) \longrightarrow \h^2( T^1 \Sigma ; \ZZ)
\end{equation*}
and its kernel.

We realize $\gamma$ as a $C^1$ loop on $\Sigma$, it then has a natural
lift to the unit tangent bundle $T^1 \Sigma$ which we denote again by
$\gamma$. This lift induces a trivialization $T^1 \Sigma|_{ \gamma}
\cong S^1 \times \gamma$ and hence isomorphisms
\begin{equation*}
  \h^1( T^1 \Sigma|_{ \gamma} ; \ZZ ) \cong \h^1( S^1 ; \ZZ) \oplus
  \h^1( \gamma ; \ZZ ) \cong \ZZ \oplus \ZZ.
\end{equation*}
The first identification is the map in cohomology corresponding to the
projections $T^1 \Sigma|_{ \gamma} \to S^1$ and $T^1 \Sigma|_{ \gamma}
\to \gamma$ whereas the second identification involves the
orientations on $S^1$ and $\gamma$ (the orientation on $S^1 \cong
T_{x}^{1} \Sigma$ is induced by the orientation on $\Sigma$).

\begin{prop}
  \label{prop:mayervietoris}
  Let $\gamma$ be an oriented closed simple separating geodesic on the
  surface $\Sigma$.
  
  Then the orientation class $o_\gamma\in \h^1( \gamma; \ZZ) \cong
  \ZZ$ is sent to $\GenTor$ by the connecting homomorphism of the
  Mayer-Vietoris sequence:
  \begin{equation*}
    \h^1( \gamma; \ZZ) \subset \h^1( T^1 \Sigma|_{ \gamma} ; \ZZ)
    \overset{ \delta }{ \longrightarrow } \h^2( T^1 \Sigma ; \ZZ).
  \end{equation*}
  
  The kernel of $\delta$ is generated by the elements:
  \begin{equation*}
    (1, 1-2 g( \Sigma_l) ) \text{ and } (-1 , 1- 2g( \Sigma_r) ) \in
    \ZZ \times \ZZ \cong \h^1( T^1 \Sigma|_{ \gamma} ; \ZZ ).
  \end{equation*}
\end{prop}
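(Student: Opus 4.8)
The plan is to analyze the Mayer--Vietoris sequence for the decomposition $T^1\Sigma = T^1\Sigma|_{\Sigma_l}\cup T^1\Sigma|_{\Sigma_r}$ glued along the torus $T^1\Sigma|_\gamma$, using the already-computed cohomology groups of $T^1$ of a surface (Proposition~\ref{prop:cohom_unit}) applied both to $\Sigma$ and to the subsurfaces with boundary. First I would identify all the relevant cohomology groups: $\h^1(T^1\Sigma|_\gamma;\ZZ)\cong\ZZ\oplus\ZZ$ via the chosen trivialization (the $S^1$-factor generated by the fiber class $f=[T^1_x\Sigma]$ and the base-factor generated by $o_\gamma=[\gamma]$), while $T^1\Sigma|_{\Sigma_l}$ and $T^1\Sigma|_{\Sigma_r}$ deformation retract onto (circle bundles over) surfaces with boundary, so that $T^1\Sigma|_{\Sigma_l}$ is homotopy equivalent to a product $S^1\times\Sigma_l$ (the circle bundle over a surface with nonempty boundary is trivial because $\h^2(\Sigma_l;\ZZ)=0$), and similarly for $\Sigma_r$. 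Thus $\h^1(T^1\Sigma|_{\Sigma_l};\ZZ)\cong\ZZ\oplus\h^1(\Sigma_l;\ZZ)$ and likewise on the right.

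The key computational step is to understand the restriction maps $\h^1(T^1\Sigma|_{\Sigma_l};\ZZ)\to\h^1(T^1\Sigma|_\gamma;\ZZ)$ and the analogous map on the right, because $\ker\delta$ is exactly the image of their difference and $\mathrm{im}\,\delta\cong\h^1(T^1\Sigma|_\gamma;\ZZ)/\ker\delta$. The fiber class always restricts to the fiber class, contributing $(1,0)$ (resp.\ $(1,0)$) from each side. The subtle point is where the class $[\gamma]\in\h^1(\Sigma_l;\ZZ)$ goes: $\gamma$ bounds $\Sigma_l$, so as a homology class in $\Sigma_l$ it is trivial, but the \emph{lift} to $T^1\Sigma|_{\Sigma_l}$ of the boundary loop, when expressed in the trivialization $S^1\times\gamma$, picks up a winding number in the $S^1$-direction equal to the total turning of the tangent vector along $\partial\Sigma_l$. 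By the Gauss--Bonnet/Poincaré--Hopf count (equivalently, the relative Euler number of $T^1\Sigma_l\to\Sigma_l$, which is $\chi(\Sigma_l)=2-2g(\Sigma_l)$ up to the appropriate sign convention fixed by the orientations), the image of the boundary class of $\Sigma_l$ in $\h^1(T^1\Sigma|_\gamma;\ZZ)$ is $(1-2g(\Sigma_l),1)$ — or rather, combining it appropriately with the fiber class, one gets that $(1,1-2g(\Sigma_l))$ lies in the image from the left side. I would pin down the exact sign and the exact vector by the same device used elsewhere in the paper: compare with the trivialization of $T^1\Sigma|_\gamma$ given by the $C^1$ lift of $\gamma$, and recall that the Euler class of $T^1\Sigma\to\Sigma$ is $2-2g$ with a fixed sign (this is the normalization already used in Proposition~\ref{prop:cohom_unit}, where $\h^2(T^1\Sigma;\ZZ)^{tor}\cong\ZZ/(2g-2)\ZZ$ canonically). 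Symmetrically, $(-1,1-2g(\Sigma_r))$ comes from the right side (the sign flip on the fiber coordinate reflecting that $\gamma$ is oriented oppositely as a boundary component of $\Sigma_r$). This gives the claimed generators of $\ker\delta$, provided one checks these two vectors are primitive and independent, which they are since $\gcd$-type considerations on $\det\begin{pmatrix}1&-1\\1-2g(\Sigma_l)&1-2g(\Sigma_r)\end{pmatrix}=2g(\Sigma_l)+2g(\Sigma_r)-2=2g(\Sigma)-2$ shows the quotient is cyclic of order $2g-2$.

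Finally, to prove $\delta(o_\gamma)=\GenTor$: I would note that $\delta$ is surjective onto the torsion subgroup $\h^2(T^1\Sigma;\ZZ)^{tor}\cong\ZZ/(2g-2)\ZZ$ (the free part of $\h^2(T^1\Sigma;\ZZ)$ comes from $\h^1(\Sigma;\ZZ)$ and is hit by the earlier part of the sequence), and that $\h^1(T^1\Sigma|_\gamma;\ZZ)/\ker\delta$ is cyclic of order $2g-2$ generated by the image of $o_\gamma=(0,1)$ — indeed modulo $(1,1-2g(\Sigma_l))$ and $(-1,1-2g(\Sigma_r))$ one has $(1,0)\equiv(0,2g(\Sigma_l)-1)$ and $(0,1)$ generates, with $(2g-2)(0,1)$ lying in $\ker\delta$. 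So $\delta(o_\gamma)$ is a generator of $\h^2(T^1\Sigma;\ZZ)^{tor}$; to see it is the \emph{canonical} generator $\GenTor$ (the image of the orientation class of $\Sigma$), I would appeal to the compatibility of the Mayer--Vietoris connecting map with the Gysin sequence: restricting the $S^1$-bundle and the comparison of Euler classes forces $\delta(o_\gamma)$ to equal the image under $\h^2(\Sigma;\ZZ)\to\h^2(T^1\Sigma;\ZZ)$ of the fundamental class, up to sign fixed by the chosen orientations. The main obstacle I anticipate is precisely this sign and normalization bookkeeping — keeping the orientation conventions on $\gamma$, on the fibers $T^1_x\Sigma$, on $\Sigma_l$ versus $\Sigma_r$, and on the generator of $\pi_1(S^1)$ all mutually consistent — rather than any deep topological input; the underlying geometry is just the relative Euler number of the unit tangent bundle of a surface with boundary.
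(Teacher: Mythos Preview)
Your overall strategy is sound and arrives at the right answer, but it differs from the paper's proof in two places worth noting.

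For the identity $\delta(o_\gamma)=\GenTor$, the paper does something more direct than your route through the quotient computation followed by a Gysin comparison: it observes that the Mayer--Vietoris connecting maps for the decompositions of $\Sigma$ and of $T^1\Sigma$ fit into a commutative square (via the projection $T^1\Sigma\to\Sigma$), so the image of $o_\gamma$ is computed already at the level of $\Sigma$, where $\delta\colon\h^1(\gamma;\ZZ)\to\h^2(\Sigma;\ZZ)$ is visibly an isomorphism sending $o_\gamma$ to $o_\Sigma$. This bypasses your last step entirely and pins down the sign in one stroke.

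For the determination of the integer $n$ in the generator $(1,n)$ of the image from the $\Sigma_l$ side, you invoke Poincar\'e--Hopf/relative Euler number to read off the tangent winding along $\partial\Sigma_l$. The paper instead caps off $\Sigma_l$ with a disk to form a closed surface $\overline{\Sigma}_1$ of genus $g(\Sigma_l)$, runs Mayer--Vietoris for $T^1\overline{\Sigma}_1$, and matches the resulting torsion of $\h^2(T^1\overline{\Sigma}_1;\ZZ)$ against the value $\ZZ/(2g(\Sigma_l)-2)\ZZ$ already computed in Proposition~\ref{prop:cohom_unit}; this yields $n\in\{1-2g(\Sigma_l),\,2g(\Sigma_l)-3\}$, and a second capping (doubling $\Sigma_l$) resolves the ambiguity. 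Your Poincar\'e--Hopf argument is more geometric and arguably more conceptual, but as you yourself anticipate, it puts all the weight on orientation and trivialization conventions; the paper's capping-off trick trades that bookkeeping for a purely algebraic comparison with a previously established fact, at the cost of an auxiliary construction and a two-step sign resolution.
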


\begin{proof}
  The connecting homomorphisms for the decompositions of the surface
  and the unit tangent bundle fit in a commutative diagram:
  \[ \xymatrix@-1em{ \h^1( \gamma; \ZZ) \ar[r]^{\delta} \ar[d] & \h^2(
    \Sigma ; \ZZ) \ar[d] \\
    \h^1( T^1 \Sigma|_{\gamma}; \ZZ) \ar[r]^{\delta} & \h^2( T^1
    \Sigma ; \ZZ). }\] So the first result follows from the equality
  $\delta( o_\gamma ) = o_\Sigma$, where $o_\Sigma$ is the orientation
  class in $\h^2( \Sigma; \ZZ)$. This equality is easy to
  establish. In fact the Mayer-Vietoris sequence for the surface:
  \begin{equation*}
    \h^1( \gamma; \ZZ) \longrightarrow \h^2( \Sigma ; \ZZ)
    \longrightarrow \h^2( \Sigma_l ; \ZZ) \oplus \h^2( \Sigma_r ; \ZZ)
  \end{equation*}
  already shows that $\delta : \h^1( \gamma; \ZZ) \to \h^2( \Sigma ;
  \ZZ)$ is surjective so that $\delta( o_\gamma) = \pm o_\Sigma$. The
  sign conventions are precisely arranged so that $\delta( o_\gamma) =
  o_\Sigma$.
  
  Due to the exactness of the Mayer-Vietoris sequence the kernel of
  $\delta$ is the image of
  \begin{equation*}
    \h^1( T^1 \Sigma|_{\Sigma_l} ; \ZZ) \oplus \h^1( T^1
    \Sigma|_{\Sigma_r} ; \ZZ) \longrightarrow \h^1( T^1
    \Sigma|_{\gamma} ; \ZZ).
  \end{equation*}
  It is therefore enough to show that the image of
  \begin{equation}\label{eq:imagesigma1}
    \h^1( T^1 \Sigma|_{\Sigma_l} ; \ZZ) \longrightarrow \h^1( T^1
    \Sigma|_{\gamma} ; \ZZ) \cong \ZZ \oplus \ZZ
  \end{equation}
  is generated by $( 1, 1-2g( \Sigma_l))$. (The calculation for
  $\Sigma_r$ is similar). The commutative square
  \[\xymatrix@-1em{ \h^1( \Sigma_l ; \ZZ ) \ar[d] \ar[r] & \h^1(
    \gamma;
    \ZZ) \ar[d] \\
    \h^1( T^1 \Sigma|_{\Sigma_l} ; \ZZ) \ar[r] & \h^1( T^1
    \Sigma|_{\gamma} ; \ZZ)}\] implies that the composition $\h^1(
  \Sigma_l ; \ZZ ) \subset \h^1( T^1 \Sigma|_{\Sigma_l} ; \ZZ) \to
  \h^1( T^1 \Sigma|_{\gamma} ; \ZZ)$ is zero, because the map $ \h^1(
  \Sigma_l ; \ZZ ) \to \h^1( \gamma; \ZZ)$ is zero as $\gamma$ is a
  boundary in $\Sigma_l$. The restriction $T^1 \Sigma|_{ \Sigma_l}$ is
  the trivial bundle $S^1 \times \Sigma_l$ so that
  \begin{equation*}
    \h^1( T^1 \Sigma|_{\Sigma_l} ; \ZZ) \cong \h^1( S^1 ; \ZZ) \times
    \h^1( \Sigma_l ; \ZZ).
  \end{equation*}
  This means that the image of the above map \eqref{eq:imagesigma1}
  has rank $1$ and is the image of
  \begin{equation*}
    \ZZ \cong \h^1( S^1 ; \ZZ) \longrightarrow \h^1( T^1 \Sigma|_{\gamma} ; \ZZ)
    \cong \ZZ \times \ZZ.
  \end{equation*}
  The first component of this last map is the identity $\ZZ \cong
  \h^1( S^1 ; \ZZ) \to \h^1( S^1 ; \ZZ) \cong \ZZ$ so that the image
  of \eqref{eq:imagesigma1} is generated by $(1, n)$ for some integer
  $n$. To calculate this integer $n$ let us consider the closed
  surface $\overline{ \Sigma}_1 = \Sigma_l \cup_{ \gamma} D^2$
  obtained by gluing a disk along $\gamma$. The genus of $\overline{
    \Sigma}_1$ is $g( \overline{ \Sigma}_1) = g( \Sigma_l)$ and the
  two $S^1$-bundles $T^1 \Sigma|_{\Sigma_l }$ and $T^1 \overline{
    \Sigma}_{1 | \Sigma_l }$ are isomorphic. From the Mayer-Vietoris
  sequence for the decomposition of $T^1 \overline{ \Sigma}_1$ we get
  \begin{multline*}
    \h^1( T^1 \Sigma|_{\Sigma_l } ; \ZZ) \oplus \h^1( T^1 \overline{
      \Sigma}_{1 | D^2 } ; \ZZ ) \longrightarrow \h^1( T^1
    \Sigma|_{\gamma } ; \ZZ) \longrightarrow \h^2( T^1 \overline{
      \Sigma}_1; \ZZ) \\ \longrightarrow \h^2( T^1 \Sigma|_{\Sigma_l }
    ; \ZZ) \oplus \h^2( T^1 \overline{ \Sigma}_{1 | D^2 } ; \ZZ )
    \longrightarrow \h^2( T^1 \Sigma|_{\gamma } ; \ZZ).
  \end{multline*}
  A small calculation shows that
  \[\ZZ \cong \h^1( T^1 \overline{
    \Sigma}_{1 | D^2 } ; \ZZ ) \to \h^1( T^1 \Sigma|_{\gamma } ; \ZZ)
  \cong \ZZ \times \ZZ
  \]
  sends $1$ to $( -1 , 1)$. The exact sequence reduces to
  \begin{equation*}
    0 \longrightarrow \ZZ^2 / \langle (1, n), (-1,1) \rangle
    \longrightarrow \h^2( T^1 \overline{ \Sigma}_1; \ZZ) \longrightarrow
    \ZZ^{2 g( \Sigma_l ) } \longrightarrow 0.
  \end{equation*} 
  This exact sequence implies that the torsion of $\h^2( T^1
  \overline{ \Sigma}_1; \ZZ)$ is isomorphic to $\ZZ / (n+1)\ZZ$. This
  torsion being isomorphic to $\ZZ / (2 g( \Sigma_l) - 2 )\ZZ$ by
  Proposition~\ref{prop:cohom_unit} the value of $n$ is $1 - 2g(
  \Sigma_l)$ or $2 g( \Sigma_l) -3$.
  
  Doubling $\Sigma_l$ along $\gamma$ to obtain a closed surface, a
  similar argument shows that the torsion of the double of $\Sigma_l$
  is isomorphic to $\ZZ / (2n)\ZZ$. Thus the only possible value for
  $n$ is $1- 2g( \Sigma_l)$ because the genus of the double of
  $\Sigma_l$ is $2g( \Sigma_l)$.
\end{proof}

\begin{lem}
  \label{lem:whenTwoClassIsTor}
  Let $\{ \eta_i\}$ be curves in $\Sigma$ whose images generate the
  homology group $\h_1( \Sigma ; \ZZ)$; we denote by $f_i : \eta_i \to
  \Sigma$ the inclusion. With a slight abuse of notation we write $T^1
  \Sigma |_{\eta_i}$ for the pulled back circle bundle $f_{i}^{*} T^1
  \Sigma$ and denote again by $f_i : T^1 \Sigma |_{ \eta_i} \to T^1
  \Sigma$ the corresponding map.
  
  Let $c$ be a class in $\h^2( T^1 \Sigma; A)$ such that, for all $i$,
  \[ c |_{T^1 \Sigma |_{ \eta_i}} := f_{i}^{*} ( c) = 0.\] Then $c$
  belongs to $\mathrm{ Im}( \h^2( \Sigma; A) \to \h^2( T^1 \Sigma;
  A))$.
\end{lem}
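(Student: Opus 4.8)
The plan is to reduce the statement to a vanishing assertion for a single class in $\h^1(\Sigma;A)$ via the Gysin sequence, and then to detect that vanishing by pairing with the curves $\eta_i$. Recall from the proof of Proposition~\ref{prop:cohom_unit} that the Gysin sequence \eqref{eq_gysin} for the oriented circle bundle $\pi : T^1\Sigma \to \Sigma$ contains the piece
\[
\h^2(\Sigma;A) \xrightarrow{\ \pi^*\ } \h^2(T^1\Sigma;A) \xrightarrow{\ \pi_!\ } \h^1(\Sigma;A) \longrightarrow 0 ,
\]
where $\pi_!$ is fiber integration (the Gysin map). In particular $\mathrm{Im}\big(\h^2(\Sigma;A)\to \h^2(T^1\Sigma;A)\big) = \ker \pi_!$, so it suffices to prove that $\pi_!(c) = 0$.

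To establish $\pi_!(c)=0$ I would invoke naturality of the Gysin sequence under pullback of circle bundles. For each $i$ the inclusion $f_i : \eta_i \hookrightarrow \Sigma$ is covered by the bundle map $f_i : T^1\Sigma|_{\eta_i} \to T^1\Sigma$, and since $T^1\Sigma|_{\eta_i}$ is by definition the pullback $f_i^* T^1\Sigma$, one gets a morphism of Gysin sequences; the relevant square reads
\[
\begin{array}{ccc}
\h^2(T^1\Sigma;A) & \xrightarrow{\ \pi_!\ } & \h^1(\Sigma;A) \\[2pt]
\big\downarrow{\scriptstyle f_i^*} & & \big\downarrow{\scriptstyle f_i^*} \\[2pt]
\h^2(T^1\Sigma|_{\eta_i};A) & \xrightarrow{\ (\pi_i)_!\ } & \h^1(\eta_i;A),
\end{array}
\]
with $\pi_i : T^1\Sigma|_{\eta_i}\to\eta_i$ the restricted bundle. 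Evaluating on $c$ and using the hypothesis $f_i^*c = c|_{T^1\Sigma|_{\eta_i}} = 0$ gives $f_i^*\big(\pi_!(c)\big) = (\pi_i)_!\big(f_i^*c\big) = (\pi_i)_!(0) = 0$ for every $i$.

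Finally I would convert these vanishings into $\pi_!(c) = 0$. Since $\h_0(\Sigma;\ZZ)$ is free, the universal coefficient theorem identifies $\h^1(\Sigma;A)$ with $\hom(\h_1(\Sigma;\ZZ),A)$, and similarly $\h^1(\eta_i;A)\cong\hom(\h_1(\eta_i;\ZZ),A)\cong A$; under these identifications $f_i^*\big(\pi_!(c)\big)\in\h^1(\eta_i;A)$ is exactly the value of the homomorphism $\pi_!(c)$ on the class $[\eta_i]\in\h_1(\Sigma;\ZZ)$. As the $[\eta_i]$ generate $\h_1(\Sigma;\ZZ)$ and all these values are $0$, the homomorphism $\pi_!(c)$ vanishes, so $c\in\mathrm{Im}\big(\h^2(\Sigma;A)\to\h^2(T^1\Sigma;A)\big)$. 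The only point that needs a little care is this last coefficient bookkeeping — that generation of $\h_1(\Sigma;\ZZ)$ (rather than of $\h_1(\Sigma;A)$) suffices, and that the $\mathrm{Ext}$ summand in universal coefficients does not interfere — which is precisely what freeness of $\h_0(\Sigma;\ZZ)$ and the resulting splitting provide; everything else is formal naturality.
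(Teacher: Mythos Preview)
Your proof is correct and follows essentially the same route as the paper: reduce via the Gysin sequence to showing that the image of $c$ in $\h^1(\Sigma;A)$ vanishes, then use naturality of the Gysin sequence for the pullback bundles over the $\eta_i$ together with the fact that the $\eta_i$ generate $\h_1(\Sigma;\ZZ)$. The paper's version is terser, but your added remarks on universal coefficients only make explicit what the paper leaves implicit.
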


\begin{proof}
  Using the Gysin exact sequence with $A$ coefficients we only need to
  show that the image $a$ of $c$ in $\h^1( \Sigma; A)$ is zero. As $\{
  \eta_i\}$ generate the homology it will be the case if $f_{i}^{*}
  (a) = 0$ for all $i$. Observe that the Gysin sequences for $T^1
  \Sigma$ and for $T^1 \Sigma |_{\eta_i}$ fit in a commutative diagram
  \[
  \xymatrix@-1em{
    \h^2( T^1 \Sigma; A) \ar[r] \ar[d] & \h^1( \Sigma; A) \ar[d] \\
    \h^2( T^1 \Sigma |_{\eta_i}; A) \ar[r] & \h^1( \eta_i; A).}\]

  So the property follows from the hypothesis $f_{i}^{*}(c) = 0$.
\end{proof}

\subsection{Stiefel-Whitney classes for tensor products}

\begin{prop}\label{prop_swtensprod}
  Let $L$ a line bundle and $W$ a $n$-plane bundle over a
  (paracompact) base $B$. Then the first and second Stiefel-Whitney
  classes of the tensor product $L\otimes W$ are:
  \[ sw_1(L\otimes W) = n sw_1(L) +sw_1({W})\]
  \[
  sw_2(L\otimes W) = \frac{n(n-1)}{2} sw_1(L)\cupprod sw_1(L) + (n-1)
  sw_1(L)\cupprod sw_1({W})+sw_2({W}).\]
\end{prop}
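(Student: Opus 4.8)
The plan is to use the splitting principle to reduce both formulas to the case where $W$ is a direct sum of line bundles, and then to expand the total Stiefel-Whitney class. First I would invoke the splitting principle for the $O(n)$-bundle $W$: there is a space $p\colon B' \to B$ such that $p^*\colon \h^*(B;\FF_2) \to \h^*(B';\FF_2)$ is injective and $p^* W \cong \ell_1 \oplus \cdots \oplus \ell_n$ for line bundles $\ell_j$ over $B'$. Since $p^*(L\otimes W) = (p^*L)\otimes(p^*\ell_1) \oplus \cdots \oplus (p^*L)\otimes(p^*\ell_n)$ and all the classes in the asserted identities are natural, it suffices to prove the formulas after pulling back to $B'$; so I may assume from the start $W = \ell_1\oplus\cdots\oplus\ell_n$. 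Write $x = sw_1(L)$ and $y_j = sw_1(\ell_j)$, so that, by the Whitney sum formula, $sw_1(W) = \sum_j y_j$ and $sw_2(W) = \sum_{j<k} y_j y_k$ (all cup products, with $\FF_2$ coefficients).

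Next I would compute the total Stiefel-Whitney class of $L\otimes W$. For a line bundle one has $sw(L\otimes \ell_j) = 1 + (x + y_j)$, hence
\[
sw(L\otimes W) = \prod_{j=1}^n \bigl(1 + (x+y_j)\bigr).
\]
The degree-one part is $\sum_j (x+y_j) = n x + \sum_j y_j = n\,sw_1(L) + sw_1(W)$, which is the first formula. The degree-two part is $\sum_{j<k}(x+y_j)(x+y_k)$. Expanding,
\[
\sum_{j<k}(x+y_j)(x+y_k) = \binom{n}{2} x^2 + (n-1)\,x\!\sum_j y_j + \sum_{j<k} y_j y_k,
\]
where the middle coefficient $n-1$ arises because each $y_j$ appears in exactly $n-1$ of the pairs $\{j,k\}$. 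Substituting $\sum_j y_j = sw_1(W)$ and $\sum_{j<k} y_j y_k = sw_2(W)$ gives
\[
sw_2(L\otimes W) = \tfrac{n(n-1)}{2}\, sw_1(L)\cupprod sw_1(L) + (n-1)\, sw_1(L)\cupprod sw_1(W) + sw_2(W),
\]
which is the second formula. Pulling this identity back along the injection $p^*$ yields the statement over $B$.

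The only genuine subtlety — and the step I would treat with a little care — is the combinatorial bookkeeping in the degree-two expansion over $\FF_2$: one must check that the coefficient of $x\sum_j y_j$ is $n-1$ and not, say, reduced modulo $2$ by an accidental cancellation, and that the $x^2$ coefficient is exactly $\binom n2$ (again as an integer to be reduced mod $2$, so that $\tfrac{n(n-1)}2$ is the correct expression even when $n$ is even). This is routine but worth spelling out since the whole point of the proposition is the explicit coefficients. Everything else — the splitting principle, the formula $sw(L\otimes\ell) = 1 + sw_1(L)+sw_1(\ell)$ for line bundles, and naturality of Stiefel-Whitney classes — is standard and can be quoted.
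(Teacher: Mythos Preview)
Your proof is correct and follows essentially the same route as the paper: invoke the splitting principle to reduce to $W=\ell_1\oplus\cdots\oplus\ell_n$, use $sw_1(L\otimes\ell_j)=sw_1(L)+sw_1(\ell_j)$, apply the Whitney sum formula to get $sw_2(L\otimes W)=\sum_{j<k}(x+y_j)\cupprod(x+y_k)$, and expand. Your added remark about the integer coefficients $\binom{n}{2}$ and $n-1$ being well-defined before reduction mod $2$ is a helpful clarification the paper leaves implicit.
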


\begin{proof}
  There exists $f:B_1 \to B$ such that $f^*W$ splits as the sum of
  $n$-line bundles and $f^*: \h^*(B) \to \h^*(B_1)$ is injective
  \cite[Ch.~16, Prop.~5.2]{Husemoller_FB}. Hence, up to pulling back,
  one can suppose that $W$ is the sum of $n$ line bundles $L_1, \dots,
  L_n$. Let $\eta$ the first Stiefel-Whitney class of $L$ and $\eta_i$
  the first Stiefel-Whitney class of $L_i$. Then the first
  Stiefel-Whitney class of $L\otimes L_i$ is $\eta+ \eta_i$
  (\cite[Ch.~16, Th.~3.4]{Husemoller_FB}).
 
  Since the total Stiefel-Whitney class is multiplicative under sums
  of bundles (\cite[Ch.~16, Sec.~3.1]{Husemoller_FB}) one finds the
  following formulas for the first and second Stiefel-Whitney classes
  of $W$ and $L\otimes W$:
  \begin{align*}
    sw_1(W) = \sum_{i=1,\dots, n} \eta_i,\quad &
    sw_1(L\otimes W) = \sum_{i=1,\dots, n} (\eta +\eta_i) \\
    sw_2(W) = \sum_{1\leq i < j \leq n} \eta_i \cupprod \eta_j,\quad &
    sw_2(L\otimes W) = \sum_{1\leq i < j \leq n} (\eta+\eta_i)
    \cupprod (\eta+\eta_j).
  \end{align*}
  By expanding the sums, the claim follows.
\end{proof}

\subsection{Euler class in Mayer-Vietoris sequence}

\begin{prop}
  \label{prop_eulclassMV}
  Let $F$ be an oriented $n$-plane bundle over a base $B$. Suppose
  that $B = B_{1} \cup_C B_2$ with $F|_{B_{1}}$ and $F|_{B_2}$ being
  trivial and that Mayer-Vietoris sequences hold for this
  decomposition of $B$. Let $h: C \to \GL^+(n, \RR)$ be the change of
  trivializations, $k: C \to \RR^n \moins \{0\}$ an orbital map
  associated with $h$ and $t$ in $\h^{n-1}(\RR^n \moins \{ 0\}; \ZZ)$
  the generator.

  Then the Euler class of $F$ is $\delta(k^*(t))$ where $\delta:
  \h^{n-1}(C; \ZZ) \to \h^n(B, \ZZ)$ is the connecting morphism in the
  Mayer-Vietoris sequence and $k^*: \h^{n-1}(\RR^n \moins \{ 0\}; \ZZ)
  \to \h^{n-1}(C; \ZZ)$ the map induced by $k$.

  In particular the image of $k^*(t)$ under the natural map from
  $\h^{n-1}(C; \ZZ)$ to $\hom(\h_{n-1}(C; \ZZ), \ZZ) $ is $k_* :
  \h_{n-1}(C; \ZZ) \to \h_{n-1}(\RR^n \moins \{ 0\}; \ZZ) \cong \ZZ$.
\end{prop}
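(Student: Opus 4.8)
The plan is to compute $e(F)$ as the obstruction to a global section of the sphere bundle $S(F)\to B$, using the two given trivializations to produce sections over $B_1$ and $B_2$ and reading off the resulting difference class on the overlap $C$. Fix a nonzero $v_0\in\RR^n$, write $\phi_i\colon F|_{B_i}\xrightarrow{\ \sim\ } B_i\times\RR^n$ for the given trivializations, normalized so that $\phi_1\circ\phi_2^{-1}(c,v)=(c,h(c)v)$ on $C$, and let $\sigma_i$ be the section of $S(F)|_{B_i}$ corresponding to the constant unit section $v_0/|v_0|$ of the trivial bundle. Since $F|_{B_i}$ is trivial, $e(F)$ restricts to $0$ on each $B_i$; by exactness of the Mayer--Vietoris sequence there is therefore a class $\theta\in\h^{n-1}(C;\ZZ)$, unique modulo the image of $\h^{n-1}(B_1;\ZZ)\oplus\h^{n-1}(B_2;\ZZ)$, with $e(F)=\delta(\theta)$.

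Next I would identify $\theta$ with the difference class of the two partial sections $\sigma_1|_C$ and $\sigma_2|_C$ of $S(F)|_C$ (whose fibre $S^{n-1}$ has $\pi_{n-1}\cong\ZZ$). This is exactly the content of obstruction theory following Steenrod: $e(F)$ is the primary obstruction to a section of $S(F)$, the sections $\sigma_i$ witness that this obstruction dies on each piece, and the image under the connecting map $\delta$ of the difference cochain $d(\sigma_1|_C,\sigma_2|_C)\in\h^{n-1}(C;\pi_{n-1}(S^{n-1}))$ is precisely $e(F)$. Reading the two sections in the $B_1$-trivialization, $\sigma_1|_C$ is constant equal to $v_0/|v_0|$ while $\sigma_2|_C$ is the map $c\mapsto h(c)v_0/|h(c)v_0|$; the difference class of a constant section and one classified by a map $g\colon C\to S^{n-1}$ is $g^*\iota$ with $\iota\in\h^{n-1}(S^{n-1};\ZZ)$ the orientation generator. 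Under the deformation retraction $\RR^n\moins\{0\}\simeq S^{n-1}$ the map $g$ corresponds to the orbital map $k\colon c\mapsto h(c)v_0$ and $\iota$ to the generator $t$, so $\theta=k^*(t)$ and $e(F)=\delta(k^*(t))$. (One checks the homotopy class of $k$, hence $k^*(t)$, is independent of $v_0$ since $\GL^+(n,\RR)$ acts transitively on $\RR^n\moins\{0\}$.)

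The final assertion is then a formal consequence of universal coefficients: the image of $k^*(t)\in\h^{n-1}(C;\ZZ)$ in $\hom(\h_{n-1}(C;\ZZ),\ZZ)$ sends a class $\alpha$ to $\langle k^*(t),\alpha\rangle=\langle t,k_*\alpha\rangle$, and since $\h_{n-1}(\RR^n\moins\{0\};\ZZ)\cong\ZZ$ is identified with $\ZZ$ via the generator dual to $t$, this equals $k_*\alpha$ under that identification. Hence that image is the map $k_*\colon\h_{n-1}(C;\ZZ)\to\h_{n-1}(\RR^n\moins\{0\};\ZZ)\cong\ZZ$.

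I expect the main obstacle to be bookkeeping rather than substance. First, one must pin down orientation conventions so the equality holds on the nose: the orientation of $F$ selects the generator $t$ and simultaneously fixes the signs of $\delta$ and of the difference cochain, and one has to verify these combine to give $+$ and not $-$ (this is the same sign normalization exploited in Proposition~\ref{prop:mayervietoris} and in the hybrid computation). Second, justifying the obstruction-theoretic identification of $\theta$ with the difference class in the stated generality---where $B$ is only assumed to satisfy Mayer--Vietoris---requires either reducing to a CW model via a classifying map $B\to BSO(n)$, or rephrasing the argument directly in terms of the relative Euler class of the pair $(D(F),S(F))$ together with naturality of $\delta$. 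Everything else (the homotopy $g\simeq k$, the evaluation of the difference class on a constant versus a general section, the universal-coefficients step) is routine.
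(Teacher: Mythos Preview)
Your approach is correct but proceeds differently from the paper. You work with the sphere bundle $S(F)$ and obstruction theory: the trivializations give sections $\sigma_i$ over $B_i$, the Euler class is the primary obstruction to a global section, and you identify $\theta$ with the difference class $d(\sigma_1|_C,\sigma_2|_C)$, which is $k^*(t)$. The paper instead works with the Thom class $u\in\h^n(F,F^0)$ (where $F^0$ is the complement of the zero section) and its image $e(F)\in\h^n(F)\cong\h^n(B)$. It sets up a commutative diagram mixing the Mayer--Vietoris sequences for $F$, $F^0$, and $(F,F^0)$ with the long exact sequences of the pairs $(F|_{B_i},F^0|_{B_i})$; the restricted Thom classes $u_i$ lift to classes $v_i\in\h^{n-1}(F^0|_{B_i})$ because the bundles are trivial, and the class $w=v_2|_C-v_1|_C$ lifts to $x\in\h^{n-1}(F|_C)\cong\h^{n-1}(C)$. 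A direct cochain computation then shows $\delta(x)=e(F)$, and a separate lemma on the evaluation map $\GL^+(n,\RR)\times(\RR^n\setminus\{0\})\to\RR^n\setminus\{0\}$ identifies $x$ with $k^*(t)$.

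The trade-off is this: your argument is more geometric and shorter to state, but the step ``$\delta$ of the difference cochain equals $e(F)$'' is exactly the point you flag as needing justification, and you defer it to Steenrod or to passing to a CW model. The paper's cochain-level argument with the Thom class proves this identity directly, without invoking general obstruction theory, and so works cleanly under the minimal hypothesis that Mayer--Vietoris holds for the decomposition. In effect the paper \emph{is} the verification you postpone, carried out in the Thom-class model rather than the sphere-bundle model.
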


\begin{proof}
  Below every cohomology group is understood with $\ZZ$-coefficients.

  Let $F^0$ the complement of the zero section in $F$.  The Thom class
  $u$ of $F$ is the unique class of $\h^n(F, F^0)$ such that $u$ is
  sent to the generator in $\h^n(F_b, F_b \moins \{0\})$ ($F_b \cong
  \RR^n$ is one fiber of $F$) and the Euler class $e$ is the image of
  $u$ under the natural map $\h^n(F, F^0) \to \h^n(F) \cong \h^n(B)$
  \cite[Ch.~16, Sec.~7]{Husemoller_FB}. We should first construct a
  class $x$ in $\h^{n-1}(F|_C)$ such that $\delta (x)=e$ in $\h^n(E)
  \cong \h^n(B)$ and then identify $x$ with $k^*(t)$. Consider the
  commutative diagram:
  \[
  \xymatrix@-1em{
    &  & \h^{n-1}(F|_{C}) \ar[d] \\
    & \h^{n-1}(F^0|_{B_{1}}) \oplus \h^{n-1}(F^0|_{B_2}) \ar[d]
    \ar[r] & \h^{n-1}(F^0|_{C}) \ar[d] \\
    \h^n(F,F^0) \ar[d] \ar[r] & \h^n(F|_{B_{1}},F^0|_{B_{1}}) \oplus
    \h^n(F|_{B_2},F^0|_{B_2}) \ar[d]
    \ar[r] & \h^n(F|_C,F^0|_{C}) \\
    \h^n(F) \ar[r] & \h^n(F|_{B_{1}}) \oplus \h^n(F|_{B_2}).  }\] The
  horizontal lines are Mayer-Vietoris exact sequences whereas the
  vertical lines are exact sequences of pairs. Each of those sequences
  arises from a short exact sequences of differential
  complexes\footnote{In this proof, we should not use any particular
    realization of the complexes calculating the cohomology. Rather we
    will only use the \emph{existence} of such complexes.}: $0 \to
  (C^*(F),d) \to (C^*(F|_{B_{1}}),d) \oplus (C^*(F|_{B_2}),d) \to
  (C^*(F|_{C}),d) \to 0$, etc. In the above diagram, consider the
  following classes:
  \[
  \xymatrix@-1em{
    &  & x \ar@{|->}[d] \\
    & v_1 \oplus v_2 \ar@{|->}[d]
    \ar@{|->}[r] & w \ar@{|->}[d] \\
    u \ar@{|->}[d] \ar@{|->}[r] & u_1 \oplus u_2 \ar@{|->}[d]
    \ar@{|->}[r] & 0 \\
    e \ar@{|->}[r] & 0 \oplus 0.  }\] The diagram is self-explanatory:
  $u_1 \oplus u_2$ is the image of $u$ and $u_1$ and $u_2$ are the
  Thom classes of $F|_{B_{1}}$ and $F|_{B_2}$.  Since these bundles
  are trivial, the corresponding Euler classes are zero and $u_1$ and
  $u_2$ lift to $v_1 \in \h^{n-1}(F^0|_{B_{1}})$ and $v_2 \in
  \h^{n-1}(F^0|_{B_2})$ respectively. The class $w$ in
  $\h^{n-1}(F^0|_C)$ is the image of $v_1 \oplus v_2$. It projects to
  $0$ in $\h^n(F|_C,F^0|_{C})$. Hence it comes from a class $x$ in
  $\h^{n-1}(F|_C)$.

  We claim that $\delta(x) \in \h^n(F)$ is the Euler class of $F$.
  Let $V_1 \in C^{n-1}(F^0|_{B_{1}})$ and $V_2 \in
  C^{n-1}(F^0|_{B_2})$ represent $v_1$ and $v_2$. Let also
  $\widetilde{V}_1 \in C^{n-1}(F|_{B_{1}})$ and $\widetilde{V}_2 \in
  C^{n-1}(F|_{B_2})$ be cochains extending $V_1$ and $V_2$, i.e.\
  $V_1$ is the image of $\widetilde{V}_1$ under the surjective map
  $C^{n-1}(F|_{B_{1}}) \to C^{n-1}(F^0|_{B_{1}})$. Then $U_1 = d
  \widetilde{V}_1$ and $U_2 = d \widetilde{V}_2$ are representatives
  of $u_1$ and $u_2$ and a representative of $u$ is $U=U_1 \oplus U_2$
  where we identify $C^n(F,F^0)$ with its image in
  $C^n(F|_{B_{1}},F^0|_{B_{1}}) \oplus
  C^n(F|_{B_2},F^0|_{B_2})$. Under the injection $C^n(F,F^0)\to
  C^n(F)$ $U$ represents the Euler class $e$.

  A cochain representing $w$ in $C^{n-1}(F^0|_C)$ is $W= V_2|_C -
  V_1|_C$ (the map from the complexes associated with $B_{1}$, or
  $B_2$, to the complexes associated with $C$ are simply denoted by
  $A\mapsto A|_C$). A cochain representing $x$ in $C^{n-1}(F|_C)$ is
  then $X= \widetilde{V}_2|_C - \widetilde{V}_1|_C$. Hence $X$ is the
  image of $\widetilde{V}_1 \oplus \widetilde{V}_2$ by the map
  $C^{n-1}(F|_{B_{1}}) \oplus C^{n-1}(F|_{B_2}) \to
  C^{n-1}(F|_C)$. Thus the element $d\widetilde{V}_1 \oplus d
  \widetilde{V}_2$ in $C^n(F|_{B_{1}}) \oplus C^n(F|_{B_2})$ lies in
  the image of the injective map $C^n(F)\to C^n(F|_{B_{1}}) \oplus
  C^n(F|_{B_2})$. By construction of the connecting morphism,
  $\delta(x)$ is represented by $d\widetilde{V}_1 \oplus d
  \widetilde{V}_2 = U_1 \oplus U_2 = U$. Therefore $e$ and $\delta(x)$
  are represented by the same cocycle, so they are equal. Note that an
  obvious diagram chasing shows that $\delta(x)$ does not depend on
  the choice of the lifts $v_1$ and $v_2$.

  The trivializations can now be used to get an explicit cycle
  $x$. For this we note that the trivialization $\phi_i: F|_{B_{i}}
  \to B_{i} \times \RR^n$ gives an isomorphism
  \[\phi_{i}^*: \h^*(B_{i} \times \RR^n, B_{i} \times (\RR^n \moins
  \{0\})) \longrightarrow \h^*(F|_{B_{i}}).\] Note that
  \[ \h^{*+n}(B_{i} \times \RR^n, B_{i} \times (\RR^n \moins \{0\}))
  \cong \h^*(B_{i}) \otimes \h^n(\RR^n, \RR^n \moins \{0\}).\] Under
  the isomorphism $\phi_{i}^*$, the class $u_i$ is the image of $1
  \otimes z$, $z$ being the generator in $\h^n(\RR^n, \RR^n \moins
  \{0\})$. The connecting homomorphism $\delta: \h^{n-1}(\RR^n \moins
  \{ 0\}) \to \h^n(\RR^n, \RR^n \moins \{0\})$ is an isomorphism that
  sends $t$ to $z$. The class $v_i$ lifting $u_i$ can be chosen to be
  the image of $1\otimes t$ by $\phi_{i}^{*}$. Hence $v_i$ is the
  image of $t$ by $\phi_{i}^{*} \circ p^*= (p \circ \phi_i)^*$ where
  $p: B_{i} \times (\RR^n \moins \{ 0\}) \to (\RR^n \moins \{ 0\}) $
  is the second projection.  By definition of $h$, for any $f$ in
  $F^0|_C$, one has the following equality $p \circ \phi_2(f) =
  (h\circ \pi(f)) \cdot (p \circ \phi_1(f))$ where $\pi: F^0 \to B$ is
  the projection. The next lemma implies that $w= v_2|_C - v_1 |_C $
  is equal to $(k \circ \pi)^*(t)$.  Since the map $\h^{n-1}(F|_C)
  \cong \h^{n-1}(C) \to \h^{n-1}(F^0|_C)$ is precisely $\pi^*$, one
  can set $x=k^*(t)$ for the lift of $w$.
\end{proof}

\begin{lem}
  Let $h: D \to \GL^+(n, \RR)$ and $\phi: D \to \RR^n \moins \{0\}$ be
  two continuous maps. Denote by $k: D \to \RR^n \moins \{0\} \sep d
  \mapsto h(d) \cdot v_0$ an orbital application and by $h\cdot \phi$
  the map $d \mapsto h(d) \cdot \phi(d)$.

  Then the following equality holds:
  \[ (h\cdot \phi)^* = k^* + \phi^* : \h^{n-1}(\RR^n \moins \{0\};
  \ZZ) \longto \h^{n-1}(D; \ZZ).\]
\end{lem}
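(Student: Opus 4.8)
The statement to prove is a cohomological identity comparing the maps induced on $\h^{n-1}(\RR^n \moins \{0\}; \ZZ)$ by three related maps $D \to \RR^n\moins\{0\}$: the map $\phi$, the orbital map $k(d) = h(d)\cdot v_0$, and the ``twisted'' map $(h\cdot\phi)(d) = h(d)\cdot\phi(d)$. The claimed formula $(h\cdot\phi)^* = k^* + \phi^*$ reflects the fact that $\h^{n-1}(\RR^n\moins\{0\};\ZZ) \cong \ZZ$ behaves additively under the relevant action.

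\medskip

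\textbf{Plan of proof.} The key idea is to factor the map $h\cdot\phi$ through a product and reduce the computation to the H-space structure on $\RR^n\moins\{0\} \simeq S^{n-1}$ (or rather to the ``scalar action'' map $\GL^+(n,\RR)\times(\RR^n\moins\{0\}) \to \RR^n\moins\{0\}$). Concretely, I would introduce the action map
\[
 a: \GL^+(n, \RR) \times (\RR^n \moins \{0\}) \longrightarrow \RR^n \moins \{0\}\sep (g, v) \longmapsto g\cdot v,
\]
and observe that $h\cdot\phi = a\circ (h, \phi)$, where $(h,\phi): D \to \GL^+(n,\RR)\times(\RR^n\moins\{0\})$. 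Hence $(h\cdot\phi)^* = (h,\phi)^* \circ a^*$. Likewise $k = a\circ(h, \mathrm{const}_{v_0})$ so $k^* = (h, \mathrm{const}_{v_0})^*\circ a^*$, and $\phi = a\circ(\mathrm{const}_{e}, \phi)$ so $\phi^* = (\mathrm{const}_e,\phi)^*\circ a^*$. The whole statement therefore follows once one knows the single identity
\[
 a^*(t) = \mathrm{pr}_2^*(t) + q^*(t) \quad\text{in } \h^{n-1}\big(\GL^+(n,\RR)\times(\RR^n\moins\{0\}); \ZZ\big),
\]
where $\mathrm{pr}_2$ is the projection to $\RR^n\moins\{0\}$ and $q: \GL^+(n,\RR)\times(\RR^n\moins\{0\}) \to \RR^n\moins\{0\}\sep (g,v)\mapsto g\cdot v_0$ is the orbital map on the first factor (composed with the projection); pulling this back along $(h,\phi)$, $(h,\mathrm{const}_{v_0})$, $(\mathrm{const}_e,\phi)$ then yields the three instances needed, using that $(h,\mathrm{const}_{v_0})^*\mathrm{pr}_2^*(t) = \mathrm{const}_{v_0}^*(t) = 0$ and $(\mathrm{const}_e,\phi)^*q^*(t) = 0$ since $q$ restricted to $\{e\}\times(\RR^n\moins\{0\})$ is constant equal to $v_0$.

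\medskip

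\textbf{Establishing the action-map identity.} For the displayed identity on $a^*(t)$, I would use the K\"unneth theorem: since $\GL^+(n,\RR)$ deformation retracts onto $\SO(n)$ and $\RR^n\moins\{0\}$ onto $S^{n-1}$, and since $\h^{n-1}(\RR^n\moins\{0\};\ZZ)\cong\ZZ$ sits in degree $n-1$, the class $a^*(t)$ decomposes as a sum of a term pulled back from each factor plus possible mixed terms; but in the top relevant degree for the sphere factor the mixed terms involving positive-degree classes on $\GL^+(n,\RR)$ and a positive-degree class on the sphere either vanish for degree reasons or contribute nothing when we finally restrict along maps out of the base $D$ — more cleanly, I would simply compute $a^*(t)$ by restricting to the two ``slices'' $\GL^+(n,\RR)\times\{v_0\}$ and $\{e\}\times(\RR^n\moins\{0\})$: on the first slice $a$ becomes $g\mapsto g\cdot v_0$, i.e.\ $q$; on the second slice $a$ is the identity of $\RR^n\moins\{0\}$. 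The main structural input is that $\h^{n-1}$ of the product, modulo the ideal that dies under any of the pullbacks we use, is detected by these two restrictions — this is where I expect the only real subtlety, namely making precise that the ``cross term'' $\h^{>0}(\GL^+(n,\RR))\otimes\h^{>0}(\RR^n\moins\{0\})$ part of $a^*(t)$ does not contribute to $(h\cdot\phi)^*(t) - k^*(t) - \phi^*(t)$. This can be handled by noting that $a$ restricted to $\GL^+(n,\RR)\times\{v\}$ for any fixed $v$ is homotopic to $q$ (since $\RR^n\moins\{0\}$ is connected and $\GL^+(n,\RR)$ acts transitively with connected stabilizer... or simply via a path from $v$ to $v_0$), so the component of $a^*(t)$ that restricts nontrivially to the $\GL^+$-direction is entirely captured by $q^*(t)$, and the rest is $\mathrm{pr}_2^*(t)$. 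Alternatively, and perhaps most economically, one observes $S^{n-1}$ is an H-space only for $n=1,2,4,8$, so a direct H-space argument is not available; instead the correct general tool is: the map $a$ fits into a pullback/degree computation where the induced map on $\h_{n-1}$ (homology) of the product sends the fundamental class of $\GL^+\times S^{n-1}$-slices additively — which is exactly the content of the final sentence of the lemma's own statement in the excerpt, relating $k^*(t)$ to $k_*$ on $\h_{n-1}$.

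\medskip

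\textbf{Conclusion.} Granting the action-map identity $a^*(t) = \mathrm{pr}_2^*(t) + q^*(t)$, the proof is a two-line diagram chase: apply $(h,\phi)^*$ to get $(h\cdot\phi)^*(t) = (h,\phi)^*\mathrm{pr}_2^*(t) + (h,\phi)^*q^*(t) = \phi^*(t) + k^*(t)$, using $\mathrm{pr}_2\circ(h,\phi) = \phi$ and $q\circ(h,\phi) = k$. The main obstacle, as indicated, is the rigorous justification of the action-map identity, i.e.\ controlling the K\"unneth cross-terms; everything else is formal naturality of pullbacks and the identification $\h^{n-1}(\RR^n\moins\{0\};\ZZ)\cong\ZZ$ via the generator $t$.
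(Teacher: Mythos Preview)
Your approach is essentially the same as the paper's: factor through the action map $a=ev:\GL^+(n,\RR)\times(\RR^n\moins\{0\})\to\RR^n\moins\{0\}$, compute $a^*(t)$ via K\"unneth, then pull back along $(h,\phi)$. The paper does exactly this in three lines.

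What you flag as ``the only real subtlety'' --- controlling the K\"unneth cross terms --- is in fact no obstacle at all, and the paper does not pause on it. Since $\RR^n\moins\{0\}\simeq S^{n-1}$ has integral cohomology concentrated in degrees $0$ and $n-1$, the K\"unneth decomposition in degree $n-1$ is simply
\[
\h^{n-1}\big(\GL^+(n,\RR)\times(\RR^n\moins\{0\});\ZZ\big)\;\cong\;\h^{n-1}(\GL^+(n,\RR);\ZZ)\oplus\h^{n-1}(\RR^n\moins\{0\};\ZZ),
\]
with no mixed terms (the Tor contributions vanish since $\h^*(S^{n-1})$ is free). The two summands are detected by restriction to the slices $\GL^+(n,\RR)\times\{v_0\}$ and $\{e\}\times(\RR^n\moins\{0\})$, exactly as you propose; this immediately gives $a^*(t)=\mathrm{pr}_1^*o^*(t)+\mathrm{pr}_2^*(t)$, and then $(h\cdot\phi)^*=(h,\phi)^*\circ a^*=h^*\circ o^*+\phi^*=k^*+\phi^*$. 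Your detours through H-space structures and homology degree arguments are unnecessary once you notice that the cohomology of the sphere factor forces the cross terms to vanish.
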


\begin{proof}
  Denote by $o$ the map $\GL^+(n,\RR) \to \RR^n \moins \{0\} \sep g
  \mapsto g \cdot v_0$ and by $ev$ the map $\GL^+(n,\RR) \times (\RR^n
  \moins \{0\}) \to \RR^n \moins \{0\} \sep (g,v) \mapsto g \cdot
  v$. Hence $k= o\circ h$ and $h\cdot \phi = ev \circ (h, \phi)$. By
  the Künneth formula, the $(n-1)$-cohomology group of $\GL^+(n,\RR)
  \times (\RR^n \moins \{0\})$ decomposes as
  \[ \h^{n-1}(\GL^+(n,\RR) \times (\RR^n \moins \{0\}); \ZZ) \cong
  \h^{n-1}(\GL^+(n,\RR); \ZZ) \oplus \h^{n-1}( \RR^n \moins \{0\};
  \ZZ).\] and the map $ \h^{n-1}(\GL^+(n,\RR); \ZZ) \oplus \h^{n-1}(
  (\RR^n \moins \{0\}); \ZZ) \to \h^{n-1}(\GL^+(n,\RR) \times (\RR^n
  \moins \{0\}); \ZZ)$ is given by the two projections; its inverse is
  given by the inclusions of $\GL^+(n,\RR) \times\{v_0\}$ and $\{g_0\}
  \times (\RR^n \moins \{0\})$.  In this decomposition $ev^*$ is the
  map $(o^*, \id)$ and $(h,\phi)^*$ is $h^*+\phi^*$. Hence $(h\cdot
  \phi)^* = (h,\phi)^* \circ ev^* = h^* \circ o^* + \phi^* = k^* +
  \phi^*$.
\end{proof}

\def\cprime{$'$} \providecommand{\bysame}{\leavevmode\hbox
  to3em{\hrulefill}\thinspace}
\providecommand{\MR}{\relax\ifhmode\unskip\space\fi MR }
\providecommand{\MRhref}[2]{%
  \href{http://www.ams.org/mathscinet-getitem?mr=#1}{#2} }
\providecommand{\href}[2]{#2}

\end{document}